\newtheorem{thm}{Theorem}[section]
\newtheorem{cor}[thm]{Corollary}
\newtheorem{lem}[thm]{Lemma}
\newtheorem{prop}[thm]{Proposition}
\newtheorem{rem}[thm]{Remark}
\newcommand{\Z}{\mathcal{Z}}
 \numberwithin{equation}{section}
\newcommand{\be}{\begin{equation}}
\newcommand{\ee}{\end{equation}}
\newcommand\bes{\begin{eqnarray}}
\newcommand\ees{\end{eqnarray}}
\newcommand{\bess}{\begin{eqnarray*}}
\newcommand{\eess}{\end{eqnarray*}}
\newcommand{\R}{{\mathbb{R}}}
\begin{document}

\title[Convergence of the Solutions of RPME]
{Convergence of Solutions of the Porous Medium Equation with Reactions}
\author[B. Lou and M. Zhou]{Bendong Lou \ and\ Maolin Zhou}

\thanks{{\bf B. Lou}: Mathematics and Science College, Shanghai Normal University,
Shanghai, 200234, China. Email: {\tt lou@shnu.edu.cn}}

\thanks{{\bf M. Zhou}:
Chern Institute of Mathematics, Nankai University, Tianjin, 300071, China.
Email: {\tt zhouml123@nankai.edu.cn}}

\thanks{This research was partially supported by the National Key Research and Development Program of China (2021YFA1002400) and the NSF of China (No. 12071299).}
\maketitle

\noindent
{\bf Abstract.}
Consider the Cauchy problem of one dimensional porous medium equation (PME) with reactions. We first prove a general convergence result, that is, any bounded global solution starting at a nonnegative compactly supported initial data converges as $t\to \infty$ to a nonnegative zero of the reaction term or a ground state stationary solution. Based on it, we give out a complete classification on the asymptotic behaviors of the solutions for PME with monostable, bistable and combustion types of nonlinearities.

\smallskip

\noindent
{\bf Key words and phrases: }
Porous medium equation with reactions (RPME); Cauchy problem; free boundary; asymptotic behavior; spreading and vanishing.

\smallskip

\noindent
{\bf 2020 Mathematics Subject Classification: } 35K65,
35K57,
35B40,
35K15.

\setlength{\baselineskip}{16pt}{\setlength\arraycolsep{2pt}

\tableofcontents

\section{Introduction}

Consider the following reaction diffusion equation (abbreviated as RDE in the following)
\begin{equation}\label{eq}
 u_t = u_{xx} + f(u), \quad x\in \R,\ t>0.
\end{equation}
In the last decades, many authors gave systemic qualitative study for this equation. The reaction terms include the following three typical types:
\begin{center}
(f$_M$)\ \ monostable case, \ \ \ \ \ \ (f$_B$)\ \ bistable case,\ \ \ \ \ \
(f$_C$)\ \ combustion case.
\end{center}
In the monostable case, we assume $f=f(u)\in C^1([0,\infty))$ and
\begin{equation}\label{mono}
f(0)=f(1)=0, \quad f'(0)>0, \quad  f'(1)<0,\quad (1-u)f(u) >0 \ \mbox{for } u>0, u\not= 1.
\end{equation}
The well known example is the logistic term, that is, the Verhulst law: $f(u) =u(1-u)$.
In the bistable case, we assume $f=f(u)\in C^1 ([0,\infty))$ and
\begin{equation}\label{bi}
f(0)=f(\theta)= f(1)=0, \quad f(u) \left\{
\begin{array}{l}
<0 \ \ \mbox{in } (0,\theta),\\
>0\ \  \mbox{in } (\theta, 1),\\
< 0\ \ \mbox{in } (1,\infty),
\end{array} \right.
\quad \int_0^1 f(s) ds >0,
\end{equation}
for some $\theta\in (0,1)$,  $f'(0)<0$, $f'(1)<0$. A typical example is $f(u) = u(u-\theta)(1-u)$ with $\theta \in (0, \frac{1}{2})$.
In the combustion case, we assume there exists $\theta\in (0,1)$ such that $f \in C^1([\theta,\infty])$ and
\begin{equation}\label{combus}
\left\{
 \begin{array}{l}
 f(u)=0 \ \ \mbox{in } [0,\theta], \quad f(u) >0 \ \mbox{in }
(\theta,1), \quad f'(1)<0,\quad f(u) < 0 \ \mbox{in } (1, \infty),\\
 \mbox{there exists a small }\delta>0 \mbox{ such that } f'(u)>0 \mbox{ for } u\in (\theta, \theta +\delta].
 \end{array}
\right.
\end{equation}

In 1937, Fisher \cite{F} and Kolmogorov et al. \cite{KPP} studied the RDE with logistic reaction in their pioneer works. In particular, they specified the {\it traveling wave solutions}. In 1975 and 1978, Aronson and Weinberger \cite{AW1, AW2} studied the asymptotic behavior for the solutions of the Cauchy problem of RDE.
Among others, in the monostable case, they proved the {\it hair-trigger effect}, which says that {\it spreading happens} (that is, $u\to 1$ as $t\to \infty$, which is also called {\it persistence} or {\it propagation phenomenon}) for any positive solution of the monostable equation; in the bistable case, they gave  sufficient conditions for {\it spreading} and that for {\it vanishing} (that is, $u\to 0$ as $t\to \infty$, which is also called {\it extinction phenomenon}). In 1977, Fife and McLeod \cite{FM} also studied
the bistable equation, proved the existence and stability of the traveling wave solution. In 2006, Zlato\v{s} \cite{Zla} gave further systemic study on the asymptotic behavior for the solutions of bistable and
combustion equations. More precisely, he considered the Cauchy problem of \eqref{eq} with initial data $u(x,0)=\chi_{[-L,L]}(x)$ (which denotes the characteristic function over $[-L,L]$), and proved a trichotomy result: there exists a sharp value $L_*>0$ such that when $L>L_*$ (resp. $L<L_*$), spreading (resp. vanishing) happens for the solution. In addition, the solution develops to a transition one  if and only if $L=L_*$, which is a ground state solution
in the bistable case and the largest zero $\theta\in (0,1)$ of $f$ in the combustion case. In 2010, Du and Matano \cite{DM}
extended these results to the Cauchy problem with general initial data like $u(x,0) =\sigma \phi(x)$ for positive number $\sigma $ and continuous, nonnegative, compactly supported function $\phi$. For bistable and combustion equations, they also proved the trichotomy and sharp transition results as in \cite{Zla}. In 2015, Du and Lou \cite{DL} proved similar results for the nonlinear Stefan problem.

In this paper, we study the porous medium equation with reactions:
$$
\mbox{(CP)} \hskip 40mm
 \left\{
 \begin{array}{ll}
 u_t = (u^m)_{xx} + f(u), &\ \  x\in \R,\ t>0,\\
 u(x,0)=u_0(x), &\ \  x\in \R,
 \end{array}
\right.
 \hskip 50mm
$$
where $m>1$ is a constant. This problem can be used to model population dynamics with diffusion flux depending on the population density, the combustion, propagation of interfaces and nerve impulse propagation phenomena in porous media, as well as the propagation of intergalactic civilizations in the field of astronomy (cf. \cite{GN, GurMac, NS, SGM} etc.).
For simplicity, we use PME/RPME to refer to the porous media equation in (CP) without/with a reaction.

The well-posedness of the problem (CP) was studied in \cite{A1969, A1, Sacks, Vaz-book} etc..
For the qualitative property, the monostable equation was considered widely. For example, in 1979, Aronson \cite{A1} considered the problem with logistic reaction. He studied the existence of traveling wave solutions and compared the results with that in RDE. It was shown that (see also \cite{GK, PV, She}) the monostable RPME has a traveling wave solution $u(x,t)=\widetilde{U}(x-ct)$ if and only if $m\geq 1$ and only for the speed $c\geq c_*(m)$. For bistable and combustion RPMEs, however, there is only one traveling wave solution $\widetilde{U}(x-c_* t)$ (cf. \cite{GK}).
Recently, Du, Quir\'{o}s and Zhou \cite{DQZ} considered the ($N$-dimensional) RPME with Fisher-KPP type of reaction. In particular, they presented a precise estimate for the spreading speed of the free boundary with a logarithmic correction. On the other hand, the qualitative properties for the RPMEs with bistable or combustion reaction were not as clear as the monostable case.
In 1982, Aronson, Crandall and Peletier \cite{ACP} considered the bistable RPME in a bounded interval. Besides the well-posedness, they specified the stationary solutions which can be $\omega$-limits of the solutions. Recently, G\'{a}rriz \cite{Garriz}
considered the problem (CP) with reactions of the monostable, bistable and combustion types. He gave sufficient conditions ensuring the spreading or vanishing, and used the traveling wave solution to characterize the spreading solutions starting at compactly supported initial data.

Our aim in this paper is to give out a complete classification for the asymptotic behaviors of the solutions of (CP).
We will specify its stationary solutions; prove a general convergence result, that is, any nonnegative bounded global solution converges as $t\to \infty$ to a stationary one. Then, based on the general result, we prove the hair-trigger effect for monostable PME and spreading-transition-vanishing trichotomy results on the asymptotic behavior for solutions of bistable or combustion RPMEs. In some sense, this paper can be regarded as a RPME version of the important works  \cite{DL, DM, Zla} for RDEs.

Our basic assumption on $f$ is
\begin{equation*}\label{f-Lip}
\mbox{(F)}\hskip 5mm
  f(u)\in C^2([0,\infty)) \mbox{ with Lipschitz number } K,\ \
  f(0)= 0 \mbox{ and } f(u)<0 \mbox{ for } u>1.
 \hskip 25mm
\end{equation*}
The $C^2$ smoothness is mainly used to give the a priori estimates, including the lower bound for $v_{xx}$ (where $v$ is the pressure, see Appendix for details). In the study of asymptotic behavior, we actually only need $f\in C^1$. The assumption $f(u)<0$ for $u>1$ is natural in population dynamics, which means that the population has a finite capacity. If one only studies the well-posedness, this requirement can be weakened or omitted.  The initial data are chosen from the following set (see Figure 1)
\begin{equation*}\label{Initial-data}
{\rm (I)} \hskip 5mm u_0\in \mathfrak{X} := \left\{ \psi \in C(\R)
\left|
\begin{array}{l}

 \mbox{there exist } b>0 \mbox{ and a finite number of open intervals in } [-b,b]
 \\
 \mbox{such that } \psi(x)>0 \mbox{ in these intervals, and } \psi(x)=0 \mbox{ otherwise}
 \end{array}
 \right.
 \right\}.\ \ \ \
\end{equation*}
It is also possible to consider more general initial data, for example,  $u_0\in C(\R)$ with compact support, or, $u_0\in L^1(\R)\cap L^\infty(\R)$. For simplicity and clarity of presentation, especially, for clarity of the free boundaries, we consider in this paper only the initial data in $\mathfrak{X}$. In this case, the solution has a left-most free boundary $l(t)$, a right-most free boundary $r(t)$, and finite number of interior free boundaries in $(l(t),r(t))$.
\begin{figure}
\begin{center}
\includegraphics[width=4in,height=1in]{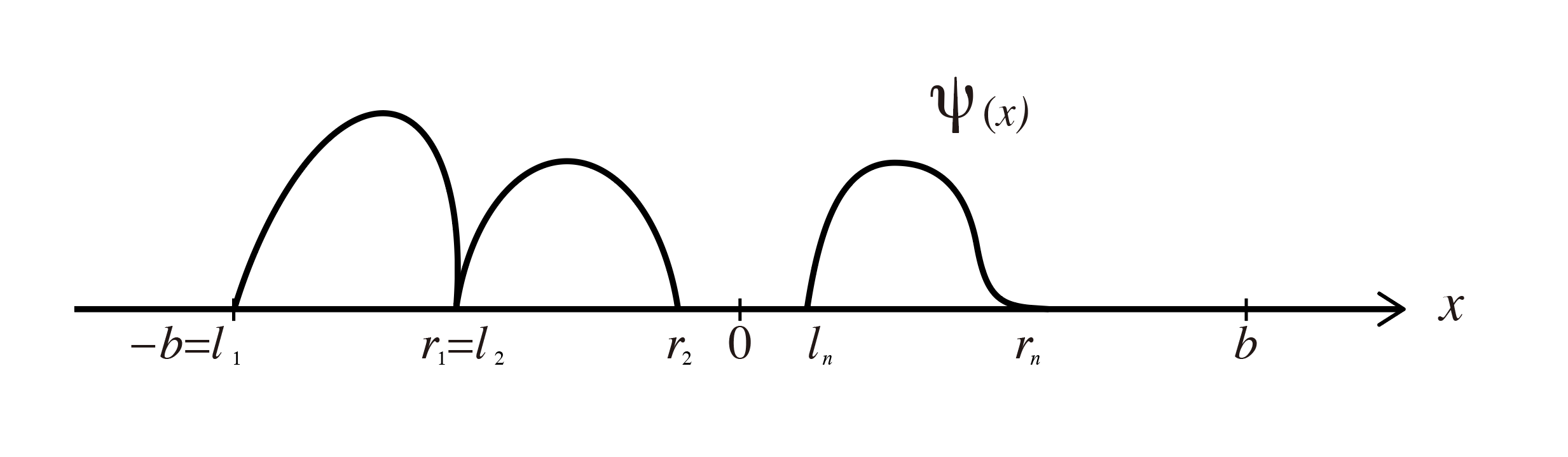}

\caption{An example of the initial data.}
\end{center}
\end{figure}

For any $T>0$, denote $Q_T := \R \times (0,T)$. A function $u(x,t)\in C(Q_T)\cap L^\infty (Q_T)$ is called a {\it very weak solution} of
(CP) if for any $\varphi \in C^\infty_c (Q_T)$, there holds
\begin{equation}\label{def-weak sol}
\int_{\R} u(x,T)\varphi(x,T) dx  =  \int_{\R} u_0(x) \varphi(x,0)dx +
\iint_{Q_T} f(u) \varphi  dx dt  + \iint_{Q_T} [u\varphi_t + u^m \varphi_{xx}] dx dt.
\end{equation}
As an extension of the definition, if $u$ satisfies \eqref{def-weak sol} with inequality \lq\lq $\geq$" (resp. \lq\lq $\leq$"), instead of equality, for every test function $\varphi\geq 0$, then $u$ is called a
{\it very weak supersolution/very weak upper solution} (resp. {\it very weak subsolution/very weak lower solution}) of (CP) (cf. \cite[Chapter 5]{Vaz-book}).

From the references \cite{ACP, PV, Sacks, Vaz-book} etc. we know that under the assumption (F) and (I), the problem (CP) has a unique very weak solution $u(x,t)\in C(Q_T)\cap L^\infty (Q_T)$ for any $T>0$, $u(x,t)\geq 0$ in $Q_T$,
and the support of $u(\cdot,t)$, denoted by ${\rm spt}[u(\cdot,t)]$, is contained in $[l (t),r(t)]$ for all $t>0$. Moreover, we will show below that both $-l(t)$ and $r(t)$ are non-decreasing Lipschitz functions, as in PME, and so the following limits exist:
\begin{equation}\label{def-l-r-infty}
l^\infty := \lim\limits_{t\to \infty} l(t),\quad
r^\infty := \lim\limits_{t\to \infty} r(t).
\end{equation}

With the global existence in hand, it is possible to study the asymptotic behavior for the solutions. For RDEs, the equations are uniform parabolic and the strong maximum principle is applicable. So, any solution is a classical one and the convergence of $u$ to its $\omega$-limit is taken in the topology of $C^{2,1}_{loc}(\R)$. For our RPME, however, we have only very weak solutions with $C^\alpha$ bounds in any compact domain. A nonnegative solution is not necessarily to be positive and classical. Hence the convergence of a solution to its limit is first considered in the topology of $C_{loc}(\R)$ or $L^\infty_{loc}(\R)$. In addition, if $u(x,t_n)\to w(x)>0$ as $t_n\to \infty$ in some domain $J$, then $u>0$ in $J$ by the positivity persistence and so it is a classical solution. Thus, the convergence $u(x,t_n)\to w(x)$ holds in $C^{2}$ topology in any compact subdomain of $J$. In summary, if no other specification, throughout this paper we use the following definition for $\omega$-limits of $u$:
\begin{equation}\label{def-conv}
\lim\limits_{n\to \infty} u(x,t_n)= w(x) \mbox{ means }
\left\{
 \begin{array}{l}
 u(x,t_n)\to w(x) \mbox{ in } C_{loc}(\R) \mbox{ topology};\\
 u(x,t_n)\to w(x) \mbox{ in } C^{2}_{loc}(J) \mbox{ topology if } w(x)>0 \mbox{ in }J.
 \end{array}
 \right.
\end{equation}
In some cases, our problem may have a {\it ground state solution}, which means one of the following two types of nonnegative stationary solutions of (CP):

\noindent
$\bullet$ {\bf Type I ground state solution} (see Figure 2): $U_0 (x)\in C^2(\R)$ is an even stationary solution of (CP) with
$$
U_0 (x)>0>U'_0 (x) \mbox{ for }x>0,\quad U_0 (+\infty)\in [0,1);
$$

\begin{figure}
\centering
\includegraphics[width=2.6in,height=1in]{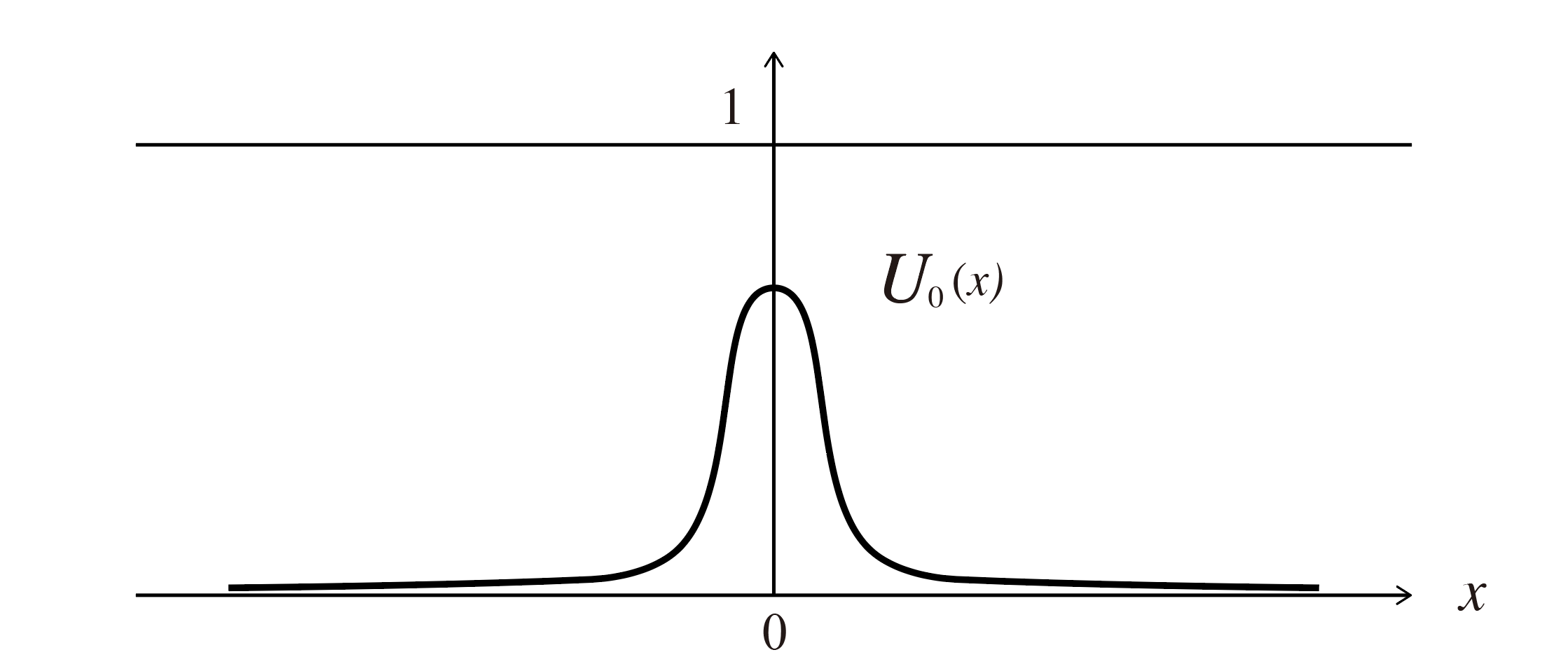}
\hspace{0.1in}
\includegraphics[width=2.6in,height=1.05in]{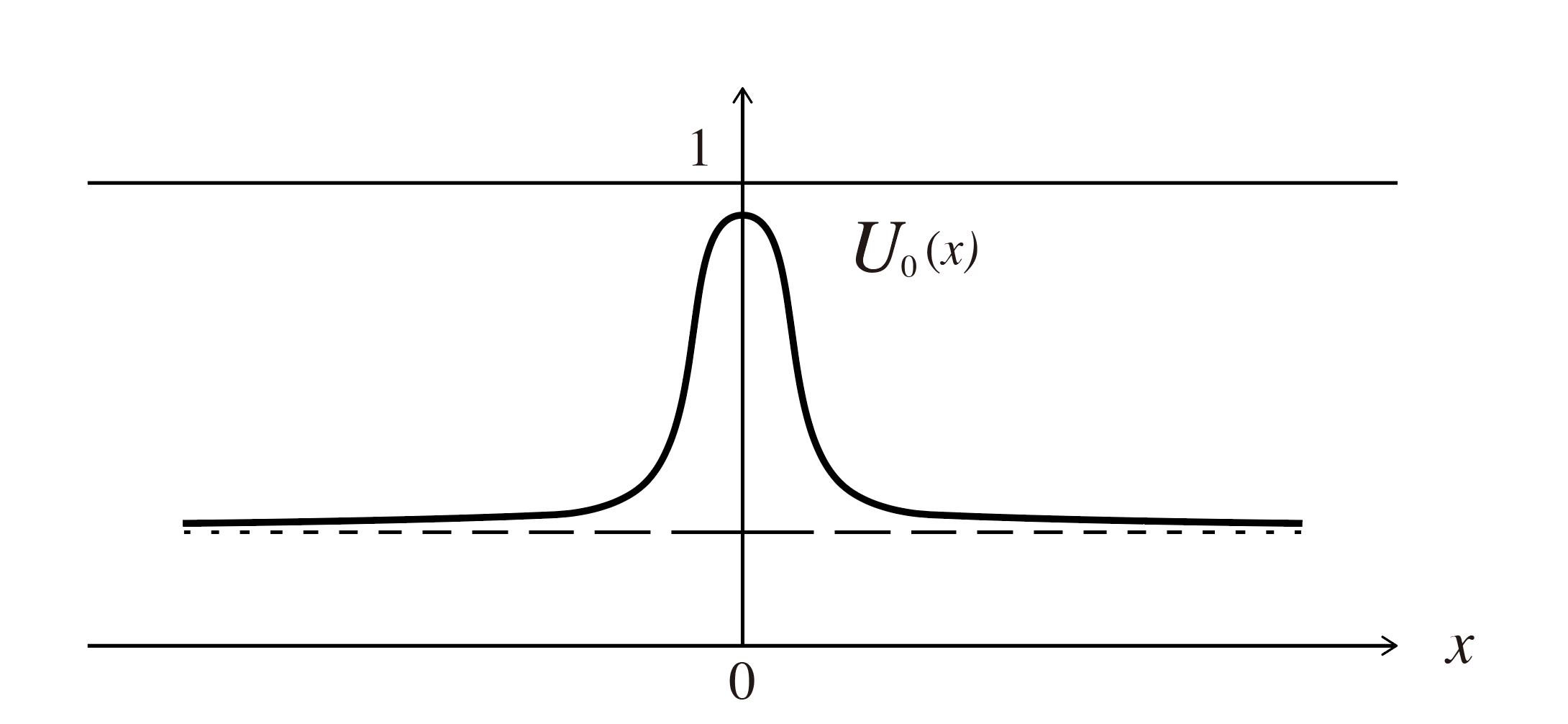}
\caption{Type I ground state solution.}
\end{figure}

\noindent
$\bullet$ {\bf Type II ground state solution} (see Figure 3): there exist a positive integer $k$, $z_i\in \R \ (i=1,2,\cdots,k)$ and $L>0$ with
$z_i+2 L \leq z_{i+1}\ (i=1,2,\cdots, k-1)$ such that
\begin{equation}\label{finite-ground-state-0}
\mathcal{U}(x)= U(x-z_1)+U(x-z_2)+\cdots +U (x-z_k), \quad x\in \R,
\end{equation}
where $U(x)\in C (\R)\cap C^2(\R\backslash\{\pm L\})$ is an even stationary solution of the problem (CP) with
\begin{equation}\label{def-type-II}
U (x)>0>U' (x) \mbox{ for } x\in (0,L),\quad U (x)=0 \mbox{ for }x\geq L,\quad (U^{m-1})'(L)=0.
\end{equation}
\begin{figure}
\begin{center}
\includegraphics[width=3.2in,height=1in]{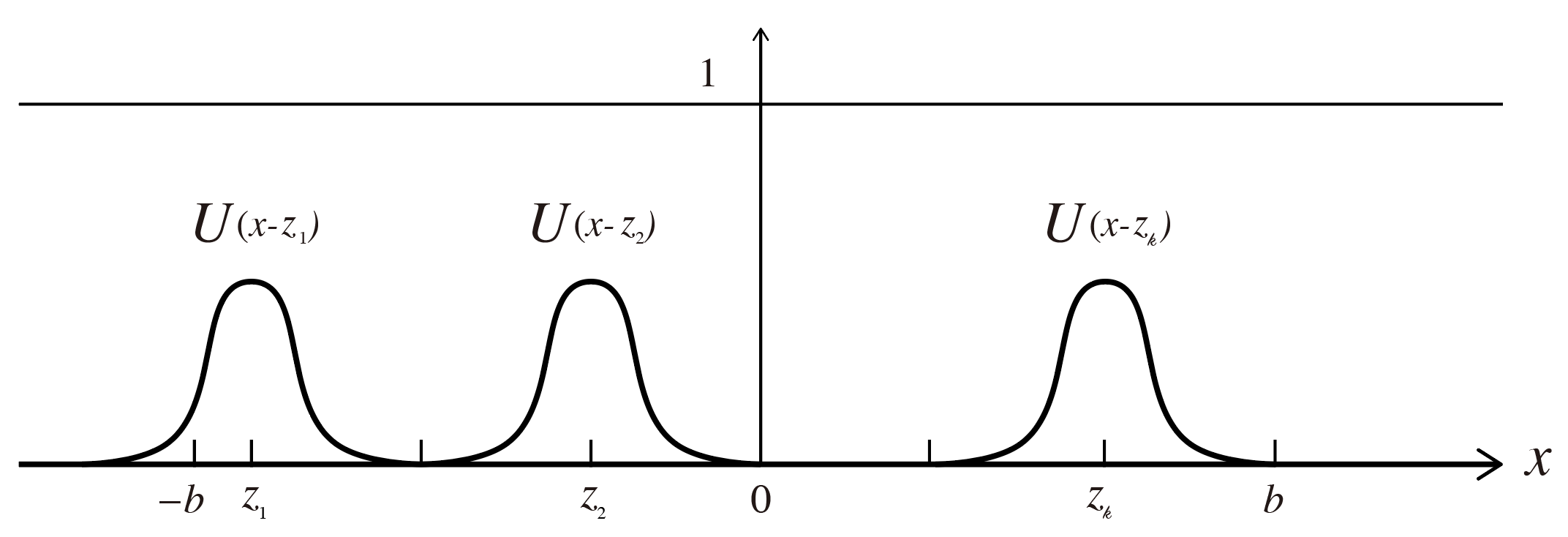}
\caption{Type II ground state solution.}
\end{center}
\end{figure}
\noindent
A Type I ground state solution is positive on the whole $\R$, which is the analogue of that in bistable RDEs (cf. \cite{DM,Zla}). For our RPME, this kind of solution exists if, in particular, $f$ is a bistable nonlinearity with
\begin{equation}\label{decay-rate-f}
f(u)=-\lambda u^\alpha (1+o(1)) \mbox{\ \ as\ \ } u\to 0+0,
\end{equation}
for some $\lambda >0$ and $\alpha\geq m$ (see Lemma \ref{lem:finite-infinite-GS}). The component $U$ in Type II ground state solution has a compact support, which is different from Type I. Such a solution exists when \eqref{decay-rate-f} holds for $0<\alpha<m$ (see Lemma \ref{lem:finite-infinite-GS}).
The last equality in \eqref{def-type-II} is used to indicate that $U(x)$ is a stationary solution not only of the RPME but also of the Cauchy problem (CP). In fact, a necessary and sufficient condition ensuring a stationary solution $\bar{u}$ of RPME with compact support $[-b,b]$ is also a stationary solution of the Cauchy problem (CP) is that the waiting times at its boundaries $\pm b$ are infinite, which requires that $(\bar{u}^{m-1})(\pm b)=0$ (see Lemma \ref{lem:ss-problem} below).

Our first main theorem is a general convergence result, which says that any nonnegative bounded global solution of (CP) converges to a nonnegative zero of $f$ or a ground state solution:

\begin{thm}[General convergence theorem]\label{thm:general-conv}
Assume {\rm (F)} and {\rm (I)}. Let $u(x,t)$ be a bounded, nonnegative, time-global solution of {\rm (CP)}. Then $u(\cdot,t)$ converges as $t\to \infty$, in the sense of \eqref{def-conv}, to a stationary solution of {\rm (CP)}, which is one of the following types:
\begin{enumerate}[{\rm (i)}]
\item a nonnegative zero of $f$;
\item $U_0 (x-z_0)$ for some $z_0\in [-b,b]$ and some Type I ground state solution $U_0$;
\item $\mathcal{U}(x)$ as in \eqref{finite-ground-state-0} for $k\geq 1$ points $z_1,z_2,\cdots, z_k\in [-b,b]$. In addition, $l^\infty,\ r^\infty$ are bounded in this case.
\end{enumerate}
\end{thm}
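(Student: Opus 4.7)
My plan is to adapt the Matano-Zelenyak convergence method to the degenerate, free-boundary setting of (CP), in four steps. From the a priori H\"older bounds on $u$ and the pressure lower bound from the appendix, any sequence $t_n \to \infty$ admits a subsequence along which $u(\cdot,t_n)\to w$ in the sense of \eqref{def-conv} for some bounded nonnegative $w$. To identify $w$ as stationary I would exploit the Lyapunov functional
\begin{equation*}
E(u) := \int_{\R} \Bigl[ \tfrac{1}{2}\bigl((u^m)_x\bigr)^2 - \Psi(u) \Bigr] dx, \qquad \Psi(u) := m\int_0^u s^{m-1} f(s)\, ds,
\end{equation*}
which, after multiplying (CP) by $(u^m)_t = m u^{m-1} u_t$ and integrating by parts over $[l(t), r(t)]$, satisfies $\tfrac{d}{dt} E(u(\cdot,t)) = -\int_{\R} m u^{m-1} u_t^2 \, dx \leq 0$. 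Since $E$ is bounded below under (F) and (I), this gives $\int_0^\infty\!\!\int_{\R} m u^{m-1} u_t^2\, dx\, dt < \infty$; a time-shift argument on the windows $(t_n-1, t_n+1)$ then shows that every subsequential limit $w$ solves $(w^m)'' + f(w) = 0$ very weakly on $\R$.

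\textbf{Step 2 (Classification of stationary profiles).} On each connected component of $\{w>0\}$ the first integral $\tfrac{1}{2}((w^m)')^2 + \Psi(w) \equiv \mathrm{const}$ reduces the stationary problem to a phase-plane ODE with single-bump even profiles. Enforcing nonnegativity, continuity of $w^m$ across the zero set, and the waiting-time condition $(w^{m-1})'(\pm L)=0$ at any compactly supported positive component (Lemma \ref{lem:ss-problem}) leaves exactly the three classes listed in (i)-(iii) of the theorem. Since $u_0$ is supported in $[-b,b]$, a comparison using reflections of the solution across points $\xi\in\R\setminus[-b,b]$ pins each translation centre $z_i$ into $[-b,b]$.

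\textbf{Step 3 (From sub-sequential to full-orbit convergence).} For alternative (iii), the $\omega$-limit has compact support; combined with the Lipschitz monotonicity of $l(t), r(t)$ alluded to before \eqref{def-l-r-infty}, this forces $l(t), r(t)$ to stabilize in finite time, yielding the boundedness of $l^\infty, r^\infty$ claimed in (iii), and on the resulting bounded domain Zelenyak's theorem applied to the pressure variable upgrades the subsequential limit to convergence of the full orbit. For alternatives (i) and (ii), where the support may invade all of $\R$, I would instead use a Sturmian / intersection-number argument: the count of sign changes of $u(\cdot,t)-w(\cdot)$ against any $w\in\omega(u_0)$ is non-increasing and finite, and combined with the forced even symmetry of every candidate limit around its translation centre pins down a unique profile and translate.

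\textbf{Main obstacle.} The energy dissipation in Step 1 is essentially identical to the uniformly parabolic case; the real difficulty lies in Step 3 in the spreading regime, where $u$ is only $C^\alpha$ across the free boundary and the strong maximum principle is available only on $\{u>0\}$. Carrying out Sturmian comparisons across the zero set requires the $C^2_{loc}$ convergence on the positivity set afforded by \eqref{def-conv} together with the pressure bound to control the oscillation of the free boundary. The subtlest piece is then to rule out heteroclinic orbits inside $\omega(u_0)$: any such orbit would strictly decrease $E$, contradicting $E(u(\cdot,t))\to \inf_{w\in\omega(u_0)} E(w)$.
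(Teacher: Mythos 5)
Your proposal correctly identifies the two main ingredients (a Lyapunov/dissipation argument and a Sturmian intersection-number argument) and the correct dichotomy between bounded and unbounded free boundaries, but it contains a genuine gap at the very first step. You assert that the energy
$$
E(u) = \int_{\R}\Bigl[\tfrac12\bigl((u^m)_x\bigr)^2 - \Psi(u)\Bigr]dx
$$
is ``bounded below under (F) and (I)''. This fails precisely in the spreading regime $-l^\infty = r^\infty = \infty$: if, say, $u\to 1$ on an expanding domain and $\Psi(1) = m\int_0^1 s^{m-1}f(s)\,ds > 0$ (as in the monostable or bistable-with-positive-mass cases), then $E(u(\cdot,t))\to -\infty$. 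Consequently the dissipation integral $\int_0^\infty\int m u^{m-1}u_t^2\,dx\,dt$ need not be finite, and the whole ``time-shift window'' argument for quasi-convergence collapses exactly in the case you later call the ``spreading regime''. The same unboundedness also kills your closing remark about ruling out heteroclinic orbits via $E(u(\cdot,t))\to\inf_{\omega(u_0)}E$. The paper confronts this head-on: it uses the Lyapunov functional only when $l^\infty, r^\infty$ are finite (its Step 3), and in the spreading case it abandons the energy method entirely. There it instead constructs, for each subsequential limit $w(\cdot,0)$, an exact classical stationary solution $\bar v$ of the pressure equation with the same value and derivative at one point as $w$, and then compares $v(\cdot,t)$ with $\bar v$ via the zero-number diminishing argument in domains cut away from the free boundaries so that the coefficient $a^{-1}=[(m-1)v]^{-1}$ stays bounded; this yields quasi-convergence without any global energy bound. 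Your proposal has no substitute for this step.

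Two smaller points. First, your Step 2 asserts that the phase-plane classification ``leaves exactly the three classes (i)--(iii)'', but this is only true after ruling out the possibility that, in the spreading regime $r^\infty=-l^\infty=\infty$, the $\omega$-limit is a compactly-supported bump or a one-sided monotone half-profile (the paper's cases (b)--(d) in its Step~6). Eliminating these requires the monotonicity of $u$ outside $[-b,b]$ together with a delicate argument comparing the solution against the translated bump and using the intersection-number properties (Lemma~\ref{lem:diminish-intersection}); none of this is present in your outline. Second, in the bounded-support case you claim the free boundaries ``stabilize in finite time'' and invoke Zelenyak on a fixed bounded domain; neither is justified here. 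The monotone free boundaries only have finite limits $l^\infty, r^\infty$ as $t\to\infty$, they need not reach them in finite time, and the $\omega$-limit may have several disconnected supported components (Type~II with $k\geq 2$), so this is not a fixed-bounded-domain problem. The paper instead upgrades quasi-convergence to genuine convergence via a zero-number argument against suitable reflections (its Step~4), which is more robust than Zelenyak in this degenerate, free-boundary setting.
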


This theorem is a fundamental result and can be used conveniently to study various RPME. For example, when the reaction term in RPME is a typical monostable, bistable or combustion one, it is easy to clarify all of the nonnegative stationary solutions. Hence, using the general convergence result in the previous theorem, we can give a rather complete analysis on the asymptotic behavior for the solutions of (CP) with these reactions.

To obtain the above convergence result, our main tool is the so-called {\it zero number argument} in the version of porous medium equation, which can be used to describe the intersection points between two solutions. The main difficulty to establish the argument is brought by the existence of free boundaries and the degeneracy of diffusion here, which is a significant difference compared with the classical heat equation. In particular, the intersection number may increase. For the convenience of the readers, we present the intersection number properties as a theorem in the below. Let $u_1 ,u_2$ be two solutions of {\rm (CP)} with initial data in $\mathfrak{X}$, then they are zero outside of their supports. It is a little confusing to define the intersection number between $u_1$ and $u_2$ in these places. Instead, we denote $\mathcal{Z}_0(t)$ as the number of their positive intersection points, that is,
\begin{equation}\label{def-Z0}
\mathcal{Z}_0(t):= \#\{x\in \R \mid u_1(x,t)=u_2(x,t)>0\},\qquad t\geq 0,
\end{equation}
where $\# S$ denotes the number of the elements in a set $S$.

\begin{thm}[Intersection number properties]\label{thm:intersection num}
Assume {\rm (F)}. Let $u_1 ,u_2$ be bounded, nonnegative, time-global solutions of {\rm (CP)} with initial data in $\mathfrak{X}$. If $\mathcal{Z}_0(0)<\infty$, then $\mathcal{Z}_0(t)<\infty$ for all $t>0$, and there exists a time $T\geq 0$ such that
$$
\mathcal{Z}_0(t) \mbox{ decreases for } t>T.
$$
More specifically, there exist  $\{t_j\}_{0\leq j\leq k}$ with $1\leq k \in \mathbb{N}$ and   $0=t_0<t_1<\cdots< T:= t_{k-1} < t_k = +\infty$ such that
\begin{enumerate}[{\rm (i)}]
\item $\mathcal{Z}_0(t)$ decreases if $t_j<t<t_{j+1}\ (j=0,1,\cdots,k-1)$;
\item $\mathcal{Z}_0(t)$ strictly increases at $t=t_j\ (j=1,\cdots, k-1)$ in the following sense:
$$
\lim_{t\rightarrow t_j + 0}\mathcal{Z}_0(t)>\lim_{t\rightarrow t_j -0}\mathcal{Z}_0(t).
$$
\end{enumerate}
\end{thm}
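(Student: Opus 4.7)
My plan is to split the analysis into two regimes: a classical Angenent-type zero number argument on the open set where both $u_1$ and $u_2$ are positive, together with a bookkeeping of the finitely many \emph{free boundary events} at which new positive intersections can be created.

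The first step is to work on the open set $\Omega := \{(x,t)\in\R\times(0,\infty) : u_1(x,t)>0 \text{ and } u_2(x,t)>0\}$. On every relatively compact subset of $\Omega$ the two equations are uniformly parabolic, and $w := u_1-u_2$ satisfies a linear parabolic equation $w_t = A\,w_{xx} + B\,w_x + C\,w$ with continuous coefficients and $A>0$, obtained from the mean-value factorizations of $u_1^m-u_2^m$ and $f(u_1)-f(u_2)$. The classical Angenent zero number theorem then implies: for each $t>0$ the set $\{x : u_1(x,t)=u_2(x,t)>0\}$ is locally finite in $\R$, and on any time sub-interval on which the cross-section $\Omega_t := \{x : (x,t)\in\Omega\}$ evolves continuously as a finite union of open intervals, the number of zeros of $w(\cdot,t)$ in $\Omega_t$ is nonincreasing and strictly drops whenever a multiple zero occurs. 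Combined with the compactness of $\mathrm{spt}\,u_i(\cdot,t)$ this yields $\mathcal{Z}_0(t)<\infty$ for every $t\geq 0$.

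The second step is to identify the times $t^*$ at which the topology of $\Omega$ can change, and therefore at which $\mathcal{Z}_0$ is allowed to strictly increase. Such a change is of one of two kinds: (a) a free boundary of one solution sweeps into the positive set of the other (so $\Omega_t$ gains a new component or enlarges discontinuously), or (b) two adjacent components of $\mathrm{spt}\,u_i$ merge at a point where the other solution is positive. Using that the outermost free boundaries $l_i,r_i$ are monotone Lipschitz, and that the interior free boundaries in (CP) move monotonically toward each other until collision, the number of connected components of $\mathrm{spt}\,u_i(\cdot,t)$ is nonincreasing in $t$, starting from the finite value dictated by $\mathfrak{X}$. Hence events of both kinds occur only finitely many times in total; listing them as $t_1<\cdots<t_{k-1}$ and setting $t_0:=0$, $t_k:=+\infty$, $T:=t_{k-1}$, the first paragraph already yields the monotone decrease of $\mathcal{Z}_0$ on each $(t_j,t_{j+1})$, which is conclusion (i).

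The crux, and the step I expect to be the main obstacle, is to show that at each event time $t^*$ only \emph{finitely many} new positive intersections are born, so that $\mathcal{Z}_0$ undergoes a genuine finite jump and (ii) holds. A priori, on the newly opened piece of $\Omega$ immediately after $t^*$ the difference $w$ could oscillate infinitely often. To rule this out I would pass to the pressure variable $v = \frac{m}{m-1}u^{m-1}$, which is Lipschitz up to the free boundary and satisfies the one-sided lower bound on $v_{xx}$ recalled in the Appendix; this controls the boundary profiles of $u_1$ and $u_2$ near $t^*$ and prevents infinite-order tangencies, so that on each newly formed component of $\Omega_t$ the zero set of $w(\cdot,t)$ is finite for every $t$ slightly larger than $t^*$. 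Angenent then takes over on $(t^*,t^*+\varepsilon)$, and reassembling the information across $(0,t_1),(t_1,t_2),\dots,(t_{k-1},+\infty)$ delivers the full statement.
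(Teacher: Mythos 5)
Your overall architecture --- classical Angenent zero-number theory on the open set $\Omega$ where both solutions are positive, together with a finite list of free-boundary contact events at which $\mathcal{Z}_0$ may jump --- is the same skeleton the paper uses (Theorem \ref{thm:zero num 1} via Lemmas \ref{lem:diminish-intersection} and \ref{lem:diminish-intersection 1}). However, the mechanism you invoke at your self-identified crux step does not work. Semi-convexity, $v_{i,xx}\geq -C(t)$, is a one-sided second-derivative bound on each pressure \emph{separately}; it gives no bound of either sign on $(v_1-v_2)_{xx}$, and hence no obstruction at all to the difference vanishing many times inside the thin, newly formed overlap zone (a semi-convex $\psi$ can have arbitrarily many sign changes). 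What actually produces the unit jump in the paper is the Darcy law of Theorem \ref{thm:Darcy-law}, $r_1'(t)=-D^-_x v_1(r_1(t),t)$, together with its analogue at the left boundary $l_2$: the instant the two components begin to interpenetrate, each pressure is strictly monotone near its own moving free boundary, so on the small overlap $(l_2(t),r_1(t))$ the difference $v_1-v_2$ is strictly decreasing and acquires \emph{exactly one} new sign change. Lipschitz regularity plus semi-convexity of the $v_i$ do not recover this; the pointwise free-boundary speed is the essential extra input you are missing.

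Two secondary gaps. Angenent/Chen (Proposition \ref{prop:zero}) requires $a^{-1}=[(m-1)v_i]^{-1}\in L^\infty$, which fails up to the degenerate boundary of $\Omega_t$; the paper cures this by retreating to a slightly shrunken strip $[\hat l(t),\hat r(t)]$ in the proof of Lemma \ref{lem:diminish-intersection}, a step your first paragraph glosses over. And $\mathcal{Z}_0$ need not actually strictly increase at every contact time $t_j$, because an interior degenerate zero may vanish at the very same instant; the paper therefore selects only a \emph{subset} of the contact times for conclusion (ii), a selection your plan does not perform.
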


The last property (that is, $\mathcal{Z}_0(t)$ increases at $t_j$) happens in particular when a component where $u_1>0$ meets a component where $u_2>0$ as their boundaries approaching to each other (see more in Lemma \ref{lem:diminish-intersection 1}). As we have mentioned above, this is significant different from the classical zero number diminishing properties. Such a phenomena happens only in PMEs where a solution has free boundaries but not in RDEs.

In order to describe the threshold of transition solutions, as in \cite{DL, DM,Zla}, we introduce a one-parameter family of initial data $u_0 = \phi_\sigma (x)$ satisfying the following conditions:
\begin{itemize}
\item[($\Phi_1$)] $\phi_\sigma \in \mathfrak{X}$ for every $\sigma>0$, and the map $\sigma \mapsto \phi_\sigma$ is continuous from $(0,\infty)$ to $L^\infty(\R)$;
\item[($\Phi_2$)] if $0<\sigma_1 <\sigma_2$, then $\phi_{\sigma_1}\leq,\not\equiv \phi_{\sigma_2}$;
\item[($\Phi_3$)] $\lim\limits_{\sigma\to 0} \|\phi_{\sigma}\|_{L^\infty(\R)} =0$.
\end{itemize}

\begin{thm}\label{thm:mono}
Assume $f\in C^2$ is a monostable reaction term satisfying \eqref{mono}. Let $u_\sigma(x,t)$ be the very weak solution of {\rm (CP)} with $u_0=\phi_\sigma (x)$ satisfying $(\Phi_1)$-$(\Phi_3)$.
Then the hair-trigger effect holds: for all $\sigma>0$, we have
$$
    u_\sigma(\cdot, t) \to 1 \mbox{\ \ as\ \ } t\to \infty,\ \ \mbox{ in the topology of } L^{\infty}_{loc}(\R).
$$
\end{thm}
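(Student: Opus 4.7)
The plan is to apply Theorem~\ref{thm:general-conv} and successively rule out every possible limit except the constant $1$. By that theorem, $u_\sigma(\cdot,t)$ converges to a stationary solution $w$ of type (i), (ii), or (iii). In the monostable case \eqref{mono}, the nonnegative zeros of $f$ are exactly $\{0,1\}$, so case (i) gives $w\equiv 0$ or $w\equiv 1$.

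I would first rule out Types~I and~II by a first-integral analysis of the stationary equation. Any nonnegative stationary solution $U$ of (CP) satisfies $(U^m)_{xx}+f(U)=0$ on its positivity set; setting $v=U^m$ and multiplying by $v_x$ gives
\[
\tfrac12 v_x^2+G(v)=\mathrm{const},\qquad G(v):=\int_0^v f(s^{1/m})\,ds.
\]
In the monostable regime, $G$ is strictly increasing on $[0,1]$ and strictly decreasing on $[1,\infty)$. At any interior maximum of $U$, $(U^m)''\le 0$ forces $f(U_{\max})\ge 0$, so $U_{\max}\le 1$. For a Type~I ground state, $U\to U_\infty\in[0,1)$ at $\pm\infty$ with $U_\infty<U_{\max}\le 1$; strict monotonicity of $G$ on $[0,1]$ then contradicts the required equality $G(U_\infty^m)=G(U_{\max}^m)$. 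For a Type~II component with support $[-L,L]$, the first integral at $x=L$ yields $\tfrac12 v_x(L)^2=G(U_{\max}^m)>0$, whence $v$ has a linear zero at $L$, $U\sim c(L-x)^{1/m}$, and $(U^{m-1})'(L)=-\infty$, contradicting \eqref{def-type-II}.

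To exclude $w\equiv 0$, I would use a compactly supported stationary subsolution. A shooting argument for $\tfrac12 v_x^2+G(v)=G(v_0)$ shows that for every small $v_0>0$ the Dirichlet problem $(\Psi_R^m)_{xx}+f(\Psi_R)=0$ on $(-R,R)$ with $\Psi_R(\pm R)=0$ admits a positive even solution whose half-length $R=R(v_0)\to 0$ and maximum $v_0^{1/m}\to 0$ as $v_0\to 0$. Its zero extension $\underline{\Psi}_R$ is a weak stationary subsolution of (CP), the flux $(\underline{\Psi}_R^m)_x$ having the correct sign at $\pm R$. By positivity persistence, $u_\sigma(\cdot,t_0)>0$ on some open interval about a point $x_0$ for any $t_0>0$; setting $\eta:=u_\sigma(x_0,t_0)>0$ and using continuity of $u_\sigma$, one chooses $v_0$ small enough that $u_\sigma(\cdot,t_0)\ge \underline{\Psi}_R(\cdot-x_0)$ pointwise on $\R$. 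Let $\tilde u$ solve (CP) with this subsolution as initial data; since the initial datum is a stationary subsolution, $\tilde u$ is non-decreasing in $t$, hence $\tilde u(\cdot,t)\uparrow\tilde w$ pointwise for some $\tilde w\not\equiv 0$. Applying Theorem~\ref{thm:general-conv} to $\tilde u$ and reinvoking the exclusions above, $\tilde w\equiv 1$. Comparison then gives $u_\sigma(\cdot,t)\ge\tilde u(\cdot,t-t_0)$ for $t\ge t_0$, so $\liminf_{t\to\infty} u_\sigma\ge 1$ in $L^\infty_{loc}(\R)$; combined with the ODE super-solution bound $u_\sigma(\cdot,t)\le\bar U(t)\to 1$ (where $\bar U$ solves $\dot{\bar U}=f(\bar U)$ with $\bar U(0)=\|\phi_\sigma\|_\infty$), the theorem follows.

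The main obstacle is the monotonicity $\tilde u_t\ge 0$: for RDEs this is a standard maximum-principle consequence, but the degeneracy of RPME at the free boundary of $\underline{\Psi}_R$ makes a direct appeal delicate, so one must either smooth the initial datum and pass to the limit, or compare $\tilde u(\cdot,t+h)$ with $\tilde u(\cdot,t)$ for $h>0$ using the intersection-number calculus of Theorem~\ref{thm:intersection num}. The subsolution property of the zero-extended $\underline{\Psi}_R$ at $\pm R$ is the other technical point, but it reduces to a routine sign check on $(\underline{\Psi}_R^m)_x(\pm R)$.
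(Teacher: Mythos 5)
Your proposal is correct and rests on the same two pillars the paper uses --- the general convergence theorem and a small compactly-supported stationary subsolution --- but it takes a longer route and flags a technical worry the paper's argument avoids. You route the subsolution through an auxiliary solution $\tilde u$ and worry about establishing $\tilde u_t\geq 0$ across the degenerate free boundary. This detour is unnecessary: Lemma~\ref{lem:ss-problem}(iv) already gives $u_\sigma(x,t)\geq U_{q_0}(x-x_0)$ for all $t>0$ by comparison with the \emph{time-independent} very weak subsolution $U_{q_0}$ (the paper's analogue of your $\underline\Psi_R$, constructed in Case D of Subsection~\ref{subsec:-mono-ss}), and that pointwise lower bound alone pins the $\omega$-limit of $u_\sigma$ above $U_{q_0}$, hence away from zero, with no intermediate $\tilde u$ and no time-monotonicity required. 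Your remaining ingredients coincide with the paper's: the shooting claim $R(v_0)\to 0$ is Lemma~\ref{lem:width-height} (the step where $f'(0)>0$ is used), and your first-integral exclusions of Type I and Type II ground states are the phase-plane classification in Subsection~\ref{subsec:-mono-ss} together with the linear-zero observation \eqref{rate-V-0} recorded in Remark~\ref{rem:ss-eq-p}. The streamlined version of your argument is thus exactly the paper's: pick $q_0$ small enough (via Lemma~\ref{lem:width-height}) that $\phi_\sigma\geq U_{q_0}(\cdot-x_0)$, invoke Lemma~\ref{lem:ss-problem}(iv) for the persistent lower bound, then apply Theorem~\ref{thm:general-conv}, whose only admissible limit above $U_{q_0}$ in the monostable case is the constant $1$.
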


\begin{rem}\label{rem:hair-Garriz}\rm
Theorem \ref{thm:mono} is a direct application of Theorem \ref{thm:general-conv}, since the structure of the $\omega$-limit set is rather simple in this case (see details in Section 4). Through a different proof, \cite{Garriz} obtained a deeper result on the hair-trigger effect for more general monostable RPME in $N$-dimensional space, where $f$ satisfies $\liminf\limits_{u\rightarrow 0+0}\frac{f(u)}{u^{m+2/N}}>0$.
\end{rem}

In the combustion case, we have the following result.

\begin{thm}\label{thm:com}
Assume $f\in C^2$ is a combustion reaction term satisfying \eqref{combus}. Let $u_\sigma (x,t)$ be the very weak solution of {\rm (CP)} with $u_0 = \phi_\sigma$ satisfying $(\Phi_1)$-$(\Phi_3)$.
Then there exist $0<\sigma_* \leq \infty$ such that the following trichotomy result holds:
\begin{itemize}
\item[(i)] {\bf spreading happens} for $\sigma >\sigma_*$:
    $$
    u_\sigma(\cdot, t) \to 1 \mbox{\ \ as\ \ } t\to \infty,\ \ \mbox{ in the topology of } L^{\infty}_{loc}(\R).
    $$
\item[(ii)] {\bf vanishing happens} for $0< \sigma < \sigma_*$:
    $$
    u_\sigma(\cdot, t) \to 0 \mbox{\ \ as\ \ } t\to \infty,\ \ \mbox{ in the topology of } L^{\infty}(\R).
    $$

\item[(iii)] {\bf transition case} for $\sigma =\sigma_*$:
$$
    u_\sigma(\cdot, t) \to \theta \mbox{\ \ as\ \ } t\to \infty,\ \ \mbox{ in the topology of } L^{\infty}_{loc}(\R).
    $$
    Moreover for large $t$, $u_\sigma $ has exactly two free boundaries $l(t)<r(t)$ satisfying
\begin{equation}\label{transition-asy-speed}
    -l(t), r(t) = 2 y_0 \sqrt{t} \; [1+o(1)]\mbox{\ \ as\ \ }t\to \infty,
\end{equation}
for some $y_0 \in (0, \theta^{\frac{m-1}{2}})$.
\end{itemize}
\end{thm}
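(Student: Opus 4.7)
The plan is to follow the spreading-transition-vanishing template of \cite{DL,DM,Zla}, using Theorem \ref{thm:general-conv} to pin down the possible $\omega$-limits, Theorem \ref{thm:intersection num} to handle the uniqueness of the threshold, and a self-similar analysis for the free-boundary rate. Comparison and $(\Phi_2)$ give $u_{\sigma_1}\le u_{\sigma_2}$ when $\sigma_1<\sigma_2$, so the vanishing set $\Sigma_v:=\{\sigma:u_\sigma\text{ vanishes}\}$ and the spreading set $\Sigma_s:=\{\sigma:u_\sigma\text{ spreads}\}$ are a lower and an upper subinterval of $(0,\infty)$. For small $\sigma$, $(\Phi_3)$ forces $\|\phi_\sigma\|_\infty<\theta$; since $f\equiv 0$ on $[0,\theta]$ the solution satisfies the pure PME, whose compactly supported solutions decay in $L^\infty(\R)$, so $\Sigma_v\ne\emptyset$. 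Openness of $\Sigma_v$ follows from continuous dependence in $L^\infty$ plus the stability of the PME decay; openness of $\Sigma_s$ follows from the sufficient spreading criterion ``$u(\cdot,T)\ge\theta+\delta$ on a long enough interval'', verified by a compactly supported stationary subsolution of the auxiliary equation $(u^m)_{xx}+f(u)-\varepsilon=0$. Setting $\sigma_*:=\sup\Sigma_v\in(0,\infty]$ and $\sigma^{**}:=\inf\Sigma_s\le\infty$ yields parts (i) and (ii) immediately on $(\sigma^{**},\infty)$ and $(0,\sigma_*)$.

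For $\sigma\in[\sigma_*,\sigma^{**}]$ I invoke Theorem \ref{thm:general-conv}. First I rule out ground states in the combustion case by the first integral of $(U^m)''+f(U)=0$: with $G(u):=m\int_0^u s^{m-1}f(s)\,ds$, one has $\frac{1}{2}[(U^m)']^2+G(U)=E$, where $G\equiv 0$ on $[0,\theta]$ and strictly increases on $(\theta,1)$. A Type I profile would need $G(c)=G(U(0))$ with $c\ne U(0)$ in $[0,1)$, which is impossible. A Type II profile with $(U^{m-1})'(L)=0$ forces $E=G(U(L))=0$, hence $U\le\theta$ on $[-L,L]$, so $(U^m)''\equiv 0$, conflicting with $U>0$ inside and $U(\pm L)=0$. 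Therefore $u_\sigma$ converges to a constant in $[0,\theta]\cup\{1\}$, and since $\sigma\notin\Sigma_v\cup\Sigma_s$ this constant lies in $(0,\theta]$. To exclude a limit $c<\theta$: the $C^2_{loc}(\R)$ convergence given by Theorem \ref{thm:general-conv} combined with a PME barrier at height $(c+\theta)/2<\theta$ forces $u_\sigma\le(c+\theta)/2$ on its entire support from some time on, hence pure-PME behaviour with $\|u_\sigma(\cdot,t)\|_{L^\infty(\R)}\to 0$, contradicting $u_\sigma\to c>0$. Consequently $u_\sigma(\cdot,t)\to\theta$ in $L^\infty_{loc}(\R)$ for every $\sigma\in[\sigma_*,\sigma^{**}]$.

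To establish $\sigma_*=\sigma^{**}$, assume for contradiction $\sigma_*<\sigma_1<\sigma_2<\sigma^{**}$. Both $u_{\sigma_i}$ transition to $\theta$ with unbounded supports, and strict comparison on the common interior gives $u_{\sigma_1}<u_{\sigma_2}$ there. The aim is to transfer this strict inequality into a uniform gap at some large time $T$: applying Theorem \ref{thm:intersection num} to the pair $(u_{\sigma_1}(\cdot,t),\,u_{\sigma_1}(\cdot-h,t))$ for small $h>0$ (translates solve the same PME), the stabilization of the positive intersection number forces, for $T$ sufficiently large and $h$ small, a quantitative lower bound on the difference $u_{\sigma_1}(\cdot,T)-u_{\sigma_1}(\cdot-h,T)$ on a long interval. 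Chaining this with $u_{\sigma_2}(\cdot,T)\ge u_{\sigma_1}(\cdot,T)$ and the local uniform convergence $u_{\sigma_1}(\cdot,T)\to\theta$, one arranges $u_{\sigma_2}(\cdot,T)\ge\theta+\delta_0$ on an interval long enough to trigger the spreading criterion from Step~1, so $\sigma_2\in\Sigma_s$, contradiction. This uniqueness step is technical but parallels the RDE case; the intersection-number control of Theorem \ref{thm:intersection num} is precisely what has been built to carry it through in the presence of free boundaries.

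Finally I address \eqref{transition-asy-speed}. Since $u_{\sigma_*}(\cdot,t)\to\theta$ locally uniformly and $f\equiv 0$ below $\theta$, the outer portion of $u_{\sigma_*}$ is eventually governed by the pure PME. The natural parabolic rescaling $u^{(\lambda)}(y,s):=u_{\sigma_*}(\sqrt{\lambda}\,y,\lambda s)$, together with the Bernstein-type bound on $(u^{m-1})_x$ in the Appendix, produces a compact family whose cluster points solve the self-similar ODE $(\Phi^m)''(\eta)+\frac{\eta}{2}\Phi'(\eta)=0$ with $\Phi(0)=\theta$, $\Phi$ even, and $\Phi\equiv 0$ on $[y_0,\infty)$ for some $y_0>0$. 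Uniqueness of this profile then delivers $-l(t),\,r(t)=2y_0\sqrt{t}\,(1+o(1))$. The strict inequality $y_0<\theta^{(m-1)/2}$ comes from comparison with the linear diffusion obtained by freezing the coefficient $mu^{m-1}$ at its ceiling $m\theta^{m-1}$: the corresponding linear free boundary propagates exactly at the scale $\theta^{(m-1)/2}\sqrt{t}$, while the waiting-time/pressure-gradient mechanism at the true free boundary makes the nonlinear front strictly slower. This last step is the main obstacle, since unlike the earlier ones (which are soft applications of the general machinery) it requires sharp self-similar asymptotics, and it is where the RPME analysis departs most from the RDE templates of \cite{DL,DM,Zla}.
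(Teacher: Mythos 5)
Your Steps 1--2 (the intervals $\Sigma_v,\Sigma_s$, non-emptiness of $\Sigma_v$ via PME decay, openness of both sets, elimination of ground states by the first integral, and exclusion of limits $c\in(0,\theta)$) are correct and in the same spirit as the paper's Lemmas 5.1--5.2 and Remark~\ref{rem:no-small-constant}, so that part is fine.

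The sharpness step $\sigma_*=\sigma^{**}$ has a genuine gap. Your mechanism is to apply Theorem~\ref{thm:intersection num} to $(u_{\sigma_1}(\cdot,t),u_{\sigma_1}(\cdot-h,t))$ and claim that the eventual stabilization of the intersection number gives a ``quantitative lower bound on the difference $u_{\sigma_1}(\cdot,T)-u_{\sigma_1}(\cdot-h,T)$ on a long interval.'' That is not what intersection-number stabilization delivers: on a long interval near the crest where $u_{\sigma_1}\approx\theta$ and $u_{\sigma_1}$ is nearly flat, the difference from a small translate is small and changes sign near the crossing point. More importantly, even granting some lower bound for $u_{\sigma_1}-u_{\sigma_1}(\cdot-h)$, nothing in the chain $u_{\sigma_2}\geq u_{\sigma_1}$ and $u_{\sigma_1}\to\theta$ produces the strict excess $u_{\sigma_2}\geq\theta+\delta_0$; you would only obtain $u_{\sigma_2}\geq\theta-o(1)$, which does not trigger your spreading criterion. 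The paper closes this step by a completely different device (Lemma~\ref{lem:separate} combined with Lemma~\ref{lem:com-sharp}): a shifting argument gives $u_{\sigma_*}(x,t)\leq u_\sigma(x+\varepsilon,t+\tau)$, and then the Zlato\v{s}-type rescaling $w^\lambda(x,t)=\lambda^{-2}u_{\sigma_*}(\lambda^m x,\lambda^2 t)$ --- which satisfies $w^\lambda_t=(w^\lambda)^m_{xx}+f(\lambda^2 w^\lambda)\le(w^\lambda)^m_{xx}+f(w^\lambda)$ once $w^\lambda\leq\theta+\delta$, using the monotonicity of $f$ on $(\theta,\theta+\delta]$ --- is a subsolution whose locally uniform limit equals $\lambda^{-2}\theta>\theta$, forcing $u_\sigma\not\to\theta$. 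That multiplicative blow-up of $\theta$ is precisely the piece your argument does not manufacture.

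Your derivation of the asymptotic rate also departs from the paper and is not fully sound as written. A blow-down $u^{(\lambda)}(y,s)=u_{\sigma_*}(\sqrt{\lambda}y,\lambda s)$ converging to a self-similar PME profile is a reasonable strategy (the paper instead cites Proposition~\ref{prop:transition-speed}, which adapts the zero-number and Darcy-law argument of \cite{DLZ} using the explicit self-similar solution of Lemma~\ref{lem:selfsimilar-sol-comb}), but you have not addressed uniqueness of the blow-down limit, nor the matching at the inner edge $u=\theta$ where the reaction turns on. And the justification of $y_0<\theta^{(m-1)/2}$ by ``freezing $mu^{m-1}$ at its ceiling $m\theta^{m-1}$'' does not work: the resulting linear heat equation has no free boundary, and its diffusive length scale is $\sqrt{m\theta^{m-1}t}$, not $\theta^{(m-1)/2}\sqrt t$, so there is no comparison object at the claimed scale. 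The paper obtains $y_0<\theta^{(m-1)/2}$ by a direct ODE comparison for the profile $\xi$: integrating $\xi''<-2\tilde y(\xi^{1/m})'$ twice against the quadratic majorant $\tilde\xi(y)=\theta(y-\theta^{(m-1)/2})^2$, which is elementary but essential.
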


In the bistable case, if we further assume  that $f'(0)<0$, then the problem (CP) has Type II ground state solutions, and so we have the following trichotomy result.

\begin{thm}\label{thm:bi}
Assume $f\in C^2$ is a bistable reaction term satisfying \eqref{bi} and $f'(0)<0$. Let $u_\sigma (x,t)$ be the very weak solution of {\rm (CP)} with $u_0 = \phi_\sigma$ satisfying $(\Phi_1)$-$(\Phi_3)$.
Then there exist $0<\sigma_* \leq \sigma^* \leq \infty$ such that the following trichotomy holds:
\begin{itemize}
\item[(i)] {\bf spreading happens} for $\sigma >\sigma^*$:
    $$
    u_\sigma(\cdot, t) \to 1 \mbox{\ \ as\ \ } t\to \infty,\ \ \mbox{ in the topology of } L^{\infty}_{loc}(\R).
    $$
\item[(ii)] {\bf vanishing happens} for $0< \sigma < \sigma_*$:
    $$
    u_\sigma(\cdot, t) \to 0 \mbox{\ \ as\ \ } t\to \infty,\ \ \mbox{ in the topology of } L^{\infty}(\R).
    $$

\item[(iii)] {\bf transition case} for $\sigma_* \leq\sigma\leq \sigma^* $:
$$
    u_\sigma(\cdot, t) \to \mathcal{U}(x) \mbox{\ \ as\ \ } t\to \infty,\ \ \mbox{ in the topology of } L^{\infty}(\R),
    $$
where $\mathcal{U}(x)$ is a Type II ground state solution defined as \eqref{finite-ground-state-0} for $k\; (\geq 1)$ points $z_1,z_2,\cdots, z_k\in [-b,b]$.
\end{itemize}
\end{thm}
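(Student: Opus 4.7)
The plan is to carry out the sharp-threshold analysis in the spirit of \cite{DM,Zla}, using Theorem~\ref{thm:general-conv} to restrict the $\omega$-limit set and continuous dependence on initial data to propagate the spreading/vanishing dichotomy to open intervals of $\sigma$. Since $f'(0)<0$, we have $f(u)=-|f'(0)|u(1+o(1))$ as $u\to 0+$, so \eqref{decay-rate-f} holds with $\alpha=1<m$. The classification in Lemma~\ref{lem:finite-infinite-GS} then rules out Type~I ground states and produces only Type~II ones (with compact support). Hence Theorem~\ref{thm:general-conv} forces every $\omega$-limit of $u_\sigma(\cdot,t)$ to lie in $\{0,\theta,1\}$ or to be a Type~II ground state $\mathcal{U}$ as in \eqref{finite-ground-state-0}.

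Next I would introduce the vanishing and spreading sets
\[
\Sigma_0:=\{\sigma>0:\,u_\sigma\to 0\mbox{ in }L^\infty(\mathbb{R})\},\qquad
\Sigma_1:=\{\sigma>0:\,u_\sigma\to 1\mbox{ in }L^\infty_{\mathrm{loc}}(\mathbb{R})\},
\]
and show both are open, with $\Sigma_0$ non-empty. By $(\Phi_3)$, for $\sigma$ small we have $\|\phi_\sigma\|_\infty<\theta$; then the spatially-constant function $\bar u(t)$ solving $\bar u'=f(\bar u)$, $\bar u(0)=\|\phi_\sigma\|_\infty$, is a supersolution of (CP), and since $f<0$ on $(0,\theta)$ one has $\bar u(t)\downarrow 0$. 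Comparison gives $u_\sigma(\cdot,t)\leq\bar u(t)\to 0$ in $L^\infty(\mathbb{R})$, so $\sigma\in\Sigma_0$. Openness of $\Sigma_0$ at $\sigma_0$ follows by choosing $T$ with $\|u_{\sigma_0}(\cdot,T)\|_\infty<\theta/2$, invoking continuous dependence on initial data on $[0,T]$ to get $\|u_\sigma(\cdot,T)\|_\infty<\theta$ for $\sigma$ close to $\sigma_0$, and restarting the ODE comparison at $t=T$.

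For $\Sigma_1$, I would use the bistable traveling wave $\widetilde U(x-c_* t)$ with $c_*>0$ (cf.~\cite{GK}) to build a compactly-supported subsolution that forces spreading whenever initial data exceed a fixed threshold on a sufficiently long interval. If $\sigma_0\in\Sigma_1$, then for any $R$ and small $\eta$ there is $T$ with $u_{\sigma_0}(\cdot,T)>1-\eta$ on $[-R,R]$; continuous dependence yields $u_\sigma(\cdot,T)>1-2\eta$ on $[-R,R]$ for $\sigma$ near $\sigma_0$, and taking $R$ larger than the effective width of a truncated traveling-wave subsolution drives $u_\sigma\to 1$ in $L^\infty_{\mathrm{loc}}$. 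Monotonicity from $(\Phi_2)$ together with the comparison principle shows $\phi_{\sigma_1}\leq,\not\equiv\phi_{\sigma_2}\Rightarrow u_{\sigma_1}\leq u_{\sigma_2}$, so $\Sigma_0$ and $\Sigma_1$ are order-intervals, giving $\Sigma_0=(0,\sigma_*)$ and $\Sigma_1=(\sigma^*,\infty)$ for some $0<\sigma_*\leq\sigma^*\leq\infty$.

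For $\sigma\in\Sigma_*:=[\sigma_*,\sigma^*]$, the $\omega$-limit is neither $0$ nor $1$, hence either $\theta$ or a Type~II ground state $\mathcal{U}$. The hard part will be excluding the constant $\theta$. I plan to do this by contradiction, combining Theorem~\ref{thm:intersection num} applied to $u_\sigma$ and the constant $\theta$ with the instability of $\theta$ in the ODE $u'=f(u)$. If $u_\sigma\to\theta$ in $L^\infty_{\mathrm{loc}}$, the support $[l(t),r(t)]$ cannot stay bounded (else $u_\sigma=0$ outside a fixed compact set, contradicting pointwise convergence to $\theta$), so $l^\infty=-\infty$, $r^\infty=+\infty$; meanwhile $\mathcal{Z}_0(t)$ for $u_\sigma$ versus $\theta$ eventually stabilises at a fixed finite value, after which the sign pattern of $u_\sigma-\theta$ on each component of positivity is frozen. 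A dichotomy then applies: on some long interval either $u_\sigma<\theta$ (and the ODE supersolution of the vanishing step forces $u_\sigma\to 0$) or $u_\sigma\geq\theta+\delta$ on a long enough interval (and the truncated traveling-wave subsolution of the spreading step forces $u_\sigma\to 1$), each contradicting $u_\sigma\to\theta$. Thus the limit is a Type~II ground state $\mathcal{U}$, and by Theorem~\ref{thm:general-conv}(iii), $l^\infty,r^\infty$ are finite, so $u_\sigma$ has eventually bounded support, upgrading the $L^\infty_{\mathrm{loc}}$ convergence to $L^\infty(\mathbb{R})$.
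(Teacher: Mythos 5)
Your overall architecture follows the paper closely: restrict the $\omega$-limit set via Theorem~\ref{thm:general-conv} and Lemma~\ref{lem:finite-infinite-GS} (with $\alpha=1<m$), establish vanishing for small $\sigma$ from $(\Phi_3)$, define $\Sigma_0$ and $\Sigma_1$, show both are open order-intervals by continuous dependence, and conclude $\Sigma_0=(0,\sigma_*)$, $\Sigma_1=(\sigma^*,\infty)$. Using a truncated bistable traveling wave to prove openness of $\Sigma_1$ is a legitimate alternative to the paper's device of comparing against a compactly supported stationary solution $U_{q_0}$ of the RPME, and the final upgrade from $L^\infty_{\rm loc}$ to $L^\infty(\R)$ via Theorem~\ref{thm:general-conv}(iii) is correct.

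However, there is a genuine gap in your exclusion of the constant $\theta$ from the $\omega$-limit set, which is the delicate step. You propose an intersection-number argument between $u_\sigma$ and the constant $\theta$, followed by a ``dichotomy.'' Neither step is sound. Theorem~\ref{thm:intersection num} is stated for two solutions with compactly supported initial data in $\mathfrak{X}$; the constant $\theta$ is not such a solution, and even if one invokes the classical zero-number lemma instead, the argument is degenerate: if $u_\sigma\to\theta$ in $L^\infty_{\rm loc}$ then $u_\sigma-\theta\to 0$, so the ``limiting zero count'' is not infinite, and a finite, eventually constant intersection count between $u_\sigma$ and $\theta$ is perfectly compatible with convergence to $\theta$. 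The claimed dichotomy also fails: stabilisation of the sign pattern of $u_\sigma-\theta$ is compatible with $|u_\sigma-\theta|\to 0$ locally uniformly (the oscillation amplitude shrinks), so no long interval with $u_\sigma\geq\theta+\delta$ need exist; and while in the outer region between the last null curve and the free boundary one does have $u_\sigma<\theta$, this does not force $u_\sigma\to 0$ there, because the spatial boundary value $u_\sigma=\theta$ on the inner edge feeds mass outward and the pure ODE comparison you invoke ignores it.

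The paper's resolution is to compare $u_\sigma$ with a positive \emph{periodic} stationary solution $U_{per}$ of the RPME (Case~E of Subsection~\ref{subsec:bi-ss}, available precisely because $(\theta,0)$ is a center of the stationary phase plane in the bistable case). Since $U_{per}$ is bounded away from $0$, the classical zero-number argument applies to $u_\sigma(\cdot,t)-U_{per}(\cdot)$ on the set where $u_\sigma>0$, so this intersection count is finite and non-increasing. But $U_{per}$ crosses $\theta$ transversally infinitely often, so $\theta-U_{per}$ has infinitely many zeros; if $u_\sigma\to\theta$ in $L^\infty_{\rm loc}$, the intersection count on $[-R,R]$ would eventually exceed any fixed bound as $R\to\infty$, a contradiction. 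That choice of comparison function is the idea missing from your proposal, and without it the exclusion of $\theta$ is not established.
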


\begin{rem}\rm

\begin{itemize}
\item[]
\item[(1)] Under some additional conditions, we can show the sharpness of the transition solution: $\sigma_*=\sigma^*$ (see details in Lemma \ref{lem:sharp-bi}).
\item[(2)] The behaviour of a transition solution in Theorem \ref{thm:bi} (iii) is quite different from that of the bistable RDE, where the transition solution converges to a Type I ground state solution which is positive on the whole $\R$.
\item[(3)] When spreading happens, the estimate on the propagation of the free boundaries has been shown in \cite{Garriz}.
\end{itemize}

\end{rem}

The main features of the paper are as follows:
\begin{itemize}
\item[(1)] we give out a complete classification for the asymptotic behaviors of the solutions of (CP) with general nonlinear term;
\item[(2)] we establish the zero number argument for RPME with solid proof.
\end{itemize}

This paper is arranged as follows. In Section 2 we first present some results on the well-posedness and the a priori estimates, which are not entirely new and some details  are postponed to Appendix. We also present sufficient conditions ensuring the waiting time of a free boundary to be $0$, finite or infinite, specify the relationship between the stationary solutions of the RPME and those of the Cauchy problem, and derive the monotonicity and the Darcy law for the free boundaries. In Section 3, we study the monotonicity of the solution outside of the initial support, show the intersection number properties, and then use it, as well as the classical zero number argument, to prove the general convergence result. In Section 4, we consider the RPME with monostable reaction term and prove the hair-trigger effect. In Section 5 we consider the RPME with combustion reaction term: we present all possible nonnegative stationary solutions, prove a trichotomy result with a sharp transition for the asymptotic behavior. We also present a precise estimate for the propagation speed of the free boundaries of the transition solutions. In Section 6, we consider the RPME with bistable reaction term: we present all possible nonnegative stationary solutions, specify Type I and Type II ground state solutions, prove a trichotomy result with transition for the asymptotic behavior. In Section 7, we give a sufficient condition for the complete vanishing phenomena. Finally, for the convenience of the readers, we present in Appendix (Section 8) the a priori estimates, well-posedness and other important properties for PME with general reaction terms. They are not entirely new but not well collected in the literature.

\medskip

We collect some notations and symbols which will be used in this paper.
\begin{itemize}
\item For $\psi\in C(\R)$, we use ${\rm spt}[\psi(\cdot)]$ to denote its support;
\item $v(x)\to b\ (x\to x_0 - 0)$, or, $\lim\limits_{x\to x_0-0} v(x)=b$ means the left limit  $\lim\limits_{x<x_0,\ x\to x_0} v(x)=b$. The right one is denoted similarly;
\item $D^-_x v(x_0) = \lim\limits_{x\to x_0-0} \frac{v(x)-v(x_0)}{x-x_0}$ denotes the left derivative of $v$ at $x_0$. The right one $D^+_x v(x_0)$ is similar;
\item $0<1-x \ll 1$ means that $x<1$ and $1-x$ is sufficiently small;
\item $a(x)\sim b(x)$ as $x\to x_0 \pm 0$ means that $a(x) = b(x) (1+o(1))$ as $x\to x_0\pm 0$.
\end{itemize}

\section{Preliminaries}\label{sec:pre}

In this section we first collect some basic results on the a priori estimates and the well-posedness. Then we specify the waiting time, stationary solutions, the regularity and the Darcy law of the free boundaries, which turn out not direct consequences of the corresponding results for PME.

\subsection{Well-posedness and a priori estimates}

As we have mentioned above, the existence and uniqueness of (CP) have been studied a lot by many authors (see, for example,  \cite{ACP, PV, Sacks, Vaz-book} etc.). In addition, by  the assumption $f(u)<0$ for $u>1$ in (F) and by the comparison principle, it is easily to obtain
\begin{equation}\label{lower-upper-bounds}
0\leq u(x,t)\leq M'_0 :=\max\{1, \|u_0\|_{L^\infty(\R)}\},\quad x\in \R,\ t>0.
\end{equation}

The persistence of positivity, the finite spreading speed and the a priori estimates were also studied widely, especially for the PME without reactions/sources or RPMEs with polynomial or logistic reactions. For our equation with a general reaction term, however, these properties seem not conveniently collected in literature, though they are not entirely new. We collect some of them here and present some detailed proof in the appendix.

1. {\it Positivity persistence}. If $u(x,t_0)\geq \varepsilon>0$ in a neighborhood of $x_0\in \R$, then $u(x_0, t)>0$ for all $t\geq t_0$ (see Proposition \ref{prop:positive}). Thus, $u$ is classical near the line $\{(x_0,t)\mid t >t_0\})$.

2. {\it Finite propagation speed}. The solution has
a left-most free boundary $l(t)$ and a right-most free boundary $r(t)$ (also called interfaces) which propagate in finite speed: ${\rm spt} [u(\cdot,t)] \subset [l(t),r(t)]\subset [-\bar{s}(t),\bar{s}(t)]$ with
\begin{equation}\label{def-bar-s}
\bar{s}(t) := O(1) (t+1)^{1/2} e^{\frac{K(m-1)(t+1)}{2}}, \quad t>0,
\end{equation}
(see Proposition \ref{prop:finite speed}).

In a gas flow problem through a porous medium, $u$ denotes the density of the gas, and
\begin{equation}\label{def-pressure}
v(x,t) := \frac{m}{m-1} [u(x,t)]^{m-1}
\end{equation}
represents the {\it pressure} of the gas. In many cases, it is convenient to consider $v$ rather than $u$. Using $v$, (CP) is converted into the following problem
$$
{\rm (pCP)}\hskip 30mm
 \left\{
 \begin{array}{ll}
 v_t = (m-1) v v_{xx} + v_x^2 + g(v), & x\in \R,\ t>0, \\
 \displaystyle  v(x,0) = v_0(x) := \frac{m}{m-1} u_0^{m-1}(x), & x\in \R,
 \end{array}
 \right.
 \hskip 50mm
$$
with
\begin{equation}\label{def-g}
g(v) := m \Big( \frac{(m-1)v}{m}\Big)^{\frac{m-2}{m-1}} f
\left( \Big( \frac{(m-1)v}{m}\Big)^{\frac{1}{m-1}} \right).
\end{equation}
For simplicity, we call the equation in (pCP) as {\it pRPME}. Since $f$ is Lipschitz with number $K$ we have
\begin{equation}\label{bound-g-Lip}
|g(v)|\leq K (m-1) v, \quad v\geq 0.
\end{equation}
Since $u$ is bounded as in \eqref{lower-upper-bounds}, we see that $v$ is also bounded:
\begin{equation}\label{bound-v}
0\leq v(x,t) \leq M_0 := \max\left\{ \frac{m}{m-1}, \|v_0\|_{L^\infty} \right\},\quad x\in \R,\ t>0.
\end{equation}

3. {\it A priori estimates}.  We have a priori estimates for $v_x, v_{xx}$ and $v_t$ as follows.
If $v>0$ in $R:= (a,b)\times (0,T]$, then for any $0< \delta <\frac{b-a}{2}, \ \tau < T$ there holds:
\begin{equation}\label{C1 bound}
|v_x(x,t)|\leq M_1 (m, f, \delta, \tau),\quad (x,t)\in [a+\delta, b-\delta]\times [\tau, T].
\end{equation}

\noindent
There exist $\tau_0 =\tau_0(m, v_0)$ and $C=C(m, v_0)$ such that
$$
v_{xx} (x,t)\geq - C(t+\tau_0),\quad x\in \R,
\ t\in (0,T],
$$
in the sense of distributions in $\R\times (0,T]$. For any $\tau>0$ there holds
\begin{equation}\label{bound-of-vt}
-C_3 t - C_4 \leq v_t(x,t) \leq C_1 t +C_2,\quad x\in \R,\ t\geq \tau,
\end{equation}
where $C_i$ depends on $m, f$ and $v_0$, while $C_2$ depends also on $v_t(x,\tau)$.

In addition, we have the following locally uniform H\"{o}lder estimates (cf. \cite{DB-F} and \cite[Theorem 7.18]{Vaz-book}): for any $\tau >0$ and any compact domain $D\subset \R\times (\tau, \infty)$, there are positive constants $C$ and $\alpha$, both depending only on $M'_0, m$ and $\tau$, such that
\begin{equation}\label{Holder-est}
\|v\|_{C^\alpha(D)}\leq C.
\end{equation}
Note that the H\"{o}lder estimates in \cite{DB-F,Vaz-book} are given for $u$, but it is easy to convert to that for $v$.

\subsection{Waiting times of the free boundaries and compactly supported stationary solutions}

In PME and RPMEs, the degeneracy of the equation not only causes finite propagation and free boundaries. It may even occur that a free boundary is stationary for a while if the mass distribution near the border of the support is very small.

Let $x_0\in \R$ be a zero of $u_0(x)$. Define the {\it waiting time at $x_0$} by
$$
t^*(x_0) := \inf \{t> 0 \mid u(x_0, t)>0\}.
$$
For the one dimensional PME, it is known that $t^*(x_0)$ can be $0$ or a positive number, depending on the decay rate of $u_0$ near $x_0$ (cf. \cite[\S 15.3]{Vaz-book}, \cite{Wu-book}). For our equation with a source, the situation is more complicated since positive reactions can promote the propagation while the negative reactions will restrain it. We will show the following results.
\begin{itemize}
\item[(1).] $t^*(x_0)=0$ when the pressure $v_0$ near $x_0$ is bounded from below by a linear function;
\item[(2).] $t^*(x_0)>0$ when the pressure $v_0$ lies below a quadratic function near $x_0$. In addition $t^*(x_0)<\infty$ if the reaction term $f\geq 0$ near $0$, or, if $f$ is negative but $v_0$ lies above another quadratic function with big coefficient.
\item[(3).] $t^*(x_0)=\infty$ when the pressure $v_0$ is less than a quadratic function near $x_0$ and $f(u)\leq - K'u$ for some large $K'>0$ and $0\leq u\ll 1$.
\end{itemize}
For simplicity, we assume $0$ is a zero of $v_0$ and consider its waiting time.
\medskip

1. {\it The case $t^*(0)=0$}.

\begin{lem}\label{lem:no-waiting-time}
Assume {\rm (F)} and {\rm (I)}. Assume $v_0(0)=0$ and
\begin{equation}\label{v0=line-in-part}
v_0 (x) \geq  \rho x,\quad 0\leq x\leq r_0,
\end{equation}
for some $\rho>0,\ r_0>0$, then $t^*(0)=0$.
\end{lem}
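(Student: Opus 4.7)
The plan is to construct an explicit very weak subsolution $\underline{v}$ of the pressure equation in (pCP) whose support instantaneously invades a left-neighborhood of $x=0$, and then invoke the comparison principle on a bounded strip to conclude $v(0,t)>0$ for all small $t>0$. Since $t^*(0)=0$ is equivalent to the pressure $v$ becoming positive at $x=0$ immediately, this suffices.

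The model candidate is the planar traveling wave $V(x,t)=c(x+ct)_+$, which is an exact solution of the reaction-free pressure equation $v_t=(m-1)vv_{xx}+v_x^2$ with free boundary moving left at Darcy speed $c$. To absorb the Lipschitz reaction $g$, which only satisfies $g(v)\ge -\lambda v$ with $\lambda:=K(m-1)$ by (F) (cf.\ \eqref{bound-g-Lip}), I would modify it to
$$
\underline{v}(x,t)=ce^{-\lambda t}\bigl(x+h(t)\bigr)_+,\qquad h(t):=\tfrac{c}{\lambda}\bigl(1-e^{-\lambda t}\bigr),
$$
for a small parameter $c\in(0,\rho]$ to be fixed. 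Inside the support a direct computation gives $\underline{v}_t=-\lambda\underline{v}+c^2e^{-2\lambda t}$ and $(m-1)\underline{v}\,\underline{v}_{xx}+\underline{v}_x^2=c^2e^{-2\lambda t}$, so that the subsolution inequality collapses to $-\lambda\underline{v}\le g(\underline{v})$, which is precisely the lower bound from (F). At the free boundary $x=-h(t)$ the interface speed $h'(t)=ce^{-\lambda t}$ matches $\underline{v}_x$, so Darcy's law is satisfied and no extra contribution is generated by the corner; hence $\underline{v}$ qualifies as a very weak subsolution of (pCP).

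The comparison would be performed on the cylinder $Q:=[-A,r_0/2]\times[0,\tau_0]$ with $A>c/\lambda$ and $\tau_0>0$ to be chosen. The initial inequality $\underline{v}(x,0)=cx_+\le v_0(x)$ holds on $[-A,r_0/2]$ since $v_0\ge \rho x$ on $[0,r_0]$ and $c\le\rho$. On the left face $x=-A$ the subsolution vanishes by construction; on the right face $x=r_0/2$ I use that $v_0(r_0/2)\ge\rho r_0/2>0$, together with the positivity persistence (Proposition \ref{prop:positive}) and the continuity of $v$ near $(r_0/2,0)$, to obtain $\tau_0>0$ with $v(r_0/2,t)\ge \rho r_0/4$ for all $t\in[0,\tau_0]$; then I choose $c$ so small that $c\bigl(r_0/2+c/\lambda\bigr)<\rho r_0/4$, which bounds $\underline{v}(r_0/2,t)$ from above by $v(r_0/2,t)$ on $[0,\tau_0]$. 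The comparison principle for very weak sub/supersolutions on the bounded cylinder $Q$ then gives $v\ge\underline{v}$ on $Q$, and evaluating at $x=0$ yields
$$
v(0,t)\ge \underline{v}(0,t)=\frac{c^2}{\lambda}e^{-\lambda t}\bigl(1-e^{-\lambda t}\bigr)>0,\qquad t\in(0,\tau_0],
$$
which forces $t^*(0)=0$.

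The main technical obstacle is the rigorous verification that $\underline{v}$, which is merely Lipschitz in $x$ with a corner at $x=-h(t)$, satisfies the subsolution inequality in the very weak sense of \eqref{def-weak sol}. Once the Darcy balance $h'=\underline{v}_x$ at the free boundary is checked as above, the argument parallels the standard fact that the linear pressure wave is a weak solution of the reaction-free pressure PME; the damping factor $e^{-\lambda t}$ is precisely what compensates the possible negative part of $g$, and a short mollification (or an appeal to the comparison theory in \cite{Vaz-book}) closes the argument. The remaining delicate point is the joint calibration of $c$, $A$ and $\tau_0$: they must be tuned simultaneously so that both the right-face and the initial comparisons hold uniformly on $[0,\tau_0]$, which is exactly where the quantitative hypothesis $v_0\ge\rho x$ and the strict positivity $v_0(r_0)>0$ are used.
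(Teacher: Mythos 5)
Your construction is correct, and it takes a genuinely different route from the paper's. The paper uses a \emph{compactly supported} damped sinusoidal pulse
$\underline{v}(x,t)=he^{-\mu t}\sin(ax-b(t))$ on $[b(t)/a,(b(t)+\pi)/a]$, with $\mu=K(m-1)+mha^2$ and $b(t)$ decreasing, and then verifies $\mathcal{N}(\underline{v})\le 0$ by a somewhat lengthy trigonometric estimate; because the support is bounded and shrinks to the left of $r_0$, the comparison is done directly on all of $\R$ without any truncation. Your damped planar pressure wave $\underline{v}(x,t)=ce^{-\lambda t}(x+h(t))_+$ buys a much cleaner interior computation: the diffusion and gradient terms cancel identically, $(m-1)\underline{v}\,\underline{v}_{xx}+\underline{v}_x^2=c^2e^{-2\lambda t}=\underline{v}_t+\lambda\underline{v}$, so the subsolution inequality reduces exactly to the one-sided Lipschitz bound $g(v)\ge-K(m-1)v$ from \eqref{bound-g-Lip}, with no estimates to juggle. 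The price is that the support is a moving half-line, so you must truncate to a cylinder and control the right face; your calibration of $c$, $A$, $\tau_0$ (with $A>c/\lambda$ so the left edge $-h(t)$ stays interior, $c\le\rho$ for the initial ordering, and $c$ small enough that $\underline{v}(r_0/2,\cdot)$ sits below the $\rho r_0/4$ floor obtained from continuity of $v$ at $(r_0/2,0)$) handles this correctly. On the one issue you flag as delicate — whether the kinked profile is a bona fide very weak subsolution — your situation is no worse than the paper's: the sinusoidal profile has the same linear-order corner at its two interfaces, and both constructions balance the interface by Darcy's law ($l'(t)=-D^+_x\underline{v}$ in your case) so that, passing to the density, $u^m$ is $C^1$ across the interface and the corner contributes no boundary term in \eqref{def-weak sol}; the comparison theory of \cite{Vaz-book} then applies in both cases. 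One minor remark: you do not actually need Proposition \ref{prop:positive} at the right face — continuity of $v$ up to $t=0$ together with $v_0(r_0/2)\ge\rho r_0/2>0$ already gives the required lower bound on a short time interval.
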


\begin{proof}

Set
$$
a:= \frac{\pi}{r_0},\quad h:= \frac{\rho r_0}{\pi},\quad \mu := K(m-1) + m h a^2,\quad b(t):= \frac{ha^2}{\mu} (e^{-\mu t}-1),
$$
and, for $t\geq 0$, define
$$
\underline{v}(x,t):=  \left\{
 \begin{array}{ll}
 h e^{-\mu t}\sin (ax -b(t)), & \ \ \ \displaystyle  \frac{b(t)}{a}\leq x \leq \frac{b(t)+\pi}{a},\\
 0, & \ \ \ \mbox{ otherwise}.
 \end{array}
 \right.
$$
By a direct calculation we have
$$
\underline{v}(x,0)\leq v_0(x),\quad x\in \R,
$$
and, with $z:= ax -b(t)$,
\begin{eqnarray*}
\mathcal{N}(\underline{v}) & := & \underline{v}_t - (m-1) \underline{v} \; \underline{v}_{xx} - \underline{v}_x^2 - g(\underline{v})\\
& \leq & \underline{v}_t - (m-1) \underline{v} \; \underline{v}_{xx} - \underline{v}_x^2 +K(m-1) \underline{v}\\
& = & he^{-\mu t} [ -\mu \sin z + (m-1)a^2 h e^{-\mu t} \sin^2 z + K(m-1)\sin z \\
& & - b'(t) \cos z - a^2 h e^{-\mu t} \cos^2 z ]\\
& \leq & a^2 h^2 e^{-\mu t} [- \sin z + e^{-\mu t} \cos z - e^{-\mu t} \cos^2 z]\\
& \leq & a^2 h^2 e^{-2\mu t} [- \sin z + \cos z - \cos^2 z]\\
& = & -\frac12 a^2 h^2 e^{-2\mu t} [(1-\cos z)^2 + 2\sin z -\sin^2 z]\\
& \leq & 0,\qquad  \frac{b(t)}{a}\leq x \leq \frac{b(t)+\pi}{a}, \ t\geq 0.
\end{eqnarray*}
Therefore, $\underline{v}(x,t)$ is a subsolution, and so by the comparison result (see, for example, \cite[Theorem 6.5]{Vaz-book}) we have $v(0,t)\geq \underline{v}(0,t)>0$ for small $t>0$. Then  $t^*(0)=0$ since $b'(t)<0$ for all $t\geq 0$.
\end{proof}

2. {\it The case $t^*(0)\in (0,\infty)$}.

\begin{lem}\label{lem:finite-waiting-time}
Assume {\rm (F)} and {\rm (I)}. Assume $v_0(0)=0$,
\begin{equation}\label{v0-large}
v_0(x)\geq A_2 x^2,\quad 0\leq x\leq r_0,
\end{equation}
for some $r_0>0$ and some $A_2$ satisfying
\begin{equation}\label{choice of A}
A_2 > A_2 (m,K):=\frac{K(m-1)}{2\sigma^{\frac{m}{m+1}} - 2\sigma},\quad \sigma:= \Big(\frac{m}{m+1} \Big)^{m+1}.
\end{equation}
and
\begin{equation}\label{v0-small}
v_0 (x) \leq A_1 x^2, \quad x\geq 0,
\end{equation}
for some $A_1>A_2$. Then $t^*(0)\in (0,\infty)$.
\end{lem}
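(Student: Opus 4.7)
The plan is to establish the two bounds $t^*(0)>0$ and $t^*(0)<\infty$ independently by barrier constructions in the pressure variable $v$. Both arguments are local, carried out on a bounded rectangle, with finite propagation speed (Proposition~\ref{prop:finite speed}) used to close off the lateral boundaries where $v$ is not directly controlled.

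For $t^*(0)>0$, I will use a supersolution of (pCP) of the form $\bar v(x,t) = A(t)\,x_+^2$, where $A$ solves the Riccati ODE
\[
A'(t) = (2m+2)\,A(t)^2 + K(m-1)\,A(t),\qquad A(0) = A_1,
\]
which blows up at a finite $T_b>0$. Direct substitution, together with $g(v)\le K(m-1)\,v$, shows that $\bar v$ is a very weak supersolution on $\R\times [0,T_b)$; the density $\bar u$ associated with $\bar v$ is $C^2$ across $x=0$ because $\bar u^m$ behaves like $x_+^{2m/(m-1)}$ with exponent $>2$. The initial bound $v_0(x)\le A_1 x^2$ yields $\bar v(\cdot,0)\ge v_0$ on $[0,\infty)$. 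Since $0$ is the left endpoint of a component of $\{v_0>0\}$, Proposition~\ref{prop:finite speed} gives $\varepsilon>0$ and $t_1\in(0,T_b)$ with $v(-\varepsilon,t)=0$ on $[0,t_1]$. Choosing $N\ge\sqrt{M_0/A_1}$ so that $\bar v(N,t)\ge M_0\ge v(N,t)$, the comparison principle on $[-\varepsilon,N]\times[0,t_1]$ forces $v(0,t)\le\bar v(0,t)=0$, hence $t^*(0)\ge t_1>0$.

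For $t^*(0)<\infty$, I will use a compactly supported Barenblatt-type subsolution
\[
\underline v(x,t) = \bigl[B(t) - A(x-\xi(t))^2\bigr]_+,\qquad \xi(t) = \xi_0 + v t,\ v<0.
\]
Substituting into the subsolution inequality for (pCP) and using $g(w)\ge -K(m-1)w$, the condition reduces, after maximizing the downward-parabolic residual in $y=x-\xi$ over its support, to
\[
B'(t) + \mu\,B(t) + \frac{A v^2}{\tau} \le 0,\qquad \mu := (m-1)(2A+K),\ \tau := (2m+2)A + K(m-1),
\]
and saturating with equality determines $B(t)$ together with its vanishing time $T_B>0$. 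The admissibility $\underline v(\cdot,0)\le v_0$, combined with $v_0(x)\ge A_2 x^2$ on $[0,r_0]$, translates into $B_0 \le A A_2 \xi_0^2/(A+A_2)$. Choosing the speed so that the center $\xi$ reaches $0$ at $T_c := \xi_0/|v|<T_B$ yields $\underline v(0, T_c) = B(T_c)>0$. The simultaneous feasibility of these conditions in $(A,v,\xi_0,B_0)$ reduces, after elimination, to requiring that $\sup_{z>0}\ln(1+z)/\sqrt{z}$ exceed a specific algebraic quantity in $A$ and $A_2$; optimizing the remaining parameter $A$ produces the sharp threshold, which after the substitution $\sigma=(m/(m+1))^{m+1}$ and the identity $2m^m/(m+1)^{m+1} = 2\sigma^{m/(m+1)}-2\sigma$ is exactly $A_2 > A_2(m,K)$ of \eqref{choice of A}. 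With $\underline v$ in hand, very weak comparison on a bounded rectangle (finite propagation handles the left lateral boundary and $\underline v$ vanishes at the right) gives $v(0,T_c)\ge\underline v(0,T_c)>0$, so $t^*(0)\le T_c<\infty$.

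The main obstacle will be the careful verification that the Barenblatt-type bump $\underline v$ is a genuine very weak subsolution across both its free boundaries $x=\xi(t)\pm\sqrt{B(t)/A}$ (where the second derivative is discontinuous and the center moves in time), together with the algebraic optimization that pins down the sharp constant in \eqref{choice of A}; the remaining comparison steps are routine in the spirit of \cite[Ch.~6]{Vaz-book}.
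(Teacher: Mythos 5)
Your argument for $t^*(0)>0$ is fine and is in substance the paper's own argument: the paper's explicit barrier $\bar v=\frac{C_1 x^2}{T_1-t}$, with $C_1=\frac{1-a}{2(m+1)}$ and $T_1=\frac{a}{(m-1)K}$, is nothing but an explicit super-solution of the Riccati inequality $A'\ge (2m+2)A^2+K(m-1)A$ with $A(0)=A_1$, so both constructions coincide up to packaging.

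The $t^*(0)<\infty$ half, however, has a genuine flaw. You propose a rigid bump $\underline v=[B(t)-A(x-\xi(t))^2]_+$ with fixed spatial coefficient $A$ and a linearly moving centre, and you claim that the resulting feasibility condition, optimized over $A$, reproduces the algebraic threshold $A_2(m,K)$ of \eqref{choice of A}. Carrying your own reduction through: with $\mu=(m-1)(2A+K)$, $\tau=(2m+2)A+K(m-1)$, $z=\mu\tau B_0/(A\nu^2)$, and the saturated admissibility $B_0=\frac{AA_2\xi_0^2}{A+A_2}$, the condition $T_c<T_B$ becomes
$$
\frac{\mu(A+A_2)}{\tau A_2} \;<\; \sup_{z>0}\frac{\ln^2(1+z)}{z}=:c_*^2\approx 0.648 .
$$
But for every $A>0$, $A_2>0$, $K>0$, $m>1$ one has the identity
$$
(m+1)(2A+K)(A+A_2)-\big[(2m+2)A+K(m-1)\big]A_2 = 2(m+1)A^2+(m+1)KA+2KA_2>0,
$$
which is exactly the statement that
$$
\frac{\mu(A+A_2)}{\tau A_2} \;>\; \frac{m-1}{m+1}\qquad\text{for all admissible parameters.}
$$
Consequently the constraint above is unsolvable whenever $\frac{m-1}{m+1}\ge c_*^2$, i.e.\ for all $m\ge \frac{1+c_*^2}{1-c_*^2}\approx 4.68$, \emph{irrespective of how large $A_2$ is}. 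The lemma is asserted for all $m>1$, so the rigid moving bump simply cannot close the argument for $m$ moderately large. Even in the range $m<4.68$ where the construction can be made to work, your claim that the optimization returns $A_2(m,K)$ cannot be right: the threshold you derive necessarily involves the transcendental constant $\sup_z\ln(1+z)/\sqrt z$, whereas $A_2(m,K)=K(m-1)(m+1)^{m+1}/(2m^m)$ is algebraic, and numerically (e.g.\ $m=2$, $K=1$) the two disagree by a factor of about~$2$.

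The obstruction is structural. In your bump the width $\sqrt{B(t)/A}$ can only shrink, while the reaction drains the height at a rate $\mu\sim m$; for large $m$ the bump dies before the centre has time to reach the origin, no matter how one tunes the speed. The paper's barrier $\underline v = C_2(t+\tau)^{-m/(m+1)}-\frac{(x-x_1)^2}{2(m+1)(t+\tau)}$ is a genuinely self-similar (ZKB-type) profile whose spatial width grows like $(t+\tau)^{1/(2(m+1))}$; this spreading is what lets the left interface cross $0$ within the window $t\le T_2=\frac{1-\sigma}{K(m^2-1)}$ on which the reaction term is dominated, and it is this construction, not the rigid bump, that produces the stated constant $A_2(m,K)$. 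To repair the proposal you should replace the fixed-width translating bump by a subsolution whose quadratic coefficient decays in $t$ (so the support expands), in the spirit of the paper's choice.
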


\begin{proof}
First we show that $t^*(0)>0$. Set
$$
a:= \frac{(m-1)K}{2(m+1)A_1 +(m-1)K},\quad T_1 := \frac{a}{(m-1)K},\quad C_1 := \frac{1-a}{2(m+1)},
$$
and define
$$
\bar{v}(x,t) := \frac{C_1 x^2}{T_1-t},\quad x\in \R,\ 0\leq t <T_1.
$$
Then $\bar{v}(x,0)= A_1 x^2 \geq v_0(x)$, and
\begin{eqnarray*}
\mathcal{N}(\bar{v})& := & \bar{v}_t - (m-1)\bar{v} \bar{v}_{xx} - \bar{v}_x^2 - g(\bar{v}) \\
& \geq &
\bar{v}_t - (m-1)\bar{v} \bar{v}_{xx} - \bar{v}_x^2 - K(m-1) \bar{v}\\
& = & \frac{C_1 x^2}{(T_1-t)^2} [1-2(m+1)C_1 - K(m-1)(T_1-t)]\\
& \geq & 0,\quad x\in \R,\ t\in [0,T_1).
\end{eqnarray*}
By comparison we have $v(x,t)\leq \bar{v}(x,t)$ in $t\in [0,T_1)$. Hence, the waiting time of $v$ at $0$ is not smaller than $T_1=T_1 (m,A_1,K)$.

Next we prove $t^*(0)<\infty$. Set, for some $\delta \in (0,1)$,
$$
\tau:= \frac{\sigma}{K(m^2 -1)},\quad C_2 := \frac{\delta r_0^2 } {2(m+1)\tau^{\frac{1}{2(m+1)}}}, \quad
x_1 := \left[ \frac{C_2 (1+2(m+1)A_2\tau )}{A_2 \tau^{\frac{m}{m+1}}} \right]^{1/2}.
$$
Then by the choice of $A_2$ we have
$$
\frac{x_1^2}{C_2} = \frac{1}{A_2 \tau^{\frac{m}{m+1}}} + \frac{2(m+1)\tau}{\tau^{\frac{m}{m+1}}}\\
 < 2 \frac{\sigma^{\frac{m}{m+1}} -\sigma} {K(m-1)\tau^{\frac{m}{m+1}}} + 2\frac{(m+1)\tau }{\tau^{\frac{m}{m+1}}}\\
 =  2\frac{[K(m^2-1)]^{\frac{m}{m+1}}}{K(m-1)}.
$$

Define
$$
\underline{v}(x,t):=
  \left\{
  \begin{array}{ll}
  \displaystyle C_2 ( t + \tau )^{-\frac{m}{m+1}} - \frac{(x-x_1)^2}{2(m+1) ( t +\tau ) }, & x\in [\underline{l}(t), \underline{r}(t)],\ 0\leq t<T_2,\\
  0, & x< \underline{l}(t) \mbox{ or }x> \underline{r} (t),\ 0\leq t<T_2,
  \end{array}
  \right.
$$
where
$$
\underline{l} (t) := x_1 - \sqrt{2(m+1)C_2} (t+\tau)^{\frac{1}{2(m+1)}},\quad
\underline{r} (t) := x_1 + \sqrt{2(m+1)C_2} (t+\tau)^{\frac{1}{2(m+1)}}
$$ and
$$
T_2 := \frac{1-\sigma}{K(m^2 -1)}.
$$

We now show that $\underline{v}(x,t)$ is a subsolution  in $0\leq t<T_2$. In fact, by our construction it is easy to verify that
$$
A_2 x^2 \geq \underline{v}(x,0),\quad x\in [\underline{l} (0), \underline{r}(0)]\subset [0, r_0]
$$
provided $\delta\in (0,1)$ is small, and the equality holds at
$$
x= \hat{x}:= \frac{x_1} {2(m+1)\tau A_2 + 1}.
$$
In addition, for $x\in [\underline{l}(t), \underline{r}(t)],\ 0\leq t\leq T_2$, by a direct calculation we have
\begin{eqnarray*}
\mathcal{N}\underline{v} & := & \underline{v}_t - (m-1)\underline{v} \underline{v}_{xx} - \underline{v}_x^2 - g(\underline{v}) \\
& \leq & \underline{v}_t - (m-1)\underline{v} \underline{v}_{xx} - \underline{v}_x^2 + K(m-1)\underline{v} \\
& \leq & \frac{C_2}{(t+\tau)^{\frac{2m+1}{m+1}}}\Big[ \frac{-1}{m+1} +K(m-1)(t+\tau) \Big]  \leq  0.
\end{eqnarray*}
This implies that $\underline{v}$ is a subsolution  at leat in the time interval $[0,T_2]$.

By the choice of the above parameters, we have
$$
\frac{\underline{l}(0)}{\sqrt{2C_2}}  =  \left( \frac{1+2(m+1)A_2 \tau}{A_2 \tau^{\frac{m}{m+1}}}\right)^{1/2} - \left[ 2(m+1)\tau^{\frac{1}{m+1}}\right]^{1/2} >0,
$$
and
\begin{eqnarray*}
\frac{\underline{l}(T_2)}{\sqrt{2C_2}} & < & \left(\frac{[K(m^2-1)]^{\frac{m}{m+1}}}{K(m-1)}\right)^{1/2} - \left[
(m+1) (T_2 +\tau)^{\frac{1}{m+1}} \right]^{1/2} \\
& = & \left(\frac{[K(m^2-1)]^{\frac{m}{m+1}}}{K(m-1)}\right)^{1/2} - \left(
\frac{m+1}{[K(m^2 -1)]^{\frac{1}{m+1}}} \right)^{1/2} =0.
\end{eqnarray*}
This means that before $t=T_2$, the left free boundary $\underline{l}(t)$ of $\underline{v}(x,t)$ moves leftward from a positive point to a negative one. Then, $\underline{v}(0, t)$ becomes positive at $T_2$, and so does $v(0,t)$. This proves our lemma.
\end{proof}

The case $t^*(0)>0$ still includes two subcases: $t^*(0)\in (0,\infty)$  and $t^*(0) =\infty$. In the PME, it is known that only the former subcase is possible, which is called the {\it hole-filling} phenomena (see, for example, \cite[Theorem 15.15]{Vaz-book}). For our RPME with a reaction term, however, the latter is also possible.

\medskip

3. {\it The case where $t^*(0)=\infty$ and compactly supported stationary solutions}. To discuss the possibility of infinite waiting time, we first specify the compactly supported stationary solutions, including its sufficient and necessary conditions, examples and uniqueness.  We remark that the following two concepts should be distinguished:
\begin{center}
{\it compactly supported stationary solutions of the equation pRPME,\ \ and}\\
{\it compactly supported stationary solutions of the Cauchy problem {\rm (pCP)}}.
\end{center}
The latter means all compactly supported functions $\bar{v}$ such that the solution $v(x,t;\bar{v})$ of (pCP) with initial data $v_0 =\bar{v}$ satisfies $v(x,t;\bar{v})\equiv \bar{v}$ for all $x\in \R$ and $t\geq 0$.

\begin{lem}\label{lem:ss-problem}

Assume $\bar{v}$ is a continuous nonnegative stationary solution of pRPME with support $[l,r]$ and $\bar{v}(x)>0$ in $(l,r)$. Then
\begin{enumerate}[{\rm (i).}]
\item $\bar{v}$ is also a stationary solution of {\rm (pCP)} if and only if $t^*(l),\ t^*(r)\geq \tau$ for some $\tau>0$;
\item a sufficient condition for {\rm (i)} is $\bar{v}(x)\leq A (x-l)^2$ and $\bar{v}(x)\leq A (x-r)^2$ in $(l,r)$ for some $A>0$; a necessary condition for {\rm (i)} is $\bar{v}'(l)=\bar{v}'(r)=0$;
\item $\bar{v}$ is not a stationary solution of {\rm (pCP)} if $\bar{v}'(l)>0$ or $\bar{v}'(r)<0$;
\item in any case, $\bar{v}$ is a time-independent (also called a stationary) very weak subsolution of {\rm (pCP)}, and $v(x,t;\bar{v})\geq \bar{v}(x)$ for $x\in \R,\ t>0$. The inequality is strict on $[l,r]$ when $\bar{v}$ is not a stationary solution of {\rm (pCP)}.
\end{enumerate}
\end{lem}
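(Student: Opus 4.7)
The plan is to prove (iv) first, since the subsolution property it establishes delivers the ordering $v(\cdot,t;\bar v)\geq \bar v$ that drives all of (i)--(iii); then to tackle (i), after which (ii) and (iii) follow quickly. Throughout I will use both $\bar v$ and the associated density $\bar u:=\left(\frac{m-1}{m}\bar v\right)^{1/(m-1)}$, extended by zero to $\R$; the function $\bar u$ is a classical stationary solution of $(u^m)_{xx}+f(u)=0$ on $(l,r)$, and from this ODE with $f$ continuous one has $\bar u^m\in C^1([l,r])\cap C^2((l,r))$.

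For part (iv), I substitute $\bar u$ into the very weak formulation \eqref{def-weak sol} against an arbitrary $\varphi\in C_c^\infty(Q_T)$ with $\varphi\geq 0$. Time-independence of $\bar u$ collapses the required subsolution inequality to $\iint_{Q_T}[f(\bar u)\varphi+\bar u^m\varphi_{xx}]\,dx\,dt\geq 0$. Integrating by parts twice in $x$ over $[l,r]$ (the outer boundary terms vanish since $\bar u^m(l)=\bar u^m(r)=0$) and invoking the stationary equation $(\bar u^m)_{xx}=-f(\bar u)$ on $(l,r)$, the interior contribution cancels, leaving
$$\int_0^T\left[-(\bar u^m)_x(r^-)\,\varphi(r,t)+(\bar u^m)_x(l^+)\,\varphi(l,t)\right]dt,$$
which is nonnegative because $(\bar u^m)_x(l^+)\geq 0$ and $(\bar u^m)_x(r^-)\leq 0$ (from $\bar u^m\geq 0$ vanishing at $l,r$). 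Hence $\bar u$ (equivalently $\bar v$) is a stationary very weak subsolution of (CP), and the comparison principle (cf.\ \cite{Vaz-book}) gives $v(\cdot,t;\bar v)\geq \bar v$. The strict inequality when $\bar v$ fails to be stationary for (pCP) follows from the strong maximum principle applied to the nonnegative difference $v-\bar v$ on the open region where $v>0$, since $v\not\equiv\bar v$ in that case.

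For (i), the direction ``$\bar v$ stationary for (pCP) $\Longrightarrow t^*(l),t^*(r)>0$'' is immediate: then $v(l,t)=\bar v(l)=0$ for all $t$, giving $t^*(l)=+\infty$, and similarly at $r$. Conversely, set $\tau:=\min\{t^*(l),t^*(r)\}>0$. Then $v(l,t)=v(r,t)=0$ for $t\in[0,\tau)$, which together with monotonicity of the free boundaries forces ${\rm spt}[v(\cdot,t;\bar v)]=[l,r]$ on $[0,\tau)$. On $(l,r)\times(0,\tau)$ both $v$ and $\bar v$ are classical positive (by positivity persistence) solutions of the same initial-Dirichlet boundary value problem for pRPME with zero boundary data and initial datum $\bar v$; uniqueness for this IBVP (from the comparison principle, which applies because the equation is uniformly parabolic on compact subsets of the positive set) forces $v\equiv\bar v$ on $[l,r]\times[0,\tau]$. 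Iterating in increments of length $\tau$ (the restarted initial datum being again $\bar v$, the new waiting times are again $\geq\tau$) extends the identity to all $t\geq 0$, so $\bar v$ is stationary for (pCP).

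For (ii), the sufficient condition is exactly the hypothesis of the first half of the proof of Lemma~\ref{lem:finite-waiting-time}, which yields $t^*(l),t^*(r)>0$, whereupon (i) applies. For the necessary condition, suppose $\bar v$ is stationary for (pCP) but $\bar v'(l^+)>0$; then $\bar v(x)\geq\rho(x-l)$ on a right-neighborhood of $l$ for some $\rho>0$, and Lemma~\ref{lem:no-waiting-time} gives $t^*(l)=0$, contradicting the forward direction of (i). Hence $\bar v'(l^+)\leq 0$; combined with $\bar v'(l^+)\geq 0$ (from $\bar v\geq 0$ and $\bar v(l)=0$) we obtain $\bar v'(l^+)=0$, and the argument at $r$ is symmetric. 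Part (iii) is the contrapositive of this necessary condition. The main obstacle I expect is the uniqueness step in the converse of (i): although the equation is uniformly parabolic on compact subsets of $(l,r)\times(0,\tau)$, it degenerates at $x=l,r$, so standard parabolic uniqueness up to the boundary does not apply directly; I plan to close this by invoking the uniqueness of very weak solutions of the Cauchy problem on $\R\times[0,\tau]$ applied to $v$ and $\bar v$ (both extended by zero outside $[l,r]$), whose support structure matches on this time interval.
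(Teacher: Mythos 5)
Your overall organization (prove (iv) first, then (i), then deduce (ii) and (iii)) matches the paper, and your arguments for (iv), (ii), and (iii) are essentially the paper's. The genuine gap is in the converse of (i), and it is precisely the one you flagged but did not close correctly. Your proposed fix --- uniqueness of very weak solutions of (pCP) applied to $v$ and to the zero-extension of $\bar v$ --- does not apply as stated, because that uniqueness theorem compares two functions \emph{both known to be very weak solutions}, whereas part (iv) only shows the zero-extension of $\bar u$ is a very weak \emph{sub}solution. It is a very weak solution precisely when the boundary fluxes $(\bar u^m)_x(l^+)$ and $-(\bar u^m)_x(r^-)$ vanish, which is exactly the content at stake; assuming it would beg the question.

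There are two clean ways to close the gap. (a) Argue as the paper tacitly does: for $t\in[0,\tau)$ the free boundaries are pinned at $l,r$ (since $v(l,t)=v(r,t)=0$ while $l(t)$ is non-increasing, $r(t)$ non-decreasing, and $v>0$ persists on $(l,r)$), so both $u(\cdot,t;\bar u)$ and $\bar u$ solve the Dirichlet IBVP for $u_t=(u^m)_{xx}+f(u)$ on the \emph{fixed} interval $(l,r)$ with zero boundary data and the same initial datum; uniqueness for this degenerate IBVP holds via $L^1$-contraction at the level of the density (Kato's inequality gives $\frac{d}{dt}\|(u-\bar u)_+\|_{L^1(l,r)}\le K\|(u-\bar u)_+\|_{L^1(l,r)}$ with vanishing boundary terms because $(u^m-\bar u^m)_+=0$ at $l,r$), so the boundary degeneracy is harmless. (b) Alternatively, show the boundary fluxes vanish directly: from the first integral of the stationary ODE, $(\bar u^m)'(l^+)=\sqrt C$ and $(\bar u^m)'(r^-)=-\sqrt C$ for a common constant $C\ge 0$; if $C>0$ then $\bar v'(l^+)=+\infty$ and $\bar v'(r^-)=-\infty$, so by Lemma~\ref{lem:no-waiting-time} both waiting times are zero, contradicting $\tau>0$. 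Hence $C=0$, the fluxes vanish, the zero-extension of $\bar u$ is a genuine very weak solution of (CP), and Cauchy-problem uniqueness then finishes the argument you wanted to make.

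A secondary, minor point on (iv): invoking the strong maximum principle on the positivity set of $v$ only gives strict inequality $v>\bar v$ on the open interval $(l,r)$; to obtain $v(l,t)>0=\bar v(l)$ and $v(r,t)>0=\bar v(r)$, which is needed for strictness on the closed interval $[l,r]$ as stated, one must also know the waiting times at $l$ and $r$ vanish --- this is where the paper circles back to part (i).
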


\begin{proof}
(i). If $t^*(l),\ t^*(r)\geq \tau$, then for all $t\in [0,\tau]$ we have
$$
v(x,t;\bar{v})= \bar{v}\ \ (x\in \R),\quad l(t)= l,\quad r(t)= r.
$$
Therefore, these equalities hold for all $t\geq 0$, and so $\bar{v}$ is a stationary solution of (pCP).

(ii). The sufficient condition follows from Lemma \ref{lem:finite-waiting-time}, and the necessary condition follows from Lemma \ref{lem:no-waiting-time}.

(iii). The conclusion is a consequence of (ii).

(iv). Denote the corresponding density functions of $\bar{v}(x)$ and $v(x,t;\bar{v})$ by $\bar{u}(x)$ and $u(x,t;\bar{u})$, respectively. For any $T>0$ and any test function $\varphi\in C^\infty_c (Q_T)$ ($Q_T:= \R\times (0,T)$) with $\varphi\geq0$,
\begin{eqnarray*}
\iint_{Q_T} [\bar{u}\varphi_t + \bar{u}^m \varphi_{xx}] dx dt & = & \int_{\R} \bar{u}(x)[\varphi(x,T)-\varphi(x,0)]dx + \int_{0}^T \int_{l}^r (\bar{u}^m)_{xx}(x) \varphi(x,t) dxdt\\ & & + \int_0^T [(\bar{u}^m)_x (l)\varphi (l,t) - (\bar{u}^m)_x (r)\varphi (r,t) ] dt\\
&\geq & \int_{\R} \bar{u}(x)[\varphi(x,T)-\varphi(x,0)]dx + \int_{0}^T \int_{l}^r (\bar{u}^m)_{xx}(x) \varphi(x,t) dxdt\\
& = & \int_{\R} \bar{u}(x)[\varphi(x,T)-\varphi(x,0)]dx - \iint_{Q_T}f(\bar{u}(x)) \varphi(x,t) dxdt.
\end{eqnarray*}
So $\bar{u}$ is a time-independent very weak solution of (CP), and then
$u(x,t;\bar{u})\geq \bar{u}(x)$ follows from the comparison result (cf. \cite[Theorem 6.5]{Vaz-book}).

To show the strict inequality, we note that, in the domain $(l,r)$, both $\bar{v}$ and $v$ are positive, and so the classical strong maximum principle is applied. On the other hand, by the conclusion in (i), the waiting times at $l$ and $r$ are zero since $\bar{v}$ is not a stationary solution of (pCP). So $v(l,t;\bar{v})>0=\bar{v}(l)$ and $v(r,t;\bar{v})>0=\bar{v}(r)$. This proves the strict inequality on $[l,r]$.
\end{proof}

Due to the strong maximum principle, the Cauchy problem of a RDE does not have
compactly supported stationary solutions. For (pCP), however, such solutions are possible since the strong maximum principle is not true on the free boundaries. In fact, we will show in Section 5 (see details in Lemma \ref{lem:finite-infinite-GS}) that, if $f$ is a bistable nonlinearity with
$$
f(u)= -\lambda u^\alpha (1+o(1))\mbox{\ \  as\ \  } u\to 0+0,
$$
for some $\lambda>0$ and $1\leq \alpha<m$, then the bistable pRPME has nonnegative compactly supported stationary solution $\bar{v}$, which is a ground state solution satisfying
$$
\bar{v}(x)>0 \mbox{ in }(-L,L),\quad \bar{v}(x)=0 \mbox{ for }|x|\geq L,
$$
and
$$
\bar{v}(x)\sim A (x\pm L)^{\frac{2(m-1)}{m-\alpha}} \mbox{\ \ as\ \ }x\to \mp L\pm 0.
$$
Since $\frac{2(m-1)}{m-\alpha}\geq 2$, by Lemma \ref{lem:ss-problem} (ii) we know that $\bar{v}$ is also a stationary solution of (pCP).

Comparing the ZKB solution of PME (named after Zel'dovich, Kompaneets and Barenblatt) with the solution of (pCP) for monostable and combustion reactions, it is easy to see that compactly supported
stationary solutions do not exist in such problems. But they can exist for (pCP) with bistable reactions (see Lemma \ref{lem:finite-infinite-GS}).
We now show that it is unique when it exists.

\begin{lem}\label{lem:unique-GS}
Assume {\rm (F)} and $V$ is a compactly supported stationary solution of {\rm (pCP)} with
$$
V(x)> 0 \mbox{ in } (-L,L), \quad V(x) =0 \mbox{ for } |x|\geq L,
$$
for some $L>0$. Then
\begin{equation}\label{k-V-GSs}
 \quad V(x)= V(-x) \mbox{ and } V' (x)<0 \mbox{ in } (0, L),
 \quad V'(0)=V' (\pm L) =0.
\end{equation}
(So, $V$ is a Type II ground state solution of {\rm (pCP)}.)
Moreover, such compactly supported stationary solution is unique.
\end{lem}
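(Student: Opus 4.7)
My plan is to reduce (on the positive set) the stationary pRPME to a classical nonlinear ODE, derive an energy first integral, use Lemma~\ref{lem:ss-problem}(ii) to pin the energy constant at zero, and then read off symmetry, monotonicity, boundary data and uniqueness from the resulting homoclinic orbit in the phase plane.

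Since $V>0$ on $(-L,L)$, the density $U=\bigl(\tfrac{m-1}{m}V\bigr)^{1/(m-1)}$ is a positive classical solution of $(U^m)''+f(U)=0$, so $W:=U^m\in C^2(-L,L)$ satisfies $W''+f(W^{1/m})=0$. Multiplying by $W'$ and integrating gives
$$
\tfrac12(W')^2+F(U)=C, \qquad F(u):=\int_0^u m s^{m-1}f(s)\,ds.
$$
Because $V$ is a stationary solution of (pCP), Lemma~\ref{lem:ss-problem}(ii) forces $V'(\pm L)=0$. The algebraic identity $W'=UV'$ (a direct consequence of $V=\tfrac{m}{m-1}U^{m-1}$), together with $U(\pm L)=0$, then gives $W'(\pm L)=0$; combined with $W(\pm L)=F(0)=0$ this pins $C=0$, whence
$$
\tfrac12(W')^2=-F(U)\ge 0 \quad\text{on } [-L,L].
$$

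I then analyse the phase portrait in the $(W,W')$-plane. The continuous $W$ attains its maximum $W_{\max}=U_{\max}^m$ at some interior $x^\ast\in(-L,L)$ (since $W(\pm L)=0<W_{\max}$), and at $x^\ast$ the first integral yields $F(U_{\max})=0$ and $F\le 0$ on $[0,U_{\max}]$. To exclude an interior zero $U_1\in(0,U_{\max})$ of $F$, note that such $U_1$ would be a local maximum of $F|_{[0,U_{\max}]}$, whence $F'(U_1)=0$ and so $f(U_1)=0$; then $(U_1^m,0)$ is a stationary point of the autonomous planar flow and by uniqueness of the IVP the orbit through it would be constant, contradicting $W(\pm L)=0$. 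Hence $F<0$ on $(0,U_{\max})$, $W'>0$ on $(-L,x^\ast)$, $W'<0$ on $(x^\ast,L)$, and $U_{\max}$ is the smallest positive zero of $F$. Since the ODE is autonomous and the IVP with data $(W_{\max},0)$ has a unique solution in each direction, $W(x^\ast+s)=W(x^\ast-s)$; the prescribed support $[-L,L]$ then forces $x^\ast=0$, giving $V(x)=V(-x)$, $V'<0$ on $(0,L)$, $V'(0)=0$ by evenness and interior $C^2$ regularity, and $V'(\pm L)=0$ as already noted.

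For uniqueness, with $U_{\max}$ fixed as the smallest positive zero of $F$ and the energy constant $C=0$, the homoclinic loop is determined and
$$
L=\int_0^{W_{\max}}\frac{dW}{\sqrt{-2F(W^{1/m})}}
$$
is forced; together with the centring $x^\ast=0$ this determines $V$ uniquely. I expect the main obstacle to be extending the interior energy identity up to the degenerate boundary $x=\pm L$, handled by the limit $W'=UV'\to 0$ coming from Lemma~\ref{lem:ss-problem}(ii); a secondary point of care is the exclusion of interior zeros of $F$ in $(0,U_{\max})$ sketched above, which is what prevents spurious critical points of $W$ in $(-L,L)$.
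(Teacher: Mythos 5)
Your proof is correct and follows essentially the same route as the paper's: derive the first integral (your $\tfrac12(W')^2+F(U)=C$ is exactly the paper's $w^2=F(V)-F(V(0))$ after the change of variable $w=UV'=W'$), pin the energy constant at zero via Lemma~\ref{lem:ss-problem}(ii), identify $V(0)$ (your $U_{\max}$) as the smallest positive zero of $F$, and read off uniqueness. You are somewhat more explicit than the paper in two places — excluding interior zeros of $F$ via the fixed-point/IVP-uniqueness argument, and deriving the symmetry, monotonicity, and $x^\ast=0$ from the phase-plane picture rather than the paper's one-line "symmetric about every positive critical point" observation — but these are the same underlying facts, unpacked.
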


\begin{proof}
By the equation we know that $V$ is symmetric with respect to each positive critical points, so we have the symmetry and monotonicity in \eqref{k-V-GSs} and $V'(0)=0$. The equality $V'(\pm L)=0$ follow from Lemma \ref{lem:ss-problem}.

We now prove the uniqueness of such compactly supported stationary solution. Set
\begin{equation}\label{w-expression}
w := \Big( \frac{m-1}{m}V\Big)^{\frac{1}{m-1}} V'.
\end{equation}
Then $w(\pm L)=0$. Hence, on the $(V,w)$-phase plane (cf. Subsections 4.1 and 5.1 for more details), $V(x)$ corresponds to a homoclinic orbit starting and ending at $(0,0)$. Using the equation in (pCP) we have
$$
w'= -f \left( \Big(\frac{m-1}{m}V\Big)^{\frac{1}{m-1}}\right).
$$
Multiplying this equation by \eqref{w-expression} and integrating it over $[x,0]$ for $x\in [-L,0)$, we have
$$
w^2 (x) = F(V(x)) - F(V(0))
$$
with
$$
F(v) := -2\int_0^{v} f \left( \Big(\frac{m-1}{m}r\Big)^{\frac{1}{m-1}}\right) \cdot  \Big(\frac{m-1}{m}r\Big)^{\frac{1}{m-1}} dr.
$$
Taking $x=-L$ we have $w^2(-L) =0 = F(0)-F(V(0))$, and so $F(V(0))=0$. For $x\in (-L,0)$ we have $V'(x)>0$ and $w(x)>0$. Thus,
$$
F(v)>F(V(0))=0,\quad v\in (0,V(0)).
$$
(This happens only if $f(u)<0$ for $0<u\ll 1$.) Therefore,
\begin{equation}\label{def-tilde-theta1}
\Theta_1:= V(0)
\end{equation}
is the smallest positive zero of $F(\cdot)$, and there exists $\Theta \in (0, \Theta_1)$ such that
\begin{equation}\label{def-tilde-theta}
f \left( \Big(\frac{m-1}{m}\Theta\Big)^{\frac{1}{m-1}}\right)= g(\Theta)=0 \mbox{ and }
f \left( \Big(\frac{m-1}{m}v\Big)^{\frac{1}{m-1}}\right) >0 \mbox{ for  }v\in (\Theta, \Theta_1).
\end{equation}
Consequently, for $x<0$, $V(x)$ is uniquely determined by the problem
$$
\Big( \frac{m-1}{m}V\Big)^{\frac{1}{m-1}} V' = \sqrt{F(V(x))} \mbox{\ for\ }x<0, \qquad V(0)= \Theta_1,\quad V'(0)=0.
$$
This proves the uniqueness of compactly supported stationary solution of (pCP), that is, the Type II ground state solution with one connected support.
\end{proof}

We have given sufficient and necessary conditions, and uniqueness for compactly supported stationary solutions of the Cauchy problem (pCP). Clearly, they correspond to the analogues of the original problem (CP). Using them we finally give some examples of initial data, which are not necessarily to be stationary solutions, such that the waiting times on their boundaries are infinite. For example, for a ground state solution $V$ of the bistable pRPME in the previous lemma, we choose $v_0(x)\in C(\R)$ such that
$$
v_0(x)\leq V(x) \mbox{ in }\R,\quad v_0(x)>0 \mbox{ in }(-L,L).
$$
Then $v_0$ lies below $V$ and has the same initial boundaries $\pm L$ as $V$.
Since the boundaries $l(t)$ and $r(t)$ of $v(x,t;v_0)$ are monotone in time and since $v$ always lies below $V$ by comparison, we have $t^*({\pm L})=\infty$.

\subsection{Lipschitz continuity, Darcy law and strict monotonicity}

We now address the Lipschitz continuity, the Darcy law and the monotonicity for each free boundary.
For definiteness, in this subsection, we work with the right free boundary $x=r(t)$, that is,
$$
v(x,t)>0 \mbox{ for } r(t)-1\ll x< r(t),\quad v(x,t)=0 \mbox{ for } x\geq r(t).
$$
We use one-sided partial derivatives in space and time:
$$
D^\pm r(t) := r'(t \pm 0),\quad  D^\pm_x v(x,t) = v_x(x \pm 0, t).
$$
Also we use $r'(t)$ to denote the derivative of $r$ when $D^+r(t)=D^-r(t)$.

First we recall the related results on PME.

\begin{lem}[{\cite[Chapter 15]{Vaz-book}}]\label{lem:Darcy-law-PME}
Let $v$ be the global solution of {\rm (pCP)} with $g\equiv 0$, $r(t)$ be its right free boundary with waiting time $t^*\geq 0$. Then, for any
$t_0>t^*$, there exists a small $\varepsilon>0$ such that
\begin{enumerate}[{\rm (i).}]
\item $r\in C^\infty ((t_0 -\varepsilon, t_0 +\varepsilon))$, $v\in C^\infty (\mathcal{N}_\varepsilon)$ for
$$
\mathcal{N}_\varepsilon = \{(x,t) \mid x\leq r(t) \mbox{ and } |(x-x_0, t-t_0)|\leq \varepsilon\},\quad x_0 := r(t_0);
$$

\item there holds
$$
r'(t_0) = - D^-_x v(x_0,t_0)>0,\quad r''(t)= m r'(t) D^-_{xx} v(x_0,t_0);
$$

\item for any positive integer $j$, there exists $C_j$ depending on $(x_0,t_0), m,j,\varepsilon,v$ such that
 $$
 \left| \frac{\partial^j}{\partial x^j} v(x,t) \right|\leq C_j,\quad (x,t)\in \mathcal{N}_\varepsilon.
 $$

\end{enumerate}
\end{lem}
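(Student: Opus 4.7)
The plan is to proceed in three stages: establish nondegeneracy of the pressure gradient at the moving free boundary, perform a hodograph transformation that reduces the free boundary problem to one on a fixed half-plane, and then derive the explicit Darcy law identities by differentiating the vanishing trace $v(r(t),t) \equiv 0$.

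First I would establish nondegeneracy. The Aleksandrov--B\'enilan--Crandall estimate gives $v_{xx} \geq -C(t)$ in the sense of distributions for every $t > 0$. If $D_x^- v(x_0, t_0) = 0$ at some $t_0 > t^*$, then integrating this semiconvexity leftward from $r(t_0)$ yields a quadratic control $v(x,t_0) \leq C'(r(t_0)-x)^2$ in a one-sided neighborhood. Combined with the barrier construction in the proof of Lemma \ref{lem:finite-waiting-time}, this quadratic flatness would force a positive waiting time starting at $t_0$; together with the fact that once the interface has left its initial position it is strictly monotone in $t$ (a standard 1D PME result), this yields a contradiction. Hence $-D_x^- v(x_0, t_0) > 0$, which is the strict positivity asserted in (ii).

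Next, with nondegeneracy in hand, I would introduce the hodograph variable $X(y,t)$ defined by $v(X(y,t), t) = y$ for $(y, t)$ in a neighborhood of $(0, t_0)$ in $[0, \infty) \times \R$. Using the identities $X_y = 1/v_x$ and $X_t = -v_t/v_x$, the pressure equation transforms into
\[
X_t = (m-1) y \, \frac{X_{yy}}{X_y^2} - \frac{1}{X_y},
\]
which is uniformly parabolic for $y > 0$ (since $X_y$ is bounded above and below) and degenerates to the first-order trace relation $X_t = -1/X_y$ on the fixed boundary $\{y = 0\}$. Classical interior parabolic Schauder theory, together with a boundary bootstrap using the first-order trace equation, then yields $X \in C^\infty$ in a neighborhood of $(0, t_0)$ up to $\{y=0\}$. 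Transporting back through the change of variables produces the $C^\infty$ regularity of $r$ on $(t_0 - \varepsilon, t_0 + \varepsilon)$, the $C^\infty$ regularity of $v$ in the parabolic neighborhood $\mathcal{N}_\varepsilon$, and the uniform derivative estimates in (iii).

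Finally, differentiating $v(r(t), t) \equiv 0$ once and substituting the pressure equation gives $D_x^- v \cdot r'(t) + (D_x^- v)^2 = 0$, whence $r'(t_0) = -D_x^- v(x_0, t_0)$. Differentiating once more yields $r''(t) = -D_x^- v_{xx}(r(t),t)\, r'(t) - D_x^- v_{xt}(r(t), t)$; differentiating the pressure equation in $x$ and evaluating at the boundary, where $v = 0$, gives $v_{xt} = (m+1) v_x v_{xx}$ there. Substituting and applying the first Darcy identity produces $r''(t) = -m v_x v_{xx} = m r'(t) D_x^- v_{xx}(r(t), t)$, as claimed. The main obstacle is the interplay of the ABC semiconvexity, the waiting-time characterization of Lemma \ref{lem:finite-waiting-time}, and the strict monotonicity of the interface past $t^*$ needed to rule out the degenerate case; the rest is a standard hodograph reduction together with elementary differentiation on the free boundary. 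For a fully detailed execution I would defer to \cite[Chapter~15]{Vaz-book}.
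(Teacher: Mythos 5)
The paper does not prove this lemma at all: it is quoted verbatim from \cite[Chapter 15]{Vaz-book} as known background, so there is no internal proof to compare against. Your architecture (nondegeneracy of $v_x$ at the interface, then a hodograph reduction, then differentiation of $v(r(t),t)\equiv 0$) is indeed the standard route in the literature, and the formal computations in your last paragraph for part (ii) — in particular $v_{xt}=(m+1)v_xv_{xx}$ on $\{v=0\}$ and hence $r''=mr'\,D^-_{xx}v$ — are correct.

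However, your nondegeneracy step contains a genuine gap. Semiconvexity $v_{xx}\ge -C$ means $\phi(x):=v(x,t_0)+\tfrac{C}{2}x^2$ is convex, and a convex function lies \emph{above} its one-sided tangent lines; combined with $D^-_x v(x_0,t_0)=0$ this yields only
$$
v(x,t_0)\;\ge\;-\tfrac{C}{2}\,(x_0-x)^2,
$$
which is vacuous since $v\ge 0$. Equivalently, integrating $v_{xx}\ge -C$ leftward gives an \emph{upper} bound on $v_x$ and hence again only a \emph{lower} bound on $v$; it does not produce the upper bound $v(x,t_0)\le C'(x_0-x)^2$ that you need in order to invoke the positive-waiting-time barrier of Lemma \ref{lem:finite-waiting-time}. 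The standard way to get strict positivity of $-D^-_xv$ for $t_0>t^*$ is different: one first proves the Darcy identity $D^+r(t)=-D^-_xv(r(t),t)$, then uses the semiconvexity of the \emph{interface} (e.g.\ that $t^{1/(m+1)}r'(t)$ is nondecreasing in the distributional sense, so that once the interface moves it can never stop) together with $r(t)\not\equiv r(t^*)$ for $t>t^*$. A second, smaller issue: the hodograph equation $X_t=(m-1)y\,X_{yy}/X_y^2-1/X_y$ degenerates at $y=0$ (the coefficient of $X_{yy}$ vanishes there), so interior Schauder theory plus a first-order trace relation does not by itself give smoothness up to $\{y=0\}$; this is precisely the hard degenerate-parabolic content behind the $C^\infty$ regularity of the interface. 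Since you defer the details to \cite[Chapter 15]{Vaz-book} in the end, the sketch is acceptable as a citation, but as a proof the nondegeneracy argument as written would fail.
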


We will use this lemma to prove the important properties for the free boundaries, that is, the $C^1$ regularity and the Darcy law. Since we have no semiconvexity for the free boundary $r(t)$ of RPME as for the PME (cf. \cite[\S 15.4.1]{Vaz-book}), we have to deal with the problem in a more complicated way.

We first show the left side continuity of $v_x$:

\begin{lem}\label{lem:left-continuity}
Let $v$ be the solution of {\rm (pCP)}, $r(t)$ be its right free boundary. Then
\begin{equation}\label{D-x-v=lim}
D^-_x v(r(t),t)  = \lim\limits_{x \to r(t)-0} v_x(x,t),\quad t>0.
\end{equation}
\end{lem}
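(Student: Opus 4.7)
My plan is to show that both sides of \eqref{D-x-v=lim} exist as finite real numbers and coincide, by combining the Aronson--B\'enilan type semiconvexity estimate of Section 2.1 with a mean value theorem argument on the positivity side of $r(t)$. Fix $t>0$; since $x=r(t)$ is the right endpoint of ${\rm spt}[v(\cdot,t)]$, there exists $\delta>0$ with $v(\cdot,t)>0$ on $(r(t)-\delta,r(t))$ and $v(r(t),t)=0$. On the positivity set the pRPME is uniformly parabolic, so $v$ is classical (in particular $C^\infty$ in $x$) there, and hence the right-hand side $\lim_{x\to r(t)-0}v_x(x,t)$ is a limit of genuine classical derivatives.

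First I would establish that this right-hand limit exists and is finite. The a priori bound $v_{xx}\ge -C(t+\tau_0)$ in the sense of distributions, combined with a standard slicing argument (testing against products $\psi(x)\eta(t)$ with $\eta$ concentrating at a given $t$ and using the continuity of $v$), shows that for each fixed $t$ the function
$$
\phi(x) := v(x,t) + \tfrac{1}{2}C(t+\tau_0)\,x^{2}
$$
is convex on $\R$. A convex function on $\R$ has finite one-sided derivatives at every point, and wherever it is differentiable its derivative converges monotonically from the left to the left one-sided derivative at any given point. Applied at $x=r(t)$ this yields
$$
L := \lim_{x\to r(t)-0} v_x(x,t) = \phi'_{-}(r(t)) - C(t+\tau_0)\,r(t) \in \R.
$$

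Next I would identify $L$ with $D^-_x v(r(t),t)$ by the mean value theorem. For each $x\in (r(t)-\delta,r(t))$, $v(\cdot,t)$ is continuous on $[x,r(t)]$ and differentiable on $(x,r(t))$, so there exists $\xi=\xi(x)\in(x,r(t))$ with
$$
\frac{v(x,t)-v(r(t),t)}{x-r(t)} = v_x(\xi(x),t).
$$
Letting $x\to r(t)-0$ forces $\xi(x)\to r(t)-0$ by sandwich, and by the previous step the right-hand side tends to $L$. The left-hand side is by definition the difference quotient whose limit defines $D^-_x v(r(t),t)$, so $D^-_x v(r(t),t)=L$, which is exactly \eqref{D-x-v=lim}.

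The main technical obstacle, and the only place I would expect to spend real effort, is justifying that the space-time distributional bound $v_{xx}\ge -C(t+\tau_0)$ actually descends to semiconvexity of each individual time slice $v(\cdot,t)$. This requires a slicing/mollification argument together with the continuity of $v$ in $(x,t)$; it is essentially classical (in the spirit of the Aronson--B\'enilan estimate for PME) but does not follow verbatim from the stated bound, so I would carry it out carefully once to legitimize the convexity of $\phi$ used above.
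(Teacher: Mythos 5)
Your proof is correct and follows essentially the same route as the paper's: both hinge on the Aronson--B\'enilan type bound $v_{xx}\ge -C(t+\tau_0)$ to render $v(\cdot,t)+\tfrac12 C(t+\tau_0)x^2$ convex, and then read off the desired identity from the one-sided derivative structure of convex functions. The only cosmetic difference is in the endgame: you invoke the standard fact that the derivative of a convex function tends to its left one-sided derivative (and optionally seal it with the mean value theorem), whereas the paper gets the inequality $k_1\le D^-_x v(r(t),t)$ from monotonicity of $v_x+(C_1t+C_2)x$ and then rules out strict inequality by comparing difference quotients against an intermediate slope $k_2$; these are interchangeable formulations of the same convexity argument. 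Your explicit attention to the slicing step (passing from the space-time distributional semiconvexity to convexity of each time slice, using continuity of $v$ and a mollifier in $t$) is a legitimate point that the paper states without proof, and your sketch of it is the right one.
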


\begin{proof}
As we mentioned before, $v_{xx}$ is bounded from below: $v_{xx}\geq -C_1 t - C_2$ for some $C_1, C_2>0$. Hence
$$
\tilde{v}(x,t) := v(x,t) + \frac12 (C_1 t + C_2) x^2
$$
is a continuous and convex function of $x$ for $t>0$. So, for every $x_1\in \R,\ t_1>0$, the function $\tilde{v}(\cdot, t_1)$ admits one-sided derivatives at $x_1$:
$D^+_x \tilde{v}(x_1 , t_1)$ and $D^-_x \tilde{v}(x_1, t_1)$ with
$D^-_x \tilde{v}(x_1 ,t_1)\leq D^+_x \tilde{v}(x_1,t_1)$, and the functions $D^\pm_x \tilde{v} (\cdot , t_1)$ are non-decreasing ones.
Consequently, $D^\pm_x v(x_1, t_1)$ exist with $D^-_x v(x_1,t_1)\leq D^+_x v(x_1, t_1)$ and $D^\pm_x v(x,t_1)+(C_1 t_1 +C_2)x$ are non-decreasing in $x$. Note that, for small $\varepsilon>0$, when $x\in I_1:= [r(t_1)-\varepsilon ,r(t_1))$, $v(x, t_1)$ is smooth and so
$$
v_x(x,t_1)+(C_1 t_1+C_2)x \leq D^\pm_x v(r(t_1),t_1) + (C_1 t_1 +C_2)r(t_1).
$$
The right hand side is bounded since $D^-_x v(r(t_1),t_1)\leq 0 = D^+_x v(r(t_1) , t_1)$. So $k_1:= \lim\limits_{x\to r(t_1)-0}v_x(x,t_1)$ exists and $k_1\leq D^-_x v(r(t_1),t_1)$. The conclusion \eqref{D-x-v=lim} is proved if we can show the equality. By contradiction, we assume that $k_1< D^-_x v(r(t_1),t_1)\leq 0$. Then there exists $k_2$ lying between them and
$$
v_x(x,t_1)< k_2,\qquad x\in I_1.
$$
For any $x_2, x_3$ satisfying $r(t_1)-\varepsilon \ll x_3 <x_2 <r(t_1)$ we have
$$
\frac{v(x_3,t_1)-v(x_2,t_1)}{x_3-x_2}<k_2.
$$
Taking the limit as $x_2\to r(t_1)-0$ and using the continuity of $v$ we obtain
$$
\frac{v(x_3,t_1)-v(r(t_1),t_1)}{x_3- r(t_1)}\leq k_2.
$$
This implies that $D^-_x v(r(t_1),t_1)\leq k_2$, a contradiction.
\end{proof}

\begin{thm}\label{thm:Darcy-law}
Let $v$ be the solution of {\rm (pCP)}, $r(t)$ be its right free boundary with waiting time $t^*\in [0,\infty)$. Then $r\in C^1((t^*,\infty))$ and the Darcy law holds:
\begin{equation}\label{C1-Darcy-law}
r'(t) = - D^-_x v(r(t),t) \geq 0,\quad t>t^*.
\end{equation}
Moreover, $r'(t)\not\equiv 0$ in any interval in $(t^*,\infty)$.
\end{thm}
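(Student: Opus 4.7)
I would fix $t_0 > t^*$, set $x_0 := r(t_0)$ and $a := -D^-_x v(x_0, t_0)$; monotonicity of $r$ (from positivity persistence, Proposition \ref{prop:positive}) guarantees $a \geq 0$. Combining Lemma \ref{lem:left-continuity} with the semi-convexity estimate $v_{xx}(\cdot, t_0) \geq -C(t_0+\tau_0)$ (which makes $x \mapsto v(x,t_0) + \frac{C(t_0+\tau_0)}{2}x^2$ convex), one obtains, for every $\epsilon > 0$, the pinching
\[
(a-\epsilon)(x_0 - x) \leq v(x, t_0) \leq (a+\epsilon)(x_0 - x), \qquad x \in (x_0 - \delta_\epsilon, x_0),
\]
for some $\delta_\epsilon > 0$. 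This is the starting point for a sub/super-solution sandwich of $v$ near $(x_0, t_0)$.

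For the upper bound $D^+ r(t_0) \leq a$, the plan is to majorize $v$ by a pRPME super-solution whose free-boundary speed is controlled by the pure-PME Darcy law (Lemma \ref{lem:Darcy-law-PME}). Let $\tilde v$ be the pure-PME pressure solution with initial datum a suitable truncation of $(a+\epsilon)(x_0 - x)_+$; by Lemma \ref{lem:no-waiting-time} its waiting time at $x_0$ is zero, so Lemma \ref{lem:Darcy-law-PME} gives $\tilde r'(0^+) = a+\epsilon$. The amplitude-and-time rescaling
\[
\bar{v}(x,t) := e^{K(m-1)(t-t_0)}\,\tilde v\bigl(x, s(t)\bigr), \qquad s(t) := \frac{e^{K(m-1)(t-t_0)}-1}{K(m-1)},
\]
preserves spatial supports and, using $|g(v)|\leq K(m-1)v$, satisfies $N(\bar v) = K(m-1)\bar v - g(\bar v) \geq 0$, hence is a super-solution of pRPME. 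Local comparison on the cylinder $[x_0-\delta_\epsilon, x_0]\times[t_0, t_0+\eta]$ (the lateral inequality $\bar v \geq v$ at $x=x_0-\delta_\epsilon$ being guaranteed by the pinching, with a brief continuity argument for $t>t_0$) then yields $r(t)\leq \bar r(t)=\tilde r(s(t))$, and $D^+ r(t_0) \leq \tilde r'(0^+)\cdot s'(t_0) = a+\epsilon$; sending $\epsilon\to 0$ finishes it. The matching lower bound $D^+ r(t_0)\geq a$ (trivially true if $a=0$ by monotonicity) is obtained from the analogous sub-solution $\underline v(x,t) := e^{-K(m-1)(t-t_0)}\tilde v(x, s_-(t))$ with $s_-(t):=(1-e^{-K(m-1)(t-t_0)})/(K(m-1))$ and $\tilde v$ starting from $(a-\epsilon)(x_0-x)_+$. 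Replaying the same sandwich backward on $[t_0-\eta, t_0]$ (with $\eta$ small enough that $t_0-\eta>t^*$) yields $D^- r(t_0) = a$, hence $r'(t_0)$ exists and equals $-D^-_x v(r(t_0),t_0)$. The $C^1$ regularity on $(t^*,\infty)$ then follows from continuity of $t\mapsto -D^-_x v(r(t),t)$, which in turn follows from the uniform Hölder estimate \eqref{Holder-est} together with the uniform version of the pinching above on compact subintervals.

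For the final non-stationarity claim, I argue by contradiction: assume $r'(t) \equiv 0$ on some open interval $(s_1,s_2)\subset(t^*,\infty)$, so that $r(t)\equiv x_0$ and $D^-_x v(x_0,t)\equiv 0$ throughout. Lemma \ref{lem:no-waiting-time} (contrapositive) forces $v(x,t)/(x_0-x)\to 0$ as $x\to x_0^-$ for every $t\in(s_1,s_2)$. Pick $t_1\in(s_1,s_2)$ and $x_1\in(x_0-\delta, x_0)$ with $v(x_1,t_1)>0$; positivity persistence yields $v(x_1,t)\geq c_0>0$ on $[t_1,s_2]$. I would then insert a sub-solution anchored at $(x_1,t_1)$ of small traveling-wave type, with right front moving strictly rightward, and by comparison conclude that $r$ must move past $x_0$ before time $s_2$, contradicting $r\equiv x_0$.

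The main obstacle is this last barrier construction. A naive linear candidate $b(x,t)=\epsilon(x_0-x)$ is indeed a sub-solution of pRPME for $\delta\leq\epsilon/(K(m-1))$ (since $N(b)=-\epsilon^2-g(b)\leq 0$), but it cannot be reconciled with the lateral boundary data at $x=x_0-\delta$, where positivity persistence only supplies the pointwise bound $v\geq c_0$ rather than the linear profile $\epsilon\delta$. The barrier must therefore be re-shaped — for instance tilted into a traveling-wave-type sub-solution whose left foot is at $x_1$, or combined with an auxiliary Harnack-type estimate in the positive region — so as to simultaneously dominate $v$ from below on the lateral boundary yet stay strictly below $v$ near the free boundary, where $v$ is forced by the contradiction hypothesis to decay faster than linearly.
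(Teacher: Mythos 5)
Your treatment of $D^+r(t_0)$ is essentially the paper's Step~1: pin $v(\cdot,t_0)$ linearly near $x_0$ using Lemma~\ref{lem:left-continuity} and the lower bound on $v_{xx}$, then convert the reaction to a signless term via the time/amplitude change $V(x,s)=\bar v(x,T(s))/(1+K(m-1)s)$ and invoke the PME Darcy law (Lemma~\ref{lem:Darcy-law-PME}). The paper runs a global comparison with $\bar v_0\geq v(\cdot,t_0)$ on all of $\R$ and exact left slope $-\beta$ at $x_0$, whereas you use an $\epsilon$-slacked slope and a cylinder; both are fine once you take a strictly larger slope $a+2\epsilon$ so the lateral inequality survives a short time. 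After this point, though, the proposal has three genuine gaps.

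\emph{(1) Left derivative.} ``Replaying the sandwich backward on $[t_0-\eta,t_0]$'' is not a legitimate move. Your sandwich propagates \emph{forward} in time from a linear pinching at the base time; to use it on $[t_0-\eta,t_0]$ you would first need a pinching of $v(\cdot,t_0-\eta)$ near $r(t_0-\eta)$ with slope close to $a$, and that proximity is exactly the continuity of $D^-_x v(r(\cdot),\cdot)$ you are trying to prove. The paper's Step~3 breaks the circle by a contradiction argument along a sequence $t_n\nearrow t_0$: with $\beta_n:=-D^-_x v(r(t_n),t_n)$ (positive by Step~2), it applies the Step~1 estimate $r(t_n+t)=r(t_n)+\beta_n t+O(t^2)$ uniformly, plugs in $t=t_0-t_n$ to bound $\beta_n$ from below, and then $t=2(t_0-t_n)$ against the Step~1 estimate at base $t_0$ to get a contradiction if $\lim(r(t_n)-r(t_0))/(t_n-t_0)\neq\beta$. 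This simultaneously yields $D^-r(t_0)=\beta$ and $\beta_n\to\beta$.

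\emph{(2) $C^1$ regularity.} The claim that continuity of $t\mapsto -D^-_xv(r(t),t)$ follows from the H\"older estimate \eqref{Holder-est} (``together with the uniform version of the pinching'') is not supported: $C^\alpha$ control of $v$ gives no control over a one-sided spatial derivative at the free boundary, and the semiconvexity bound on $v_{xx}$ is only one-sided. In the paper the continuity of $r'$ is \emph{deduced} from the Step~3 argument (its Step~4), not from any a~priori regularity of $v_x$.

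\emph{(3) Non-stationarity.} You have correctly diagnosed the obstruction in your own barrier construction, and it is fatal: under the contradiction hypothesis $D^-_xv(x_0,t)\equiv 0$, the pressure decays faster than linearly at $x_0$, so no sub-solution with a linear or sine-type foot can be inserted under $v$ near the free boundary, and an interior Harnack bound at $x_1$ does not repair the degeneracy at $x_0$. The paper's Step~2 never works inside the constancy interval. It sets $t_3:=\inf\{\tilde t\mid D^+r\equiv 0\text{ on }[\tilde t,t_2]\}$ and chooses $\tau\in(t^*,t_3)$ with $D^+r(\tau)>0$ (possible precisely because $t_3$ is an infimum, and $t_3\leq t^*$ is excluded since $r$ is then constant on $[0,t_2]$ with $t_2>t^*$). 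At such a $\tau$, Step~1's Darcy law gives $-D^-_xv(r(\tau),\tau)=2\rho>0$, the lower bound $v_{xx}\geq -P$ converts this into a genuine linear profile $v(x,\tau)\geq\rho\,(r(\tau)-x)$ on a strip of width $\rho/P$, and the Lemma~\ref{lem:no-waiting-time} sub-solution (reflected so its foot moves rightward) forces $r$ to advance a definite amount by $t_2$, contradicting $r(t_3)=r(t_2)$ when $\tau$ is close enough to $t_3$. The ingredient missing from your plan is exactly this choice of a base time \emph{before} the constancy interval, where the profile is still linear.
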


\begin{proof}
We prove the Darcy law at any given time $t_0>t^*$. Denote $x_0 := r(t_0)$ and $\beta:= - D^-_x v(x_0,t_0)$.

\medskip
{\it Step 1. Assume $\beta >0$ and to show $D^+ r(t_0) = \beta$.} Consider the following problem
$$
\left\{
 \begin{array}{ll}
 \bar{v}_t = (m-1)\bar{v} \bar{v}_{xx} + \bar{v}_x^2 + K(m-1) \bar{v} , & x\in \R,\ t>0,\\
 \bar{v} (x,0)= \bar{v}_0(x), & x\in \R,
 \end{array}
 \right.
$$
where $\bar{v}_0(x)$ satisfies
$$
\bar{v}'_0(x_0-0) = -\beta,\quad \bar{v}_0(x)\geq v(x,t_0) \mbox{ for } x\in \R,
$$
and $\bar{v}_0(x)$ is sufficiently smooth so that the right free boundary $\bar{r}(t)$ of $\bar{v}(\cdot,t)$ is smooth in $t\geq 0$. Then $\bar{v}$ is a supersolution of (pCP), and so
$$
r(t+t_0) \leq \bar{r}(t), \quad t\geq 0.
$$
To estimate $\bar{r}(t)$ precisely, we change the time variable from $t$ to $s$ by
$$
s = S(t):= \frac{e^{K(m-1)t} -1}{K(m-1)} \Leftrightarrow t = T(s):= \frac{\ln (1+K(m-1)s)}{K(m-1)},
$$
and define
$$
V(x,s):= \frac{\bar{v}(x, T(s))}{1+K(m-1)s},
$$
then we have
\begin{equation}\label{problem-of-w-u3}
\left\{
 \begin{array}{ll}
  V_s = (m-1)V V_{xx} + V_x^2 ,& x\in \R, \ s>0,\\
  V(x,0)= \bar{v}_0(x), & x\in \R.
  \end{array}
  \right.
\end{equation}
By Lemma \ref{lem:Darcy-law-PME}, for some small $\varepsilon_0>0$, the right free boundary $x=R(s) = \bar{r}(T(s))$ of $V(\cdot,s)$ satisfies
$$
R'(s) = -D^-_x V(R(s),s),\quad |R''(s)|\leq \bar{C}_1,\quad 0\leq s\leq S(\varepsilon_0),
$$
or, equivalently,
$$
\bar{r}'(t)= R'(s)\frac{ds}{dt} = - D^-_x \bar{v}(\bar{r}(t),t),\quad
|\bar{r}''(t)|\leq \bar{C},\quad 0\leq t\leq \varepsilon_0.
$$
Consequently, we have upper estimate for $r(t+t_0)$:
\begin{equation}\label{upper-est-rtt0}
r(t+t_0)\leq \bar{r}(t) \leq \bar{r}(0)+ \bar{r}'(0)t + \bar{C} t^2
\leq x_0 +\beta t +\bar{C}t^2,\quad 0\leq t\leq \varepsilon_0.
\end{equation}

Next we consider the problem
$$
\left\{
 \begin{array}{ll}
 \underline{v}_t = (m-1) \underline{v} \underline{v}_{xx} + \underline{v}_x^2 - K(m-1) \underline{v} , & x\in \R,\ t>0,\\
 \underline{v} (x,0)= \underline{v}_0 (x), & x\in \R,
 \end{array}
 \right.
$$
where $\underline{v}_0(x)$ satisfies
$$
\underline{v}'_0(x_0-0) = -\beta,\quad \underline{v}_0(x)\leq v(x,t_0) \mbox{ for } x\in \R,
$$
and $\underline{v}_0(x)$ is sufficiently smooth so that the right free boundary $\underline{r}(t)$ of $\underline{v}(\cdot,t)$ is smooth in $t\geq 0$. Then $\bar{v}$ is a subsolution  of (pCP) and, as above,
\begin{equation}\label{lower-est-rtt0}
r(t+t_0)\geq \underline{r}(t) \geq x_0 + \beta t - \underline{C} t^2,\quad 0\leq t\leq \varepsilon_0,
\end{equation}
for some positive $\underline{C}>0$. Combining with \eqref{upper-est-rtt0} we have $D^+ r(t_0)=\beta$.

\medskip
{\it Step 2. To show $D^+ r(t)\not\equiv 0$ in any interval in
$(t^*, \infty)$ with positive measure.}
Note that this conclusion implies that $r(t)$ is strictly increasing in $t>t^*$. By the positivity persistence, $r(t)$ is increasing in $t>t^*$. Assume by contradiction that $D^+r(t)= 0$ for $t\in [t_1,t_2] \subset (t^*, \infty)$. We will derive
\begin{equation}\label{equiv=0-t2}
D^+r(t)=0,\quad t\in [0,t_2],
\end{equation}
which, then, contradicts the definition of $t^*$ and the fact $t_2 >t^*$.

To show \eqref{equiv=0-t2}, we suppose, on the contrary, that $t_3 := \inf \{\tilde{t}\mid D^+ r(t)\equiv 0 \mbox{ in } [\tilde{t},t_2]\} \in (0,t_1]$, and, by the continuity, there exists $\tau \in (t^*,t_3)$ such that
\begin{equation}\label{def-Delta}
 D^+ r(\tau) >0 \mbox{\ \ and\ \ } r(t_3)-r(\tau)<
\Delta := \frac{P\pi}{\mu} (1-e^{-\mu (t_2 -t_1)}),
\end{equation}
where $-P$ is the lower bound of $v_{xx}$: $v_{xx}>-P$ for $x\in \R,\ t\in [t^*,t_2]$, and $ \mu:= K(m-1)+ mP\pi$. By Step 1 we have
$$
0< 2\rho := D^+ r(\tau ) = -D^-_x v(r(\tau ), \tau ).
$$
Hence, for $\varepsilon := \frac{\rho}{P}$ we have
$$
-D^-_x v(x,\tau)\geq \rho \mbox{\ \  and\ \  } v(x,\tau ) \geq - \rho (x-r(\tau)),\quad r(\tau)-\varepsilon \leq x \leq r(\tau).
$$
Denote
$$
a:= \frac{P\pi}{\rho},\quad h:= \frac{\rho^2}{P\pi},\quad b (t):= \frac{P\pi}{\mu}(1-e^{-\mu (t-\tau)})
$$
and using the subsolution  $\underline{v}$ as in Lemma \ref{lem:no-waiting-time} we conclude that
\begin{equation}\label{increase-difference}
r(t) \geq r(\tau) + b (t),\quad t>\tau.
\end{equation}
In particular, at $t=t_3$ we have
$$
r(t_3) = r(t_2) \geq r(\tau) + b (t_2) \geq r(\tau) + \Delta,
$$
contradict \eqref{def-Delta}. This proves \eqref{equiv=0-t2}, and then the conclusion in this step is proved.

\medskip
{\it Step 3. Assume $\beta >0$ and to show $D^- r(t_0)= \beta$.}  We assume that, for some increasing sequence $\{t_n\}$ tending to $t_0$,
\begin{equation}\label{tn-to-t0}
\beta' := \lim\limits_{n\to \infty} \frac{r(t_n) -r(t_0)}{t_n -t_0} ,\quad D^+ r(t_n) = \beta_n := -D^-_x v(r(t_n),t_n)>0.
\end{equation}
(The latter holds by Step 2.) If we can show that, for each of such time sequence $\{t_n\}$, the limit $\beta' =\beta$. Then we obtain the conclusion $D^- r(t_0)=\beta$.

By contradiction, we assume $\beta' >\beta$. The reversed case is proved similarly. Then there exist a small $\delta>0$ and a large $n_0$ such that
\begin{equation}\label{contra-gradient}
\frac{r(t_n) -r(t_0)}{t_n -t_0} \geq \beta +2\delta,\quad n\geq n_0.
\end{equation}
For any given $n\geq n_0$, using the estimates \eqref{upper-est-rtt0} and \eqref{lower-est-rtt0} at the time $t_n$ instead of $t_0$ we see that, for some $\varepsilon_1 \in (0,\varepsilon_0)$ and some positive numbers $\underline{C}',\ \bar{C}'$, there holds
\begin{equation}\label{est-rn}
r(t_n) +\beta_n t -\underline{C}' t^2 \leq r(t+t_n) \leq
r(t_n) +\beta_n t +\bar{C}' t^2 , \quad 0\leq t\leq \varepsilon_1.
\end{equation}
Taking $t = t_0 -t_n$ in the second inequality we have
$$
r(t_0) - r(t_n) \leq  \beta_n (t_0 -t_n) + \bar{C}'(t_0 -t_n)^2.
$$
Together with \eqref{contra-gradient} we see that, when $n$ is sufficiently large,
$$
\beta_n \geq \beta +\delta.
$$
Taking $n$ large such that $\epsilon := t_0 -t_n \leq \frac{\varepsilon_1}{2}$ and taking $t=2\epsilon$ in \eqref{est-rn} we have
\begin{eqnarray*}
r(t_0 +\epsilon) & \geq & r(t_n)+ 2 \beta_n \epsilon +O(\epsilon^2) \\
& \geq & r(t_n)+ \beta_n \epsilon + \bar{C}' \epsilon^2 + (\beta +\delta) \epsilon +O(\epsilon^2)\\
& \geq & r(t_0) + (\beta+\delta)\epsilon +O(\epsilon^2).
\end{eqnarray*}
Together with \eqref{upper-est-rtt0} we have
$$
O(\epsilon^2)  \geq \delta \epsilon + O(\epsilon^2).
$$
This is a contradiction when $\epsilon $ is small, or, equivalently $n$ is sufficiently large.

\medskip
{\it Step 4. Existence and continuity of $r'(t)$ at $t_0$ when $\beta = r'(t_0)>0$.} Combining Step 1 and Step 3 we have
$$
r'(t_0) = D^+ r(t_0) =D^- r(t_0) = \beta, \quad \mbox{ when } \beta =-D^-_x v(x_0, t_0)>0.
$$

The proof in Step 3 also implies that, for any subsequence $\{n_i\}$ of $\{n\}$, $\beta_{n_i} \geq \beta+\delta$ is impossible, and so $\limsup_{n\to \infty} \beta_n \leq \beta$. In a similar way as in Step 3, one can show that
$\beta_{n_i} \leq \beta -\delta$ is impossible, and so $\liminf_{n\to \infty} \beta_n \geq \beta$. Therefore, $r'(t)$ is continuous at $t_0$ from the left side.
On the other hand, if the time sequence $\{t_n\}$ in \eqref{tn-to-t0} is a decreasing one tending to $t_0$ from right side, then one can prove as above that $r'(t_n)\to \beta\ (n\to \infty)$. This means that $r'(t)$ is continuous at $t_0$ from the right side. Combining the above conclusions together we get $r\in C^1$ at any time $t_0$ where $r'(t_0)>0$.

\medskip
{\it Step 5. The case $\beta := -D^-_x v(x_0,t_0)=0$.} For any small $\varepsilon>0$, we define
$$
\tilde{v}_0 (x) := v(x,t_0) + \varepsilon (x_0 -x),\quad x\in \R,
$$
and consider the equation of $v$ for $t\geq t_0$, with initial data $v(x,t_0)$ replaced by $\tilde{v}_0 (x)$. Then by Step 1 we have $D^+ r(t_0)\leq - D^-_x \tilde{v}_0 (x_0)= \varepsilon$. Since $\varepsilon>0$ is arbitrary we actually have $D^+ r(t_0)=0$. Now as the proof in Step 3, with $\beta>0$ being replaced by $\beta=0$, one can prove in a similar way that $D^- r(t_0)=0$, and so $r'(t_0)=0$. This implies that the Darcy law \eqref{C1-Darcy-law} remains hold even if $\beta=0$.  Finally, the continuity of $r'(t)$ at $t_0$ where $r'(t_0)=0$ can be shown as in Step 4.

This completes the proof of the theorem.
\end{proof}

\begin{rem}\label{rem:Darcy=0}\rm
From this theorem we know that, for $t>t^*$, $r'(t)\geq 0$ and it
is not identical zero in any time interval. Though we do not have the strong conclusion $r'(t)>0$ as in PME (cf. \cite[Corollary 15.23]{Vaz-book}), the current properties are enough to continue the study for the asymptotic behavior of the solutions. For example, they are enough in the application of the zero number argument.
\end{rem}

\section{Zero Number Argument and General Convergence Result}

In this section we present the intersection number properties, prove that the solution is strictly monotone outside its initial support, and then prove the general convergence result.

\subsection{Zero number and intersection number properties}
A useful tool we will use in the qualitative study is the so-called zero number argument. For the convenience of the readers, we first prepare some basic results in this area. Consider
\begin{equation}\label{linear}
\eta_t=a(x,t) \eta_{xx}+b(x,t) \eta_x+c(x,t) \eta\quad \mbox{ in } E_0 := \{(x,t) \mid b_1 (t) <x < b_2 (t),\ t\in (t_1, t_2) \},
\end{equation}
where $b_1$ and $b_2$ are continuous functions in $(t_1,t_2)$.
For each $t\in (t_1, t_2)$, denote by
$$
\Z(t) := \#\{x\in \bar{J} (t)\mid \eta(\cdot ,t)=0\}
$$
the number of zeroes of $\eta (\cdot,t)$ in the interval $\bar{J}(t) := [b_1(t), b_2(t)]$.
A point $x_0\in \bar{J}(t)$ is called a {\it multiple zero} (or {\it degenerate zero}) of
$\eta (\cdot, t)$ if $\eta (x_0,t) = \eta_x (x_0,t)=0$.
In 1988, Angenent \cite{A} proved a zero number diminishing property, and in 1998, the conditions in \cite{A} were weakened by Chen \cite{ChXY} for solutions with lower regularity. One of their results can be summarized as the following:

\begin{prop}[\cite{A, ChXY}, zero number diminishing properties]\label{prop:zero}
Assume the coefficients in \eqref{linear} satisfies
\begin{equation}\label{smoothy}
a, a^{-1}, a_t, a_x, b, c\in L^\infty.
\end{equation}
Let $\eta$ be a nontrivial $W^{2,1}_{p,loc}$ solution of \eqref{linear}. Further assume that, for $i=1,2$, either
\begin{equation}\label{bdry-cond}
\eta (b_i(t),t )\not= 0,\quad t\in (t_1, t_2),
\end{equation}
or,
\begin{equation}\label{bdry-cond1}
b_i(t) \in C^1 \mbox{ and } \eta (b_i(t),t ) \equiv 0,\quad t\in (t_1, t_2), \end{equation}
or,
\begin{equation}\label{bdry-cond2}
b_i(t) \in C^1 \mbox{ and } \eta_x (b_i(t),t) \equiv 0,\quad t\in (t_1, t_2).
\end{equation}
Then
\begin{itemize}
\item[(i)] $\Z(t)$ is finite and decreasing in $t\in (t_1, t_2)$;
\item[(ii)] if $s\in (t_1, t_2)$ and $x_0 \in \bar{J}(s)$ is a multiple zero of $\eta(\cdot,s)$, then $\Z (s_1) > \Z (s_2)$ for all $s_1, s_2$ satisfying $t_1 < s_1< s< s_2 <t_2$.
\end{itemize}
\end{prop}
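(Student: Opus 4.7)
The plan is to follow the Angenent--Chen strategy, which splits the argument into (a) finiteness of $\Z(t)$ at each time, via parabolic unique continuation, and (b) the monotone/strict-drop behavior in time, via a local parabolic-Taylor analysis around each zero. First I would show $\Z(t_0)<\infty$ for every $t_0\in(t_1,t_2)$: otherwise, compactness of $\bar J(t_0)$ yields an accumulation zero $x_0$, and hence a zero of infinite order of $\eta(\cdot,t_0)$ at $x_0$. Under \eqref{smoothy}, an Aronszajn--Saut--Scheurer-type parabolic unique continuation theorem (in the $W^{2,1}_{p,loc}$ version worked out by \cite{ChXY}) then forces $\eta(\cdot,t_0)\equiv 0$ on $\bar J(t_0)$, and backward uniqueness propagates this to all of $E_0$, contradicting nontriviality.

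For (i), I would analyze zeros by their order. At any simple zero $(x_0,t_0)$ in the interior one has $\eta_x(x_0,t_0)\neq 0$, so the implicit function theorem yields a continuous curve of zeros through $(x_0,t_0)$ across which $\eta$ changes sign; near such a curve $\Z$ is locally constant. The three boundary hypotheses \eqref{bdry-cond}--\eqref{bdry-cond2} are precisely what is needed to prevent zeros from entering or escaping laterally: \eqref{bdry-cond} keeps them away from $b_i(t)$, while the $C^1$ assumption in \eqref{bdry-cond1}--\eqref{bdry-cond2} allows the straightening $y=x-b_i(t)$, converting the moving boundary into a stationary one where $\eta$ (resp.\ $\eta_x$) vanishes; then the interior argument rules out creation or loss of zeros at the boundary in those two cases as well.

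The main obstacle is the strict drop across a multiple zero in (ii), which is the whole point of the theorem. Near such $(x_0,s)$, I would freeze coefficients, pass to the parabolic rescaling $y=(x-x_0)/\sqrt{a(x_0,s)|t-s|}$, and approximate $\eta$ by its leading parabolic-Taylor term, a Hermite polynomial $H_k$ of degree $k\geq 2$ equal to the order of the zero. A direct count on $H_k$, together with the parity structure of Hermite polynomials, shows that $\eta(\cdot,s+\tau)$ has strictly fewer zeros near $x_0$ than $\eta(\cdot,s-\tau)$ for all small $\tau>0$. The technical crux, given only the low regularity in \eqref{smoothy}, is to upgrade this heuristic into a rigorous $L^p$ comparison between $\eta$ and its polynomial approximant so that the perturbation (including the lower-order terms $b\eta_x+c\eta$, which contribute at higher order in the parabolic scaling) does not corrupt the count; this is the essence of Chen's refinement \cite{ChXY} of Angenent \cite{A}. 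Once (ii) is established at each multiple-zero time, combining it with the locally-constant picture for simple zeros and the confinement provided by the boundary hypotheses yields the monotonicity in (i).
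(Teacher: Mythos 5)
Your outline is correct and follows essentially the same route as the paper, which offers no independent proof but simply cites Angenent \cite{A} and Chen \cite{ChXY} for the fixed-interval case and handles the moving boundaries $b_i(t)$ by exactly the straightening $y=x-b_i(t)$ over short time intervals that you describe. Your additional sketch of the internals of \cite{A,ChXY} (unique continuation for finiteness, the implicit function theorem at simple zeros, and the parabolic rescaling to Hermite profiles at multiple zeros) accurately reflects how those cited results are established, with the genuinely hard step --- the $L^p$ comparison between $\eta$ and its parabolic-polynomial approximant under the low regularity \eqref{smoothy} --- correctly identified and attributed to Chen rather than carried out.
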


Note that, in the original results in \cite{A,ChXY}, the problem was considered in fixed intervals, that is, $b_i(t)$ are constants. Using our assumptions \eqref{bdry-cond}-\eqref{bdry-cond2}, for any time interval with small length, we can straighten the domain boundaries so that the zero number diminishing properties remain hold. The boundedness assumption of $a^{-1}$ in \eqref{smoothy} is clearly a difficulty in using the zero number argument for RPMEs. Nevertheless, under some additional conditions, we can prove a diminishing property for the number of intersection points between two solutions of (pCP), despite the degeneracy of the equation. The analogue results for RDEs was proved in \cite[Lemma 2.4]{DLZ}, but the proof for our current result is more complicated due to the degeneracy.

For $i=1,2$, let $v_i$ be the solution of {\rm (pCP)} with initial data in
$$
\{ \psi\in C(\R) \mid \psi(x)>0 \mbox{ in } (l_i(0) ,r_i(0)),\quad \psi(x)=0 \mbox{ for } x\not\in (l_i(0),r_i(0))\}.
$$
Denote $l_i(t)$ and $r_i(t)$ the left and right free boundaries of $v_i$, respectively. Write
$$
l(t):= \max\{l_1(t),l_2(t)\},\quad r(t):= \min\{r_1(t),r_2(t)\}, \qquad t\geq 0.
$$
Then in the case where $l(t)<r(t)$, the number of the positive intersection points between $v_1(\cdot,t)$ and $v_2(\cdot,t)$ in $\R$, which is denoted by $\mathcal{Z}_0(t)$ as in \eqref{def-Z0}, is the same as the number of intersection points in $J_0(t):= (l(t),r(t))$.

\begin{thm}\label{thm:zero num 1}
Assume {\rm (F)}, $v_1, v_2$ are given as above. Assume that $\mathcal{Z}_0(0)$ is finite. Then there exists a time $\tau\in [0,+\infty]$  , such that
\begin{enumerate}[{\rm (i)}]
\item $\mathcal{Z}_0(t)$ decreases for $t\in (0,\tau)$ and for $t\in (\tau,\infty)$;
\item $\lim\limits_{t\rightarrow \tau + 0}\mathcal{Z}_0(t)=1$ and $\lim\limits_{t\rightarrow \tau -0}\mathcal{Z}_0(t)=0$ if $\tau\in (0,\infty)$.
\end{enumerate}
\end{thm}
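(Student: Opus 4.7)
The plan is to reduce the statement to the classical zero-number diminishing property (Proposition~\ref{prop:zero}) applied to $\eta:=v_1-v_2$ on the common positivity set, using the monotonicity of the free boundaries and the Darcy law from Theorem~\ref{thm:Darcy-law}. Since each $l_i(t)$ is non-increasing and each $r_i(t)$ non-decreasing, the interval $J_0(t)=(\max\{l_1,l_2\}(t),\min\{r_1,r_2\}(t))$ is monotone non-decreasing under inclusion, so the set $\{t\ge 0:J_0(t)\neq\emptyset\}$ is either all of $[0,\infty)$, a half-line $[\tau,\infty)$ with $\tau\in(0,\infty)$, or empty; I set $\tau$ accordingly, taking $\tau=\infty$ in the last case. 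On $(0,\tau)$ one has $J_0(t)=\emptyset$ and hence $\mathcal{Z}_0\equiv 0$, which immediately gives the $(0,\tau)$-part of (i) and the left limit in (ii).

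For $t>\tau$ I work on $E:=\{(x,t):x\in J_0(t),\ t>\tau\}$, on which both $v_1,v_2>0$. By positivity persistence the $v_i$ are classical there and $\eta$ satisfies the linear equation
\[
\eta_t=(m-1)v_1\,\eta_{xx}+(v_{1,x}+v_{2,x})\eta_x+\bigl[(m-1)v_{2,xx}+c(x,t)\bigr]\eta,
\]
whose leading coefficient $(m-1)v_1$ is bounded below on every compact subset of $E$, so \eqref{smoothy} holds. I split $(\tau,\infty)$ into the maximal open subintervals on which the identity of the ``dominant'' $l_i$ (and symmetrically the ``smaller'' $r_j$) does not change. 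On each such piece Theorem~\ref{thm:Darcy-law} gives $l,r\in C^1$; along the left curve either $\eta=\mp v_{3-i}\neq 0$, verifying \eqref{bdry-cond}, or $\eta\equiv 0$ when $l_1\equiv l_2$ on the whole piece, verifying \eqref{bdry-cond1}; the right curve is handled identically. Proposition~\ref{prop:zero} then gives $\mathcal{Z}_0$ finite and non-increasing on each such piece.

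The key technical step, and the main obstacle, is to show that $\mathcal{Z}_0$ does not jump up at a ``switching'' time $t_*\in(\tau,\infty)$ where $l_1(t_*)=l_2(t_*)$ but the dominant left boundary changes (similarly on the right). At such $t_*$ the common value $x_*:=l(t_*)$ is a \emph{boundary} zero of $\eta$ (both $v_i$ vanish) and is not counted in $\mathcal{Z}_0$. Using the Darcy law together with the left-continuity of $v_x$ at the free boundary (Lemma~\ref{lem:left-continuity}), one reads off the local expansion $\eta(x,t)\sim c_1(x-l_1(t))-c_2(x-l_2(t))$ in a small rectangle about $(x_*,t_*)$, with $c_i=|l_i'(t_*)|$ and $c_1\neq c_2$ at a transverse crossing; this shows $\eta_x\neq 0$ near the boundary and that the unique near-boundary zero of $\eta(\cdot,t_*^-)$ in $J_0$ converges to $x_*$ and exits $J_0$ through the boundary as $t\to t_*$, so $\mathcal{Z}_0$ in fact \emph{decreases} across $t_*$ and no new interior zero appears in the newly exposed sliver of $J_0$. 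Because Proposition~\ref{prop:zero} cannot be invoked at $t_*$ due to the degeneracy there, this must be verified by this direct local argument; combined with the per-piece non-increase, it yields (i).

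Finally, for the jump at $\tau\in(0,\infty)$ the two supports first touch at a single point $x^*$, say $l_1(\tau)=r_2(\tau)=x^*$. For $t>\tau$ close to $\tau$, $J_0(t)=(l_1(t),r_2(t))$ contains $x^*$ and shrinks to $\{x^*\}$ as $t\to\tau^+$, with $\eta(l_1(t),t)=-v_2(l_1(t),t)<0$ and $\eta(r_2(t),t)=v_1(r_2(t),t)>0$; the intermediate value theorem gives $\mathcal{Z}_0(t)\ge 1$. The one-sided pressure estimates at the free boundaries (Lemma~\ref{lem:left-continuity}) give $\eta_x>0$ uniformly on $J_0(t)$ for $t$ close to $\tau$, so the zero is unique and $\lim_{t\to\tau^+}\mathcal{Z}_0(t)=1$. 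Together with the non-increase from the previous paragraph, this establishes (ii) and the $(\tau,\infty)$-part of (i).
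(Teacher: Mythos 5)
Your overall architecture (first a disjoint-support phase with $\mathcal{Z}_0\equiv 0$, a single touching time $\tau$ producing one new intersection, then monotone decrease) matches the paper's decomposition into Lemmas \ref{lem:diminish-intersection} and \ref{lem:diminish-intersection 1}. But the step you yourself flag as the key one is where the argument has genuine gaps. First, on a piece where $l_1\equiv l_2$ you propose to apply Proposition \ref{prop:zero} via the boundary condition \eqref{bdry-cond1}. There the hypothesis \eqref{smoothy} fails: the common free boundary is exactly where $a=(m-1)v_1$ vanishes, so $a^{-1}\notin L^\infty$ on the domain between the curves, and the zeroth-order coefficient contains $v_{2xx}$, which is only bounded below near the interface. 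The "bounded below on every compact subset of $E$" observation does not help, because the proposition must be applied up to the boundary curve on which $\eta\equiv 0$; the shrinking-the-domain trick only works under \eqref{bdry-cond} (a sign of $\eta$ near the boundary), not under \eqref{bdry-cond1}. This is precisely the obstruction the paper points out after Proposition \ref{prop:zero}, and it is resolved there not by the linear zero-number machinery but by comparing the very weak solutions $v_1,v_2$ on the region up to the first interior null curve and then invoking the strong maximum principle where both are positive (Cases 1 and 2 in the proof of Lemma \ref{lem:diminish-intersection}).

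Second, your local analysis at a switching time $t_*$ is not justified by the cited results. The expansion $\eta(x,t)\sim c_1(x-l_1(t))-c_2(x-l_2(t))$ requires a first-order Taylor expansion of each $v_i$ that is uniform in $(x,t)$ near the degenerate corner $(x_*,t_*)$; Theorem \ref{thm:Darcy-law} and Lemma \ref{lem:left-continuity} give only the one-sided derivative at the boundary point and the continuity of $v_x$ in $x$ on a fixed time slice. Moreover you assume a transverse crossing $c_1\neq c_2$; if $l_1'(t_*)=l_2'(t_*)$ (a tangential meeting, which nothing excludes), the expansion degenerates and your conclusion that the near-boundary zero exits and no new zero appears does not follow. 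A related problem: after the boundaries first meet, $l_1$ and $l_2$ may coincide and separate repeatedly (the paper notes "the equality can hold from time to time"), so your decomposition of $(\tau,\infty)$ into finitely many maximal pieces with a fixed dominant boundary need not exist; the paper's sign argument on the whole interval between consecutive null-curve/boundary collisions avoids this. Finally, the uniqueness of the zero just after $\tau$ does not follow from Lemma \ref{lem:left-continuity} (which gives no uniform lower bound on $\eta_x$ over the shrinking sliver); it is the spatial monotonicity of $v_i$ outside their initial supports (Lemma \ref{lem:monotonocity}) that yields $\mathcal{Z}_0(t)=1$ for $t$ slightly larger than $\tau$.
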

The proof of this theorem consists of the following two lemmas.

\begin{lem}\label{lem:diminish-intersection}
Assume {\rm (F)}, $v_1, v_2$ are given as above. Further assume that $l(0)<r(0)$ and one of the following
hypotheses holds:
\begin{enumerate}[{\rm (a)}]
\item $l_1(0)\not= l_2(0),\ r_1(0)\not= r_2(0)$;
\item $\mathcal{Z}_0(0)<\infty$.
\end{enumerate}
Then
\begin{enumerate}[{\rm (i).}]
\item $\mathcal{Z}_0(t)$ is decreasing in $t>0$;
\item $\mathcal{Z}_0(t)$ is strictly decreasing when $t$ passes through $s$, if $v_1(\cdot,s)-v_2(\cdot,s)$ has a degenerate zero in $J_0(s)$, or, if one interior zero moves to the boundary $l(t)$ or $r(t)$ as $t\to s-0$.
\end{enumerate}
\end{lem}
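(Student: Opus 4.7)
The plan is to set $w := v_1 - v_2$ and reduce the claim to the classical zero number machinery (Proposition \ref{prop:zero}) applied to $w$. Using the symmetric splitting $v_1 v_{1,xx} - v_2 v_{2,xx} = \tfrac12(v_1+v_2)w_{xx} + \tfrac12(v_{1,xx}+v_{2,xx})w$, together with the Lipschitz bound \eqref{bound-g-Lip} on $g$, on the open set $\Omega := \{v_1>0\}\cap\{v_2>0\}$ the difference $w$ satisfies the linear equation
\[
w_t = \tfrac{m-1}{2}(v_1+v_2)\, w_{xx} + (v_{1,x}+v_{2,x})\, w_x + c(x,t)\, w,
\]
whose leading coefficient $a = \tfrac12(m-1)(v_1+v_2)$ is positive on $\Omega$ and whose lower-order coefficients are locally bounded via the a priori estimates \eqref{C1 bound}--\eqref{bound-of-vt}. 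The positive intersections counted by $\mathcal{Z}_0$ are exactly the interior zeros of $w$ in $J_0(t) = (l(t), r(t))$.

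Under hypothesis (a), continuity gives an initial interval $[0, \tau_1)$ on which $l_1(t) \ne l_2(t)$ and $r_1(t) \ne r_2(t)$; here $l(t), r(t)$ each coincide with one specific $l_i, r_j$, which are $C^1$ by Theorem \ref{thm:Darcy-law}. At each endpoint exactly one pressure vanishes and the other is strictly positive, so $w$ has a definite nonzero sign there; moreover, on $\overline{J_0(t)}$ for $t$ in a compact subset of $[0, \tau_1)$, the coefficient $a$ is uniformly bounded below by continuity of $v_1, v_2$ together with the strict positivity of the non-vanishing pressure at each endpoint. Proposition \ref{prop:zero} with boundary condition \eqref{bdry-cond} then applies directly on $J_0(t)$, yielding finiteness and monotone decrease of $\mathcal{Z}_0(t)$ on $[0, \tau_1)$. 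At a crossing time $\tau_1$ where, say, $l_1(\tau_1) = l_2(\tau_1)$, the roles of $l_1, l_2$ in defining $l(t)$ may switch on $[\tau_1, \tau_2)$; the monotonicity extends across $\tau_1$ because no positive intersection can be created at a point where both pressures vanish simultaneously, so any loss at $\tau_1$ is only of the mechanism in part (ii) below.

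For hypothesis (b), the plan is to approximate. Shifting the initial datum $v_{1,0}$ horizontally by small $\delta>0$ produces $v_1^\delta$ with $l_1^\delta(0) \ne l_2(0)$ and $r_1^\delta(0) \ne r_2(0)$, so case (a) applies to $(v_1^\delta, v_2)$. Continuous dependence of very weak solutions yields locally uniform convergence on $\Omega$, and for $\delta$ small the positive-intersection count at $t=0$ satisfies $\mathcal{Z}_0^\delta(0) \le \mathcal{Z}_0(0) < \infty$ (new intersections created by the shift must lie near the original finitely many intersections, and can be controlled by choosing the shift generically). Applying case (a) to the perturbed pair and passing $\delta \to 0$, upper semicontinuity of the positive-intersection count under locally uniform convergence transfers the monotone decrease to the original pair.

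Part (ii) splits into two mechanisms. An interior degenerate zero $x_0 \in J_0(s)$ lies in a neighborhood where $v_1, v_2 \ge \varepsilon > 0$, so the $w$-equation is uniformly parabolic there and the Angenent--Chen strict drop (Proposition \ref{prop:zero}(ii)) applies locally and hence globally. If an interior zero accumulates on a boundary point $l(s)$ or $r(s)$, the nonzero sign of $w$ at that endpoint (obtained directly under (a), or after perturbation under (b)) prevents survival as a positive intersection at $t = s$, again forcing a strict drop. The principal obstacle throughout is the non-uniform parabolicity of the $w$-equation at the boundaries of $J_0(t)$, since Proposition \ref{prop:zero} a priori requires $a^{-1} \in L^\infty$; this is handled by combining the endpoint sign information of $w$ with the uniform lower bound on $a$ provided by hypothesis (a) on each maximal separation interval, together with careful tracking of the boundary crossings---a mechanism with no counterpart in the RDE setting of \cite{DLZ}.
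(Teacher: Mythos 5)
Your reduction to a linear equation for $w = v_1 - v_2$ and to the classical zero-number machinery is the right plan, and the symmetric splitting of the leading coefficient is a neat variant of the paper's choice $a = (m-1)v_1$. But the sketch has gaps at exactly the places where the paper has to work hardest.

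First, even with $a = \tfrac{m-1}{2}(v_1+v_2)$, which does remain positive on $\overline{J_0(t)}$ as long as the two outer free boundaries are distinct, the zero-order coefficient $c$ contains $\tfrac{m-1}{2}(v_{1,xx}+v_{2,xx})$, and neither $v_{1,xx}$ up to $r(t)=r_1(t)$ nor $v_{2,xx}$ up to $l(t)=l_2(t)$ is known to lie in $L^\infty$ for the reaction case (Lemma~\ref{lem:Darcy-law-PME} is a pure PME result, and the Aronson--B\'enilan bound is one-sided). The paper therefore does \emph{not} apply Proposition~\ref{prop:zero} directly on $J_0(t)$: it first shrinks to a collar-free domain $\hat J(t) = [\hat l(t), \hat r(t)]\subset J_0(t)$ on which both pressures are bounded below by some $\rho_1>0$, and then uses the definite endpoint sign of $\eta=w$ to argue that the zero count on $\hat J(t)$ equals $\mathcal Z_0(t)$. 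You identify the endpoint-sign fact but then assert direct applicability on $J_0(t)$; the collar-cutting step is missing and not obviously dispensable.

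Second, the boundary-collision time is the heart of the lemma, and it is not settled by asserting that ``no positive intersection can be created where both pressures vanish simultaneously.'' When the roles of $l_1$ and $l_2$ swap, the sign of $\eta$ at the new endpoint $l(t)$ can reverse, and one must show this reversal \emph{absorbs} an existing null curve rather than spawning a new one. The paper's proof does this by a genuine case distinction: if the leftmost null curve $\gamma_1$ stays away from $l(T)$, a comparison argument for very weak solutions combined with the strong maximum principle shows $\eta>0$ persists on $(l(t),\gamma_1(t)]$ just after $T$ and $\mathcal Z_0$ is unchanged; if $\gamma_1$ itself is pushed into the boundary as $t\to T^-$, then $\eta<0$ on all of $(l(T),\gamma_2(T))$ and $\mathcal Z_0$ drops by one. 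Nothing in your sketch replaces this comparison argument, and without it the non-increase across a collision time is unproved.

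Third, the approximation under (b) is not sound as written. A small horizontal shift of $v_1$ can \emph{increase} the number of positive intersections at $t=0$ when $w$ has a tangency (a degenerate zero can split into two simple ones), so $\mathcal Z_0^\delta(0)\le\mathcal Z_0(0)$ need not hold, and ``choose the shift generically'' is not an argument. Nor does the zero count of $w^\delta(\cdot,t)$ converge to that of $w(\cdot,t)$ in general, so no semicontinuity passes the monotone decrease to the limit. The paper's treatment of (b) avoids this entirely: when $l_1(0)=l_2(0)$, finiteness of $\mathcal Z_0(0)$ gives an intersection-free interval $(l_1(0),\tilde x)$ adjacent to the common boundary, and that interval plays exactly the role of the buffer interval in the non-absorption collision case, after which the proof proceeds as in (a). This is both simpler and actually correct, and is the idea you should adopt for (b).
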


\begin{proof}
We first prove the conclusions under the assumption (a).
The main complexity is caused by the possibility that an interior intersection point moves up to the domain boundaries. Without loss of generality, we assume
$$
l_1(t)<l_2(t) \mbox{ for small } t\geq 0,\quad r_1(t)<r_2(t) \mbox{ for all } t\geq 0.
$$
Then the left boundaries $l_1(t)$ and $l_2(t)$ may meet after some time, but the right boundaries will not.

At the early stage, say $t\in [0,t_1]$ for some $t_1>0$, $l(t)=l_2(t)>l_1(t)$, and $\eta(x,t):= v_1(x,t)-v_2(x,t)$ satisfies a linear equation
\begin{equation}\label{eta-eq-0}
\eta_t = a(x,t) \eta_{xx} + b(x,t) \eta_x + c(x,t) \eta, \quad x\in J_0(t):=(l(t),r(t)),\ t\in (0,t_1],
\end{equation}
with
$$
a(x,t) := (m-1)v_1(x, t) ,\quad b(x,t):= v_{1x}(x, t) + v_{2x} (x,t),
$$
and
$$
c(x,t):= (m-1) v_{2xx}(x,t)+ \left\{
 \begin{array}{ll}
  \frac{g(v_1(x, t))-g( v_2 (x,t))}{v_1(x,t)- v_2(x,t)}, & v_1(x,t)\not= v_2(x,t),\\
  0 , & v_1(x,t)=v_2(x,t).
  \end{array}
  \right.
$$

By the continuity of $v_1,v_2$,
$$
\eta(l(t),t) = v_1(l_2(t),t)\geq \rho,\quad \eta(r(t),t)= -v_2(r_1(t),t) \leq -\rho,\quad t\in [0,t_1],
$$
for some small $\rho>0$. Take two smooth curves $\hat{l}(t)$ and $\hat{r}(t)$ such that
$$
l(t)<\hat{l}(t)\ll l(t)+1,\quad r(t)-1\ll \hat{r}(t)<r(t),
$$
\begin{equation}\label{eta=not=0}
\eta(x,t)\not= 0,\quad x\in [l(t), \hat{l}(t)]\cup [\hat{r}(t),r(t)],\ t\in [0,t_1],
\end{equation}
and, for some $\rho_1>0$,
$$
v_1(x,t),\ v_2(x,t)\geq \rho_1 \mbox{\ \ in \ \ } E_1:=\{(x,t) \mid x\in \hat{J}(t):= [\hat{l}(t),  \hat{r}(t)],\ t\in [0,t_1]\}.
$$
Then both $v_1$ and $v_2$ are classical in $E_1$, we can use the classical zero number diminishing property in $E_1$ to conclude that $\mathcal{Z}_{\hat{J}(t)}(t)$, which denotes the zero number of $\eta(\cdot,t)$ in the interval $\hat{J}(t)$, is finite, decreasing in $t$, and strictly decreasing when $t$ passes through a moment $s$ when $\eta(\cdot,s)$ has a degenerate zero. Note that, the same conclusions hold true for $\mathcal{Z}_0(t)$ since it equals to $\mathcal{Z}_{\hat{J}(t)}(t)$ by \eqref{eta=not=0}.

If $l_2(t)$ will never approach $l_1(t)$ for all $t>0$, then the conclusions are proved.

We now consider the case: there is a moment $T > t_1$ such that $l_2(t)$ moves leftward and meets $l_1(t)$ as $t\to T-0$, and suppose that $T$ is the first one of such moments. Due to the diminishing property for $\mathcal{Z}_0 (t)$ in the time interval $(0, T)$ we see that there are at most finite moments when $\eta(\cdot,t)$ has degenerate zeros (only in $J_0(t)$). Thus, there exists $t_2\in (t_1,T)$ such that $\eta(\cdot,t)$ has no degenerate zeros in $J_0 (t)$ in the time interval $(t_2,T)$, that is, in this time interval $\eta(\cdot,t)$ has fixed finite number of non-degenerate zeros. For simplicity and without loss of generality, in the rest of the proof we assume that $\eta(\cdot,t)$ has no interior degenerate zeros in $J_0(t)$ for all $t\in (t_2,\infty)$. So we can assume the null curves of $\eta$ in $E_2 := \{(x,t)\mid x\in J_0(t),\ t_2<t<T\}$ are
$$
\gamma_1(t) < \gamma_2(t)<\cdots <\gamma_k(t),\quad t\in (t_2, T),
$$
for some positive integer $k$.
Using \cite[Theorem 2]{Fer} as in the proof of Lemma 2.4 in \cite{DLZ} one can show that
$$
x_j := \lim\limits_{t\to T-0} \gamma_j (t),\quad j=1,2,\cdots, k,
$$
exist. We divide the situation into two cases.

{\it Case 1}. $l(T)< x_1$, that is, $l_2(t)$ moves leftward to catch up with $l_1(t)$ as $t\to T-0$, while the left-most null curve $\gamma_1(t)$ remains on the right of $l(t)$ till $t=T$. In this case, by using the comparison principle for very weak solutions in the domain
$$
E_3 := \{(x,t) \mid -\infty <x<\gamma_1(t),\ t\in (t_2, T)\}
$$
we have
$$
\eta(x,T)>0 \mbox{ for } l(T)<x< x_1,\quad \eta(l(T),T)=\eta(x_1,T)=0.
$$
As a consequence we have $\mathcal{Z}_0(T) =k$. The boundary zero $l(T)$ of $\eta(\cdot,T)$ is not included in $\mathcal{Z}_0(T)$. Recalling the Darcy law we have the additional fact $D^+_x \eta(l(T),T)=0$.
We remark that, this will lead to a contradiction with the Hopf lemma immediately in the Stefan problems for RDEs (cf. \cite{DL, DLZ}). However, in the current problem, the fact $D^+_x \eta(l(T),T)=0$ does not contradict the Hopf lemma, since the Hopf lemma is no longer necessarily to be true at $(l(T),T)$ where the equation \eqref{eta-eq-0} is degenerate.

Next, we consider the time period from the time $T$ to $t_3\in (T,\infty]$, where $t_3$ is the smallest moment when $\gamma_1(t_3)$ meets $l(t_3)$. Using the comparison principle in the domain
$$
E_4:= \{(x,t) \mid -\infty <x<\gamma_1(t),\ t\in [T,t_3)\},
$$
we conclude that $\eta(x,t)\geq 0$ in this domain. This implies that, for $t\in [T,t_3)$, $l_1(t)\leq l_2(t)$, but the equality can hold from time to time. However, no matter $l_1(t)<l_2(t)$ or $l_1(t)=l_2(t)$, we have $v_1, v_2>0$ for $x\in (l(t),\gamma_1(t)]$, and so the strong maximum principle holds in
$$
E_5 := \{(x,t) \mid l(t)<x\leq \gamma_1(t),\ T\leq t<t_3\},
$$
which implies that $\eta(x,t)>0$ in $E_5$. As a consequence, $\mathcal{Z}_0 (t) =k$ for $t\in [T,t_3)$.

At the moment $t_3$, $\gamma_1(t_3)$ meets $l(t_3)$. The following analysis for $t\geq t_3$ is similar as what we will do in Case 2.

{\it Case 2}. $\gamma_1 (t)$ tends to $l(t)$ at $t\to T-0$, and so $l_1(T)=l_2(T)=\gamma_1(T)$ (see Figure 4). Recall that we assumed before that $\eta(\cdot,t)$ no longer has interior degenerate zeros. However, this does not exclude that possibility that $\gamma_1(T)=\gamma_2(T)=\cdots = \gamma_{j_0}(T)=l(T)$, that is, $j_0$-null curves tend to the point $(l(T),T)$ at the same time as $t\to T-0$. Since the discussion is similar, in what follows, we only consider the case
$$
l_1(T)=l_2(T)=l(T)=\gamma_1(T) <\gamma_2(T) <\cdots <\gamma_k(T).
$$
Then we have $\mathcal{Z}_0(T)=k-1$, and
$$
\eta(x,T)<0,\quad l(T)<x<\gamma_2(T).
$$
\begin{figure}
\begin{center}
\includegraphics[width=5in,height=1in]{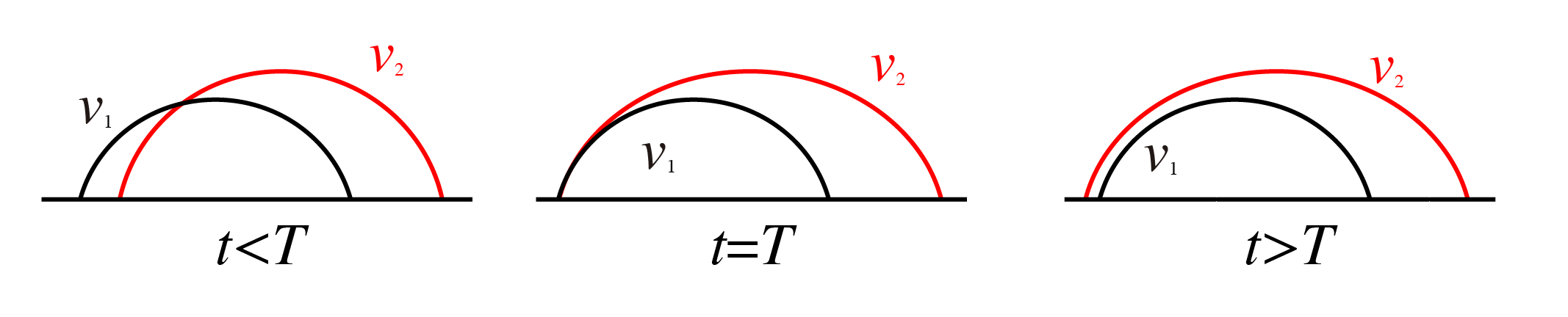}

\caption{Vanishing of zero at $t=T$.}
\end{center}
\end{figure}
As above we next consider the time period from $T$ to $t_4\in (T,\infty]$, where $t_4$ is the smallest moment when $\gamma_2(t_4)$ meets $l(t_4)$. Using the comparison principle for very weak solutions in the domain
$$
E_6:= \{(x,t) \mid -\infty<x<\gamma_2(t),\ t\in [T,t_4)\},
$$
we conclude that $\eta(x,t)\leq 0$ in this domain. This implies that, for $t\in [T,t_4)$, $l_2(t)\leq l_1(t)$, but the equality can hold from time to time. However, no matter $l_2(t)<l_1(t)$ or $l_2(t)=l_1(t)$, we have $v_1, v_2>0$ for $x\in (l(t),\gamma_2(t)]$, and so the strong maximum principle holds in
$$
E_7 := \{(x,t) \mid l(t)<x\leq \gamma_2(t),\ T\leq t<t_4\},
$$
which implies that $\eta(x,t)<0$ in $E_7$. As a consequence, $\mathcal{Z}_0 (t) =k-1$ for $t\in [T,t_4)$.

In the case where $t_4=\infty$, there is nothing left to prove. In the case where $t_4<\infty$, we can analyze the situation for $t\geq t_4$ as in the current case.

As a conclusion, we see that, in case no interior degenerate zeros appear, the zero number of $\eta(\cdot,t)$ in the open interval $J_0(t)$ is decreasing. It is strictly decreasing when some interior null curves touch the boundaries.

Next we consider the assumption (b). Without loss of generality, we assume that $l_1(0)=l_2(0)$, that is, the left free boundaries of $v_1$ and $v_2$ glue together at the beginning. Since $\mathcal{Z}_0(t)<\infty$, there exists $\tilde{x}>l_1(0)=l_2(0)$, such that $v_1(x,0)$ and $v_2(x,0)$ has no intersection points in $(l_1(0),\tilde{x})$. This is a situation like that in Case 1 at time $T$. The rest discussion is then similar as above.

This completes the proof of the lemma.
\end{proof}

\begin{rem}\label{rem:MP-unbounded}\rm
In the above proof, as well as in other related places of the paper, we use the maximum principle for $\eta$. One may worry about the unboundedness of the coefficient $c$ in \eqref{eta-eq-0} at the points where $v_2 =0$. We remark that, though we write $v_1-v_2$ in the form of $\eta$ for convenience, we actually compare the very weak solutions $v_1$ and $v_2$. As in \cite[Theorem 5.5]{Vaz-book} the comparison principle between $v_1, v_2$ holds safely. The main reason is that each of them can be approximated by a sequence of decreasing classical solutions.
\end{rem}

\begin{lem}\label{lem:diminish-intersection 1}
Assume {\rm (F)}, $v_1, v_2$ are given as above. Further assume that $l(0)\geq r(0)$.
Then there exists a time $t_1\in [0,\infty]$ such that
$$
\begin{cases}
\mathcal{Z}_0(t)=0, & t\leq t_1,\\
\mathcal{Z}_0(t)=1, & t_1 <t \ll t_1 +1,\\
\mathcal{Z}_0(t)\mbox{ decreases}, & t>t_1.
\end{cases}
$$
\end{lem}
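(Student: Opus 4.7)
The plan is to identify the first touching time $t_1$ of the two supports and reduce the analysis beyond $t_1$ to Lemma~\ref{lem:diminish-intersection}(b).

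First, I would set the stage. By Theorem~\ref{thm:Darcy-law}, $l_1, l_2$ are non-increasing Lipschitz and $r_1, r_2$ are non-decreasing Lipschitz, so $l(t)=\max(l_1,l_2)(t)$ is non-increasing, $r(t)=\min(r_1,r_2)(t)$ is non-decreasing, and $r-l$ is non-decreasing. Define
\[
t_1 := \inf\{t\geq 0 \mid l(t) < r(t)\},
\]
with $t_1=\infty$ if the set is empty. For $t\in[0,t_1)$ the positive sets of $v_1(\cdot,t)$ and $v_2(\cdot,t)$ have disjoint interiors, so $\mathcal{Z}_0(t)=0$. If $t_1=\infty$ we are done, so assume $t_1<\infty$. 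Continuity of $l,r$ forces $l(t_1)=r(t_1)=:x_0$, and since neither support has collapsed, the only possibility (after swapping $v_1,v_2$) is $l_2(t_1)=r_1(t_1)=x_0$ with $l_1(t_1)<x_0<r_2(t_1)$; by monotonicity this picture persists on some right neighborhood $(t_1,t_1+\epsilon_0)$, where $l(t)=l_2(t)$, $r(t)=r_1(t)$, and $l_2(t)<r_1(t)$.

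Second, I would analyze $\eta:=v_1-v_2$ on this overlap. The endpoint values $\eta(l(t),t)=v_1(l(t),t)>0$ and $\eta(r(t),t)=-v_2(r(t),t)<0$ give $\mathcal{Z}_0(t)\geq 1$ by the intermediate value theorem. On the open overlap both $v_i$ are strictly positive classical solutions, so the linear equation for $\eta$ is uniformly parabolic on any closed sub-interval there; a local application of Proposition~\ref{prop:zero} combined with the definite endpoint signs yields $\mathcal{Z}_0(t_2)<\infty$ for each $t_2\in(t_1,t_1+\epsilon_0)$. Applying Lemma~\ref{lem:diminish-intersection}(b) with $t_2$ as the new initial time gives $\mathcal{Z}_0(t)$ non-increasing on $(t_2,\infty)$; letting $t_2\downarrow t_1$ gives the decreasing property on $(t_1,\infty)$.

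The final and hardest step is to upgrade $\mathcal{Z}_0(t)\geq 1$ to $\mathcal{Z}_0(t)=1$ for $t-t_1$ small. The plan is a local monotonicity argument on the contracting overlap $(l_2(t), r_1(t))\to\{x_0\}$: using the Darcy law of Theorem~\ref{thm:Darcy-law} and Lemma~\ref{lem:left-continuity}, the one-sided derivatives $D^-_x v_1(r_1(t),t)=-r_1'(t)\leq 0$ and $D^+_x v_2(l_2(t),t)=-l_2'(t)\geq 0$ propagate by continuity into one-sided neighborhoods of the respective free boundaries, making $v_1(\cdot,t)$ strictly decreasing near $r_1(t)$ and $v_2(\cdot,t)$ strictly increasing near $l_2(t)$; for $t$ close enough to $t_1$ the whole overlap lies inside both neighborhoods, so $\eta_x<0$ throughout and $\eta$ vanishes exactly once. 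The main obstacle is the waiting-time scenario in which either $r_1'$ or $-l_2'$ vanishes on some right-interval of $t_1$ (they cannot both vanish, or no overlap would open): the strict monotonicity must then be extracted from the moving side alone, while the pinned side is controlled via the quadratic pressure bound from Section~\ref{sec:pre}. This case separation is the delicate part of the argument.
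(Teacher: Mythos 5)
Your proposal follows the same route as the paper: identify the first contact time $t_1$ (your $\inf$ of the overlap set equals the paper's $\sup\{t \mid r_1(t)\le l_2(t)\}$), observe $\mathcal{Z}_0=0$ up to $t_1$, establish $\mathcal{Z}_0=1$ on a small right-interval of $t_1$ from the opposite signs of $\eta=v_1-v_2$ at the two ends of the contracting overlap together with monotonicity of each $v_i$ near its own moving free boundary, and then hand off to Lemma~\ref{lem:diminish-intersection} for $t$ beyond that. The paper's own text is even terser at the key step --- it simply invokes \lq\lq the monotonicity of $v_i$ near the free boundaries,'' which is the Aleksandrov-reflection monotonicity of Lemma~\ref{lem:monotonocity}/Corollary~\ref{cor:multi-mono} rather than the Darcy law (Theorem~\ref{thm:Darcy-law}) you cite; both furnish strict monotonicity of the relevant $v_i$ on the small overlap provided the associated boundary has already left its initial position. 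You correctly isolate the one genuine subtlety that remains: if one of the two boundaries is still within its waiting time when contact occurs, neither the reflection monotonicity nor the Darcy law applies to that side on the overlap, and you propose (but do not carry out) a comparison of the quadratic pressure bound on the pinned side against the linear opening of the moving side. This is exactly the point the paper leaves implicit, so your write-up does not fall short of the paper's; it simply names the gap rather than papering over it. The small slip that $\mathcal{Z}_0(t)=0$ should hold for $t\le t_1$, not just $t<t_1$ (at $t=t_1$ the supports meet only at the single boundary point $x_0$ where both solutions vanish) is harmless and worth tidying.
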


\begin{proof}
By the assumption $l(0)\geq r(0)$, we see that the supports of two initial data $v_1(x,0)$ and $v_2(x,0)$ do not have overlap. Without loss of generality, we assume $v_1$ lies on the left of $v_2$ in the beginning time, that is $r_1(0)\leq l_2(0)$. Define $t_1 := \sup\{t \geq 0 \mid r_1(t)\leq l_2(t)\}\in [0,\infty]$, then $\mathcal{Z}_0(t)=0$ when $t\leq t_1$ by the definition of $\mathcal{Z}_0(t)$.
\begin{figure}
\begin{center}
\includegraphics[width=5in,height=1in]{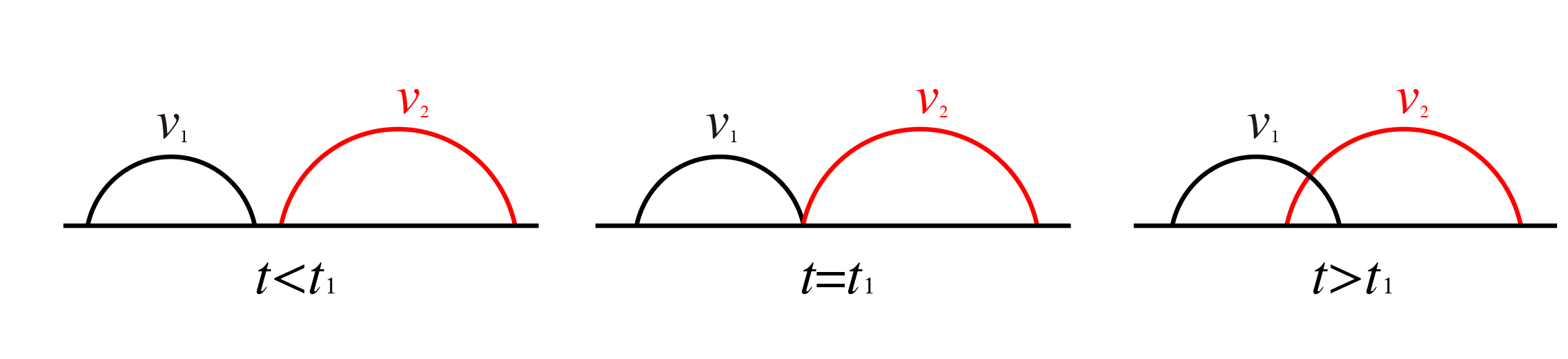}
\caption{Generation of zero at $t=t_1$.}
\end{center}
\end{figure}
When $t_1<\infty$, $r_1(t)$ will surpass $l_2(t)$ at $t=t_1$ (see Figure 5). According to the monotonicity of $v_i$ near the free boundaries, we obtain $\mathcal{Z}_0(t)=1$ for $t_1 < t \ll t_1 +1$. After $t_1$, the problem can be treated like that in Lemma \ref{lem:diminish-intersection}. Then the lemma follows.
\end{proof}

\medskip
\noindent
{\it Proof of Theorem \ref{thm:intersection num}}. The proof is almost the same as that of Theorem \ref{thm:zero num 1}. We only note that the support of initial data may not be singly connected, but consists of finite intervals. Similar to $t_1$ in Lemma \ref{lem:diminish-intersection 1}, we define $t_j$ as the $j$-th time for two components (in one of them $v_1>0$ and in the other $v_2>0$) to meet. It is easy to see that $\# \{t_j\}$ is at most finite up to the choice of initial data. However, more complex than the case in Lemma \ref{lem:diminish-intersection 1}, $\mathcal{Z}_0(t)$ may not increase strictly at $t=t_j$, since there may exist other interior degenerate zeros of $v_1-v_2$ which vanishes at the same time. To exclude such cases, we select a subset of $\{t_j\}$ such that $\mathcal{Z}_0(t)$ strictly increases at these times. This proves Theorem \ref{thm:intersection num}.
\qed

\subsection{Monotonicity outside of the initial support}
Assume the initial data $u_0$ of (CP) belongs to
\begin{equation}\label{def-X1}
\mathfrak{X}_1 :=\{\psi \in C(\R) \mid \psi(x)>0 \mbox{ for } x\in (-b,b), \mbox{ and } \psi(x)=0 \mbox{ for } |x|\geq b\}.
\end{equation}
Then the solution has exactly two free boundaries: $l(t)<r(t)$ with $l(0)=-b$ and $r(0)=b$. Moreover, $l(t)$ is strictly decreasing after the waiting time $t^*_1(-b)$, $r(t)$ is strictly increasing after the waiting time $t^*_2(b)$, and $\mbox{spt} [u(\cdot,t)] =  [l(t),r(t)]$.
We now prove a monotonicity property for $u(\cdot,t)$ outside of $[-b,b]$.

\begin{lem}\label{lem:monotonocity}
Assume {\rm (F)} and $u_0\in \mathfrak{X}_1$.  Then

\begin{enumerate}[{\rm (i).}]
\item for any given $\bar{x}>b$, if $r(\bar{t})=\bar{x}$ for some
$\bar{t}>  t^*(b)$, then $v_x (\bar{x},t)<0$ for $t> \bar{t}$.
Similarly, for any given $\bar{x}' <-b $, if $l(\bar{t}')=\bar{x}'$ for some $\bar{t}' >t^* (-b)$, then $v_x (\bar{x}',t) >0$ for $t> \bar{t}'$;

\item $-2b\leq r(t)+l(t)\leq 2b$ for all $t>0$.
\end{enumerate}
\end{lem}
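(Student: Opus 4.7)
The plan is to prove both parts by reflection arguments, handling (i) through the intersection-number machinery of Section 3.1 and (ii) through a half-line comparison.

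For part (i), fix $\bar x>b$ and $\bar t>t^*(b)$ with $r(\bar t)=\bar x$, and introduce the reflected solution
$$
w(x,t):=v(2\bar x-x,t),
$$
which is itself a very weak solution of (pCP). At $t=\bar t$ the positive sets of $v(\cdot,\bar t)$ and $w(\cdot,\bar t)$ are the disjoint intervals $(l(\bar t),\bar x)$ and $(\bar x,2\bar x-l(\bar t))$, so $\mathcal{Z}_0(\bar t)=0$. Lemma \ref{lem:diminish-intersection 1} applied with time origin $\bar t$ then yields $\mathcal{Z}_0(t)=1$ for $t$ just past $\bar t$, and the reflective symmetry $w(x,t)=v(2\bar x-x,t)$ forces this unique positive intersection to lie at $\bar x$. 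Positivity persistence keeps $v(\bar x,t)=w(\bar x,t)>0$ for every $t>\bar t$, so $\mathcal{Z}_0(t)\ge 1$; combined with the eventual monotonicity from Theorem \ref{thm:intersection num} this gives $\mathcal{Z}_0(t)\equiv 1$ throughout $t>\bar t$. Inspecting $v-w$ at $\bar t^+$ (where $v(x,t)$ is near $0$ just to the right of $\bar x$ while $w(x,t)=v(2\bar x-x,t)$ is close to $v(\bar x,t)>0$) and invoking connectedness of the common positive region pins down the sign: $v<w$ for $x>\bar x$ and $v>w$ for $x<\bar x$ wherever both are positive. Finally, for any $t_0>\bar t$, Theorem \ref{thm:Darcy-law} places $\bar x$ strictly inside $(l(t_0),r(t_0))$, so $w-v$ solves a linear, uniformly parabolic equation in a classical neighbourhood of $(\bar x,t_0)$. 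If $(w-v)_x(\bar x,t_0)$ were to vanish, $\bar x$ would be a degenerate zero and Proposition \ref{prop:zero}(ii) would force $\mathcal{Z}_0$ to drop strictly at $t_0$, contradicting $\mathcal{Z}_0\equiv 1$; hence
$$
-2\,v_x(\bar x,t_0)=(w-v)_x(\bar x,t_0)>0,
$$
which is the asserted strict inequality. The corresponding statement at the left free boundary follows by reflecting about $\bar x'<-b$.

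For part (ii), reflect about the edge of the initial support by $w(x,t):=v(2b-x,t)$. On $[b,\infty)\times[0,\infty)$ the boundary values at $x=b$ coincide trivially, and for every $x\ge b$ one has $v(x,0)=v_0(x)=0\le v_0(2b-x)=w(x,0)$ since $\mathrm{spt}[v_0]\subseteq[-b,b]$. A half-line adaptation of the whole-line comparison for very weak solutions (Remark \ref{rem:MP-unbounded}) then gives $v\le w$ throughout $[b,\infty)\times[0,\infty)$. Since $\mathrm{spt}[w(\cdot,t)]=[2b-r(t),2b-l(t)]$, the function $w$ vanishes for $x>2b-l(t)$ and so does $v$; hence $r(t)\le 2b-l(t)$, i.e., $r(t)+l(t)\le 2b$. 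Reflecting about $x=-b$ instead gives the matching lower bound $r(t)+l(t)\ge-2b$.

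The main obstacle is the strict inequality in (i): the degeneracy at the free boundary blocks a direct Hopf-lemma step at $\bar x$, and the device above is designed precisely to replace Hopf by the sharp zero-number count that is available only in the classical positive regime $v,w>0$ supplied, for every $t>\bar t$, by positivity persistence and the strict monotonicity of $r$ past its waiting time. A secondary technical point is the half-line comparison used in (ii); it is not recorded as a named result in the paper but is a routine adaptation of the smooth-approximation argument behind Theorem 6.5 of \cite{Vaz-book}, and may equivalently be obtained by reducing to whole-line comparison after a symmetric extension about $x=b$.
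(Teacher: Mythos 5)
Your argument is correct, but it reaches the conclusion by a different route than the paper. For part (i) the paper also reflects about $\bar x$, but then works in the half-plane $E_1=\{x\le\bar x,\ t\ge\bar t\}$: the comparison/maximum principle gives $\eta(x,t):=v(x,t)-v(2\bar x-x,t)\ge 0$ there, and since positivity persistence makes $v(\bar x,t)>0$ for $t>\bar t$ the equation for $\eta$ is uniformly parabolic near the line $\{x=\bar x\}$, so the Hopf boundary lemma applies directly and yields $\eta_x(\bar x,t)=2v_x(\bar x,t)<0$. Your motivating remark that degeneracy ``blocks'' Hopf at $\bar x$ is therefore not accurate: for $t>\bar t$ the point $\bar x$ is strictly interior to the support, which is exactly why the paper's one-line Hopf step works. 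Your substitute — treating $v$ and its reflection as two solutions on the whole line, invoking Lemma \ref{lem:diminish-intersection 1} to get $\mathcal{Z}_0\equiv 1$ with the unique positive intersection pinned at $\bar x$, and ruling out a degenerate zero there via Proposition \ref{prop:zero}(ii) — is nevertheless valid, at the price of more machinery; two small points should be tightened: the sign of $v-w$ on $\{x>\bar x\}$ is best read off at the right endpoint $x=r(t)$ of the common positivity region for $t$ close to $\bar t$ (where $v=0<w$) and then propagated by connectedness, rather than by the slightly garbled ``$v$ near $0$, $w$ close to $v(\bar x,t)$'' heuristic; and the ruling-out of a degenerate zero is cleaner via Lemma \ref{lem:diminish-intersection}(ii) applied to the pair $(v,w)$, which packages the local truncation needed before Proposition \ref{prop:zero} can be used. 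For part (ii) the paper does not introduce a new comparison: it reuses the inequality $\eta\ge 0$ on $E_1$ to get $l(t)+r(t)\le 2\bar x$ for $t>\bar t$ and then lets $\bar x\downarrow b$ (so $\bar t\downarrow t^*(b)$); your direct reflection about $x=b$ from $t=0$ avoids that limit but rests on the same half-line comparison on a quarter-plane with coinciding Dirichlet data — which is precisely the tool the paper itself uses on $E_1$, so this is fine, though your suggested reduction ``by symmetric extension about $x=b$'' to the whole-line comparison does not actually work (the extended functions need not solve the equation across $x=b$) and should simply be dropped in favour of citing the Aleksandrov reflection/approximation argument.
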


\begin{proof}
(i). We only work on $\bar{x}>b$ since the case $\bar{x}'<-b$ is studied similarly. Given $\bar{x}>b$, either $r(t)\leq \bar{x}$ for all $t>0$, or, by Theorem \ref{thm:Darcy-law}, there exists a unique $\bar{t}>t^*(b)$  such that $r(\bar{t})=\bar{x}>b$ and $r(t)>\bar{x}$ for $t>\bar{t}$. We consider only the latter case and will use the so-called Aleksandrov's reflection principle.
Set
$$
\eta(x,t):= v(x,t)- v(2\bar{x}-t),\quad (x,t)\in E_1 := \{(x,t)\mid -\infty<x\leq \bar{x},\ t\geq \bar{t}\}.
$$
Then, for some functions $c_1 = c_1(v_x)$ and $c_2 =c_2 (m,g',v,v_{xx})$, $\eta$ satisfies
$$
\left\{
 \begin{array}{ll}
 \eta_t = (m-1)v \eta_{xx} + c_1 \eta_x + c_2\eta, & (x,t)\in E_1,\\
 \eta(\bar{x},t)=0,& t\geq \bar{t},\\
 \eta(x,\bar{t}) \geq 0, & x\leq \bar{x}.
 \end{array}
 \right.
$$
Using the maximum principle (see Remark \ref{rem:MP-unbounded} if one worries about the unboundedness of $c_2$) we have
\begin{equation}\label{reflection-eta}
\eta(x,t)\geq 0 \mbox{ in } E_1,\quad \eta_x (\bar{x},t) \leq 0 \mbox{ for }t\geq \bar{t}.
\end{equation}
Furthermore, $v(\bar{x},t)>0$ for $t>\bar{t}$ by the positivity persistence, and so both $v$ and $\eta$ are classical near the line $\{x=\bar{x},\ t>\bar{t}\}$. This implies that the Hopf lemma is applicable and so
$$
\eta_x (\bar{x},t) = 2v_x (\bar{x},t)< 0 \mbox{ for }t >\bar{t}.
$$

(ii). The first inequality in \eqref{reflection-eta} also implies that
$$
l(t) \leq 2\bar{x} - r(t),\quad t>\bar{t},
$$
that is,
$$
l(t)+r(t) \leq 2\bar{x},\quad t>\bar{t}.
$$
Finally, since $\bar{x}-b>0$ can be chosen as small as possible, so does $\bar{t} -t^*(b)$, we actually have  $r(t)+l(t) \leq 2b$ for $t >0$. In a similar way one can show that $r(t)+l(t)\geq -2b$ for all $t>0$.
\end{proof}

In the above lemma we consider initial data with exactly one connected support.
Now we consider the case $u_0\in \mathfrak{X}$. More precisely, assume
\begin{equation}\label{def-Xn}
u_0\in \left\{ \psi \in C(\R)
\left|
\begin{array}{l}
 \mbox{there exist } -b\leq l_1 <r_1\leq l_2 <r_2\leq \cdots \leq l_n <r_n \leq b \mbox{ such that }\\
  \psi(x)>0 \mbox{ in } (l_j,r_j) \mbox{ for } j=1,\cdots,n, \mbox{ and } \psi(x)=0 \mbox{ otherwise}
 \end{array}
 \right.
 \right\}.
\end{equation}
Note that this set is exactly the same as $\mathfrak{X}$. Here we explicitly give the name of each boundary (see Figure 1).
As we will see below that, the purpose for choosing $u_0$ in this set is mainly for the clarity of the statement for the free boundaries. Our approach remains valid for general continuous and compactly supported initial data with infinite many free boundaries.

For each $i=1,2,\cdots,n$, denote by $l_i(t)$ (resp. $r_i(t)$) the free boundary of the solution starting at $l_i$ (resp. $r_i$).
 Given $j\in \{1,2,\cdots,n-1\}$, the boundary point $r_j$ either keeps stationary for all time (that is, $t^*(r_j)=\infty$), or it moves rightward after the finite waiting time. In the latter case, we define
$$
T_j := \sup \{T \mid \mbox{there exists }x_j \in [r_j, l_{j+1}] \mbox{ such that } u(x_j,t)=0 \mbox{ for } 0\leq t\leq T\}.
$$
In case $T_j=\infty$, the free boundaries $r_j(t)$ will never meet $l_{j+1}(t)$, or, $r_j=l_{j+1}$ and both of them have infinite waiting times.
In case $T_j<\infty$, these two free boundaries meet in some finite time and then this free boundary vanish after $T_j$.
(Note that, the following case will not happen: they meet together in finite time, and this zero of $u$ will never vanish. In fact, by Step 2 in the proof of Theorem \ref{thm:Darcy-law}, any free boundary will never stop once it begins to move.) Hence, for $t>T_j$, the intervals $(l_j(t),r_j(t))$ and $(l_{j+1}(t),r_{j+1}(t))$ merge into one: $(l_j(t), r_{j+1}(t))$. We remark that, it may merge other intervals at the same time, or at some time after $T_j$. Whatever happens, there exists a large $T$ such that, for $t\geq T$, there exist fixed number of free boundaries:
\begin{equation}\label{final-points}
l_1(t)\equiv l_{n_1}(t)< r_{n_2}(t)\leq l_{n_3}(t)< r_{n_4}(t)\leq \cdots \leq  l_{n_{2k-1}}(t)<r_{n_{2k}}(t) \equiv r_n(t),
\end{equation}
for $n_1, \cdots, n_{2k}\in \{1,2,\cdots,n\}$ satisfying
$$
n_1 \leq n_2 < n_3 \leq n_4 <\cdots <n_{2k-1} \leq n_{2k},
$$
such that $v(x,t)>0$ in $(l_{n_{2i-1}}(t), r_{n_{2i}}(t))\ (i=1,2,\cdots,k)$ and $v=0$ otherwise. Moreover, by \eqref{def-Xn} and the monotonicity of $l_i(t),r_i(t)$ we have
\begin{equation}\label{finial-left-right-bdry}
-b<r_{n_2}(t),\quad l_{n_{2k-1}}(t)<b,\quad t\geq 0,
\end{equation}

Using the previous Lemma \ref{lem:monotonocity} in each of these intervals we have the following result.

\begin{cor}\label{cor:multi-mono}
Assume {\rm (F)} and {\rm (I)}. Then there exists a large $T$ such that, for $t\geq T$, $u(\cdot,t)$ has $2k$ free boundaries as in \eqref{final-points} and \eqref{finial-left-right-bdry}. Moreover, for $1\leq j\leq k$, $u(\cdot,t)$ is strictly increasing in $[l_{2j-1}(t), l_{2j-1}(0))$, and strictly decreasing in $(r_{2j}(0), r_{2j}(t)]$.
In particular, $u(\cdot,t)$ is strictly increasing in $[l_1(t), l_1(0))$ and strictly decreasing in $(r_n(0), r_n(t)]$.
\end{cor}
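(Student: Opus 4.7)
The first assertion, that the free-boundary structure of $u$ stabilizes to \eqref{final-points} after a large time $T$, will follow directly from the paragraph preceding the corollary: only finitely many merging times $T_j$ can occur, and by Step~2 in the proof of Theorem~\ref{thm:Darcy-law} (no free boundary can remain stationary on a nontrivial time interval once it has begun to move), after the last such merging the labels and positions of the surviving free boundaries form a stable pattern. Writing $L_j(t):=l_{n_{2j-1}}(t)$ and $R_j(t):=r_{n_{2j}}(t)$, the $j$-th final component is $[L_j(t),R_j(t)]$ for $t\geq T$.

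For the monotonicity my plan is to reduce the multi-component situation to the single-block setting of Lemma~\ref{lem:monotonocity}. For each $j\in\{1,\ldots,k\}$ I would introduce the auxiliary solution $u^{(j)}$ of (CP) with truncated initial data
$$
u_0^{(j)}:=u_0\,\chi_{[\,l_{n_{2j-1}},\,r_{n_{2j}}\,]}.
$$
Since the index ranges $\{n_{2j-1},\ldots,n_{2j}\}$ partition $\{1,\ldots,n\}$, one has $u_0=\sum_{j=1}^k u_0^{(j)}$ with pairwise disjoint supports. Because distinct final components of $u$ are permanently disjoint by the very definition of the final pattern, the locality of the PME together with uniqueness of very weak solutions should give $u=\sum_j u^{(j)}$ for every $t\geq 0$, with the supports of the $u^{(j)}$'s staying pairwise disjoint. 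In particular, the leftmost free boundary of $u^{(j)}$ coincides with $L_j(t)$, and $u^{(j)}(x,t)\equiv 0$ for $x<L_j(t)$ at every time.

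With this decoupling in hand I would fix $t\geq T$ and $\bar{x}\in(L_j(t),L_j(0))$; by Theorem~\ref{thm:Darcy-law} there is a unique $\bar{t}\in[0,t)$ with $L_j(\bar{t})=\bar{x}$. The Aleksandrov reflection from the proof of Lemma~\ref{lem:monotonocity} then applies to $u^{(j)}$ with the roles of ``left'' and ``right'' interchanged. Setting
$$
\eta(x,s):=v^{(j)}(x,s)-v^{(j)}(2\bar{x}-x,s),\qquad x\geq\bar{x},\ s\in[\bar{t},t],
$$
at $s=\bar{t}$ the reflected half-line $\{2\bar{x}-x\leq\bar{x}=L_j(\bar{t})\}$ sits entirely to the left of the leftmost free boundary of $u^{(j)}$, so $v^{(j)}(2\bar{x}-x,\bar{t})\equiv 0$ and hence $\eta(\cdot,\bar{t})\geq 0$; moreover $\eta(\bar{x},s)\equiv 0$, and both terms have compact spatial support at each time. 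The very weak comparison principle (Remark~\ref{rem:MP-unbounded}) then yields $\eta\geq 0$ throughout the rectangle, and since $v^{(j)}(\bar{x},s)>0$ for $s>\bar{t}$ by positivity persistence (Proposition~\ref{prop:positive}), the Hopf lemma at $x=\bar{x}$ produces $\eta_x(\bar{x},t)=2\,v^{(j)}_x(\bar{x},t)>0$. Translating back via $u\equiv u^{(j)}$ near $(\bar{x},t)$ and $v_x=m u^{m-2}u_x$ gives $u_x(\bar{x},t)>0$; combined with $u(L_j(t),t)=0$ this establishes strict increase on $[L_j(t),L_j(0))$. The right-side statement follows by the symmetric argument applied to the rightmost free boundary of $u^{(j)}$.

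The hard part will be the decoupling identity $u=\sum_j u^{(j)}$: without it, applying the reflection directly to $u$ for an intermediate $j\in\{2,\ldots,k-1\}$ fails, because reflecting around $L_j(\bar{t})$ sends points into the support of the adjacent $(j-1)$-th final component and destroys the required sign of $\eta$ at $s=\bar{t}$. The auxiliary $u^{(j)}$ is designed precisely so that nothing lies to the left of $L_j(t)$, restoring the global reflection. Justifying the decoupling itself relies both on the locality of the PME (each $u^{(j)}$ evolves from $u_0^{(j)}$ independently of the others so long as supports stay disjoint) and on the fact, built into the definition of the final pattern \eqref{final-points}, that distinct final components never merge.
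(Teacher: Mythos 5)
Your proposal is correct and takes essentially the same approach the paper intends; the paper's own proof is a single sentence ("Using the previous Lemma \ref{lem:monotonocity} in each of these intervals"), and you have correctly identified that this requires a decoupling step which the paper leaves implicit. Your observation that the naive Aleksandrov reflection fails at the interior boundaries $L_j$ for $j\geq 2$ (because the reflected points land in the $(j-1)$-st component) is exactly the obstruction, and your fix — splitting $u_0$ into $u_0^{(j)}=u_0\,\chi_{[l_{n_{2j-1}},\,r_{n_{2j}}]}$, verifying via locality, disjointness of the $k$ groups for all $t\geq 0$ (which follows from monotonicity of the outer boundaries and disjointness at time $T$), and uniqueness that $u=\sum_j u^{(j)}$, and then running the reflection for each $u^{(j)}$ — is the right way to make the paper's assertion rigorous.
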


\subsection{General convergence}

\medskip
\noindent
{\it Proof of Theorem \ref{thm:general-conv}}. We prove the results by using the pressure $v$ instead of $u$.

\medskip
{\it Step 1. To show the $\omega$-limit set is non-empty.}
By the locally uniform H\"{o}lder bound in \eqref{Holder-est}, there exist $C>0,\ \alpha_1\in (0,1)$, both depend on $\|v_0\|_C$ and $m$, such that, for any $M>0$ and any increasing time sequence $\{t_n\}$, there holds,
$$
\|v(x,t_n +t)\|_{C^{\alpha_1} ([-M,M]\times [-1,1])} \leq C.
$$
Hence, for any $\alpha \in (0,\alpha_1)$, there is a subsequence of $\{t_n\}$, denoted again by $\{t_n\}$, such that
$$
\|v(x,t_n +t)-w_M (x,t)\|_{C^{\alpha} ([-M,M]\times [-1,1])} \to 0 \mbox{\ \ as \ \ }n\to \infty.
$$
Using Cantor's diagonal argument, there exist a subsequence of $\{t_n\}$, denoted again by $\{t_n\}$ and a function $w(x,t)\in C^{\alpha} (\R\times \R)$ such that
$$
v(x,t_n +t)\to w (x,t) \mbox{\ \ as\ \ }n\to \infty,\quad \mbox{in the topology of } C^{\alpha}_{loc} (\R^2).
$$

In addition, if $w(x,t)>0$ in a domain $E\subset \R^2$, then for any compact subset $D\subset E$, there exists small $\rho>0$ such that
$$
w(x,t),\  v(x,t_n+t)\geq \rho>0,\quad (x,t)\in D, \ n\gg 1.
$$
Then, $v(x,t_n +t)$ is classical in $D$, and so, for any $\beta_1\in (0,1)$, $\|v(x,t_n +t)\|_{C^{2+\beta_1, 1+\beta_1 /2}(D)}\leq C$ for any large $n$ and some $C$ independent of $n$. This implies that a subsequence of $\{v(x,t_n +t)\}$ converges in $C^{2+\beta, 1+\beta/2}(D)\ (0<\beta<\beta_1)$ to the limit $w(x,t)$. Thus, $w(x,t)$ is a classical solution of (pCP) in the domain where $w(x,t)>0$, and it is a very weak solution of (pCP) for $(x,t)\in \R^2$. 
Consequently, the $\omega$-limit set of $v$ in the topology of \eqref{def-conv} is non-empty, compact, connected and invariant.

\medskip
{\it Step 2. Limits of the free boundaries.} By Corollary \ref{cor:multi-mono}, there exists a large $T$ such that $v(\cdot,t)$ has exactly $2k$ free boundaries as in \eqref{final-points} and \eqref{finial-left-right-bdry} for all $t\geq T$. For each $i=1,2,\cdots,k$, since $l_{n_{2i-1}}(t)$ is decreasing and $r_{n_{2i}}(t)$ is increasing, the following limits exist:
\begin{equation}\label{limits-bdry}
l^\infty := \lim\limits_{t\to\infty} l_{n_1}(t),\quad
l^\infty_{n_{2i-1}} := \lim\limits_{t\to\infty} l_{n_{2i-1}}(t),\quad
r^\infty_{n_{2i}} := \lim\limits_{t\to\infty} r_{n_{2i}}(t),\quad
r^\infty := \lim\limits_{t\to\infty} r_{n_{2k}}(t).
\end{equation}
We have the following possibilities:

{\it Case 1. $k\geq 2$.} In this case all the free boundaries lie in $[-b,b]$ except for the left most one $l_{n_1}(t)\equiv l_1 (t)$ and the right most one $r_{n_{2k}}(t)\equiv r_n(t)$.
Together with Lemma \ref{lem:monotonocity} (ii) we have
$$
l_{1}(t) \geq -2b -r_{n_2}(t) \geq -3b, \quad r_{n}(t) \leq 2b - l_{n_{2k-1}}(t) \leq 3b.
$$
Consequently, all the limits in \eqref{limits-bdry} are in $[-3b,3b]$.

{\it Case 2. $k=1$ and $-l^\infty, r^\infty <\infty$.} In this case $v(\cdot,t)$ has exactly one connected compact support $[l_{n_1}(t),r_{n_2}(t)]=[l_1(t),r_n(t)]$ which tends to the bounded interval $[l^\infty, r^\infty]$ as $t\to \infty$.
Note that, even in this case, the support of the $\omega$-limit $w(x,t_1)$ may have several separated intervals as in the ground state solution in \eqref{finite-ground-state-0}, since the convergence $v\to w$ is taken in $C^\alpha_{loc}(\R)$ topology, the positivity of $v$ does not guarantee the positivity of its limit $w$.

{\it Case 3. $k=1$ and $-l^\infty,r^\infty=\infty$.} In this case $v(\cdot,t)$ has exactly one connected compact support $[l_{n_1}(t),r_{n_2}(t)]=[l_1(t),r_n(t)]$ which tends to $\R$ as $t\to \infty$.

\medskip
{\it Step 3. Quasi-convergence and Type II ground state solutions in the case $-l^\infty, r^\infty<\infty$.}
This includes Cases 1 and 2 in Step 2. We first use the Lyapunov functional to prove the quasi-convergence result, that is, any $\omega$-limit is a stationary solution of (pCP).
 Define
\begin{equation}\label{def-Lyapunov}
E[v(\cdot,t)] := \int_{l_1 (t)}^{r_n (t)} \left[\frac{m-1}{2} v^{\frac{2}{m-1}} v_x^2 - G(v) \right]dx, \quad t\geq T,
\end{equation}
with
$$
G(v):= \int_0^v g(r) r^{\frac{3-m}{m-1}} dr,
$$
which is convergent by $|g(v)|\leq K(m-1)|v|$.
When $v$ is the solution of (pCP), a direct calculation shows that
$$
\frac{d}{dt} E[v(\cdot,t)] = -\int_{l_1 (t)}^{r_n (t)} v^{\frac{3-m}{m-1}} v_t^2 dx \leq 0.
$$
So, $E[v(\cdot,t)]$ is a Lyapunov functional. It is bounded from below by our assumption $l^\infty \leq l_1(t)<r_n(t)\leq r^\infty$. Hence, we can use the standard argument to show that any $\omega$-limit of $v(\cdot,t)$ is a stationary solution of the equation pRPME.

We now explain that any $\omega$-limit is also a stationary solution of the Cauchy problem (pCP). Assume, $w(x,t)$ is the limit function obtained in Step 1. For each $\tau_1\in \R$, we claim that the solution $v(x,t;w(\cdot,\tau_1))$ of (pCP) with $v_0 (x)= w(x,\tau_1)$ is a stationary one: $v(x,t;w(\cdot,\tau_1))\equiv w(x,\tau_1)$. Since $w(x,\tau_1)$ has boundaries $l^\infty, l^\infty_{n_{2i-1}}, r^\infty_{n_{2i}}, r^\infty$ as in \eqref{limits-bdry}, by Lemma \ref{lem:ss-problem} we only need to show that the waiting time of each of these boundaries is not zero. Without loss of generality, we only prove $t^*(l^\infty)>0$. By contradiction, assume the left boundary $l_w(t)$ of $v(x,t;w(\cdot,\tau_1))$ satisfies $l_w(\tau_2)<l^\infty$ for some $\tau_2>0$. Since $w(x,\tau_1+\tau_2)$ is also an $\omega$-limit of $v$:
$$
v(x,t_n + \tau_1+\tau_2) \to w(x,\tau_1 +\tau_2) \equiv v(x,\tau_2; w(\cdot,\tau_1)),\quad n\to \infty.
$$
(Note that $w(x,\tau_1+t)$ solves the (pCP) with initial data $w(x,\tau_1)$ at $t=0$. It is nothing but $v(x,t;w(\cdot,\tau_1))$ by the uniqueness of the very weak solution of (pCP).)
Thus, we have
$$
l_1 (t_n +\tau_1+\tau_2) \to l_w(\tau_2) <l^\infty , \quad n\to \infty,
$$
contradicts the definition of $l^\infty$. Therefore, $w(x,\tau_1)$ is a stationary solution of (pCP).

We continue to specify the possible shapes of the $\omega$-limits of $v$.
Denote by $w(x)$ one of them. By the monotonicity in Lemma \ref{lem:monotonocity} and Corollary \ref{cor:multi-mono}, we see that either $w(x)\equiv 0$, or, it has maximums in $[-b,b]$. In the latter case, we claim that $w$ does not have positive minimums. Otherwise, by the equation pRPME, $w$ is symmetric with respect to each maximum point and each minimum point as long as it keeps positive, and so $w$ must be a positive periodic function, which is impossible by the monotonicity outside of $[-b,b]$. Therefore, the only possible choice for $w$ is that it has only maximum points in $[-b,b]$ but has no positive minimum points, and so
it is a Type II ground state solution of (pCP). More precisely, for some
$$
-b\leq z_1< z_2<\cdots< z_k\leq b\mbox{\ \ and\ \ } L>0,
$$
with $z_i+2 L \leq z_{i+1}\ (i=1,2,\cdots, k-1)$, there holds
\begin{equation}\label{finite-GS-V}
w(x) = \mathcal{V}(x) := V(x-z_1)+ V(x-z_2)+\cdots + V (x-z_k), \quad x\in \R,
\end{equation}
where $V(x)$ is the unique nonnegative compactly supported stationary solution of (pCP) as in Lemma \ref{lem:unique-GS}.

\medskip
{\it Step 4. Convergence in case $-l^\infty, r^\infty<\infty$.} In the previous step we know that any $\omega$-limit $w(x)$ of $v(x,t)$ is either $0$ or a Type II ground state solution as $\mathcal{V}$. In the former case we actually have the convergence result:
$$
v(\cdot,t)\to 0\mbox{\ \ as\ \ }t\to \infty,
$$
since the $\omega$-limit set of $v$ is connected. In the latter case, we now improve the conclusion in Step 3 to a convergence one: $v(x,t)\to \mathcal{V}(x)\ (t\to \infty)$, that is, the shifts $\{z_1,z_2,\cdots,z_k\}$ in \eqref{finite-GS-V} is unique.

We first prove the following claim.

\medskip
\noindent
{\bf Claim 1}. There exists $T_1 >0$ such that $v(\cdot,t)$ has exactly $k$ maximums larger than $\Theta$ for $t\geq T_1$, where $\Theta$ is the largest positive zero of $g$ in $(0,\Theta_1)$ as in \eqref{def-tilde-theta}.

\medskip

First, we show that $v(\cdot,t)-\Theta$ has exactly $2k$ non-degenerate zeros for large time.  Using the standard zero number argument for $v(\cdot,t)-\Theta$ we know that, for some $T_2>0$, $v(\cdot,t)-\Theta$
has fixed even number of non-degenerate zeros when $t>T_2$. Since $\mathcal{V}$ defined by \eqref{finite-GS-V} is one  $\omega$-limit of $v$, we see that the fixed number must be $2k$. Denote them by
$$
y_1(t)<y_2(t)<\cdots <y_{2k-1}(t)<y_{2k}(t).
$$

Next, we show that $v$ has exactly one maximum $z_i(t)$ in $(y_{2i-1}(t),y_{2i}(t))$ for all large $t$ and each $i=1,2,\cdots, k$. We only prove the conclusion in $[y_1(t), y_2(t)]$. In this interval $\zeta(x,t):= v_x(x,t)$ satisfies
\begin{equation}\label{zeta-p}
\left\{
 \begin{array}{ll}
 \zeta_t = (m-1)v\zeta_{xx} + (m+1)\zeta \zeta_x + g'(v) \zeta, & y_1(t)<x<y_2(t), t>T_2,\\
 \zeta(y_1(t),t)>0>\zeta(y_2(t),t), & t>T_2.
 \end{array}
 \right.
\end{equation}
Since $v(x,t)>\Theta$ for $y_1(t)<x<y_2(t), t>T_2$, the equation is a uniform parabolic one, and so we can use the classical zero number argument to conclude that, for some $T_3 >T_2$, $\zeta(\cdot,t)$ has fixed number of zeros in $(y_1(t),y_2(t))$.
By our assumption, for some increasing time sequence $\{t_n\}$, we have
$$
[y_1(t_n),y_2(t_n)]\subset  (-L+z_1, L+z_1),
$$
and $v(x,t_n)\to \mathcal{V}(x)\ (n\to \infty)$ in the topology  $C^{2,1}_{loc}((-L+z_1,L+z_1))$.
This implies that $v(x,t_n)$ has exactly one maximum point $z_1(t_n)\in (y_1(t_n),y_2(t_n))$ for large $n$. 
This prove  Claim 1.

\medskip

By $v(x,t_n)\to \mathcal{V}(x)$ again, for some large $T_4 >T_3$, $v(x,T_4)> \Theta$ near $z_1$. For simplicity, when $t\geq T_4$, denote by $l(t),\ r(t)$ the two free boundaries of the support of $v(\cdot,t)$ containing $z_1$.
They are nothing but $l_1(t)\equiv l_{n_1}(t)$ and $r_{n_2}(t)$ in \eqref{final-points}. Then $-l(t)$ and $r(t)$ are non-decreasing continuous functions.

We now use Claim 1 to prove the convergence result, that is, the shift $z_1$ is unique. By contradiction we assume that $v$ has another $\omega$-limit point
$$
\check{\mathcal{V}}(x):= V (x-\check{z}_1)+V(x-\check{z}_2)+\cdots + V(x-\check{z}_k),
$$
and, without loss of generality, assume $z_1 < \check{z}_1$. Choose $x_0,x_1,x_2\in [z_1, \check{z}_1]$ such that
$$
x_0\not= \frac{l(T_4)+ r(T_4)}{2},\quad x_0-x_1 =x_2-x_0 >0,
$$
that is, $x_0$ is the center of $[x_1, x_2]\subset [z_1, \check{z}_1]$ but not that of $[l(T_4),r(T_4)]$. Then,
 by the connectedness of the $\omega$-limit set $\omega(v)$, each  $V(x-x_i)\ (i=0,1,2)$ is a part of an $\omega$-limit of $v$. Define
$$
\tilde{l}(t):= 2x_0 - r (t),\quad \tilde{r} (t):= 2x_0 - l(t),\quad
\tilde{v}(x,t):= v(2 x_0 -x, t) \mbox{\ \ for\ \ } x\in \R,\ t\geq T_4.
$$
Then $\tilde{l} (t)$ and $\tilde{r} (t)$ are free boundaries of $\tilde{v}(\cdot,t)$, $\tilde{v}(x,t)>0$ for $x\in (\tilde{l}(t),\tilde{r}(t)),\ t\geq T_4$, and
$$
\tilde{v}_t = (m-1)\tilde{v} \tilde{v}_{xx} + \tilde{v}_x^2 + g(\tilde{v}),\quad x\in \R, t\geq T_4.
$$
It is easily seen that $\tilde{l}(t)< r(t),\ \tilde{r}(t)> l(t)$, and so $v(\cdot,t)$ and $\tilde{v}(\cdot,t)$ have non-empty common domain $[\hat{l}(t),\hat{r}(t)]$ with
$$
\hat{l}(t):= \max\{l(t),\tilde{l}(t)\}\ < x_0<\ \hat{r}(t):= \min\{r(t),\tilde{r}(t)\},\quad t\geq T_4.
$$
By the choice of $x_0$ we see that
$$
l (T_4) \not= \tilde{l}(T_4) =2 x_0 -r(T_4),\quad r(T_4)\not= \tilde{r} (T_4)=2 x_0 - l (T_4).
$$
Hence, we can use Lemma \ref{lem:diminish-intersection} which is valid in open intervals to conclude that the number of the zeros of $\eta(\cdot,t):= v(\cdot,t)-\tilde{v}(\cdot,t)$ in the open interval $J_0(t):= (\hat{l} (t),\hat{r} (t))$ is finite, decreasing in $t>T_4$, and strictly decreasing when $\eta(\cdot,t)$ has degenerate zeros in $J_0(t)$. Consequently, for some large time $T_5>T_4$,
\begin{equation}\label{no-degenerate-zero}
x_0 \mbox{ is not a degenerate zero of } \eta(\cdot,t):= v(\cdot,t)-\tilde{v}(\cdot,t) \mbox{ for } t>T_5.
\end{equation}
On the other hand, by the previous assumption, there exist two time sequences $\{s^{(1)}_n\}$ and $\{s^{(2)}_n\}$ with
\begin{equation}\label{time-sequences}
\cdots < s^{(1)}_n < s^{(2)}_n < s^{(1)}_{n+1} < s^{(2)}_{n+1}<\cdots, \quad n=1,2,\cdots
\end{equation}
such that, for $i=1,2$,
$$
\|v(x,s^{(i)}_n )- V(x- x_i)\|_{L^\infty([-L+x_i, L+ x_i])}\to 0 \mbox{\ \ as\ \ } n\to \infty.
$$
Therefore, for all large $n$ we have
$$
z_1 (s^{(i)}_n)\approx x_i,\quad i=1,2.
$$
By the continuity of $z_1(t)$, there exists $s_n\in (s^{(1)}_n, s^{(2)}_n)$ such that $z_1 (s_n)= x_0$. Thus, for large $n$ we have
$$
\eta(x_0,s_n) = v(x_0,s_n) -\tilde{v}(x_0,s_n) =0,\quad
 \eta_x(x_0,s_n) = v_x (x_0,s_n)-\tilde{v}_x ( x_0,s_n)= 0,
$$
contradicting \eqref{no-degenerate-zero}.
This proves the uniqueness of $z_1$, and so the convergence in this step is proved.

\medskip
{\it Step 5. To show that any $\omega$-limit is a stationary solution of pRPME in case $-l^\infty, r^\infty=\infty$.} This is Case 3 in Step 2. For large $t$, say $t\geq T_6$, $v(\cdot,t)$ has exactly one connected support. Denote, for simplicity, its left and right free boundaries by $l(t)$ and $r(t)$, respectively.  Due to the unboundedness of $l(t)$ and $r(t)$, the Lyapunov functional as in \eqref{def-Lyapunov} is no longer a suitable tool to derive  the quasi-convergence result. Instead, we will use the zero number argument to continue our argument.

Using the limit function $w(x,t)$ in Step 1, we want to show that, for each $t\in \R$, $w(x,t)$ is a stationary solution of (pCP). It is sufficient to prove that $w(x,0)$ is so. When $w(x,0)\equiv 0$ in $x\in \R$, there is nothing left to proof. So, we assume without loss of generality that $w(0,0)>0$. 
We now construct directly a positive stationary solution as the following:
\begin{equation}\label{V-p}
(m-1)\bar{v} \bar{v}_{xx} + \bar{v}^2_x + g(\bar{v})=0,\quad \bar{v}(0)=w(0,0)>0,\ \ \bar{v}'(0)=w_x(0,0).
\end{equation}
As long as $\bar{v}>0$ the equation is a non-degenerate second order ordinary differential equation, and so the existence and uniqueness of the solution of \eqref{V-p} follow from the standard theory.
Noticing from the equation we see that $\bar{v}$ is symmetric with respect to
any positive critical point. Hence, we have the following possibilities:
\begin{enumerate}[(a).]
\item $\bar{v}(x)>0$ for all $x\in \R$;

\item $\bar{v}(x)>0$ and $\bar{v}'(x)>0$ in $ (l_0, \infty)$ for some $l_0\in (-\infty, 0)$, and $\bar{v}(l_0)=0$;

\item $\bar{v}(x)>0$ and $\bar{v}'(x)<0$ in $(-\infty, r_0)$ for some $r_0\in (0,\infty)$, and $\bar{v}(r_0)=0$;

\item $\bar{v}(x)>0$ for $x\in (l_0, r_0)$ for some $l_0<0<r_0$, $\bar{v}(l_0)=\bar{v}(r_0)=0$ and it has a unique maximum point $z_0:= \frac{l_0+r_0}{2}$.
\end{enumerate}
In this step we will prove $w(x,0)\equiv \bar{v}(x)$ in ${\rm spt}[\bar{v}]$.

(a). Set
$$
\eta(x,t) := v(x,t)- \bar{v}(x),\quad x\in \R,\ t\geq T_6.
$$
In the current case (a) we have
\begin{equation}\label{eta-is-positive-bdry}
\eta(l(t),t) <0,\quad \eta(r(t) ,t)<0,\quad t\geq T_6,
\end{equation}
and
\begin{equation}\label{eta-eq}
\eta_t = a(x,t) \eta_{xx} + b(x,t) \eta_x + c(x,t) \eta, \quad x\in J_0(t):=(l(t),r(t)),\ t\geq T_6,
\end{equation}
with
$$
a(x,t) := (m-1)v(x, t) ,\quad b(x,t):= v_x(x, t) + \bar{v}_x (x),
$$
and
$$
c(x,t):= (m-1) \bar{v}_{xx}(x)+ \left\{
 \begin{array}{ll}
  \frac{g(v(x, t))-g( \bar{v}(x))}{v(x,t)- \bar{v}(x)}, & v\not= \bar{v},\\
  0 , & v=\bar{v}.
  \end{array}
  \right.
$$

We will use Proposition \ref{prop:zero} to study the zero number of $\eta(\cdot,t)$ in any bounded time interval $(t_1, t_2)\subset (T_6,\infty)$. For this purpose, we should require that $a,b,c$ satisfy the assumption \eqref{smoothy}. This is obviously not true in the domain
$$
E_1 := \{(x,t)\mid x\in J_0(t),\ t_1 < t < t_2\},
$$
since $a^{-1}=[(m-1)v]^{-1}$ is not bounded near the boundaries of $E_1$. This difficulty will be solved below by cutting $J_0(t)$ a little bit near its boundaries. Since $l(t)$ and $r(t)$ are continuous and bounded functions in $[t_1, t_2]$, there exists $\rho_1 >0$ small such that
$$
\eta({l}(t),t) = -\bar{v} (l(t)) \leq -\rho_1,\quad
\eta({r}(t),t) = - \bar{v}(r(t)) \leq -\rho_1,
\quad t\in [t_1, t_2].
$$
By the uniform continuity of $v$, there exists $\varepsilon_1>0$ small such that
$$
\eta(x,t) <0,\quad l(t)\leq x \leq l(t)+ \varepsilon_1 \mbox{ or } r(t)+ \varepsilon_1 \leq x\leq r(t),\ t\in [t_1, t_2]
$$
and, for some small $\rho_2>0$, there holds
$$
v(x,t)\geq \rho_2 \mbox{ in } E_2:= \{(x,t) \mid l(t)+\varepsilon_1 \leq x\leq r(t)-\varepsilon_1,\ t_1 \leq t\leq t_2\}.
$$
Now we see that $a,b,c$ satisfy the conditions in \eqref{smoothy} in the domain $E_2$. Hence the classical zero number diminishing properties as in Proposition \ref{prop:zero} hold.
Since $\varepsilon_1>0,  t_1-T_6>0$ can be chosen as small as possible and $t_2$ can be chosen as large as possible, we actually have the same zero number diminishing properties in $E_1$ with $(t_1,t_2)=(T_6,\infty)$.
As a consequence, $\eta$ has only degenerate zeros (not on the boundaries $l(t)$ and $r(t)$) in finite time, that is, there exists $T_7 >T_6$ such that $\eta(\cdot,t)$ not longer has degenerate zeros for $t\geq T_7$. Thus, for any $t\in [-1,1]$ and all large $n$, $\eta(x,t_n +t)=v(x, t_n +t) -\bar{v}(x)$ has no degenerate zeros in $J_0(t_n +t)$. This implies that (see, for example
\cite[Lemma 2.6]{DM}) its limit $w(x,t)- \bar{v}(x)$ either is $0$ identically, or has no degenerate zeros in the spatial domain where $w(\cdot,t)>0$. The latter, however, contradicts the initial conditions in \eqref{V-p}. This proves that $w(x,0)\equiv \bar{v}(x)$ in ${\rm spt}[\bar{v}]$.

(b). Now we assume $\bar{v}$ is a stationary solution as in case (b). Since $l(t)\to -\infty\ (t\to \infty)$, there exists $T_8>T_6$ such that $l(t) <l_0$ for $t\geq T_8$. Hence, we should consider $\eta$ in
$$
E_3 := \{(x,t) \mid l_0 < x < r (t),\ t\geq T_8\}.
$$
On its left boundary $\{x=l_0,\ t>T_8\}$, we have $\eta>0$. By continuity, for any bounded time interval $(t_1, t_2)\subset (T_8 ,\infty)$,  there exists $\varepsilon_2>0$ and $\rho_3 = \rho_3 (t_1,t_2, l_0) >0$ such that
$$
\eta(x, t) > \rho_3,\quad l_0 \leq x\leq l_0+ \varepsilon_2,\ t\in (t_1, t_2).
$$
Furthermore, the term $\bar{v}_{xx}(l_0 +\varepsilon_2)$ in the coefficient $c$ of \eqref{eta-eq} is also bounded, though $\bar{v}_{xx}(l_0)$ maybe not. Thus, we can use the zero number diminishing properties in
the domain $\{(x,t)\mid l_0 +\varepsilon_2 < x < r(t), t_1<t<t_2\}$. By the arbitrariness of $\varepsilon_2$ and $(t_1,t_2)$, we see that the zero number diminishing properties hold actually in the domain $E_3$. The rest proof is similar as that in (a). This proves the quasi-convergence in case (b).

The proof for $w(x,0)\equiv \bar{v}\ (x\in {\rm spt}[\bar{v}])$ in cases (c) and (d) are similar as that in case (b).

\medskip
{\it Step 6. To show that any $\omega$-limit of $v$ is a positive stationary solution of {\rm (pCP)} in the case $-l^\infty, r^\infty=\infty$.} From the previous step we see that, in the case $-l^\infty, r^\infty=\infty$, any $\omega$-limit of $v$ must be a stationary solution of the equation pRPME as in (a)-(d). By the monotonicity in Corollary \ref{cor:multi-mono}, the cases (b) and (c) are impossible. We now prove that the case (d) is also impossible.

1). By the monotonicity of $v$ and $w$ outside of $[-b,b]$, we see that the maximum point $z_0:= \frac{l_0 +r_0}{2}$ of $\bar{v}(x)\equiv w(x,0)|_{{\rm spt}[\bar{v}]}$ lies in $[-b,b]$, and so, with  $\ell := \frac{r_0-l_0}{2}$, we have
$$
-b -\ell \leq l_0 <z_0 <r_0 \leq b +\ell.
$$

2). We show that $v(-3b-2\ell,t)<\bar{v}(z_0)$ for all $t>0$. Assume by contradiction that the reversed inequality holds at some time $t_1>0$. Then by the monotonicity of $v$ in $(l(t_1),-b)$ we have
$$
v(x,t_1) \geq \bar{v}(x+z_0 + 3b+\ell),\quad x\in [-3b-2\ell, -3b].
$$
Since $\bar{v}(x+z_0 + 3b+\ell)$ is a time-independent weak subsolution  of the problem (pCP), by Lemma \ref{lem:ss-problem} we have
$$
v(x,t) \geq \bar{v}(x+z_0 + 3b+\ell),\quad x\in \R,\ t\geq t_1,
$$
and so
$$
w(x,0) \geq \bar{v}(x+z_0 + 3b+\ell),\quad x\in \R.
$$
By the monotonicity of $w(x,0)$ in $[-3b-2\ell, -b]$ we have
\begin{equation}\label{w(x,0)-large}
w(x,0) \geq \bar{v}(z_0),\quad x\in [-3b-\ell, -b].
\end{equation}
On the other hand, $w(x,0)=0$ at $l_0\in [-b-\ell, b+\ell]$. So $w(x,0)$ has at least one maximum point $x_1\in [-b,l_0)$. Without loss of generality, assume $x_1$ is the smallest one of such points. Then, as a stationary solution of pRPME, $w(x,0)$ is symmetric with respect to $x_1$. This implies by \eqref{w(x,0)-large} that
$$
w(x,0) \geq \bar{v}(z_0),\quad x\in [-3b-\ell , 2x_1+3b+\ell ]\supset [-b-\ell, b+\ell],
$$
contradicts 1).

3). If $\bar{v}(x) \equiv w(x,0)|_{{\rm spt}[\bar{v}]}$ is not only a stationary solution of the equation pRPME, but also a stationary solution of (pCP), then we can derive a contradiction as the following. Choose $t'$ large such that $l(t')<-3b-4\ell$, and choose $L'$ such that
$$
-L' + l_0 <l(t') < -L'+r_0
$$
and $\bar{v}(x+L')$ has exactly one intersection point with $v(x,t')$. Then using the intersection number properties Lemma \ref{lem:diminish-intersection} and using the fact in 2) we conclude that $l(t)$ will never pass across the left boundary
$-L'+l_0$ of $\bar{v}(x+L')$. This contradicts the assumption $l^\infty=-\infty$.

4). We then consider the case where $\bar{v}(x) \equiv w(x,0)|_{{\rm spt}[\bar{v}]}$ is just a stationary solution of the equation pRPME, but not a stationary one of (pCP). Denote by $v(x,t; \bar{v})$ the solution of (pCP) with initial data $\bar{v}$, and denote its free boundaries by $\bar{l}(t)$ and $\bar{r}(t)$. By Lemma \ref{lem:ss-problem}, the waiting times $t^*(l_0)=t^*(r_0)=0$. Hence, $-\bar{l}(t)$ and $\bar{r}(t)$ are strictly increasing in $t>0$, and
$$
v(x,t;\bar{v})|_{[\bar{l}(t),\bar{r}(t)]} \equiv w(x,t)|_{[\bar{l}(t),\bar{r}(t)]},\quad 0<t\ll 1.
$$
Since $\bar{v}$ is a (stationary) subsolution  of (pCP), by Lemma \ref{lem:ss-problem} we have $v(x,t;\bar{v})>\bar{v}(x)$ on the support of $\bar{v}$, and so $v(x,t;\bar{v})$ is strictly increasing in $t$. Therefore, for any small $\tau>0$, we have
$$
\bar{l}(2\tau)<\bar{l}(\tau)<\bar{l}(0)=l_0 < r_0 = \bar{r}(0) <\bar{r}(\tau)<\bar{r}(2\tau),
$$
and
$$
w(x,0) < w(x,\tau) \mbox{ for } x\in [l_0, r_0],
\qquad w(x,\tau) <w(x,2\tau) \mbox{ for } x\in [\bar{l}(\tau),\bar{r}(\tau)].
$$
By Step 1, $v(x,t_n+ 2\tau)\to w(x,2\tau)$ as $n\to \infty$, and so for some large $n_0$ we have
$$
v(x,t_{n_0} +2\tau) \geq w(x,\tau),\quad x\in [\bar{l}(\tau),\bar{r}(\tau)].
$$
From the previous step we know that $w(x,\tau)$ is a stationary solution of pRPME in $[\bar{l}(\tau),\bar{r}(\tau)]$. Hence, $w(x,\tau)|_{[\bar{l}(\tau),\bar{r}(\tau)]}$ is a time-independent weak subsolution of (pCP). By comparison we have
$$
v(x,t)\geq w(x,\tau),\quad x\in [\bar{l}(\tau),\bar{r}(\tau)],\ t>t_{n_0} +2\tau.
$$
This contradicts the fact that $w(x,0)$ is an $\omega$-limit of $v(x,t)$.

\medskip
Consequently, the case (d) is also impossible. In other words, the only possible case is (a) in Step 5. We then have the following claim.

\medskip
\noindent
{\bf Claim 2.} When $-l^\infty, r^\infty=\infty$, any $\omega$-limit $w(x)$ of $v(x,t;u_0)$ is a stationary solution of pRPME which is positive in $\R$, and so is a stationary solution of the Cauchy problem (pCP).
\medskip

\medskip
{\it Step 7. Convergence in case $-l^\infty, r^\infty=\infty$.}
By Claim 2, the convergence $v(x,t_n;u_0)\to w(x)$ in Step 1 holds actually in $C^2_{loc}(\R)$ topology. Therefore, the solution $v(\cdot,t)$ is a classical one in $(l_{n_1}(t),r_{2n}(t))$.
Then, using the classical zero number diminishing properties and using similar arguments as proving \cite[Theorem 1.1]{DL}, \cite[Theorem 1.1]{DLZ} (see also \cite[Theorem 1.1]{DM}) we conclude that $v(\cdot,t)$ converges as $t\to \infty$ to either a nonnegative zero of $g$, or
$$
v(\cdot,t)\to V_0 (\cdot-z_0)>0,
$$
where $z_0\in [-b,b]$ and $V_0 $ is an evenly decreasing positive stationary solution of pRPME, that is,
$$
U_0 (x) := \left( \frac{m-1}{m}V_0 (x) \right)^{\frac{1}{m-1}}>0, \quad x\in \R
$$
is a Type I ground state solution of (CP) as given in Section 1.

This completes the proof of Theorem \ref{thm:general-conv}.
\hfill $\Box$

\section{Monostable RPME}
In this section we study the hair-trigger effect for the solutions of (CP) or (pCP) with monostable type of reaction.

\subsection{Stationary solutions}\label{subsec:-mono-ss}

A stationary solution $u=q(x)$ of (CP) satisfies
\begin{equation}\label{SS-eq}
(q^m)'' + f(q) =0,\quad x\in J\subset \R.
\end{equation}
We will use the phase portrait to present all the nonnegative stationary solutions that we are interested in. The equation \eqref{SS-eq} can be rewritten as a pair of first order equations
\begin{equation}\label{SS-sys}
\left\{
 \begin{array}{l}
 p= (q^m)' = m q^{m-1} q', \\
 p' = - f(q),
 \end{array}
 \right.
\end{equation}
or the first order equation
\begin{equation}\label{dp-dq}
pdp = - mq^{m-1} f(q) dq.
\end{equation}
Its first integral is
\begin{equation}\label{first-integral}
p^2 = C -2m \int_0^q r^{m-1}f(r) dr,
\end{equation}
for suitable choice of $C$.  We consider only the trajectories in the region $\{q\geq 0\}$ which correspond to nonnegative solutions.

{\bf Case A}. The points $(0,0)$ and $(1,0)$ are singular ones on the $(q,p)$-phase plane for the system \eqref{SS-sys}. Hence, they correspond to constant stationary solutions:
$$
U_q (x) \equiv q, \quad x\in \R,
$$
for $q=0,\ 1$. We will show below that $U_1$ is the only possible choice for the $\omega$-limit of the solution $u$ of (CP).

{\bf Case B}. When $C = C_0 :=  2m\int_0^1 r^{m-1}f(r) dr $, the trajectories on the phase plane are two curves, whose functions are
$$
p = P_0(q):= \left( 2m \int_q^1 r^{m-1}f(r) dr\right)^{1/2} \mbox{\ \ and\ \  }p=-P_0(q),\quad 0\leq q<1.
$$
They connect regular points $Q_\pm := (0, \pm \sqrt{C_0})$ to the singular point $(1,0)$.
Denote the corresponding stationary solution by $U^\pm (x)$. Then
$$
U^+(l_0)=0,\quad U^+(x)>0 \mbox{ for }x>l_0, \mbox{\ \ and\ \ } U^+(x)\nearrow 1\mbox{ as }x\to +\infty;
$$
$$
U^-(r_0)=0,\quad U^-(x)>0 \mbox{ for }x<r_0, \mbox{\ \ and\ \ } U^-(x)\nearrow 1\mbox{ as }x\to -\infty,
$$
where $l_0$ is the left boundary of $U^+$ and $r_0$ is the right boundary of $U^-$.

{\bf Case C}. When $C>C_0$, the trajectories start at $(0,\pm \sqrt{C})$, go beyond the line $q=1$ and so the the corresponding stationary solutions are unbounded ones, which will not be used below.

{\bf Case D}. When $0<C<C_0$, the trajectory $p^2 +2m \int_0^q r^{m-1}f(r) dr=C$ is a connected
curve, symmetric with respect to the $q$-axis, and connecting the points
$Q_+ := (0, \sqrt{C} )$, $Q_0:= (q_0 ,0)$ and $Q_-:= (0, - \sqrt{C})$ for a unique $q_0\in (0,1)$ which is defined as the following
$$
C= 2m \int_0^{q_0} r^{m-1}f(r) dr.
$$
This trajectory corresponds to a stationary solution $U_{q_0}(x)$, under the normalized condition $U'_{q_0}(0)=0$, it satisfies
\begin{equation}\label{compact-ss}
U_{q_0}(0)=q_0,\quad U_{q_0}(\pm L_0)=0 \mbox{ for some }L_0>0,\quad U'_{q_0}(x)<0,\ \ U_{q_0}(x)=U_{q_0}(-x) \mbox{ for }x\in (0,L_0).
\end{equation}

We now study the order of the solution $U$ in Cases B (in this case, it is $U^+$), C or D near its left boundary. Denote the left boundary by $l_0$. By \eqref{first-integral} we have
$$
(U^m)' =  \left( C - 2m \int_0^U r^{m-1}f(r) dr\right)^{1/2} \sim \sqrt{C},\quad x\to l_0 +0.
$$
So,
\begin{equation}\label{combustion-q-V-0}
U^m (x) \sim \sqrt{C} (x-l_0),\quad x \to l_0 + 0.
\end{equation}
If we use $V := \frac{m}{m-1}U^{m-1}$ to denote the corresponding stationary solution of pRPME we have
\begin{equation}\label{rate-V-0}
V(x)= \frac{m}{m-1} C^{\frac{m-1}{2m}} (x-l_0)^{\frac{m-1}{m}},\quad x \to l_0+0.
\end{equation}

\begin{rem}\label{rem:ss-eq-p}
\rm We point out that the solutions $U^\pm $ in Case B and $U_{q_0}$ in Case D are just stationary solutions of RPME but not stationary solutions of the Cauchy problem (CP). In fact, the expression in \eqref{rate-V-0} and Lemma \ref{lem:no-waiting-time} imply that, when we choose $V(x)$ as an initial data of the problem (pCP), the waiting time of its boundaries are $0$. In particular, in the bistable RPME, $U_{q_0}$ is not a Type II ground state solution.
\end{rem}

We continue to specify the relationship between the support width $2L_0$ of $U_{q_0}$ in Case D and its height $q_0$.

\begin{lem}\label{lem:width-height}
When $f$ is a monostable reaction term, there holds $L_0\to 0$ as $q_0\to 0+0$. In addition, if $f$ satisfies $f(\rho u)>\rho^m f(u)$ for $\rho, u\in (0,1)$, then $L_0$ is strictly increasing in $q_0\in (0,1)$.
\end{lem}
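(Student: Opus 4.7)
The plan is to turn $L_0 = L_0(q_0)$ into an explicit phase-plane integral and read off both statements from a single rescaled identity. Starting from the first integral \eqref{first-integral} for the trajectory of $U_{q_0}$, on which $C = C(q_0) := 2m\int_0^{q_0} r^{m-1} f(r)\,dr$ and $p = -\sqrt{2m\int_q^{q_0} r^{m-1} f(r)\,dr}$ along the decreasing branch $x\in(0,L_0)$, the relation $dx = m q^{m-1}/(-p)\,dq$ gives
\[
L_0(q_0) \;=\; \int_0^{q_0} \frac{m\, q^{m-1}}{\sqrt{2m \int_q^{q_0} r^{m-1} f(r)\,dr}} \, dq.
\]
Then I substitute $q=q_0 s$ and $r=q_0 t$: the $q_0^m$ factors arising from $q^{m-1}dq$ and from the inner integral combine to cancel against the $q_0^{m/2}$ from the denominator, and after pulling $f(q_0 t) = q_0^m\cdot f(q_0 t)/q_0^m$ one arrives at the key rescaled identity
\[
L_0(q_0) \;=\; \int_0^{1} \frac{\sqrt{m/2}\; s^{m-1}}{\sqrt{\int_s^1 t^{m-1}\, \dfrac{f(q_0 t)}{q_0^m}\,dt}} \, ds. \qquad (\star)
\]

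For the first assertion, the monostable hypothesis $f'(0)>0$ yields $f(q_0 t)/q_0^m = f'(0)\, q_0^{1-m}\, t\,[1+o(1)]$ as $q_0\to 0+0$, which, because $m>1$, tends to $+\infty$ uniformly on $t\in[s,1]$ for each $s\in(0,1)$. Plugging the leading term into $(\star)$ produces the explicit asymptotic
\[
L_0(q_0) \;\sim\; K\, q_0^{(m-1)/2}, \qquad K \;=\; \sqrt{\tfrac{m+1}{2f'(0)}} \int_0^1 \frac{\sqrt{m}\, s^{m-1}}{\sqrt{1-s^{m+1}}}\,ds,
\]
the integral being convergent since $1-s^{m+1}\sim (m+1)(1-s)$ at $s=1$, hence $L_0(q_0)\to 0$. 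The convergence can be rigorously justified by dominated convergence once one bounds $f(q_0t)/q_0^m$ below by $\tfrac12 f'(0)\,q_0^{1-m}t$ for $q_0$ small, which also produces the integrable majorant for the reciprocal.

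For the second assertion, a one-line algebraic check shows that the hypothesis $f(\rho u) > \rho^m f(u)$ for $\rho,u\in(0,1)$ is equivalent to $h(v):=f(v)/v^m$ being strictly decreasing on $(0,1)$. For each fixed $t\in(0,1]$, the map $q_0\mapsto f(q_0 t)/q_0^m = t^m\, h(q_0 t)$ is therefore strictly decreasing in $q_0\in(0,1)$, so the inner integral in $(\star)$ is strictly decreasing in $q_0$ and consequently $L_0(q_0)$ is strictly increasing. The only mildly delicate step is the passage to the limit in the first part; once $(\star)$ is established, the monotonicity in the second part is essentially immediate.
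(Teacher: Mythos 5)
Your proof is correct, and it takes a genuinely different route from the paper's. The paper proves $L_0\to 0$ by choosing $\delta=\delta(\varepsilon)$ so that $f(u)\geq \tfrac{\pi^2}{4\varepsilon^2}u^m$ on $[0,\delta)$ (possible because $f'(0)>0$ and $m>1$ imply $f(u)/u^m\to\infty$), using the first integral to bound $(q^m)'$ from below, and integrating to obtain $L_0\leq\varepsilon$. For strict monotonicity it changes variables to $\tilde v:=u^m$, $\tilde f(s):=f(s^{1/m})$, notes that the hypothesis is exactly ``$\tilde f(s)/s$ decreasing,'' and then appeals to the standard Fisher--KPP uniqueness/monotonicity result for the Dirichlet problem on $(-L_0,L_0)$. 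You instead write down the time-map integral $L_0(q_0)=\int_0^{q_0}\frac{m q^{m-1}}{\sqrt{2m\int_q^{q_0}r^{m-1}f(r)\,dr}}\,dq$ and rescale $q=q_0 s$, $r=q_0 t$ to get a single identity $(\star)$ from which both assertions follow by inspection. This is more self-contained (no appeal to outside KPP theory for part two) and gives more: the explicit asymptotic $L_0\sim K\,q_0^{(m-1)/2}$, which is stronger than ``$L_0\to 0$.'' Two minor remarks: in the first part, once you have the uniform lower bound $f(q_0 t)/q_0^m\geq \tfrac12 f'(0)q_0^{1-m}t$, the conclusion follows immediately from the resulting pointwise upper bound on the integrand rather than requiring dominated convergence; and in the second part it is worth saying explicitly that the inner integral $\int_s^1 t^{2m-1}h(q_0 t)\,dt$ is finite and strictly positive for every $s\in(0,1)$, so taking reciprocals of square roots is legitimate. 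Neither affects correctness.
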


\begin{proof}
By $f'(0)>0$, there exists $\delta>0$ small such that
\begin{equation}\label{u0-is-not-small-near0}
f(u)\geq \frac{\pi^2}{4\varepsilon^2} u^m,\quad u\in [0,\delta).
\end{equation}
For any $q_0 \in (0,\delta)$, by \eqref{first-integral} we have
$$
p^2 = 2m\int_q^{q_0} r^{m-1} f(r) dr \geq \frac{\pi^2}{4 \varepsilon^2} (q_0^{2m} -q^{2m}).
$$
Therefore, on $x\in [-L_0, 0]$ we have
$$
p= (q^m)' \geq  \frac{\pi}{2\varepsilon} q_0^{m} \Big[ 1- \Big(\frac{q}{q_0}\Big)^{2m} \Big]^{1/2} .
$$
Since $q (-L_0)=0, q (0)=q_0$, by integrating the above inequality over $[-L_0,0]$ we have
$$
{\frac{\pi}{2\varepsilon}} L_0 \leq \int_0^1 \frac{dr}{\sqrt{1-r^2}}= \frac{\pi}{2},
$$
that is, $L_0\leq \varepsilon$.

Denote $\tilde{v}:= u^m$ and $\tilde{f}(s):= f(s^{\frac{1}{m}})$, then the stationary problem
$$
(u^m)'' +f(u)=0,\ \ u(x)>0 \mbox{ in } (-L_0,L_0),\qquad u(\pm L_0)=0,
$$
is equivalent to
$$
\tilde{v}'' + \tilde{f}(\tilde{v})=0,\ \ \tilde{v}(x)>0 \mbox{ in } (-L_0,L_0),\qquad \tilde{v}(\pm L_0)=0.
$$
By our assumption $\tilde{f}$ is a Fisher-KPP type of nonlinearity: $\tilde{f}(s)/s$ is decreasing in $s\in (0,1)$. Hence $\tilde{v}(0)$ is strictly increasing in $L_0$. This proves the lemma.
\end{proof}

\subsection{Hair-trigger effect}
Now we can use the general convergence theorem to prove the hair-trigger effect easily.

\medskip
\noindent
{\it Proof of Theorem \ref{thm:mono}.}
By Lemma \ref{lem:width-height}, there exist sufficiently small $q_0>0$, small $L_0>0$ and some $x_0\in \R$ such that
\begin{equation}\label{u0-is-not-small}
u_0(x) \geq U_{q_0}(x-x_0),\quad x\in [-L_0+x_0, L_0+x_0].
\end{equation}
Since $U_{q_0}(x-x_0)$ is a time-independent very weak subsolution of (CP), by the comparison result in Lemma \ref{lem:ss-problem} (iv) we have
$$
u(x,t) \geq U_{q_0}(x-x_0),\quad x\in \R,\ t>0.
$$
In the monostable PME, the only stationary solution of (CP) larger than $U_{q_0}$ is $1$. So, by using the general convergence theorem we conclude that $u(x,t)\to 1$ in $L^\infty_{loc}(\R)$ topology. This proves the hair-trigger effect.
\hfill $\Box$

\begin{rem}\label{rem:hair-weak}\label{rem:hair-trigger}\rm
From the above proofs we see that \eqref{u0-is-not-small-near0} holds if $\lim\limits_{u\to 0+0} \frac{f(u)}{u^m} =\infty$. So, the hair-trigger effect for the monostable RPME actually holds under this condition rather than the stronger one $f'(0)>0$.
\end{rem}

\section{Combustion RPME}
In this section we study the asymptotic behavior for the solutions of (CP) or (pCP) with combustion type of reaction: $f$ satisfies (f$_C$).

\subsection{Stationary solutions}\label{subsec:-com-ss}

On the nonnegative stationary solutions of the equation in (CP), we first have Case A - Case D types as in the monostable RPME. The difference is that:
in Case A the solutions are $U_q\equiv q$ for $q\in [0,\theta]\cup \{1\}$ in the combustion case (However, only $U_0, U_\theta$ and $U_1$ are possible choices for the $\omega$-limits of the solution $u$ of (CP).); in Case D, the compactly supported stationary solutions $U_{q_0}$ exists only for $q_0 \in (\theta,1)$. The decay rates in \eqref{combustion-q-V-0} and \eqref{rate-V-0} remain valid (in fact, they are true for $x\in [l_0, l_0 + C^{-1/2} \theta^m]$ due to $f(u)\equiv 0$ in $[0,\theta]$). As we mentioned in Remark \ref{rem:ss-eq-p}, each $U_{q_0}$ is just a stationary solution of the equation but not of the Cauchy problem (CP).

\subsection{Asymptotic behavior of the solutions}
Since the current problem has no Type I and Type II ground state solutions, by the general convergence results in Theorem \ref{thm:general-conv} we know that $u(\cdot,t)$ converges as $t\to \infty$ to some constant solution in Case A.

Now we present a sufficient condition for the spreading phenomena: $u(\cdot,t)\to 1$ as $t\to \infty$.

\begin{lem}\label{lem:sufficient-for-spreading}
Assume $f$ is a combustion reaction, $u_0\in \mathfrak{X}$ satisfies
$$
u_0(x) \geq U_{q_0}(x),\quad x\in \R,
$$
for some stationary solution $U_{q_0}$ in Case D. Then spreading happens for the solution $u$ of {\rm (CP)}.
\end{lem}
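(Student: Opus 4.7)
The strategy is to combine the stationary subsolution $U_{q_0}$ with Theorem \ref{thm:general-conv}, then exclude every undesired constant $\omega$-limit using the resulting lower bound.

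First I would verify that $U_{q_0}$ is a time-independent very weak subsolution of (CP). By Remark \ref{rem:ss-eq-p}, $U_{q_0}$ is \emph{not} itself a stationary solution of the Cauchy problem, because its pressure $V$ has the decay \eqref{rate-V-0} at $\pm L_0$, which by Lemma \ref{lem:no-waiting-time} forces the waiting time at the boundaries to be zero rather than infinite. Nevertheless Lemma \ref{lem:ss-problem}\,(iv) still applies and produces the global lower bound
\[
u(x,t)\ \geq\ U_{q_0}(x),\qquad x\in\R,\ t>0.
\]
In particular $u(0,t)\geq U_{q_0}(0)=q_0>\theta$ for every $t>0$.

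Next I would invoke Theorem \ref{thm:general-conv}. Since (F) together with the comparison principle yields $0\leq u\leq\max\{1,\|u_0\|_{L^\infty}\}$, the solution is globally bounded, so the theorem guarantees that $u(\cdot,t)$ converges (in the sense of \eqref{def-conv}) to either a nonnegative zero of $f$, a shift of a Type I ground state, or a finite sum of translated Type II ground states. The combustion hypothesis rules out the last two kinds, as explained at the start of Subsection \ref{subsec:-com-ss}: the phase-plane homoclinic orbit that would produce such a ground state requires $f<0$ in a right neighborhood of $0$, whereas here $f\equiv 0$ on $[0,\theta]$. Hence the limit is a constant $c$ that is a zero of $f$, i.e.\ $c\in[0,\theta]\cup\{1\}$. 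The pointwise lower bound $u(0,t)\geq q_0>\theta$ passes to the $C_{\mathrm{loc}}$-limit and forces $c\geq q_0>\theta$, so $c=1$. Because the limit is a constant, $C_{\mathrm{loc}}$-convergence coincides with $L^\infty_{\mathrm{loc}}$-convergence on every compact set, which is precisely the spreading conclusion.

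The only delicate point in the plan is the first step, since $U_{q_0}$ vanishes outside a compact set and is not an equilibrium of (CP); it is precisely the boundary-flux term $(\bar u^m)_x(\pm L_0)\leq 0$ that appears in the proof of Lemma \ref{lem:ss-problem}\,(iv) that pushes the test-function inequality in the correct direction. Once that subsolution property is in hand, the remainder of the argument is a direct instantiation of Theorem \ref{thm:general-conv} together with the classification of combustion equilibria in Subsection \ref{subsec:-com-ss}.
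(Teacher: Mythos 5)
Your proposal is correct and follows essentially the same route as the paper: comparison with the stationary subsolution $U_{q_0}$ (via Lemma~\ref{lem:ss-problem}\,(iv)) gives the lower bound, Theorem~\ref{thm:general-conv} plus the absence of ground states in the combustion case forces any $\omega$-limit to be a constant zero of $f$, and the lower bound $u\geq U_{q_0}$ then excludes every constant except $1$. Your explicit use of the pointwise inequality $u(0,t)\geq q_0>\theta$ is just a cleaner way of phrasing the paper's remark that ``any $\omega$-limit of $u$ is also above $U_{q_0}(x)$.''
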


\begin{proof}
By the comparison result, the solution $u(x,t)$ of (CP) remains above $U_{q_0}(x)$. Hence any $\omega$-limit of $u$ is also above $U_{q_0}(x)$. The only possible choice for such stationary solutions in Case A is nothing but $U_1 \equiv 1$. Moreover, due to the finite propagation speed and the properties in Lemma \ref{lem:monotonocity}, we know that ${\rm spt}[u(\cdot,t)] =[l(t),r(t)]$ for large $t$, hence the convergence $u\to 1$ holds in the sense of \eqref{def-conv}.
\end{proof}

Next we present some sufficient conditions for the vanishing phenomena: $u(\cdot,t)\to 0$ as $t\to \infty$.

\begin{lem}\label{lem:sufficient-vanishing}
Assume $f$ is a combustion reaction, $u_0\in \mathfrak{X}$ satisfies $u_0(x)\leq \theta$. Then vanishing happens in the topologies of $L^\infty(\R)$ for the solution $u$ of {\rm (CP)}.
\end{lem}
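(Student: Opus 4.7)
My plan is to reduce the problem to the pure porous medium equation and then invoke the standard $L^\infty$ decay for PME with integrable initial data. First I observe that the constant $\bar u\equiv\theta$ is a stationary solution of (CP): indeed $f(\theta)=0$ by the combustion assumption \eqref{combus}, so $\bar u$ solves the equation trivially, and since $u_0(x)\le \theta$ pointwise, the comparison principle for very weak solutions (cf.\ \cite[Theorem 6.5]{Vaz-book}) yields
$$
0\le u(x,t)\le \theta\qquad\text{for all }(x,t)\in\R\times[0,\infty).
$$
The decisive observation is that $f\equiv 0$ on $[0,\theta]$, so along the trajectory of $u$ the reaction term vanishes identically; hence $u$ is a bounded, nonnegative, very weak solution of the \emph{pure} porous medium equation $u_t=(u^m)_{xx}$, with compactly supported initial datum $u_0\in\mathfrak{X}\subset L^1(\R)\cap L^\infty(\R)$.

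Next I would compare $u$ with a Zel'dovich--Kompaneets--Barenblatt (ZKB) envelope
$$
B_{M,\tau}(x,t):=(t+\tau)^{-\lambda}\left[C_M-\frac{\lambda(m-1)}{2m}\left(\frac{|x|}{(t+\tau)^\lambda}\right)^2\right]_+^{\frac{1}{m-1}},\qquad \lambda:=\frac{1}{m+1},
$$
which is an explicit nonnegative solution of $u_t=(u^m)_{xx}$ with mass $M$ and support of radius $O((t+\tau)^\lambda)$. Because $u_0$ has compact support and $\|u_0\|_{L^\infty}\le\theta$, one can choose $M>0$ large and then $\tau>0$ large so that the flat profile $B_{M,\tau}(\cdot,0)$ dominates $u_0$ pointwise. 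Comparison for the pure PME then gives $0\le u(x,t)\le B_{M,\tau}(x,t)$ for all $t\ge 0$, and since $\|B_{M,\tau}(\cdot,t)\|_{L^\infty(\R)}=C_M^{1/(m-1)}(t+\tau)^{-\lambda}\to 0$ as $t\to\infty$, we obtain $\|u(\cdot,t)\|_{L^\infty(\R)}\to 0$, which is exactly vanishing in the $L^\infty(\R)$ topology claimed in the lemma.

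The argument is short once Step 1 is in place; I do not expect a serious obstacle. The only minor technical point is that the initial pointwise comparison $B_{M,\tau}(\cdot,0)\ge u_0$ must be transferred to the very weak framework. This is standard: approximate $u_0$ from above by smooth, strictly positive data (on which classical comparison for the nondegenerate approximating problems applies) and pass to the very weak limit as in \cite[Chapters 5--6]{Vaz-book}. Equivalently, one may bypass the explicit envelope altogether and cite directly the $L^1$--$L^\infty$ smoothing estimate for nonnegative PME solutions on $\R$, which yields $\|u(\cdot,t)\|_{L^\infty}=O(t^{-1/(m+1)})$ from the conserved $L^1$ mass of $u_0$.
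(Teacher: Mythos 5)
Your proposal is correct and takes essentially the same approach as the paper, which simply states that since $u\le\theta$ for all time it solves the pure PME and can be suppressed by a ZKB supersolution; you have merely filled in the comparison and decay details that the paper leaves implicit.
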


\begin{proof}
The proof is simple since $u$ actually solves the PME when $u(x,t)\leq \theta$, and so we can take a ZKB solution as a supersolution to suppress $u$ to $0$.
\end{proof}

\begin{rem}\label{rem:no-small-constant}\rm
For any $q\in (0,\theta)$, $U_q \equiv q$ will not be an $\omega$-limit. In fact, if $u(x,t_n)\to q\ (n\to \infty)$ in the $C_{loc}(\R)$ topology, then, in the bounded interval $[-b,b]$, $u(x,T)<\theta$ for some large $T$. This inequality also holds in $[l(T),r(T)]$ by the monotonicity outside of $[-b,b]$. Hence vanishing happens for $u$  by the above lemma, a contradiction.
\end{rem}

To prove the sharpness of the transition: $u\to \theta$, we need the following lemmas.

\begin{lem}\label{lem:separate}
Assume {\rm (F)}. For $i=1,2$, assume $u_{i0}\in C(\R)$ satisfies
$$
u_{i0}(x)>0 \mbox{ in }(l_i(0),r_i(0)),\quad u_{i0}(x)=0 \mbox{ for }x\leq l_i(0) \mbox{ and }x\geq r_i(0),
$$
and
$$
u_{10}(x)\leq u_{20}(x),\quad x\in \R.
$$
Let $u_i$ be the solution of {\rm (CP)} with initial data $u_{i0}$ and let $l_i(t),\ r_i(t)$ be its left and right free boundaries. Then there exist $T\geq 0,\ \tau\geq 0$ and $\varepsilon_0>0$ such that
\begin{equation}\label{shift<shift}
u_1(x,t)\leq u_2(x+\varepsilon,t+\tau),\quad x\in \R,\ t\geq T,\ \varepsilon\in [0,\varepsilon_0] \mbox{ or } \varepsilon\in [-\varepsilon_0,0],
\end{equation}
provided one of the following conditions holds:

\begin{enumerate}[{\rm (i).}]
\item $r_2 (t)-l_2(t)\not\equiv r_1(t)-l_1(t)$ for all $t\geq 0$;
\item $t^*(l_2(0))$ or $t^*(r_2(0))$ is finite, and $r_2 (t)-l_2(t)= r_1(t)-l_1(t)$ for all $t\geq 0$.
\end{enumerate}

\end{lem}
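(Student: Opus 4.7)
The plan is: establish the baseline ordering $u_1\le u_2$ by comparison, show that either hypothesis forces a strict separation of an appropriate free boundary on a short time window, upgrade this boundary gap into a pointwise domination $u_1(\cdot,T)\le u_2(\cdot+\varepsilon,T+\tau)$ via the strong maximum principle in the interior, and then propagate the inequality by comparison once more.

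Standard comparison for very weak solutions (\cite[Thm.\,5.5]{Vaz-book}) yields $u_1(x,t)\le u_2(x,t)$ everywhere, so $l_2\le l_1$ and $r_1\le r_2$. I next produce a strict separation of boundaries. In Case~(i), the strict width inequality at some $t_0$ forces $l_2(t_0)<l_1(t_0)$ or $r_1(t_0)<r_2(t_0)$; say the latter. Then $u_2(r_1(t_0),t_0)>0$, and positivity persistence (Section~2.1) applied on a small neighbourhood of $r_1(t_0)$ gives $u_2(x,t)>0$ on $[r_1(t_0)-\eta,r_1(t_0)+\eta]$ for all $t\ge t_0$, so $r_2(t)\ge r_1(t_0)+\eta$, while by continuity of $r_1$ we have $r_1(t)<r_1(t_0)+\eta$ on a short window $[t_0,t_0+\tau_0]$. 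In Case~(ii), the equal-widths hypothesis together with the baseline ordering forces $l_1\equiv l_2$ and $r_1\equiv r_2$; the finite-waiting-time assumption then says the common right boundary starts moving at some finite time and, once moving, keeps moving (Step~2 in the proof of Theorem~\ref{thm:Darcy-law}), so for $T$ past the waiting time and $\tau>0$ one has $\delta:=r_2(T+\tau)-r_1(T)>0$. Either way I obtain $T\ge 0$, $\tau>0$, $\delta>0$ with $r_2(T+\tau)\ge r_1(T)+\delta$ (the symmetric situation $l_2(T+\tau)\le l_1(T)-\delta$ corresponds to the $\varepsilon\in[-\varepsilon_0,0]$ branch).

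Assume $u_1\not\equiv u_2$, else the lemma is trivial. The strong maximum principle in the interior of $\mathrm{spt}(u_1(\cdot,t))\cap\mathrm{spt}(u_2(\cdot,t))$, where both $u_i$ are classical positive solutions of the non-degenerate equation, gives $u_1<u_2$ strictly. Fix a small $\eta>0$ and split $\mathrm{spt}(u_1(\cdot,T))$ into the compact set $K_\eta:=\{u_1(\cdot,T)\ge\eta\}$ and a $\delta_1$-neighbourhood of its free boundaries. On $K_\eta$ the interior gap $u_2(\cdot,T)-u_1(\cdot,T)\ge 2c>0$ is uniform, and joint continuity in $(\varepsilon,\tau)$ preserves the ordering $u_1(x,T)\le u_2(x+\varepsilon,T+\tau)$ for $\varepsilon\in[0,\varepsilon_0]$ with $\varepsilon_0$ small. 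On the $\delta_1$-neighbourhood of $r_1(T)$, the shifted argument $x+\varepsilon$ lies in a compact subinterval of $(l_2(T+\tau),r_2(T+\tau))$ by the boundary gap just established, so $u_2(x+\varepsilon,T+\tau)\ge c'>0$ uniformly there; taking $\eta<c'$ delivers $u_1(x,T)<c'\le u_2(x+\varepsilon,T+\tau)$. A symmetric argument handles the left boundary, and outside $\mathrm{spt}(u_1(\cdot,T))$ the inequality is trivial.

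Finally, for each such $\varepsilon$, the function $w(x,t):=u_2(x+\varepsilon,t+\tau)$ is itself a very weak solution of (CP), and the previous paragraph supplies $u_1(\cdot,T)\le w(\cdot,T)$; the comparison principle then extends the inequality to every $t\ge T$, yielding \eqref{shift<shift}. The main obstacle is the middle step: because the pressure equation degenerates at the free boundary, the Hopf lemma is not directly available to quantify the gap $r_2-r_1$ (or $l_1-l_2$), and one must combine positivity persistence, Darcy's law, and the regularity of the interfaces from Theorem~\ref{thm:Darcy-law} to keep the gap alive over the chosen window.
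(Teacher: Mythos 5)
Your skeleton (baseline comparison $u_1\le u_2$, a strict separation of one free boundary, an interior strict inequality plus a boundary-layer estimate to absorb a small shift, then comparison to propagate) is the paper's skeleton, and your case (i) is essentially the paper's argument (the paper takes $\tau=0$ there, applies the strong maximum principle on $\{l_1(t)\le x<r_1(t)\}$ just before $T_0$, and shifts toward the side carrying the boundary gap). The genuine gap is in case (ii), and it begins with the sentence ``Assume $u_1\not\equiv u_2$, else the lemma is trivial.'' That case is not trivial: hypothesis (ii) permits $u_{10}\equiv u_{20}$, and then \eqref{shift<shift} is the self-comparison $u(x,t)\le u(x+\varepsilon,t+\tau)$ for an entire interval of nonzero shifts $\varepsilon$, which certainly does not follow from $u_1\le u_2$. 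Worse, your interior mechanism --- a uniform equal-time gap $u_2(\cdot,T)-u_1(\cdot,T)\ge 2c>0$ on $K_\eta$, then continuity in $(\varepsilon,\tau)$ --- collapses precisely there, since the equal-time gap vanishes and solutions are not monotone in time, so nothing supplies $u(x+\varepsilon,T+\tau)\ge u(x,T)$ on $K_\eta$.

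The paper's proof of (ii) is built to avoid needing any gap between $u_1$ and $u_2$. It exploits the strict spatial monotonicity of $u_1(\cdot,T_1)$ and of $u_2(\cdot,T_1+\tau)$ on the region $[l_2(\cdot),l_1(0)]$ swept out beyond the initial support (Lemma \ref{lem:monotonocity} / Corollary \ref{cor:multi-mono}), together with the strict decrease of $l_2(t)$ after its finite waiting time, to produce the chain $u_1(x,T_1)\le u_1(x_2(\tau),T_1)=u_2(x_1,T_1+\tau)\le u_2(x,T_1+\tau)$ on $[x_1,x_2(\tau)]$, where $x_2(\tau)$ is the level point of $u_1(\cdot,T_1)$ at height $u_2(x_1,T_1+\tau)$. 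This monotonicity near the moving interface is the ingredient you are missing; it is also what is needed (rather than a single uniform constant $c'$) to control the boundary layer at an interface that has \emph{not} detached --- e.g.\ in (ii) only one waiting time is assumed finite, so at the other, possibly stationary, common boundary your shifted point $x+\varepsilon$ can come arbitrarily close to the free boundary of $u_2(\cdot,T+\tau)$ and $\inf u_2$ there is $0$, not a positive $c'$. Replacing the ``trivial'' reduction by the monotonicity-based chain of Lemma \ref{lem:monotonocity} brings your argument in line with the paper's.
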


\begin{proof}
By comparison we have
$$
u_1(x,t)\leq u_2(x,t)\mbox{ for } x\in \R \mbox{ and } t\geq 0,
\quad l_2(t)\leq l_1(t)<r_1(t)\leq r_2(t)\mbox{ for } t\geq 0.
$$

(i). In case $r_2 (T_0)-l_2(T_0) > r_1(T_0)-l_1(T_0)$ for some $T_0\geq 0$, we assume without loss of generality that $l_2(T_0)<l_1(T_0)$, and by continuity assume $T_0>0$. Then using the strong maximum principle in $\{(x,t)\mid l_1(t)\leq x<r_1(t),\ T_0-1 \ll t \leq T_0\}$ we have
$$
u_1(x,T_0)< u_2(x,T_0)\mbox{ for } x\in [l_1(T_0), r_1(T_0)).
$$
So, there exists a small $\varepsilon_0>0$ such that
$$
u_1(x,T_0)< u_2(x+\varepsilon,T_0)\mbox{ for } x\in [l_1(T_0), r_1(T_0)],\quad \varepsilon\in [-\varepsilon_0,0).
$$
The conclusion then follows from the comparison principle for $T=T_0$ and $\tau=0$.

(ii). Assume without loss of generality that $t^*(l_2(0))<\infty$. Given $T_1 > t^*(l_2(0))$, then both $u_1(\cdot,t)$ and $u_2(\cdot,t)$ are strictly increasing in $J_1:= [x_1, l_1(0)]$, where $x_1:=l_2(T_1) = l_1(T_1)$. By the monotonicity of $l_2(t)$ after the waiting time we see that $l_2(t)$ is strictly decreasing in $t\geq T_1$. Hence, for any small $t>0$ we have
$$
l_2(T_1+ t)<x_1 = l_2(T_1) \mbox{\ \  and\ \  }
0< u_2(x_1,T_1 +t) = u_1(x_2 (t), T_1)< u_1(l_1(0),T_1),
$$
for some $x_2(t)\in [x_1, l_1(0)]$.  
Choose $\tau>0$ small, and denote
$$
E_1:= \{(x,t)\mid l_2(T_1+t)\leq x\leq x_1, 0\leq t\leq \tau\} \mbox{\ \ and\ \ }
E_2:= \{(x,t)\mid x_1\leq x\leq x_2(t), 0\leq t\leq \tau\}.
$$
By the monotonicity of $u_1(\cdot,T_1)$ and $u_2(\cdot,T_1+\tau)$ in $[x_1, x_2(\tau)]$, we have
$$
u_1(x,T_1) < u_1 (x_2(\tau),T_1) =  u_2(x_1,T_1+\tau) < u_2(x,T_1+\tau),\quad x\in [x_1, x_2(\tau)].
$$
Thus there exists $\varepsilon_0>0$ small such that
\begin{equation}\label{left-u1<u2}
u_1(x,T_1) < u_2(x+\varepsilon,T_1+\tau),\quad x\in [x_1, x_2(\tau)],\ \varepsilon \in [-\varepsilon_0,0].
\end{equation}
On the other hand, when $\tau>0$ and $\varepsilon_0$ are sufficiently small we have
$$
u_1(x,T_1) < u_2(x+\varepsilon,T_1+\tau),\quad x\in [x_2(\tau), r_1(T_1)],\ \varepsilon \in [-\varepsilon_0,0].
$$
Combining with \eqref{left-u1<u2} we have
$$
u_1(x,T_1) < u_2(x+\varepsilon,T_1+\tau),\quad x\in [l_1(T_1), r_1(T_1)],\ \varepsilon \in [-\varepsilon_0,0].
$$
Then the conclusion follows from the comparison principle.
\end{proof}

\begin{lem}\label{lem:com-sharp}
Under the assumptions of the previous lemma, if we further assume that $u_1(\cdot,t) \to \theta$ as $t\to \infty$, then $u_2(\cdot,t)\not\to \theta$ as $t\to \infty$.
\end{lem}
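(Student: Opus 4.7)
The plan is to argue by contradiction: assume $u_2(\cdot, t) \to \theta$ as $t \to \infty$ in $L^\infty_{loc}(\R)$. Lemma \ref{lem:separate} yields a one-sided strict shift,
\begin{equation*}
u_1(x, t) \leq u_2(x+\varepsilon, t+\tau), \quad x \in \R, \ t \geq T,
\end{equation*}
for some $T, \tau \geq 0$ and every $\varepsilon \in [0, \varepsilon_0]$ (or, symmetrically, $[-\varepsilon_0, 0]$, handled by reflection $x \mapsto -x$). I take $\varepsilon = \varepsilon_0 > 0$ throughout. Since $u_2 \geq u_1$ and $u_1 \to \theta$ locally uniformly, the alternative $u_2 \to 0$ in $L^\infty(\R)$ is excluded; by Theorem \ref{thm:general-conv} and Remark \ref{rem:no-small-constant}, the only $\omega$-limits available to $u_2$ are $0, \theta, 1$, so the goal is to force $u_2 \to 1$.

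The first step is to upgrade the shift into a quantitative interior gap. Because $u_1 \to \theta$, the free boundaries satisfy $r_1(t), -l_1(t) \to \infty$; in particular, for any fixed $R > 0$, both $u_1(\cdot, t)$ and $w(\cdot, t) := u_2(\cdot+\varepsilon_0, t+\tau)$ are strictly positive and classical on $[-R, R]$ for all large $t$ (by positivity persistence and the local $C^{2,1}$ estimates built into \eqref{def-conv}). In that non-degenerate region the strong maximum principle applied to $w - u_1 \geq 0$ yields strict positivity in the interior. Compactness on $[-R, R]$ then provides a time $T_2 \geq T$ and a constant $\delta_0 > 0$ with
\begin{equation*}
u_2(x+\varepsilon_0, T_2+\tau) \geq u_1(x, T_2) + \delta_0, \quad x \in [-R, R].
\end{equation*}

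Next, since $u_1(\cdot, T_2) \to \theta$ uniformly on $[-R, R]$, by enlarging $T_2$ if necessary (while keeping $R$ and $\delta_0$ fixed from the previous step) I arrange $u_1(x, T_2) \geq \theta - \delta_0/2$ on $[-R, R]$, giving $u_2(x, T_2+\tau) \geq \theta + \delta_0/2$ on $[-R+\varepsilon_0, R+\varepsilon_0]$. As $R$ is arbitrary, $u_2(\cdot, T_2+\tau)$ strictly exceeds $\theta$ by a definite margin on an interval of arbitrarily prescribed length. Using the compactly supported stationary solutions $U_{q_0}$ of the equation from Case D of Subsection \ref{subsec:-com-ss} with some $q_0 \in (\theta, \theta+\delta_0/2)$ and sufficiently wide support, one places $U_{q_0}(\cdot - x_0)$ pointwise beneath $u_2(\cdot, T_2+\tau)$ for an appropriate shift $x_0$. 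Lemma \ref{lem:sufficient-for-spreading} then forces $u_2 \to 1$, contradicting $u_2 \to \theta$.

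The main obstacle is the first step: converting the \emph{qualitative} strict shift from the strong maximum principle into a \emph{quantitative} uniform gap $\delta_0$ that does not degrade when $T_2$ is subsequently enlarged. The subtlety is that the gap produced pointwise by the strong maximum principle at a single time could in principle shrink as $u_1$ and $w$ both drift toward $\theta$, so one must fix the compact interval $[-R, R]$ and the resulting $\delta_0 = \delta_0(R) > 0$ first, and then enlarge $T_2$ only within the regime in which this gap persists, using the time-independence of $\varepsilon_0$ built into Lemma \ref{lem:separate}. A secondary technical point is producing a Case D stationary subsolution of the prescribed large width and height slightly above $\theta$ with which to trigger spreading, which will use the combustion analogue of the height--width monotonicity established in Lemma \ref{lem:width-height}.
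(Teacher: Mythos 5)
There is a genuine gap at the heart of the argument, namely the claim that you can select a time $T_2$ and a constant $\delta_0 > 0$ with $u_2(x+\varepsilon_0, T_2+\tau) \geq u_1(x,T_2) + \delta_0$ on $[-R,R]$ and then \lq\lq enlarge $T_2$ if necessary while keeping $R$ and $\delta_0$ fixed." Under your contradiction hypothesis $u_1(\cdot,t)$ and $u_2(\cdot+\varepsilon_0,t+\tau)$ both converge to the \emph{same} constant $\theta$ locally uniformly, so their difference tends to $0$ uniformly on $[-R,R]$. The gap produced by the strong maximum principle is therefore a quantity $\delta_0(T_2)$ that necessarily shrinks to $0$ as $T_2\to\infty$, and there is no \lq\lq regime in which this gap persists." To close your argument you simultaneously need $u_1(\cdot,T_2)\geq\theta-\delta_0(T_2)/2$, that is, the convergence of $u_1$ to $\theta$ must beat the shrinking of the gap by a factor of two; nothing in Lemma \ref{lem:separate} or the strong maximum principle compares these two rates, so the key conclusion $u_2(\cdot,T_2+\tau)\geq\theta+\delta_0/2$ on a wide interval is never actually reached. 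The time-independence of $\varepsilon_0$ does not help: it only supplies the \emph{weak} ordering for all $t\geq T$, not any uniform interior margin.

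The paper's proof sidesteps exactly this obstacle by rescaling rather than by chasing a vanishing gap. It compares $u_2$ with $w^\lambda(x,t):=\lambda^{-2}u_1(\lambda^m x,\lambda^2 t)$ for $\lambda<1$ close to $1$: the rescaling makes $w^\lambda$ a subsolution of (CP) in the region where $w^\lambda\leq\theta+\delta$ (the one place the combustion monotonicity $f'>0$ on $(\theta,\theta+\delta]$ is used) and, crucially, lifts the limit of $w^\lambda$ from $\theta$ to $\lambda^{-2}\theta>\theta$. Lemma \ref{lem:separate} supplies the initial ordering $w^\lambda(\cdot,T')\leq u_2(\cdot+\varepsilon_1,T')$, comparison propagates it forward in time, and letting $t\to\infty$ gives $\theta\geq\lambda^{-2}\theta$, a clean contradiction that requires no interior gap. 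If you wished to rescue your route you would in addition need to control how the Case D support width $2L_0$ grows as $q_0\to\theta+0$, which compounds the circularity: a wider target interval forces a larger $T_2$, which shrinks $\delta_0(T_2)$, which forces $q_0$ even closer to $\theta$ and hence $L_0$ larger still.
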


\begin{proof}

Since $u_1\to \theta$, there exists $T'\geq T+1$ for $T$ in \eqref{shift<shift} such that
\begin{equation}\label{u*-T}
u_1 (x,t) < \theta + \frac{\delta}{2},\quad x\in \R,\ t\geq T'-1,
\end{equation}
where $\delta$ is given in \eqref{combus}. Now, for $\lambda\in(0,1)$, we define
\[
w^\lambda(x,t):=\lambda^{-2}\:\! u_1 (\lambda^m x, \lambda^2 t),\quad x\in \R,\ t\geq T'.
\]
We choose
$\lambda_0\in(0,1)$ close enough to $1$ so that, for every
$\lambda\in[\lambda_0,1)$,
\begin{equation}\label{vsigma-bound}
 w^\lambda(x,t)\leq \theta+ \delta, \quad x\in \R, \ t\geq  T',
\end{equation}
and, by \eqref{shift<shift},
\begin{equation}\label{vsigma-u-T}
w^\lambda(x,T')\leq u_2 (x+\varepsilon_1,T'),\quad x\in  [\lambda^{-m}l_1(T'), \lambda^{-m}r_1(T')]
\end{equation}
for some $\varepsilon_1$.  Observe that $w^\lambda$ satisfies the equation
\[
w^\lambda_t= (w^\lambda)^m_{xx}+f(\lambda^2 w^\lambda),\quad x\in \R,\ t>0.
\]
By \eqref{vsigma-bound} and the assumption in (f$_C$) we have $f(\lambda^2 w^\lambda)\leq f(w^\lambda)$.  (Here is the only place we use the assumption $f'(u)>0$ in $(\theta, \theta+\delta]$.)
Therefore in view of \eqref{vsigma-u-T}, we
find that $w^\lambda$ is a subsolution  of (CP) for $t\geq T'$. It follows that $u_2(x+\varepsilon_1,t)\geq w^\lambda (x,t)$ for $t\geq T'$.
If $u_2\to \theta\ (t\to \infty)$, then by taking limit as $t\to \infty$ we conclude that $\theta \geq \lambda^{-2} \theta$, a contradiction.
\end{proof}

\bigskip
\noindent
{\it Proof of Theorem \ref{thm:com}}.
From above lemmas we know that $u(\cdot,t)$ converges as $t\to \infty$ to $0$, or $\theta$, or $1$.

For the initial data $\phi_\sigma \in \mathfrak{X}$, when $\sigma>0$ is small we have by $(\Phi_3)$ that $\phi_\sigma \leq \theta$. Hence, vanishing happens for $u_\sigma$ by Lemma \ref{lem:sufficient-vanishing}. Denote
$$
\Sigma_0:= \{\sigma>0 \mid \mbox{vanishing happens for } u_\sigma\}.
$$
Then $\Sigma_0$ is not empty, and, by comparison, it is an interval. Since $\phi_\sigma$ depends on $\sigma$ continuously, so does the solution $u_\sigma$ (cf. \cite{ACP}). Then, $\Sigma_0$ is an open interval $(0,\sigma_*)$ for some $\sigma_* \in (0,\infty]$.

In case $\sigma_*=\infty$ (this is the complete vanishing case, as stated in Theorem \ref{thm:complete-vanishing-main}), there is nothing left to prove.

The left case is $\sigma_* \in (0,\infty)$. (This happens in particular when  the condition in Lemma \ref{lem:sufficient-for-spreading} holds.) In this case, the solution $u_{\sigma_*}$ with critical initial data $\phi_{\sigma_*}$ converges to $\theta$ or $1$. We will show that $u_{\sigma_*}$ is a transition solution. Otherwise, spreading happens for $u_{\sigma_*}\to 1$, that is,
$$
\sigma_* \in \Sigma_1:= \{\sigma\geq \sigma_* \mid \mbox{spreading happens for } u_\sigma\}.
$$
Then, for any $q_0\in (\theta,1)$ and the corresponding stationary solution $U_{q_0}$ in Case D in Subsection \ref{subsec:-com-ss}, there exists a sufficiently large $T$ such that
$$
u_{\sigma_*}(x,T) > U_{q_0} (x),\quad x\in {\rm spt} [U_{q_0}].
$$
By the continuous dependence of $u$ on the initial data, for any $\sigma$ satisfying $0<\sigma_* -\sigma\ll 1$, we also have
$$
u_{\sigma}(x,T) > U_{q_0} (x),\quad x\in {\rm spt} [U_{q_0}].
$$
Consequently, the $\omega$-limits of $u_{\sigma}$ are bigger than $U_{q_0}$, but these $\omega$-limits are nothing but $1$, that is, spreading happens for $u_\sigma$. This implies that $\Sigma_1$ is an open set and it contains $\sigma_*$, contradicts the definition of $\sigma_*$.
Therefore, $u_{\sigma_*}$ is a transition solution, that is, $u(x,t)\to \theta$ as $t\to \infty$, and so both of its left and right boundaries tend to infinity.  It follows from
Lemmas \ref{lem:separate} and \ref{lem:com-sharp} that $u_\sigma\not\to \theta$ for any $\sigma>\sigma_*$. Consequently, $\Sigma_1 = (\sigma_*, \infty)$.

To finish the proof of Theorem \ref{thm:com}, we only need to prove the asymptotic
speeds of the free boundaries in \eqref{transition-asy-speed} for the transition solution, which follows from Proposition \ref{prop:transition-speed} below.

This completes the proof of Theorem \ref{thm:com}.
\hfill $\Box$

\medskip
At the end of this section we study the asymptotic speeds for the boundaries of the transition solution of (CP).
In the current combustion case, the equation is PME $u_t=(u^m)_{xx}$ when $u\leq \theta$. So, it is convenient to compare the solutions of this equation, especially, the selfsimilar solutions, with the solution of (CP).
For this purpose, we need a special selfsimilar solution which solves the following problem:
\begin{equation}\label{p-without-source}
\left\{
 \begin{array}{ll}
 v_t = (m-1)v v_{xx} + v_x^2, & 0<x<\rho(t),\ t>0,,\\
 v(0,t)= \Theta:= \frac{m}{m-1}\theta^{m-1},& t>0,\\
 v(\rho(t),t)=0, & t>0,\\
 \rho'(t)= -v_x(\rho(t),t), & t>0.
 \end{array}
\right.
\end{equation}

\begin{lem}\label{lem:selfsimilar-sol-comb}
There exists $y_0$ satisfying
\begin{equation}\label{def-xi0}
0<y_0 < \theta^{\frac{m-1}{2}}
\end{equation}
such that, with $\rho(t)= 2y_0 \sqrt{t}$, the problem \eqref{p-without-source} has a selfsimilar solution $v(x,t)=V(\frac{x}{2\sqrt{t}})$.
\end{lem}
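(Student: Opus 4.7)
The plan is to reduce the free-boundary PDE to an ODE via the self-similar ansatz, then use a scaling/shooting argument. Substituting $v(x,t) = V(\xi)$ with $\xi = x/(2\sqrt{t})$ gives $v_t = -\xi V'/(2t)$, $v_x = V'/(2\sqrt{t})$, $v_{xx} = V''/(4t)$, so the PDE becomes
$$(m-1)VV'' + (V')^2 + 2\xi V' = 0, \qquad 0 < \xi < y_0,$$
with $V(0) = \Theta$ and $V(y_0) = 0$, while the Darcy law $\rho'(t) = -v_x(\rho(t),t)$ with $\rho(t) = 2 y_0\sqrt{t}$ translates to $V'(y_0) = -2y_0$.

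The ODE carries an elementary scaling symmetry: if $V$ solves it, so does $\lambda^2 V(\cdot/\lambda)$, and this preserves the form of the boundary data at the free endpoint. Setting $\tilde V(\eta) := V(y_0\eta)/y_0^2$ produces a universal profile on $(0,1)$ satisfying the same ODE together with the terminal conditions $\tilde V(1) = 0$, $\tilde V'(1) = -2$, independent of $y_0$. A formal Taylor expansion at $\xi = 1$ forces
$$\tilde V(\xi) = 2(1-\xi) - \tfrac{1}{m}(1-\xi)^2 + O\bigl((1-\xi)^3\bigr),$$
which provides local existence to the left of $\xi = 1$ despite the degeneracy $\tilde V(1) = 0$. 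Rewriting the ODE as $\tilde V'' = -[(\tilde V')^2 + 2\xi \tilde V']/[(m-1)\tilde V]$ makes it non-singular on $\{\tilde V > 0\}$, so $\tilde V$ extends by standard ODE continuation as long as $\tilde V > 0$. A monotonicity argument (if $\tilde V' = 0$ at some interior $\xi_*$, the ODE forces $2\xi_* = 0$ or $\tilde V'(\xi_*) = -2\xi_*$, from which, together with the expansion near $\xi = 1$ giving $\tilde V' < 0$ there, one derives a contradiction via analysis of the phase portrait) rules out critical points, yielding $\tilde V > 0$ and $\tilde V' < 0$ on $[0,1)$ and a finite positive limit $\tilde V(0)$.

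Matching the remaining boundary condition $V(0) = \Theta$ gives $y_0^2 \tilde V(0) = \Theta$, so $y_0 = \sqrt{\Theta/\tilde V(0)}$ exists and is unique. The strict upper bound $y_0 < \theta^{(m-1)/2}$ is equivalent, via $\Theta = m\theta^{m-1}/(m-1)$, to $\tilde V(0) > m/(m-1)$, and this is the main technical point. The plan is to construct a suitable sub-barrier $W(\xi)$ matching $W(1) = 0$, $W'(1) = -2$ with $W(0) \geq m/(m-1)$ and satisfying the differential inequality $L[W] \leq 0$ where $L[\phi] := (m-1)\phi\phi'' + (\phi')^2 + 2\xi\phi'$, and then to compare by studying the linearized equation for $\Psi := \tilde V - W$. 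Using the sign of the next-order coefficient in the expansion at $\xi = 1$ (which gives $\Psi > 0$ just to the left of $1$) and the maximum principle for the linearized equation on $[0, 1-\varepsilon]$, one propagates $\Psi > 0$ down to $\xi = 0$, giving the desired strict inequality. The hard part of the argument is precisely this last step: constructing the right barrier and justifying the strict comparison across the regular singular point at $\xi = 1$, where the principal coefficient $(m-1)\tilde V$ of the linearized operator vanishes, preventing a direct application of the classical maximum principle.
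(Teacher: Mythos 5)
Your reduction is genuinely different from the paper's. You shoot \emph{backward} from the free boundary in the pressure variable, using the scaling $V\mapsto\lambda^2V(\cdot/\lambda)$ to normalize the front to $\eta=1$ with the Darcy condition $\tilde V(1)=0$, $\tilde V'(1)=-2$ built in, and then try to recover $y_0$ from $y_0^2\,\tilde V(0)=\Theta$. The paper instead shoots \emph{forward} in the variable $\xi=u^m$: it solves the initial value problem $\xi''=-\tfrac{2y}{m}\xi^{\frac{1-m}{m}}\xi'$ with $\xi(0)=\theta^m$, $\xi'(0)=-2\theta^{\frac{m+1}{2}}$, shows $\xi'<0$, and by integrating the equation twice (using $\xi^{1/m}\le\theta$) obtains the explicit majorant $\xi(y)<\theta\bigl(y-\theta^{\frac{m-1}{2}}\bigr)^2$, which forces the profile to vanish at some $y_0<\theta^{\frac{m-1}{2}}$; the Darcy law at the front is then asserted by citing the PME interface theory. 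So the bound \eqref{def-xi0}, which in your setup is the hard final step, is in the paper's setup an immediate consequence of the construction.

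The decisive gap is that you never prove \eqref{def-xi0}: you correctly reduce it to $\tilde V(0)>\tfrac{m}{m-1}$ and then only announce a ``plan'' to build a sub-barrier $W$, without exhibiting $W$ or verifying the comparison across the degenerate endpoint. Moreover, along your route this step cannot be completed for all $m>1$: $\tilde V(0)=\tilde V_m(0)$ is a universal constant depending only on $m$, and as $m\to1^+$ the normalized equation degenerates to $(\tilde V')^2+2\eta\tilde V'=0$, whose branch compatible with $\tilde V'(1)=-2$ is $\tilde V=1-\eta^2$, so $\tilde V_m(0)\to1$ while $\tfrac{m}{m-1}\to\infty$ (consistent with the heat-equation limit, where the front of the constant-boundary-data problem recedes to infinity in the similarity variable). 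Hence for $m$ near $1$ the profile you construct violates $y_0<\theta^{\frac{m-1}{2}}$ and no barrier can rescue the inequality; you would have to restrict $m$ or construct the paper's object instead. Be aware that the two objects are not obviously the same: the paper's $\xi$ has $\xi'(y_0^-)\le-2\theta^{\frac{m+1}{2}}+2\theta y_0<0$ by its own estimate, whereas your front condition forces $\xi\sim C(y_0-y)^{\frac{m}{m-1}}$, i.e.\ $\xi'(y_0^-)=0$, so reconciling the two requires checking which one actually satisfies all of \eqref{p-without-source}. Two secondary gaps: a ``formal Taylor expansion'' at $\eta=1$ does not by itself give local existence of the backward solution, since the leading coefficient $(m-1)\tilde V$ vanishes there and Picard--Lindel\"of does not apply (one must, e.g., pass to $\tilde V'$ as a function of $\tilde V$, or run a weighted fixed-point argument); and your exclusion of interior critical points is garbled --- at a point with $\tilde V'=0$ and $\tilde V>0$ the equation only yields $\tilde V''=0$, which is not yet a contradiction.
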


\begin{proof}
The proof is divided into two steps.

{\it Step 1. An auxiliary problem}. We first consider the following initial value problem
\begin{equation}\label{p-auxi}
\left\{
 \begin{array}{l}
\displaystyle \xi''(y) = - 2 y \left(\xi^{\frac{1}{m}} \right)'(y)= - 2 \frac{y}{m} \xi^{\frac{1-m}{m}} \xi'(y),\quad y>0,\\
\displaystyle \xi(0) = \theta^m,\ \xi'(0)= - 2 \theta^{\frac{m+1}{2}}.
 \end{array}
\right.
\end{equation}
It is easily seen that, for any $h\in (0,\theta^m)$, the constant function $\xi\equiv h$ is a solution of the equation. Hence, it follows from $\xi'(0)<0$ that $\xi'(y)<0$ as long as $\xi>0$. In other words,  there exists $y_0 \in (0,\infty]$ such that
$$
\xi'(y)<0,\quad \xi(y)>0,\quad 0\leq y <y_0.
$$

We now show that $y_0<\infty$ and so $\xi(y_0)=0$. In fact, for any $\tilde{y}, y\in (0,y_0)$ with $\tilde{y} < y$ we have
$$
\xi''(s) < -2 \frac{\tilde{y}}{m} \xi^{\frac{1-m}{m}}(s) \xi'(s),\quad 0< s< \tilde{y}.
$$
Integrating it over $[0,\tilde{y}]$ we have
$$
\xi'(\tilde{y}) < - 2 \theta^{\frac{m+1}{2}} + 2\tilde{y} \left( \theta- \xi^{\frac{1}{m}}(\tilde{y})\right) < - 2 \theta^{\frac{m+1}{2}} + 2\theta \tilde{y}.
$$
Integrating it again over $[0,y]$ we have
$$
\xi(y) < \tilde{\xi}(y):= \theta^m - 2\theta^{\frac{m+1}{2}} y + \theta y^2.
$$
Since $\tilde{\xi}(y)$ has a unique zero $y_*:= \theta^{\frac{m-1}{2}}$, we conclude that
\begin{equation}\label{def-y*}
y_0 < y_*.
\end{equation}

{\it Step 2. Selfsimilar solution.} A direct calculation shows that
$$
u (x,t)= \xi^{\frac{1}{m}} \left(\frac{x}{2\sqrt{t}} \right)
$$
is a selfsimilar solution of
$$
\left\{
 \begin{array}{l}
 u_t = (u^m)_{xx}, \quad  0<x<\rho(t),\ t>0,,\\
 u(0,t)= \theta, \ \ u(\rho(t),t)=0, \quad t>0,
 \end{array}
\right.
$$
with $\rho(t):= 2y_0 \sqrt{t}$. Therefore,
$$
v(x,t) = V \left( \frac{x}{2\sqrt{t}} \right) :=  \frac{m}{m-1} \xi^{\frac{m-1}{m}} \left( \frac{x}{2\sqrt{t}} \right)
$$
is a selfsimilar solution of the first equation in \eqref{p-without-source}. It also satisfies the boundary conditions in \eqref{p-without-source} at $x=0$ and $x=\rho(t)$. Moreover, the interface $x= \rho(t)$ satisfies the Darcy law (cf. \cite[Theorem 15.19]{Vaz-book}):
$$
\rho'(t) = - v_x(\rho(t),t).
$$
In other words, we have $V'(y_0) = -2y_0$.
This proves the lemma.
\end{proof}

This lemma implies that $V \left( \frac{x}{2\sqrt{t}} \right)$ is not only a solution of the equation in \eqref{p-without-source}, but also a solution of the following Cauchy-Dirichlet problem
\begin{equation}\label{self-Cauchy-p}
\left\{
 \begin{array}{ll}
 v_t = (m-1)v v_{xx} + v_x^2, & x>0,\ t>0,,\\
 v(0,t)= \Theta:= \frac{m}{m-1}\theta^{m-1},& t>0,
 \end{array}
\right.
\end{equation}
with one free boundary $\rho(t)$.

\begin{prop}\label{prop:transition-speed}
Assume $f$ satisfies {\rm (f$_C$)}, $u$ is a transition solution of {\rm (CP)}. Then its left and right free boundaries $l(t)$ and $r(t)$ satisfy the asymptotic speed as in \eqref{transition-asy-speed}.
\end{prop}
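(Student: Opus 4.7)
My plan is to establish matching lower and upper bounds for $r(t)$; the argument for $l(t)$ is symmetric. Throughout I would work with the pressure $v=\frac{m}{m-1}u^{m-1}$ and use the selfsimilar profile $V(x/(2\sqrt{t}))$ from Lemma \ref{lem:selfsimilar-sol-comb} as the primary comparison tool, together with the fact (specific to the combustion case) that $g(v)\equiv 0$ whenever $v\leq \Theta$, so the tail region obeys the \emph{pure} pRPME without reaction.

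For the lower bound $\liminf_{t\to\infty} r(t)/(2\sqrt{t}) \geq y_0$, I would fix $\epsilon>0$, exploit the transition assumption $u(\cdot,t)\to\theta$ in $L^\infty_{loc}(\R)$ to find $a\in\R$ and $T_1>0$ with $v(a,t)\geq \Theta-\epsilon$ for $t\geq T_1$, and apply the ODE construction of Lemma \ref{lem:selfsimilar-sol-comb} with reduced boundary value $\Theta-\epsilon$ in place of $\Theta$ to obtain a selfsimilar subprofile $V_\epsilon((x-a)/(2\sqrt{t-T_1}))$ with interface at $a+2y_\epsilon\sqrt{t-T_1}$; continuous dependence of the solution of \eqref{p-auxi} on initial data gives $y_\epsilon\to y_0$ as $\epsilon\to 0$. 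Since $f\geq 0$ on $[0,1]$ and $u\leq 1$ by the maximum principle, $g(v)\geq 0$, so $v$ is a very weak supersolution of the \emph{pure} pRPME on the domain $\{x\geq a,\,t\geq T_1\}$; a standard comparison as in the proof of Lemma \ref{lem:ss-problem}(iv) (after arranging that $V_\epsilon$ starts below $v$ initially, which requires only that $v(x,T_1)$ is positive in a right neighbourhood of $a$) then yields $v(x,t)\geq V_\epsilon$ and hence $r(t)\geq a+2y_\epsilon\sqrt{t-T_1}$. Sending $\epsilon\to 0$ gives the desired lower bound.

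For the upper bound I would first prove that $\max_x u(x,t)\to\theta$ as $t\to\infty$: by Corollary \ref{cor:multi-mono} the maximum of $u(\cdot,t)$ is attained in the compact set $[l_1(0),r_n(0)]\subset[-b,b]$ for all large $t$, and on this fixed compact the local uniform convergence $u\to\theta$ upgrades automatically to convergence of the supremum. In particular, $\max_x u(\cdot,t)\geq \theta$ for large $t$, for otherwise Lemma \ref{lem:sufficient-vanishing} would force vanishing against the transition hypothesis; so the rightmost point $r^*(t)$ with $u(r^*(t),t)=\theta$ is well defined, $u<\theta$ on $(r^*(t),r(t))$, and on this tail $v$ solves the pure pRPME with Dirichlet datum $v(r^*(t),t)=\Theta$ and the Darcy condition at $x=r(t)$.

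The crux is then to show $r^*(t)=o(\sqrt{t})$: if on the contrary $r^*(t)$ grew like $c\sqrt{t}$ for some $c>0$, then for any $q_0\in(\theta,1)$ the stationary profile $U_{q_0}$ of Case D (Subsection \ref{subsec:-com-ss}) could be translated so that $u(\cdot,t_0)\geq U_{q_0}(\cdot-z_0)$ on its support, and Lemma \ref{lem:sufficient-for-spreading} would force spreading, contradicting the transition assumption. Once $r^*(t)=o(\sqrt{t})$ is in hand, the pure pRPME problem on the tail $(r^*(t),r(t))$ with Dirichlet datum $\Theta$ at the (slowly moving) left endpoint can be compared, using Theorem \ref{thm:intersection num} to control the number of intersections, with the selfsimilar profile of Lemma \ref{lem:selfsimilar-sol-comb} translated by $r^*(t)$; this yields $r(t)-r^*(t)=2y_0\sqrt{t}(1+o(1))$ and consequently $r(t)=2y_0\sqrt{t}(1+o(1))$. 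The main obstacle I anticipate is exactly this step: the canonical selfsimilar attractor theory is designed for a \emph{fixed} Dirichlet boundary, and here the left endpoint $r^*(t)$ is moving (though only sublinearly in $\sqrt{t}$), so one must combine a stability argument for the selfsimilar profile with the monotonicity of $r^*(t)$ and the intersection-number diminishing property to rule out spurious crossings.
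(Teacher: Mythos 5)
Your proposal correctly identifies the three ingredients that the paper lists as essential — the self-similar profile $V(x/(2\sqrt{t}))$ of Lemma \ref{lem:selfsimilar-sol-comb}, the intersection-number diminishing property of Theorem \ref{thm:intersection num}, and the estimate that the $\theta$-level set $r^*(t)$ grows sublinearly in $\sqrt{t}$ (the paper's point (3), namely $\theta(t)/r(t)\to 0$, delegated to \cite{DLZ}). The lower-bound half of your argument is sound: for large $t$ one has $0\leq u\leq 1$, hence $g(v)\geq 0$ on the range of $v$, so $v$ is a very weak supersolution of the pure pRPME, and a translated self-similar profile with slightly reduced boundary datum $\Theta-\varepsilon$ serves as a subsolution, yielding $\liminf r(t)/(2\sqrt{t})\geq y_0$ after sending $\varepsilon\to 0$.

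However, there is a genuine gap in the argument you give for the crucial estimate $r^*(t)=o(\sqrt{t})$. You propose: if $r^*(t)\geq c\sqrt{t}$ along a sequence, then for any $q_0\in(\theta,1)$ one can translate $U_{q_0}$ so that $u(\cdot,t_0)\geq U_{q_0}(\cdot-z_0)$, and Lemma \ref{lem:sufficient-for-spreading} forces spreading. This cannot be made to work: by the argument you yourself give just before it (Corollary \ref{cor:multi-mono} plus the transition hypothesis), $\max_x u(\cdot,t)\to\theta$, so for $t_0$ large one has $\max_x u(\cdot,t_0)<q_0$ for every fixed $q_0>\theta$, and then $u(\cdot,t_0)\geq U_{q_0}(\cdot-z_0)$ is impossible since $\max U_{q_0}=q_0>\theta$. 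A wide plateau where $u\geq\theta$ does not help, because the obstruction is at the \emph{peak} of $U_{q_0}$, not on its tails. Even if one lets $q_0$ vary with $t_0$, say $q_0=\max_x u(\cdot,t_0)\searrow\theta$, one would still need the \emph{pointwise} inequality $u(\cdot,t_0)\geq U_{q_0}(\cdot-z_0)$ on the whole inner region where $U_{q_0}>\theta$ (whose length $2L_0(q_0)\to\infty$ as $q_0\to\theta+0$), and nothing in your proposal supplies this — the shapes of $u(\cdot,t_0)$ and $U_{q_0}$ near their respective maxima need not compare favorably. A different argument is required here; the paper obtains the estimate $\theta(t)/r(t)\to 0$ from the zero-number machinery of \cite{DLZ}, not from a $U_{q_0}$-comparison. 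The remainder of your upper-bound strategy (tail comparison with translated self-similar profiles, using intersection-number control) is consistent with the paper's description, but it rests on this unproved estimate.
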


\begin{proof}
The conclusion can be proved in a similar way as the authors did for the reaction diffusion equation in \cite[Proposition 4.2]{DLZ}. Note that there are several key points in the proof: (1) using the zero number argument to study the number of intersection points between two solutions. Here we see that the argument remains valid for our problem by Lemma \ref{lem:diminish-intersection} and by the Darcy law; (2) using a selfsimilar solution satisfying the Darcy law (which is called the Stefan boundary condition in \cite{DLZ}) to construct lower and supersolutions. Here we only need to use the selfsimilar solution $V(\frac{x}{2\sqrt{t}})$ obtained in the previous lemma to replace the selfsimilar solution $\Phi(t,x)$ in \cite{DLZ}; (3) using the limit $\theta(t)/r(t)\to 0\ (t\to \infty)$ as an extra condition, where $\theta(t)$ is the $\theta$-level set of $u(\cdot,t)$. This limit remains true for our problem and the proof is the same as that in Subsection 4.3 in \cite{DLZ}.
\end{proof}

\section{Bistable RPME}

In this section we study the asymptotic behavior for the solutions of (CP) or (pCP) with bistable type of reaction: $f$ satisfies (f$_B$).

\subsection{Stationary solutions}\label{subsec:bi-ss}
On the nonnegative stationary solutions of the equation in (CP), we first have Case A - Case D types as in the monostable RPME. The difference is that:
in Case A the solutions are $U_q$ for $q=0,\theta,1$ in the current case; in Case D, the compactly supported stationary solutions $U_{q_0}$ exists for $q_0 \in (\theta_1 ,1)$, where $\theta_1$ is defined below by \eqref{def-theta1}. The decay rates in \eqref{combustion-q-V-0} and \eqref{rate-V-0} remain valid. As we mentioned in Remark \ref{rem:ss-eq-p}, each $U_{q_0}$ is just a stationary solution of the equation but not of the Cauchy problem (CP).

Besides these stationary solutions, for  the bistable RPME, there are two other kinds of stationary solutions: periodic solutions and the {\it ground state solution}.

{\bf Case E}. In the $(q,p)$-phase plane, besides $(0,0)$ and $(1,0)$, we have another singular point $(\theta, 0)$, which is a center. There are infinitely many closed trajectories surrounding $(\theta,0)$. Each of them corresponds to a positive stationary periodic solution $U_{per}(x)$. By the monotonicity result in Corollary \ref{cor:multi-mono}, none of them can be a candidate of the $\omega$-limit for the solution of (CP) with $u_0\in \mathfrak{X}$.

{\bf Case F}. In the phase plane, there is a homoclinic orbit which starts and ends at $(0,0)$, crossing the point $(\theta_1,0)$, with $\theta_1\in (\theta,1)$ given by
\begin{equation}\label{def-theta1}
\int_0^{\theta_1} r^{m-1} f(r)dr=0.
\end{equation}
The function of this homoclinic orbit is
$$
p^2 = - 2m \int_0^q r^{m-1} f(r) dr,\quad 0<q\leq \theta_1.
$$
Denote the corresponding solution by $U (x)$, and assume it satisfies the normalized condition
$$
U (0)=\theta_1,\quad U'(0)=0.
$$
Assume the largest interval where $U$ remains positive is $(-L, L)$ for some $L\in (0,\infty]$. We can give a formula for $L$ as the following. Denote $\eta:= U^m$, then
\begin{equation}\label{def-p-eta}
\frac{d\eta}{dx}=p(\eta) := \left(-2m \int_0^{\eta^{1/m}} r^{m-1}f(r) dr\right)^{1/2},\quad x\in (-L, 0].
\end{equation}
Therefore,
\begin{equation}\label{formula-L*}
L := \int_0^{\theta_1^m} \frac{d\eta}{p(\eta)} =
\int_0^{\theta_1^m}  \left(-2m \int_0^{\eta^{1/m}} r^{m-1}f(r) dr\right)^{-1/2} d\eta.
\end{equation}
Under the assumptions in (f$_B$) we have $f'(0)<0$ and so $f(q)\sim f'(0)q$ as $q\to 0+0$.
We will see below that $L<\infty$ in this case. However, $L$ might be $\infty$ if $f$ is a bistable nonlinearity but without the assumption $f'(0)<0$.

\begin{lem}\label{lem:finite-infinite-GS}
Assume $f\in C([0,\infty)) \cap C^1((0,\infty))$ is a bistable nonlinearity satisfying \eqref{bi}. Further assume that
\begin{equation}\label{decay-rate-f-0}
f(q)= -\lambda q^\alpha (1+o(1))\mbox{\ \  as\ \  } q\to 0+0,
\end{equation}
for some $\lambda>0$ and $\alpha>0$. Then $L=\infty$ when $\alpha\geq m$; $L<\infty$ when $0<\alpha<m$.
\end{lem}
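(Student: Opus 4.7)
The plan is to work directly from the explicit formula \eqref{formula-L*}, reducing the question of whether $L<\infty$ to a one-variable convergence test of an improper integral at the endpoint $\eta=0$. For any $\eta_0\in(0,\theta_1^m]$, the denominator $p(\eta)$ defined in \eqref{def-p-eta} is continuous and strictly positive on $[\eta_0,\theta_1^m]$ (positive by the definition of $\theta_1$ in \eqref{def-theta1} and because $\int_0^{\eta^{1/m}} r^{m-1}f(r)\,dr<0$ for $0<\eta^{1/m}<\theta_1$). Moreover at $\eta=\theta_1^m$ one has $p\to 0$ like a square root, which is an integrable singularity, so the only delicate issue is the behavior of $1/p(\eta)$ as $\eta\to 0+$.

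First I would plug the asymptotic hypothesis \eqref{decay-rate-f-0} into the inner integral. Writing $s=\eta^{1/m}$ and using $r^{m-1}f(r)= -\lambda r^{m-1+\alpha}(1+o(1))$ as $r\to 0+0$, an elementary integration gives
\begin{equation*}
-2m\int_0^{s} r^{m-1}f(r)\,dr \;=\; \frac{2m\lambda}{m+\alpha}\, s^{m+\alpha}\,(1+o(1)) \qquad (s\to 0+0).
\end{equation*}
Substituting $s=\eta^{1/m}$ and taking a square root yields
\begin{equation*}
p(\eta)\;=\;\sqrt{\tfrac{2m\lambda}{m+\alpha}}\;\eta^{\frac{m+\alpha}{2m}}\,(1+o(1))\qquad(\eta\to 0+0),
\end{equation*}
so there exist $C_1,C_2>0$ and $\eta_*\in(0,\theta_1^m)$ with $C_1\eta^{(m+\alpha)/(2m)}\le p(\eta)\le C_2\eta^{(m+\alpha)/(2m)}$ for all $\eta\in(0,\eta_*]$.

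Second, I would insert these two-sided bounds into \eqref{formula-L*}. The convergence of $L$ is then equivalent to the convergence of $\int_0^{\eta_*}\eta^{-(m+\alpha)/(2m)}\,d\eta$, and this integral converges if and only if $(m+\alpha)/(2m)<1$, i.e.\ $\alpha<m$. Consequently $L<\infty$ when $0<\alpha<m$ (completing the $L<\infty$ half, after noting the harmless square-root integrability at the upper endpoint $\eta=\theta_1^m$), while $L=+\infty$ when $\alpha\ge m$ (the exponent equals $1$ when $\alpha=m$, giving a logarithmic divergence, and exceeds $1$ when $\alpha>m$). There is no real obstacle here beyond bookkeeping: the main thing to be careful with is that the $(1+o(1))$ in \eqref{decay-rate-f-0} is only pointwise near $0$, so the estimate on $p(\eta)$ must be localized to a small neighborhood of $0$ before the comparison test is applied. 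The remainder of the integrand, on any interval $[\eta_*,\theta_1^m]$, is bounded and therefore does not affect convergence.
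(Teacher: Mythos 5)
Your proposal is correct and follows essentially the same route as the paper: substitute the asymptotics \eqref{decay-rate-f-0} into the first integral to get $p(\eta)\sim\bigl(\tfrac{2m\lambda}{m+\alpha}\bigr)^{1/2}\eta^{\frac{m+\alpha}{2m}}$ near $\eta=0$, then apply the power-comparison test in \eqref{formula-L*}, with the exponent $\tfrac{m+\alpha}{2m}\ge 1$ exactly when $\alpha\ge m$. Your extra remark on the integrable square-root behavior of $1/p$ at the upper endpoint $\eta=\theta_1^m$ is a point the paper passes over silently, and is handled correctly.
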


\begin{proof}
Substituting \eqref{decay-rate-f-0} into the formula $p(\eta)$ in \eqref{def-p-eta} we have
\begin{equation}\label{rate-p-eta}
p(\eta) \sim \left(\frac{2m \lambda}{m+\alpha}\right)^{1/2} \eta^{ \frac{m+\alpha}{2m}}\mbox{\ \ as\ \ } \eta\to 0+0.
\end{equation}

{\it Case 1. $\alpha\geq m$.}  When $0<\delta\ll 1$, we have
\begin{equation}\label{L*=infinity}
L > \int_0^\delta  \left(\frac{m+\alpha}{3m \lambda}\right)^{1/2} \eta^{- \frac{m+\alpha}{2m}} d\eta =\infty.
\end{equation}
Thus, the solution $U(x)>0$ in $\R$, it is a Type I ground state solution, as that in the bistable RDEs.

{\it Case 2. $0<\alpha<m$.} When $0<\delta\ll 1$, we have
\begin{equation}\label{L*=finite}
L < \int_\delta^{\theta_1^m} \frac{d\eta}{p(\eta)} + \int_0^\delta  \left(\frac{m+\alpha}{m \lambda}\right)^{1/2} \eta^{- \frac{m+\alpha}{2m}} d\eta <\infty.
\end{equation}
By \eqref{rate-p-eta} and \eqref{def-p-eta} we have
$$
\frac{d\eta}{dx} \sim \left(\frac{2m \lambda}{m+\alpha}\right)^{1/2} \eta^{ \frac{m+\alpha}{2m}}\mbox{\ \ as\ \ } x\to -L+0,
$$
and so
$$
\frac{2m}{m-\alpha} \Big( \frac{m+\alpha}{2m\lambda}\Big)^{1/2} \Big( \eta^{\frac{m-\alpha}{2m}} \Big)' \sim 1,\mbox{\ \ as\ \ } x\to -L+0.
$$
Hence,
\begin{equation}\label{ground-state-rate}
U(x) = \eta^{1/m} (x)\sim A_1(\alpha,\lambda) (x+L)^{\frac{2}{m-\alpha}} \mbox{\ \ as\ \ }x\to -L + 0,
\end{equation}
with
$$
 A_1(\alpha,\lambda) :=
\left(\frac{m-\alpha}{2m}\right)^{\frac{2}{m-\alpha}}\left(\frac{2m \lambda}{m+\alpha}\right)^{\frac{1}{m-\alpha}}.
$$
From \eqref{ground-state-rate} we see that, at the boundaries $\pm L$, $U$ is continuous for all $m>\alpha$, Lipschitz when $\alpha < m\leq 2+\alpha$ and $C^1$ when $\alpha <m< 2+\alpha$.

If we denote by $V(x) := \frac{m}{m-1}U^{m-1}$ the corresponding pressure of $U$, then we have
\begin{equation}\label{ground-state-V}
\left\{
\begin{array}{l}
V\in C([-L,L]),\quad  V(0)=\Theta_1 := \frac{m}{m-1}\theta_1^{m-1},\quad V(\pm L) =0,\\
V(x)>0,\ \ V'(x)<0,\ \ V(x)=V(-x) \mbox{ for }x\in (0,L),\\
V(x)\sim A (x+L)^{\frac{2(m-1)}{m-\alpha}} \mbox{\ \ as\ \ }x\to -L +0,\quad  \mbox{with } A := \frac{m}{m-1}A_1^{m-1}.
\end{array}
\right.
\end{equation}
This proves the lemma.
\end{proof}

\begin{rem}\label{rem:combination-of-GS}
\rm In the special case when $f$ is a bistable nonlinearity with $f'(0)<0$ as in (f$_B$), we have $\alpha=1$ and $\lambda =-f'(0)$, and so $V$ in \eqref{ground-state-V} satisfies:
$$
V(x)\sim A (x+L)^2 \mbox{\ \ as\ \ }x\to -L +0,\quad  \mbox{with } A := \frac{-(m-1)f'(0)}{2(m+1)}.
$$
Thus $V'(-L+0)=0$, and so $V$ or $U= (\frac{m-1}{m}V)^{\frac{1}{m-1}}$ is a Type II ground state solution. By Lemma \ref{lem:ss-problem}, $V(x)$ is a stationary solution not only for the bistable RPME, but also for the Cauchy problem (pCP). This is different from the compactly supported solutions in Case D, where they are only stationary solutions of the RPME but not of (pCP). Furthermore, the combinations of such solutions, like
\begin{equation}\label{combination-GS}
\mathcal{V}(x):= V (x-z_1)+ V(x -z_2), \quad \mbox{with\ \ }z_2 -z_1 \geq 2L,
\end{equation}
are also Type II ground state stationary solutions of the problem (pCP).
\end{rem}

\subsection{Asymptotic behavior}

{\it Proof of Theorem \ref{thm:bi}}: We prove the trichotomy result in Theorem \ref{thm:bi}.
By $f'(0)<0$, it follows from Remark \ref{rem:combination-of-GS} that, the problem has only Type II ground state solution as in \eqref{ground-state-V} or \eqref{combination-GS}. By the general convergence result Theorem \ref{thm:general-conv} we know that $u$ converges as $t\to \infty$ to $0, \theta, 1$, or a Type II ground state solution.

The constant solution $\theta$ can be excluded easily from the candidates. In fact, for any positive stationary periodic solution $U_{per}(x)$ of the equation (see Case E in Subsection \ref{subsec:bi-ss}), by the classical zero number argument we see that the zero number of $u_\sigma(\cdot,t)-U_{per}(\cdot)$ is finite and decreasing in $t>0$. However, if $u_\sigma\to \theta$ for some $\sigma>0$, then the above zero number tends to that of $\theta -U_{per}(\cdot)$, which is infinite. So we derive a contradiction.

For the initial data $\phi_\sigma \in \mathfrak{X}$, when $\sigma>0$ is small we have by ($\Phi_3$) that $\phi_\sigma \leq \theta$. Hence, a similar argument as in Lemma \ref{lem:sufficient-vanishing} implies that vanishing happens for $u_\sigma$. Denote
$$
\Sigma_0:= \{\sigma>0 \mid \mbox{vanishing happens for } u_\sigma\}.
$$
Then $\Sigma_0$ is an open interval $(0,\sigma_*)$ for some $\sigma_*\in (0,\infty]$, as in the combustion equation.

In case $\sigma_*=\infty$ (this is the complete vanishing case, as stated in Theorem \ref{thm:complete-vanishing-main}), there is nothing left to prove.

In case $\sigma_*$ is a positive number, we can show that $u_{\sigma_*}$ is a transition solution, that is, it converges to some Type II ground state solution $\mathcal{U}$. Otherwise, it tends to $1$. Then the set
$$
\Sigma_1:= \{\sigma>0 \mid u_{\sigma}\to 1\mbox{ as }t\to \infty\}
$$
is not empty. Since this set is open as proved in the combustion case and since $\sigma_*\in \Sigma_1$ by our assumption, we see that $\sigma\in \Sigma_1$ for any $\sigma$ with $0<\sigma_* -\sigma\ll 1$. This contradicts the definition of $\sigma_*$.
In what follows we assume $u(x,t)$ converges to a Type II ground state solution $\mathcal{U}(x)$ as in \eqref{finite-ground-state-0}.

If $\Sigma_1$ is empty, then $u_{\sigma}$ is a transition solution for all $\sigma\geq \sigma_*$ (this is the case $\sigma_*\in (0,\infty)$ and $\sigma^*=\infty$ in Theorem \ref{thm:bi}), and the proof is also completed. In case $\Sigma_1$ is a non-empty open set, by the comparison principle, it is actually an interval $(\sigma^*, \infty)$ for some $\sigma^*\geq \sigma_*$. For each $\sigma\in [\sigma_*, \sigma^*]$, $u_\sigma$ is a transition solution.

This completes the proof for Theorem \ref{thm:bi}.
\hfill $\Box$

\medskip

On the sharpness of the transition solution: $\sigma_* =\sigma^*$, we give some sufficient conditions.

\begin{lem}\label{lem:sharp-bi}
Let $u_\sigma$ be the solution of {\rm (CP)} with bistable $f$ and with initial data $\phi_\sigma$ satisfying $(\Phi_1)$-$(\Phi_3)$. Assume $u_{\sigma_*}$ is a transition solution, then, for any $\sigma>\sigma_*$, $u_\sigma$ is no longer a transition one, if one of the following conditions holds:
\begin{enumerate}[{\rm (i).}]
\item $(\Phi_2)$ is strengthened as: $\phi_{\sigma_*}(x) \leq \phi_{\sigma}(x+\varepsilon)$ when $|\varepsilon|$ is small;

\item there is $i\in \{1,2,\cdots,k\}$ such that $r_i(t)-l_i(t)\not\equiv r_{*i}(t)-l_{*i}(t)$, where $l_i(t),r_i(t)$ are the left and right boundaries of the $i$-th connected component of the the support ${\rm spt}[u_\sigma(\cdot,t)]$, $l_{*i}(t), r_{*i}(t)$ are the analogues of ${\rm spt}[u_{\sigma_*}(\cdot,t)]$;

\item $l_i(t)\equiv l_{*i}(t),\ r_i(t)\equiv r_{*i}(t)$ for all $i=\{1,2,\cdots,k\}$, and one of the waiting times of these boundaries is finite.
\end{enumerate}
\end{lem}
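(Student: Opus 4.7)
The strategy is proof by contradiction. If $u_\sigma$ were still a transition solution, then by Theorem \ref{thm:bi} it would converge in $L^\infty(\R)$ to some Type II ground state $\mathcal{U}'$ of the form \eqref{finite-ground-state-0}, while $u_{\sigma_*}\to \mathcal{U}$. From each of the three hypotheses I will manufacture a translation inequality of the form
\[
u_{\sigma_*}(x,t)\leq u_\sigma(x+\varepsilon,t+\tau), \qquad x\in \R,\ t\geq T,\ \varepsilon\in I,
\]
where $I\subset \R$ is a non-degenerate interval, and then read off a contradiction in the limit.

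Under hypothesis (i), the shift inequality is immediate from the standard comparison principle with $\tau=0$ and $I=[-\varepsilon_0,\varepsilon_0]$, since $\phi_{\sigma_*}(x)\leq \phi_\sigma(x+\varepsilon)$ is precisely what is assumed. Under (ii) and (iii), I will invoke Lemma \ref{lem:separate} applied to $(u_{\sigma_*},u_\sigma)$: hypothesis (ii), that the widths of some corresponding components differ, matches case (i) of that lemma, where the strong maximum principle yields a strict interior ordering that is then converted into a shift; hypothesis (iii), with coinciding supports but one finite waiting time, matches case (ii) of that lemma, where the strict monotonicity of the free boundary after the waiting time (Theorem \ref{thm:Darcy-law}) is used to open a gap at the edge. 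In each case the output is the shift inequality on a one-sided interval $I=[0,\varepsilon_0]$ or $I=[-\varepsilon_0,0]$. The one point needing justification is that Lemma \ref{lem:separate} is stated for single-component initial data, but its proof is entirely local near the relevant component, so it localizes to the $i$-th pair of components of $u_{\sigma_*}$ and $u_\sigma$, after which the full-line comparison principle propagates the shift globally.

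Passing $t\to\infty$ in the shift inequality and using the $L^\infty$-convergences $u_{\sigma_*}\to \mathcal{U}$ and $u_\sigma\to \mathcal{U}'$ yields
\[
\mathcal{U}(x)\leq \mathcal{U}'(x+\varepsilon), \qquad x\in \R,\ \varepsilon\in I.
\]
Evaluating at a center $z_j$ of $\mathcal{U}$ gives $\theta_1=\mathcal{U}(z_j)\leq \mathcal{U}'(z_j+\varepsilon)$ for every $\varepsilon\in I$. But every Type II ground state is assembled from translates of the single profile $U$ with $U(0)=\theta_1$ (Case F in Subsection \ref{subsec:bi-ss}), so $\max \mathcal{U}'=\theta_1$ is attained only at the finitely many isolated centers $z'_1,\ldots,z'_{k'}$. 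A non-degenerate interval $I$ therefore cannot satisfy $\mathcal{U}'(z_j+\varepsilon)=\theta_1$ throughout, and this is the contradiction. The main obstacle in the plan is the technical verification that Lemma \ref{lem:separate}(ii) really applies in situation (iii), where the supports coincide pointwise in time but we must still prise the two solutions apart at the edge; everything after that reduces to a one-line passage to the limit and the structural remark that ground-state maxima are isolated.
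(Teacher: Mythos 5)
Your proposal is correct and follows essentially the same route as the paper's proof: manufacture a shift inequality via Lemma \ref{lem:separate}-type reasoning (respectively by direct comparison in case (i)), pass $t\to\infty$, and read off a contradiction with the structure of Type II ground states. One small point of imprecision: the paper claims the shift inequality $u_{\sigma_*}(x,t)\leq u_\sigma(x+\varepsilon,t+\tau)$ only for $x\in[l_{*i}(t),r_{*i}(t)]$, not for all $x\in\R$ as you write, because when $u_{\sigma_*}$ has several components the comparison at time $T$ is established only on the $i$-th one and need not extend across components after shifting; the argument is salvaged by working in the moving domain $\{l_{*i}(t)<x<r_{*i}(t),\,t\geq T\}$ where $u_{\sigma_*}$ vanishes on the lateral boundary. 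Since your final evaluation takes place at the center $z_j$ of precisely that component, the contradiction is reached exactly as in the paper; your last paragraph merely spells out the paper's terse ``this is impossible since $U$ is a Type II ground state solution'' as the statement that the maximum value $\theta_1$ of $\mathcal{U}'$ is attained only at isolated points and so cannot persist over a nondegenerate interval of shifts.
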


\begin{proof}
In a similar way as proving Lemma \ref{lem:separate} we see that any of the three conditions (i)-(iii) implies that, for some $T\geq 0,\ \tau\geq 0$ and $\varepsilon_0>0$, there holds
$$
u_{\sigma_*}(x,t)\leq u_{\sigma}(x+\varepsilon, t+\tau),\quad x\in [l_{*i}(t),r_{*i}(t)],\ t\geq T,\ \varepsilon\in [0,\varepsilon_0] \mbox{ or }\varepsilon\in [-\varepsilon_0,0].
$$
In case $u_\sigma$ is also a transition solution, by taking limit as $t\to \infty$ in this inequality we conclude that
$$
U(x-z_*)\leq U(x+\varepsilon-z_\sigma),\quad x\in [l^\infty_*, r^\infty_*],\ \varepsilon\in [0,\varepsilon_0] \mbox{ or }\varepsilon\in [-\varepsilon_0,0],
$$
for some $z_*, z_\sigma\in [-b,b]$, where $l^\infty_*=\lim\limits_{t\to \infty}l_{*i}(t)$,  $r^\infty_*=\lim\limits_{t\to \infty}r_{*i}(t)$. This is impossible since $U$ is a Type II ground state solution.
\end{proof}

\begin{rem}\label{rem:sharp-bi-equiv}\rm
The only situation not included in {\rm (ii)} and {\rm (iii)} in the previous lemma is
\begin{equation}\label{left-condition}
\left\{\begin{array}{l}
l_i(t)\equiv l_{*i}(t)\equiv l_i(0),\ r_i(t)\equiv r_{*i}(t)\equiv r_i(0) \mbox{\ \ for all } i=\{1,2,\cdots,k\},\\
\mbox{and all the waiting times of these boundaries are infinite}.
\end{array}
\right.
\end{equation}
We guess that, even in this case, the transition solution is also unique. The instability of Type II ground state solution should be the essential reason.
\end{rem}

\begin{rem}\rm
In case the initial data $\phi_\sigma$ has a unique maximum point, so does $u(\cdot,t)$. Then the limit of the transition solution is a Type II ground state solution $U(x-z)$ for some $z\in [-b,b]$.
Even in this case, the transition solution may have a very wide support but just converges to a positive limit in a bounded interval. For example,
assume $u(x,t;\sigma(b^2 -x^2))$ is a transition solution for some $\sigma>0$, and $b>L$, then
$$
u(x,t)>0 \mbox{ for } x\in (l(t),r(t)),\ t>0,
$$
$$
u(x,t)\leq u(\pm L,t), \quad x\in (l(t), -L]\cup [L, r(t)),\ t>0,
$$
and
$$
u(\pm L, t)\to 0,\quad \|u(\cdot,t)-U(\cdot)\|_{L^\infty([l(t),r(t)])} \to 0 \mbox{\ \ as\ \ }t\to \infty.
$$
\end{rem}

\section{Complete Vanishing Phenomena}

Note that, Theorem \ref{thm:mono}  gives the hair-trigger effect for (CP) with monostable reactions (see also Remark \ref{rem:hair-Garriz} and \ref{rem:hair-weak}). In the bistable and combustion cases, however, Theorems \ref{thm:com} and \ref{thm:bi} show that vanishing happens for the solutions $u_\sigma$  when $\sigma<\sigma_*$, where $\sigma_*\in (0,\infty]$. A natural question is that: is it really possible that $\sigma_*=\infty$?
In other words, is there $b>0$ such that, for any initial data $u_0$ with support in $[-b,b]$,
vanishing always happens for $u$, no matter how large $\|u_0\|_{L^\infty}$ is? We call such a result as a {\it complete vanishing phenomena}. Of course, this phenomena seems difficult in most cases. However, it does happen
for some problems. Recently, Li and Lou \cite{LiLou} proved that such a phenomena really happens in bistable and combustion RDEs provided the nonlinearity $f(u)$ decreases sufficiently fast as $u\to \infty$. Also, it was shown in \cite{DL} that this phenomena occurs in the free boundary problems for RDEs with bistable, combustion or even monostable reactions. We now show that this phenomena happens for (CP), provided $f(u)$ decreases fast as $u\to \infty$.

\begin{thm}\label{thm:complete-vanishing-main}
Assume $f\in C^2$ is a bistable or combustion reaction term. Assume also that
\begin{equation*}\label{f-decrease-fast}
\mbox{\rm (F)$_\infty$}\hskip 60mm
\liminf\limits_{u\to +\infty} \frac{-f(u)}{u^{p_0}} >0, \hskip 65mm
\end{equation*}
for some $p_0>m$. Then there exists a small $b>0$ such that vanishing always happens for the solution of {\rm (CP)} provided ${\rm spt}[u_0]\subset [-b,b]$.
\end{thm}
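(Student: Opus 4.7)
\medskip

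The plan is to construct a supersolution of (CP) for $t$ large that has compact support and vanishes as $t\to\infty$, thereby forcing $u$ itself to vanish. The crucial role of hypothesis (F)$_\infty$ is to let a spatially constant ODE barrier reduce the $L^\infty$-norm of the solution to a universal level in a controlled time, independently of $\|u_0\|_{L^\infty}$.

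First I would carry out an ODE-barrier reduction. The function $\phi(t)$ solving $\phi'(t)=f(\phi(t))$ with $\phi(0)=\|u_0\|_{L^\infty}$ is a spatially constant supersolution of (CP), hence $u(x,t)\le\phi(t)$. From (F)$_\infty$ there exist $c_0,u_*>0$ depending only on $f$ such that $-f(s)\ge c_0 s^{p_0}$ for $s\ge u_*$; integrating $\phi'\le -c_0\phi^{p_0}$ while $\phi\ge u_*$ yields the universal descent $\phi(t)\le [c_0(p_0-1)t]^{-1/(p_0-1)}$, so $\phi$ falls below $u_*$ in a time independent of $\phi(0)$ and then relaxes toward the ODE attractor $1$. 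Consequently there exist constants $T_1=T_1(f)$ and $M_1=M_1(f)$ such that $u(x,T_1)\le M_1$ for every admissible initial datum. Next I would control the support growth on $[0,T_1]$: by the Darcy law (Theorem~\ref{thm:Darcy-law}) combined with the bound $\phi(t)$ and the gradient estimate \eqref{C1 bound}, the hypothesis $p_0>m$ in (F)$_\infty$ makes the relevant power of $\phi$ integrable over $[0,T_1]$, giving a bound $\mathrm{spt}[u(\cdot,T_1)]\subset[-(b+\rho),b+\rho]$ with $\rho=\rho(f)$ independent of $\|u_0\|_{L^\infty}$.

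The core of the proof is the construction, for $t\ge T_1$, of a supersolution $\bar u(x,t)$ of compact support with $\bar u(x,T_1)\ge u(x,T_1)$ and $\bar u\to 0$. I would proceed in two phases: a first phase in which the peak of $\bar u$ exceeds $\theta$ and decreases toward $\theta$ thanks to the strong sink supplied by (F)$_\infty$ (interpolating between the spatially constant ODE $\phi(t)$ and a spreading Barenblatt profile), and, once the peak drops below $\theta$, a second phase in which $\bar u$ is a Barenblatt profile $A(t)\bigl[1-(x/R(t))^2\bigr]_+^{1/(m-1)}$ solving the pure PME; since then $f(\bar u)\le 0$ on the entire support, this profile is a genuine supersolution of (CP) and decays to zero. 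The smallness of $b$ enters to guarantee that the narrow initial support combined with the PME diffusion in the first phase brings the peak below $\theta$ before any spreading regime can develop, and that the Barenblatt profile of the second phase dominates $u(\cdot,T_1)$ pointwise.

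The main obstacle is the first phase of this supersolution construction: since $M_1>\theta$ (the ODE attractor being $1$), one cannot simply dominate $u(\cdot,T_1)$ by a Barenblatt profile with peak $\le\theta$, and the problematic range $\bar u\in(\theta,1)$ is where $f>0$ works against the supersolution inequality. The resolution exploits the fact that (F)$_\infty$ provides a strong sink for $\bar u\ge 1$, while in $(\theta,1)$ the reaction is bounded by a constant; for $b$ sufficiently small the diffusion on a narrow profile dominates that bounded positive reaction over the short residence time in $(\theta,1)$. Verifying this balance quantitatively, and fixing $b$ small enough to make it work uniformly, is the technical heart of the proof.
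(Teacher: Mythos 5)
The high-level plan you propose --- a spatially constant ODE barrier that brings the amplitude down to a universal level in universal time using $(F)_\infty$, followed by a compactly supported, decaying supersolution --- is indeed the same two-stage strategy the paper follows (working with the pressure $v$ rather than $u$). Your heuristic that the descent rate $\phi(t)\lesssim t^{-1/(p_0-1)}$ combined with $p_0>m$ yields an integrable bound for the free boundary speed is also the correct intuition. However, there are two gaps that would have to be repaired before this could count as a proof.

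First, the support control on $[0,t_1]$ is not justified by the tools you cite. The Darcy law $r'(t)=-D^-_xv(r(t),t)$ is correct, but you have no uniform-in-$\|u_0\|_\infty$ control of $v_x$ at the free boundary for small $t$. The interior gradient estimate \eqref{C1 bound} is an interior estimate (it requires $v>0$ in a rectangle with a safety margin $\delta>0$ from the boundary and $\tau>0$ from $t=0$) and its constant $M_1$ depends on $\|v_0\|_\infty$; it therefore says nothing about $D^-_xv(r(t),t)$ when the initial data is arbitrarily large and steep. Nor does the semiconvexity bound of Proposition~\ref{prop:2-est} help, since the shift $\tau$ there grows with $M_2^0$ and the bound $v_{xx}\ge-C(t+\tau)$ becomes useless as the data steepens. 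What the paper actually does is construct an explicit moving barrier $v_2(x,t)=(x-ct-b)^{-\alpha}-b^{-\alpha}$ (with $\alpha=2/(p-2)$ tied to $p_0$), which dominates $v$ near the boundary \emph{because} it blows up at the moving wall, and whose supersolution property is verified using $g(v)\le-Lv^p$ precisely where $v_2$ is large. This is the genuine technical content that your sketch skips over; without something of this kind, ``the relevant power of $\phi$ is integrable'' remains a heuristic.

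Second, your two-phase construction of the decaying supersolution for $t\ge T_1$ (interpolating between an ODE profile and a Barenblatt while the peak is in $(\theta,1)$, then switching to pure PME once the peak drops below $\theta$) correctly identifies the obstacle but does not resolve it. You note that for $\bar u\in(\theta,1)$ one cannot use the Barenblatt as a supersolution and you propose that ``narrowness'' of the support lets diffusion win, but no quantitative mechanism is given. The paper avoids this difficulty entirely with a cleaner device: it compares with the solution $u_3$ of $u_{3t}=(u_3^m)_{xx}+Ku_3$ with initial data $M_0\chi_{[-b_1,b_1]}$ (a supersolution since $f(u)\le Ku$), and then performs the change of time variable $s=(e^{K(m-1)t}-1)/(K(m-1))$, $w=u_3(1+K(m-1)s)^{-1/(m-1)}$, reducing exactly to pure PME $w_s=(w^m)_{xx}$, for which a ZKB solution gives the required uniform decay below $\theta$. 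This rescaling trick is what makes the second stage rigorous and uniform; your interpolation sketch would need substantial additional work to reach the same conclusion.
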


\begin{proof}

Recall that, when $f$ is a bistable or combustion nonlinearity, there exists $K>0$ such that
$$
f(u)\leq Ku,\quad u\geq 0.
$$
Using the pressure notation $v$ and
$$
g(v) := m\left(\frac{(m-1)v}{m}\right)^{\frac{m-2}{m-1}} f\left( \Big(\frac{(m-1)v}{m}\Big)^{\frac{1}{m-1}}\right),
$$
we see that
\begin{equation}\label{cond-f-to-F}
 g(v)\leq K(m-1)v \mbox{ for } v\geq 0.
\end{equation}
Denote
$$
p_1 := \frac{m-2+p_0}{m-1},\quad p:= \frac{p_0 + 3m -4}{2(m-1)},\quad \alpha:= \frac{2}{p-2}.
$$
Then $p_1 - p = p-2 = \frac{p_0 -m }{2(m-1)} = \frac{2}{\alpha}$. By (F)$_\infty$ we have
$$
\liminf\limits_{v\to +\infty} \frac{-g(v)}{v^{p_1}} >0.
$$
So, there exists $L_0>0$ and $M>1$ such that
\begin{equation}\label{cond-f-to-F-2}
g(v)\leq -L_0 v^{p_1-p}v^{p} \leq - L v^p, \qquad v\geq M,
\end{equation}
where $L:= L_0 M^{p_1 -p}$.

Recall that $\theta$ is a positive zero of $f$ in the bistable and combustion equations. Choose $M>1$ larger if necessary (to be determined below) and denote $M_0 := \Big( \frac{(m-1)M}{m}\Big)^{\frac{1}{m-1}}$, then we can choose $s_0 >0$, with $0< 1 -K(m-1)s_0 \ll 1$ when $M_0$ is sufficiently large, such that
\begin{equation}\label{xx-1}
\theta \frac{\left( \frac{1-K(m-1)s_0}{2K}\right)^{\frac{1}{m+1}}}{\left( 1+ \frac{(m-1)-K(m^2 -1)s_0}{2}\right)^{\frac{1}{m-1}}} = 2M_0 s_0^{\frac{1}{m+1}}>1.
\end{equation}
Define $b_1 = C_1(m) s_0^{\frac{1}{m+1}}$ by
\begin{equation}\label{xx-2}
\frac{(m-1)b^2_1}{2m(m+1) s_0^{\frac{2}{m+1}}} = 1- \frac{1}{2^{m-1}}.
\end{equation}
Then, we can choose $M>1$ sufficiently large such that
\begin{equation}\label{xx-4}
b := M^{-\frac{1}{\alpha}} \leq \min\left\{ \frac{b_1}{3}, 1 \right\},
\end{equation}
\begin{equation}\label{xx-5}
L := L_0 M^{p_1 - p}\geq \frac{3c}{b_1 M} + 2^p [(m-1)\alpha (\alpha+1) +\alpha^2],
\end{equation}
where $c>0$ is defined by
$$
c:= 2^{\frac{1}{\alpha}} b^{-1-\alpha} \alpha^{-1} [2(m-1)\alpha(\alpha+1)+2\alpha^2 +K(m-1)].
$$
Denote $C:= (2M_0)^{m-1} s_0^{\frac{m-1}{m+1}}>1$, then
\begin{equation}\label{xx-6}
\left( C - \frac{(m-1)b^2_1}{2m(m+1) s_0^{\frac{2}{m+1}}} \right)^{\frac{1}{m-1}} \geq \frac12 C^{\frac{1}{m-1}} = M_0 s_0^{\frac{1}{m+1}}.
\end{equation}
We will show that the complete vanishing phenomena happens if ${\rm spt}[u(x,0)]\subset [-b,b]$.

\medskip
{\it Step 1}. We first consider the stage when the solution is suppressed by a spatial homogeneous upper solution down to the range $v\leq M$. Consider the equation
$$
v_{1t} = -L v_1^2.
$$
Then $v_1 = (Lt)^{-1}$ is monotonically decreasing from $+\infty$, and till it reaches $M$ at time $t= t_1 := (LM)^{-1}$, it is a supersolution of (pCP) due to \eqref{cond-f-to-F-2} and $p>2$, no matter how large $\|u_0\|_{L^\infty}$ is. So,
$$
v(x,t)\leq v_1(t_1)=M,\quad t\geq t_1.
$$

This inequality gives an estimate for $v$ from above. We also need estimates on the right and left boundaries. We state only the case on the right side since the left side is studied similarly. For $c,b$ given above, we define
$$
v_2 (x,t):= (x-ct-b)^{-\alpha}-b^{-\alpha},\quad b+ct<x\leq 2b+ct,\ t\geq 0.
$$
We now show that $v_2$ is also a supersolution of (pCP) in the domain
$$
E:= \{(x,t)\mid b+ct<x\leq 2b+ct,\ t\geq 0\}.
$$

1). First, on the interval
$J_1:= [b+ct + 2^{-\frac{1}{\alpha}}b, 2b+ct]$, we have
$$
2^{-\frac{1}{\alpha}}b \leq H:= x-ct-b \leq b,\quad 0\leq v_2 \leq b^{-\alpha},
$$
and so by the first inequality in \eqref{cond-f-to-F} and the choice of $c$ we have
\begin{eqnarray*}
\mathcal{N}v_2 & := & v_{2t} - (m-1)v_2 v_{2xx} -v_{2x}^2 - g(v_2)\\
& \geq & \alpha c H^{-\alpha-1} -(m-1)\alpha (\alpha+1) v_2 H^{-\alpha-2} -\alpha^2 H^{-2\alpha -2} - K(m-1)v_2\\
& \geq & H^{-\alpha-2} \left[ \alpha c H -(m-1)\alpha (\alpha+1) H^{-\alpha} -\alpha^2 H^{-\alpha } \right] - K(m-1) b^{-\alpha}\\
& \geq & b^{-\alpha-2} \left[ \alpha c 2^{-\frac{1}{\alpha}} b -2(m-1)\alpha (\alpha+1) b^{-\alpha} - 2 \alpha^2 b^{-\alpha} \right] - K(m-1) b^{-\alpha}\\
& \geq & b^{-2\alpha-2} \left[ \alpha c 2^{-\frac{1}{\alpha}} b^{1+\alpha} -2(m-1)\alpha (\alpha+1) - 2 \alpha^2 - K(m-1) \right] \geq 0.
\end{eqnarray*}

2). Next, in the interval $J_2:= (b+ct, b+ct+2^{-\frac{1}{\alpha}}b]$ we have
$$
0<H:= x-ct-b \leq 2^{-\frac{1}{\alpha}} b,\quad v_2 \geq b^{-\alpha} \geq M,
$$
and by the choice of $L$ we have
\begin{eqnarray*}
\mathcal{N}v_2 & \geq & H^{-\alpha-2} \left[ \alpha c H -(m-1)\alpha (\alpha+1)H^{-\alpha} -\alpha^2 H^{-\alpha} \right] + Lv_2^{p}\\
& \geq & L (H^{-\alpha}-b^{-\alpha})^{p} - [(m-1)\alpha (\alpha+1) + \alpha^2 ] H^{-2\alpha-2}  \\
& \geq & 2^{-p} L H^{-\alpha p} - [(m-1)\alpha (\alpha+1) + \alpha^2 ] H^{-2\alpha-2} \\
& \geq & H^{-2\alpha -2} \left[2^{-p} L - (m-1)\alpha (\alpha+1) -\alpha^2 \right]\geq 0.
\end{eqnarray*}

3). On the free boundary $r_2(t) := 2b+ct$ of $v_2$, by the choice of $c$, we easily obtain
$$
-v_{2x}(2b+ct,t) = \alpha b^{-\alpha-1} \leq c,\quad t\geq 0.
$$
Therefore, $v_2$ is a supersolution of (pCP) in the domain $E$.
As a consequence, at $t_1$, the right free boundary $r(t)$ of $v$ satisfies
$$
r(t_1) \leq r_2(t_1) =2b + ct_1 \leq 2b + \frac{c}{LM} \leq b_1,
$$
by the choice of $b,c$ and $L$. Similarly, the left free boundary $l(t)$ of $v$ satisfies $l(t_1)\geq -b_1$.

Using the notation of $u$ we conclude from above that
\begin{equation}\label{stage-1}
u(x,t_1)\leq \bar{u} (x):= M_0  \cdot \chi_{[-b_1, b_1]}.
\end{equation}

\medskip

{\it Step 2}. Next we use the ZKB solution as another upper solution to suppress $u$ further such that it goes down below $\theta$ after some time. More precisely, consider
$$
\left\{
 \begin{array}{ll}
 u_{3t} = (u^m_3)_{xx} + Ku_3 , & x\in \R,\ t>0,\\
 u_3 (x,0)= \bar{u} (x), & x\in \R.
 \end{array}
 \right.
$$
Then $u_3$ is a upper solution of (CP) and
$$
u(x,t+t_1) \leq u_3(x,t), \quad x\in \R,\ t>0.
$$

Change the time variable from $t$ to $s$ by
$$
s:= \frac{e^{K(m-1)t} -1}{K(m-1)} \Leftrightarrow t = T(s):= \frac{\ln (1+K(m-1)s)}{K(m-1)},
$$
and define
$$
w(x,s):= u_3(x, T(s)) [1+K(m-1)s]^{-\frac{1}{m-1}},
$$
then we have
\begin{equation}\label{problem-of-w-u3}
\left\{
 \begin{array}{ll}
  w_s = (w^m)_{xx},& x\in \R, \ s>0,\\
  w(x,0)= \bar{u}(x), & x\in \R.
  \end{array}
  \right.
\end{equation}
To estimate $w(x,s)$, we consider the ZKB solution
$$
w_1(x,s) := s^{-\frac{1}{m+1}} \left( C - \frac{(m-1)x^2}{2m(m+1) s^{\frac{2}{m+1}}}  \right)^{\frac{1}{m-1}}_+,\quad x\in \R,\ s\geq s_0,
$$
with $s_0$ and $C$ given above. By the choice of $b_1, C$ and $s_0$, for $x\in [-b_1, b_1]$ we have
\begin{eqnarray*}
w_1(x,s_0) & \geq & w_1(\pm b_1, s_0) =  s_0^{-\frac{1}{m+1}} \left( C - \frac{(m-1)b^2_1}{2m(m+1) s_0^{\frac{2}{m+1}}}  \right)^{\frac{1}{m-1}} \\
& \geq & \frac12 s_0^{-\frac{1}{m+1}} C^{\frac{1}{m-1}} > M_0 \geq \bar{u}(x).
\end{eqnarray*}
Therefore, $w_1$ is a upper solution of \eqref{problem-of-w-u3}, and so
$$
w(x,s)\leq w_1(x,s+s_0),\quad x\in \R,\ s>0.
$$
In particular, at $x=0$ and $s=s_1:= \frac{1-K(m+1)s_0}{2K}$ we have
$$
w(0,s_1)  \leq  w_1(0,s_1+s_0) = (s_1+s_0)^{-\frac{1}{m+1}} C^{\frac{1}{m-1}}.
$$
Using the notation $u$ we have
\begin{eqnarray*}
u(x,T(s_1)+t_1) & \leq & u_3(x,T(s_1)) = w(x,s_1)[1+K(m-1)s_1]^{\frac{1}{m-1}}\\
& \leq &  (s_1+s_0)^{-\frac{1}{m+1}} C^{\frac{1}{m-1}} [1+K(m-1)s_1]^{\frac{1}{m-1}}\leq \theta,
\end{eqnarray*}
by \eqref{xx-1} and \eqref{xx-6}.

\medskip
{\it Step 3.} By the sufficient condition for vanishing in bistable and combustion RPMEs (cf. Lemma \ref{lem:sufficient-vanishing}), we conclude that $u\to 0$ as $t\to \infty$.

This proves the complete vanishing phenomena in Theorem \ref{thm:complete-vanishing-main}.
\end{proof}

\section{Appendix: Well-posedness and {\it a priori} Estimates}\label{sec:well}

For our RPME with general reaction terms, it seems that the a priori estimates, well-posedness and other important properties as that in PME are not well collected in literature, though they are not entirely new. We present some details here for the convenience of the readers.

\subsection{Well-posedness}
The existence of very weak solution of (CP) has been well studied by many authors (cf. \cite{ACP, PV, Sacks, Vaz-book, Wu-book}).

\begin{prop}\label{prop:well}
Assume {\rm (I)} and $f$ is Lipschitz continuous  with $f(0)=0$. Then the problem {\rm (CP)} has a unique very weak solution $u(x,t)\in C(Q_T)\cap L^\infty (Q_T)$ for any $T>0$, and
$$
0\leq u(x,t)\leq M'_0:= \max\{1,\|u_0\|_{L^\infty} \}\quad \mbox{ in } Q_T=\R\times (0,T).
$$
Moreover, there are two free boundaries $l(t)<r(t)$ such that ${\rm spt}(u(\cdot,t))\subset [l (t),r(t)]$ for all $t>0$.
\end{prop}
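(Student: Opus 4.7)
The plan is to follow the standard vanishing-viscosity / lifting approach for degenerate parabolic equations, which is essentially what is carried out in \cite{ACP, PV, Sacks, Vaz-book}, and to adapt it to our reaction term. First I would approximate $u_0$ by a monotone sequence $u_{0,\varepsilon}\in C^\infty(\R)$ with $\varepsilon\le u_{0,\varepsilon}\le \|u_0\|_{L^\infty}+\varepsilon$, coinciding with $u_0+\varepsilon$ outside a large compact set, and solve the uniformly parabolic Cauchy--Dirichlet problem
\begin{equation*}
u_{\varepsilon,t}=(u_\varepsilon^m)_{xx}+f(u_\varepsilon)\ \ \text{in }(-R,R)\times(0,T),\qquad u_\varepsilon=\varepsilon\ \text{on the parabolic boundary,}
\end{equation*}
with $R$ chosen large in terms of $T$. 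Classical parabolic theory yields a unique positive smooth $u_\varepsilon$. The $L^\infty$ bound $u_\varepsilon\le M_0'$ follows by comparison with the constant supersolution $M_0'$: since $f$ is Lipschitz with $f(0)=0$ one has $|f(u)|\le K u$, and exploiting the fact (used throughout the paper via (F)) that $f(u)\le 0$ for $u\ge 1$, the ODE comparison $\dot w=f(w)$, $w(0)=M_0'$, gives $w(t)\le M_0'$ for all $t$, which together with the parabolic maximum principle gives the stated bound.

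Next I would extract a limit. Applying the locally uniform H\"older estimates of \cite{DB-F} (see also \cite[Theorem 7.18]{Vaz-book}), valid with constants depending only on $M_0'$, $m$ and $\tau>0$, the family $\{u_\varepsilon\}$ is precompact in $C_{loc}(\R\times(0,T])$. Passing to a subsequence yields $u_\varepsilon\to u$ locally uniformly, hence $u_\varepsilon^m\to u^m$ locally uniformly, which is enough to pass to the limit in the integral identity \eqref{def-weak sol} (the test function is compactly supported, so integration over a bounded region is justified, and the boundary data at $x=\pm R$ do not interfere once $R$ is taken large). The limit $u\in C(Q_T)\cap L^\infty(Q_T)$ is thus a very weak solution with $0\le u\le M_0'$. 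Uniqueness is obtained by the Ole\u{\i}nik--Kala\v{s}nikov duality method (cf.\ \cite{OKC, Vaz-book}): given two very weak solutions $u_1,u_2$ with the same initial data, one tests $u_1-u_2$ against a regularized solution of the adjoint linear equation with coefficient $a_\delta\to (u_1^m-u_2^m)/(u_1-u_2)$, and absorbs the reaction contribution via Gronwall using the Lipschitz bound on $f$.

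For the final assertion on finite propagation, the plan is to build an explicit compactly supported supersolution. After the Liouville-type time change $s=(e^{K(m-1)t}-1)/[K(m-1)]$, $V(x,s):=v(x,t)/[1+K(m-1)s]$ satisfies the pure pRPME \eqref{problem-of-w-u3} (this is the same trick used in Step~1 of the proof of Theorem \ref{thm:Darcy-law}), so a dilated Barenblatt profile centred at $0$ with initial support containing $[-b,b]$ and total mass $\|u_0\|_{L^1}$ provides a supersolution whose support stays inside $[-\bar s(t),\bar s(t)]$ with $\bar s(t)$ as in \eqref{def-bar-s}. Comparison then gives ${\rm spt}\,u(\cdot,t)\subset[-\bar s(t),\bar s(t)]$, and defining $l(t)$ and $r(t)$ as the leftmost and rightmost endpoints of the support produces the desired free boundaries. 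The main technical obstacle is the passage to the limit in the $(u^m)_{xx}$ term: without strong local convergence of $u_\varepsilon$ the nonlinear term cannot be handled, and securing that strong convergence is precisely what the H\"older estimate of \cite{DB-F} delivers --- the key place where the one-dimensional $C^2$ smoothness of $f$ (via the a priori lower bound on $v_{xx}$ described in Section~2) enters, and the only non-routine ingredient in the otherwise classical scheme.
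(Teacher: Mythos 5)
Your proposal is a correct rendering of the standard approximation--compactness--duality scheme that the paper itself does not write out but delegates to \cite{ACP, PV, Sacks, Vaz-book, Wu-book}; the steps you give (lifted positive approximations on expanding cylinders, $L^\infty$ bound via comparison using $f(u)\le 0$ for $u\ge 1$, DiBenedetto--Friedman H\"older compactness, passage to the limit in \eqref{def-weak sol}, Ole\u{\i}nik duality for uniqueness, and a ZKB-type supersolution for finite propagation) are exactly the ingredients those references supply and the ones the Appendix then re-derives (Propositions \ref{prop:positive}, \ref{prop:finite speed}).

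Two small corrections. First, your closing remark misattributes the source of the compactness: the locally uniform H\"older estimate \eqref{Holder-est} of \cite{DB-F} is an \emph{intrinsic} regularity theorem for degenerate parabolic equations with a bounded source; it requires no Aronson--B\'enilan-type lower bound on $v_{xx}$ and no $C^2$ smoothness of $f$. Indeed, Proposition \ref{prop:well} only assumes $f$ Lipschitz, so $C^2$ cannot enter; the $C^2$ assumption in (F) is used much later for Propositions \ref{prop:2-est} and Lemma \ref{lem:bound-vt}, which are not needed for existence. Second, in the finite-propagation step the Barenblatt profile cannot simply be fixed by matching the $L^1$ mass of $u_0$; the height/mass must be taken large enough that the profile pointwise dominates $u_0$ on $[-b,b]$, exactly as done in the proof of Proposition \ref{prop:finite speed} where $C_1$ is chosen so that $\bar u(\cdot,0)\ge u_0$. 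The time-change route you propose, which reduces to the pure PME ZKB solution, is equivalent to the direct construction in the paper (both appear elsewhere, cf. Step~1 of the proof of Theorem \ref{thm:Darcy-law} and Step~2 of the proof of Theorem \ref{thm:complete-vanishing-main}), so either works.
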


We can not obtain $u(x,t)>0$ as in RDEs due to the degeneracy of the equation at $u=0$. Nevertheless, the solution $u$ solves the equation in the classical sense by the standard parabolic theory in a neighborhood of every point $(x,t)$ at which the solution is positive (see also \cite{OKC}).

\subsection{Persistence of positivity and finite propagation speed}

We now show the following positivity persistence property as in PME.

\begin{prop}[Persistence of positivity]\label{prop:positive}
Assume $u(x_0,t_0)>0$ for some $x_0\in \R,\ t_0> 0$.
Then $u(x_0, t)>0$ for all $t\geq t_0$. $($Thus, $u$ is classical near the line $\{(x_0,t)\mid t >t_0\}).$
\end{prop}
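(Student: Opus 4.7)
The plan is to invoke the very-weak comparison principle against an explicit subsolution built from a Barenblatt profile, using a simple rescaling in time and amplitude to absorb the reaction. First, since $u\in C(Q_T)$ and $u(x_0,t_0)>0$, continuity furnishes $\varepsilon,\delta>0$ with $u(x,t_0)\geq\varepsilon$ on $[x_0-\delta,x_0+\delta]$. It therefore suffices to construct a very weak subsolution $\underline u$ of (CP) with $\underline u(\cdot,t_0)\leq u(\cdot,t_0)$ and $\underline u(x_0,t)>0$ for every $t\geq t_0$.

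Second, I would kill off the reaction by the ansatz
\[
\underline u(x,t):=e^{-K(t-t_0)}\phi\bigl(x,\tau(t)\bigr),\qquad \tau(t):=\frac{1-e^{-(m-1)K(t-t_0)}}{(m-1)K}.
\]
A short calculation gives
\[
\underline u_t-(\underline u^m)_{xx}-f(\underline u)=-K\underline u-f(\underline u)+e^{-mK(t-t_0)}\bigl[\phi_\tau-(\phi^m)_{xx}\bigr],
\]
so if $\phi\geq 0$ is a very weak solution of the pure PME $\phi_\tau=(\phi^m)_{xx}$, the right-hand side reduces to $-K\underline u-f(\underline u)\leq 0$, because $f(0)=0$ together with the Lipschitz bound in (F) forces $f(s)\geq-Ks$ for $s\geq 0$. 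Hence $\underline u$ is a very weak subsolution of (CP).

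Third, for $\phi$ I would take a shifted Zel'dovich--Kompaneets--Barenblatt source-type solution centered at $x_0$,
\[
\phi(x,\tau)=(\tau+\tau_0)^{-\frac{1}{m+1}}\left[C-\frac{(m-1)(x-x_0)^2}{2m(m+1)(\tau+\tau_0)^{2/(m+1)}}\right]_+^{\frac{1}{m-1}},
\]
with two small parameters $C,\tau_0>0$. Since $\phi(\cdot,0)$ has support radius of order $(C\tau_0^{2/(m+1)})^{1/2}$ and peak $C^{1/(m-1)}\tau_0^{-1/(m+1)}$, I can first pick $C$ small and then $\tau_0$ so as to guarantee $\phi(\cdot,0)\leq\varepsilon\chi_{[x_0-\delta,x_0+\delta]}\leq u(\cdot,t_0)$. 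Because $\tau(t)$ is increasing and remains bounded on $[t_0,\infty)$ by $1/((m-1)K)$, the quantity $\tau(t)+\tau_0$ stays in a compact subinterval of $(0,\infty)$, and
\[
\underline u(x_0,t)=e^{-K(t-t_0)}C^{1/(m-1)}\bigl(\tau(t)+\tau_0\bigr)^{-1/(m+1)}>0,\qquad t\geq t_0.
\]

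Applying the very-weak comparison principle (cf.\ Lemma \ref{lem:ss-problem}(iv) and \cite[Theorem 6.5]{Vaz-book}) then yields $u(x_0,t)\geq\underline u(x_0,t)>0$ for all $t\geq t_0$, and classical regularity along this line follows from the standard uniformly parabolic theory on the set where $u$ is bounded below by a positive constant. The only delicate point is that the subsolution $\phi$ possesses a free boundary where the equation degenerates, so the classical maximum principle does not apply directly; this is precisely where the very-weak-solution framework employed throughout the paper is indispensable, and Barenblatt-type profiles are admissible comparison functions within it.
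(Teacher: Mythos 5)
Your argument is correct and is a genuinely different route from the paper's main proof, although it closely parallels a variant that the paper only sketches in a remark after Proposition \ref{prop:positive}. The paper's main proof takes the same first step (continuity gives $u(\cdot,t_0)\geq\varepsilon$ on $[x_0-\delta,x_0+\delta]$), but then compares against a \emph{sine-type} compactly supported subsolution
$$
\underline u(x,t)=\varepsilon e^{-\mu t}\Bigl(\sin\tfrac{\pi(x-x_0+\delta)}{2\delta}\Bigr)^{1/m},\qquad \mu=K+\tfrac{\pi^2}{4\delta^2}\varepsilon^{m-1},
$$
which is frozen on the fixed interval $J=[x_0-\delta,x_0+\delta]$ and decays exponentially in time; the reaction is absorbed through the same bound $f(s)\geq -Ks$ that you use. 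Your proof instead performs the exponential-in-amplitude and logarithmic-in-time change of variables $\underline u=e^{-K(t-t_0)}\phi(x,\tau(t))$ to reduce exactly to the pure PME and then compares against a Barenblatt profile. The paper does record a ZKB-type subsolution in a remark, but there it uses a time scaling whose support \emph{shrinks} to a point as $t\to\infty$; your choice $\tau(t)=\frac{1-e^{-(m-1)K(t-t_0)}}{(m-1)K}$ is strictly better in this respect, since $\tau(t)$ is bounded, so $\tau(t)+\tau_0$ remains in a compact subinterval of $(0,\infty)$ and the support neither shrinks nor blows up. One minor stylistic point: in the parameter selection you say ``first pick $C$ small and then $\tau_0$''; it is just as easy (and avoids the implicit inequality you must check between the two constraints on $\tau_0$) to fix $\tau_0>0$ arbitrarily and then let $C\to 0$, since at fixed $\tau_0$ both the peak $C^{1/(m-1)}\tau_0^{-1/(m+1)}$ and the support radius $\bigl(\tfrac{2m(m+1)}{m-1}C\tau_0^{2/(m+1)}\bigr)^{1/2}$ tend to zero. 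Either order works, so this is not a gap. The very-weak comparison step and the upgrade to classical regularity near the line $\{x=x_0,\ t>t_0\}$ are handled exactly as in the paper.
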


\begin{proof}
Since $t_0>0$ we have $u(x,t_0)\in C(\R)$, and so, by our assumption there exist $\varepsilon>0,\ \delta>0$ such that
$$
u(x,t_0)\geq \varepsilon\mbox{ for } x\in J:= [x_0 -\delta, x_0+\delta].
$$
We use $K$ to denote the Lipschitz constant for $f$.
Denote
$$
\beta := \frac{1}{m-1}, \qquad \mu := K + \frac{\pi^2 }{4\delta^2}\varepsilon^{m-1},
$$
and define
\begin{equation}\label{def-underline-u}
\underline{u}(x,t):= \left\{
 \begin{array}{ll}
\displaystyle \varepsilon e^{- \mu t} \left(\sin \frac{\pi (x-x_0 +\delta)}{2\delta} \right)^{\frac1m}, & x\in J,\ t\geq 0,\\
 0, & x\in \R\backslash J,\ t\geq 0.
 \end{array}
 \right.
\end{equation}
It is easily seen that
$$
\underline{u}(x,0)\leq \varepsilon \leq u(x,t_0),\quad x\in J,
$$
and
$$
\underline{u}_t - (\underline{u}^m)_{xx} - f(\underline{u})
\leq \underline{u}_t - (\underline{u}^m)_{xx} + K\underline{u} \leq 0,\quad x\in \R,\ t\geq 0.
$$
By the comparison principle we have
$$
u(x,t+t_0) \geq \underline{u}(x,t)>0,\quad |x-x_0|< \delta,\ t>0.
$$
This proves the proposition.
\end{proof}

\begin{rem}\rm
A positive lower bound can also be given by a subsolution  with the form of ZKB solution:
\begin{equation}\label{ZKB-like subsol}
\underline{u}_1 (x,t) := \frac{1}{(t+1)^\beta e^{K(t+1)}} \left(C -\frac{(x-x_0)^2 e^{(m-1)K(t+1)}}{2m } \right)^\beta_+ ,\quad x\in \R,\ t\geq 0,
\end{equation}
with $w_+ := \max\{w(x,t),0\}$ and $C>0$ satisfying
$$
2m C e^{-(m-1)K}\leq \delta^2,\quad C^\beta \leq \varepsilon e^{K}.
$$
Using this subsolution, we can conclude that
$$
u(x,t)\geq \underline{u}_1 (x,t)>0 \mbox{ if } |x-x_0|< \underline{s}(t):= (2mC)^{1/2} e^{-\frac{(m-1)K(t+1)}{2}},\ t\geq 0.
$$
Note that, using this estimate we have $u>0$ in a shrinking domain $(x_0-\underline{s}(t),x_0 +\underline{s}(t))$, while using $\underline{u}$ we have $u>0$ in a fixed domain $(x_0-\delta, x_0+\delta)$.
\end{rem}

When $u_0\geq 0$ is compactly supported, besides the upper bound in Proposition \ref{prop:well}, we also have the {\it finite propagation speed} as in PME.

\begin{prop}\label{prop:finite speed}
Assume {\rm (I)} and $f$ is Lipschitz continuous  with $f(0)=0$. Then the support of $u(\cdot,t)$ lies in $[-\bar{s}(t),\bar{s}(t)]$ with
\begin{equation}\label{def-bar-s}
\bar{s}(t) := [C_1 (t+1)]^{1/2} e^{\frac{(m-1)K(t+1)}{2}}, \quad t>0,
\end{equation}
for some $C_1$ depending only on $m, K$ and $\|u_0\|_{L^\infty}$, and so $-l(t),r(t)\leq \bar{s}(t)$.
\end{prop}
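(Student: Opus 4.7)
The plan is to reduce the support estimate for (CP) to an estimate for the classical PME via a rescaling that absorbs the Lipschitz reaction, and then dominate the PME profile by an explicit parabolic-cap supersolution in the pressure variable. Since $f(0)=0$ and $f$ is Lipschitz with constant $K$, one has $f(u)\leq Ku$ for $u\geq 0$, so $u$ is a very weak subsolution of the auxiliary equation $u_t=(u^m)_{xx}+Ku$. The change of variables
\[
\tilde u(x,t):=e^{Kt}\,w\bigl(x,\tau(t)\bigr),\qquad \tau(t):=\frac{e^{(m-1)Kt}-1}{(m-1)K},
\]
converts any very weak (super)solution of the PME $w_\tau=(w^m)_{yy}$ into a very weak (super)solution of the auxiliary equation. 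Hence, by the standard comparison principle for very weak solutions (cf.~\cite[Theorem 6.5]{Vaz-book}), if $\bar w$ is a compactly supported supersolution of the PME whose initial datum dominates $u_0$, then $u\leq \tilde u$ on $\R\times(0,\infty)$, and it suffices to control the support of $\bar w(\cdot,\tau)$.

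For the PME I would pass to the pressure variable $P=\tfrac{m}{m-1}w^{m-1}$, which satisfies $P_\tau=(m-1)P\,P_{yy}+P_y^2$, and take the parabolic-cap candidate
\[
\bar P(y,\tau)=\left[\alpha-\frac{y^2}{4(\tau+\tau_0)}\right]_+.
\]
A direct calculation shows that the two $y^2$ contributions from $\bar P_\tau$ and $\bar P_y^2$ cancel exactly thanks to the coefficient $1/4$, leaving $\bar P_\tau-(m-1)\bar P\,\bar P_{yy}-\bar P_y^2=(m-1)\bar P/[2(\tau+\tau_0)]\geq 0$. Thus $\bar P$ is a supersolution of the pressure PME, $\bar w:=[\tfrac{m-1}{m}\bar P]^{1/(m-1)}$ is a supersolution of the PME, and the parameters $\alpha,\tau_0$ can be fixed in terms of $m$, $b$ and $\|u_0\|_{L^\infty}$ (for instance $\alpha=2m\|u_0\|_{L^\infty}^{m-1}/(m-1)$ and $\tau_0=b^2(m-1)/[4m\|u_0\|_{L^\infty}^{m-1}]$) so that $\bar w(y,0)\geq u_0(y)$ on $[-b,b]$. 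The support of $\bar w(\cdot,\tau)$ is then contained in $[-R(\tau),R(\tau)]$ with $R(\tau)=2\sqrt{\alpha(\tau+\tau_0)}$.

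Pulling back through the rescaling yields ${\rm spt}[u(\cdot,t)]\subset {\rm spt}[\tilde u(\cdot,t)]\subset[-R(\tau(t)),R(\tau(t))]$. Splitting into the ranges $0\leq t\leq 1$ and $t\geq 1$, one checks that $\tau(t)+\tau_0\leq C_0(m,K,b,\|u_0\|_{L^\infty})(t+1)e^{(m-1)K(t+1)}$, so taking a square root produces $-l(t),\,r(t)\leq \bar s(t)$ with $C_1=4\alpha C_0 e^{-(m-1)K}$ depending only on $m$, $K$ and $\|u_0\|_{L^\infty}$. The main delicate point is the comparison step above: because $\tilde u$ is constructed as a supersolution of the auxiliary equation rather than of (CP) itself, one must verify that the comparison principle for very weak solutions of quasilinear degenerate parabolic equations applies to both $u$ (viewed as a subsolution of the auxiliary equation via $f(u)\leq Ku$) and $\tilde u$ (a supersolution by construction); this is covered by \cite[Theorem 6.5]{Vaz-book} since the auxiliary reaction $Ku$ is globally Lipschitz, but one should take care because the ambient domain $\R$ is unbounded, which is handled by first comparing on large bounded intervals where $\tilde u$ already vanishes near the lateral boundaries and then passing to the limit.
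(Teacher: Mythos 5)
Your proof is correct and does establish the proposition, but by a route different in structure from the paper's. The paper constructs the supersolution directly in the original variables: it takes
\[
\bar u(x,t)=A\,e^{K(t+1)}\Bigl(C_1-\frac{x^2}{(t+1)e^{(m-1)K(t+1)}}\Bigr)^{\!\beta}_{+},\qquad \beta=\tfrac{1}{m-1},\ A=(4m\beta)^{-\beta},
\]
and verifies by a single direct computation that $\bar u_t\geq(\bar u^m)_{xx}+K\bar u\geq(\bar u^m)_{xx}+f(\bar u)$, from which the bound $\bar s(t)$ drops out as exactly the edge of the cap. You instead first linearize away the reaction via the rescaling $\tilde u(x,t)=e^{Kt}w(x,\tau(t))$ with $\tau'(t)=e^{(m-1)Kt}$, which reduces the problem to the pure PME, and then invoke the parabolic-cap pressure supersolution $\bar P=(\alpha-y^2/[4(\tau+\tau_0)])_+$. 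Both are explicit compactly supported supersolutions, but your modular approach isolates the role of the reaction cleanly, and in fact yields the slightly better bound $R(\tau(t))\asymp e^{(m-1)Kt/2}$ without the extra $\sqrt{t+1}$ factor before you deliberately relax to $\bar s(t)$. The paper's one-shot computation is shorter but leaves the algebra less transparent.

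Two points you should tighten up. First, you assert that the rescaling carries very weak (super)solutions of PME to very weak (super)solutions of $u_t=(u^m)_{xx}+Ku$; this is true but deserves a line of justification at the level of the weak formulation (change of time variable in the test function, factor $e^{Kt}$), since it is the step on which the whole reduction rests. Second, you should check that $\bar w=[\tfrac{m-1}{m}\bar P]^{1/(m-1)}$ really is a very weak supersolution across its moving free boundary; the key facts are that $\bar w$ and $(\bar w^m)_y$ both vanish on $\{|y|=R(\tau)\}$ (which they do, since $\bar w^m\sim\bar P^{m/(m-1)}$ and $m/(m-1)>1$), so that Green's identity produces no boundary contribution, and that $(\bar w^m)_{yy}\sim (R-y)^{(2-m)/(m-1)}$ is locally integrable. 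Finally, a cosmetic remark shared with the paper: the constant $C_1$ unavoidably depends on $b$ (through $\tau_0$ in your proof, through the choice of $C_1$ in the paper's), so strictly speaking it depends on the support of $u_0$ and not only on $\|u_0\|_{L^\infty}$; this is a minor imprecision in the statement, not a flaw in your argument.
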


\begin{proof}
We construct a supersolution of the ZKB form. Set $A := (4m\beta)^{-\beta}$, choose $C_1>0$ such that
$$
A e^K \left( C_1 -\frac{b^2}{e^{(m-1)K}} \right)^\beta \geq \|u_0\|_{L^\infty},
$$
where $b>0$ is the number in (I), and define
$$
\bar{u}(x,t):= A e^{K(t+1)} \left(C_1 -\frac{x^2}{(t+1)e^{(m-1)K(t+1)}} \right)^\beta_+, \quad x\in \R,\ t\geq 0.
$$
By the choose of $C_1$ we see that $\bar{u}(x,0)\geq u_0(x)$, and by a direct calculation we have
$$
\bar{u}_t \geq (\bar{u}^m)_{xx} +K\bar{u} \geq (\bar{u}^m)_{xx}+f(\bar{u}),\quad |x|\leq \bar{s}(t):= [C_1 (t+1)]^{1/2} e^{\frac{(m-1)K(t+1)}{2}},\ t\geq 0.
$$
Hence, $\bar{u}$ is a supersolution of (CP), and the conclusion follows from the comparison.
\end{proof}

\begin{rem}\rm
Our construction for the subsolution  in \eqref{ZKB-like subsol} and the supersolution in this proof is inspired by the ZKB solution, but with obvious difference due to the presence of the linear term $Ku$.
\end{rem}

\subsection{A priori estimate for the pressure}
As we have mentioned in Section 2, the pressure $v(x,t) := \frac{m}{m-1} [u(x,t)]^{m-1}$ solves
$$
{\rm (pCP)}\hskip 30mm
 \left\{
 \begin{array}{ll}
 v_t = (m-1) v v_{xx} + v_x^2 + g(v), & x\in \R,\ t>0, \\
 \displaystyle  v(x,0) = v_0(x) := \frac{m}{m-1} u_0^{m-1}(x), & x\in \R,
 \end{array}
 \right.
 \hskip 50mm
$$
with
$$
g(v) := m \Big( \frac{(m-1)v}{m}\Big)^{\frac{m-2}{m-1}} f
\left( \Big( \frac{(m-1)v}{m}\Big)^{\frac{1}{m-1}} \right)
$$
satisfying
\begin{equation}\label{bound-g-Lip}
|g(v)|\leq K (m-1) v, \quad v\geq 0,
\end{equation}
by (F). Since $u$ is bounded as in Proposition \ref{prop:well}, we see that $v$ is also bounded:
\begin{equation}\label{bound-v}
0\leq v(x,t) \leq M_0 := \max\left\{ \frac{m}{m-1}, \|v_0\|_{L^\infty} \right\},\quad x\in \R,\ t>0.
\end{equation}
Hence,
\begin{equation}\label{bound-g}
|g(v)|\leq G_0 (m,K,v_0):= K (m-1) M_0,\quad  0\leq v\leq M_0.
\end{equation}

By the above propositions we have the following result.

\begin{prop}\label{prop:well-v}
Assume {\rm (I)} and $f$ is Lipschitz continuous with $f(0)=0$. Then the problem {\rm (pCP)} has a unique nonnegative solution $v(x,t)\in C(Q_T)\cap L^\infty (Q_T)$ for any $T>0$.

Moreover, if $v(x_0, t_0)>0$ for some $x_0\in \R$ and $t_0>0$, then $v(x_0,t)>0$ for all $t\geq t_0$, and so $v$ is a classical solution near the line $\{(x_0, t) \mid t\geq t_0\}$.

The support of $v(\cdot,t)$ lies in $[-\bar{s}(t), \bar{s}(t)]$ with $\bar{s}(t)$ given by \eqref{def-bar-s}.
\end{prop}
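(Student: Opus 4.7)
The plan is to derive Proposition \ref{prop:well-v} as an immediate corollary of Propositions \ref{prop:well}, \ref{prop:positive} and \ref{prop:finite speed} by invoking the pressure transformation $v = \frac{m}{m-1} u^{m-1}$, which is a homeomorphism of $[0,\infty)$ preserving the zero set and continuously mapping the bound $u\le M_0'$ onto the bound $v\le M_0$ in \eqref{bound-v}. Since the proposition is stated in terms of the same hypothesis (I) as its $u$-counterpart, essentially no new analysis is required; what remains is the bookkeeping of translating each conclusion across the change of variable.

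First I would settle well-posedness. From Proposition \ref{prop:well}, (CP) admits a unique very weak solution $u \in C(Q_T)\cap L^\infty(Q_T)$ with $0 \le u \le M_0'$. Defining $v$ pointwise by $v = \frac{m}{m-1} u^{m-1}$ gives $v \in C(Q_T)\cap L^\infty(Q_T)$ with $0 \le v \le M_0$. In the open set $\{u>0\}$, standard parabolic theory renders $u$ classical, and a direct computation (multiplying $u_t=(u^m)_{xx}+f(u)$ by $(m-1)u^{m-2}$) shows $v$ satisfies $v_t=(m-1)v v_{xx}+v_x^2+g(v)$ pointwise. The global weak formulation of (pCP) is the usual one inherited from the very weak formulation of (CP), as in \cite[Chapter 5]{Vaz-book}. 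Uniqueness transfers in the same way: any two nonnegative solutions $v_1,v_2$ of (pCP) with the same initial data yield two very weak solutions $u_i=\bigl(\frac{m-1}{m}v_i\bigr)^{1/(m-1)}$ of (CP) with common initial data $u_0$, so $u_1=u_2$ by Proposition \ref{prop:well}, and hence $v_1=v_2$.

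For the positivity assertion, note the equivalence $v(x_0,t_0)>0 \iff u(x_0,t_0)>0$. Proposition \ref{prop:positive} then gives $u(x_0,t)>0$ for all $t\ge t_0$, and so $v(x_0,t)>0$. In a neighborhood of the vertical line $\{(x_0,t):t\ge t_0\}$, the continuity of $u$ and the persistence bound \eqref{def-underline-u} applied at successive times imply $u$ is locally uniformly bounded below by a positive constant; the equation for $u$ is then uniformly parabolic there, Schauder estimates give $u\in C^{2,1}$, and the smoothness of the map $u\mapsto \tfrac{m}{m-1}u^{m-1}$ on $(0,\infty)$ transfers classical regularity to $v$. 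The final statement on supports is immediate: since $u$ and $v$ have identical zero sets at each $t>0$, Proposition \ref{prop:finite speed} applied to $u$ yields $\mathrm{spt}\,v(\cdot,t)=\mathrm{spt}\,u(\cdot,t)\subset [-\bar{s}(t),\bar{s}(t)]$ with $\bar{s}(t)$ as in \eqref{def-bar-s}.

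There is no substantial obstacle here; the only step requiring any care is the verification that the very weak formulation transfers correctly across the change of variables, and this is a standard calculation documented in \cite[Chapter 5]{Vaz-book}. Accordingly, my proof would be quite short, essentially a table of correspondences between $u$-statements and $v$-statements, supplemented by the pointwise identity relating the two PDEs in regions where $u>0$.
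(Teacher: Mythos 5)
Your proposal is correct and coincides with the paper's intent: the paper gives no explicit proof of Proposition \ref{prop:well-v}, writing only ``By the above propositions we have the following result,'' meaning exactly the transfer from Propositions \ref{prop:well}, \ref{prop:positive}, and \ref{prop:finite speed} via the pressure change of variable $v=\frac{m}{m-1}u^{m-1}$ that you carry out. Your spelled-out version of that bookkeeping (uniqueness, equivalence of zero sets, local uniform positivity plus Schauder for classical regularity of $v$) is accurate.
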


In what follows, we continue to present the a priori estimates for $v_x,\ v_{xx}$ and $v_t$ under further restrictions on $f$: $f\in C^2$. In this case we have
\begin{equation}\label{bound-g'}
|g'(v)| \leq G_1 (m,K,v_0)\quad \mbox{and}\quad |g''(v)|\leq G_2 (m,K,v_0),\quad 0\leq v\leq M_0.
\end{equation}
In a similar way as Aronson \cite{A1969} (with obvious modification caused by the reaction term $g(v)$) we have

\begin{lem}[\cite{A1969}, uniform estimate for $v_x$]\label{lem:C1-est}
Assume {\rm (I)} and $f\in C^1([0,\infty))$ with $f(0)=0$. Let $v$ be a smooth positive classical solution of  {\rm (pCP)} in $R:= (a,b)\times (0,T]$ for some $a,b,T\in \R$ with $b>a,\ T>0$. Then for any $0< \delta <\frac{b-a}{2}, \ \tau < T$ there holds:
\begin{equation}\label{C1 bound}
|v_x(x,t)|\leq M_1 (m, M_0,G_0, G_1, \delta, \tau),\quad (x,t)\in [a+\delta, b-\delta]\times [\tau, T].
\end{equation}

Moreover, if $\tau =0$ and
$$
M_1^0 := \max\limits_{[a,b]} |v'_0(x)| <\infty,
$$
then \eqref{C1 bound} holds in $(a+\delta, b-\delta)\times (0,T]$ for $M_1 =M_1(m, M_0, G_0, G_1, \delta, M_1^0)$.
\end{lem}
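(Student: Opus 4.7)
The plan is to follow the Bernstein technique of Aronson~\cite{A1969}, modified to accommodate the reaction term $g$. Set $w := v_x$; differentiating the pressure equation in $x$ gives
\begin{equation*}
w_t = (m-1) v\, w_{xx} + (m+1) w\, w_x + g'(v)\, w \quad \text{in } R,
\end{equation*}
so that $P := w^2$ satisfies
\begin{equation*}
P_t = (m-1) v\, P_{xx} + (m+1) w\, P_x - 2(m-1) v\, w_x^2 + 2 g'(v) P.
\end{equation*}
The principal (but degenerate) second-order term is $(m-1) v P_{xx}$; the only ingredient carrying information about $f$ is the bounded term $2 g'(v) P$ with $|g'(v)| \leq G_1$; and there is a favorable dissipative term $-2(m-1) v w_x^2$.

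To localize, I would choose cutoffs $\zeta \in C^2([a,b])$ with $\zeta(a)=\zeta(b)=0$ and $\zeta\equiv 1$ on $[a+\delta,b-\delta]$, and $\chi \in C^1([0,T])$ with $\chi(0)=0$ and $\chi\equiv 1$ on $[\tau,T]$, and then consider the auxiliary function
\begin{equation*}
\Psi(x,t) := \zeta(x)^4\, \chi(t)\, P(x,t).
\end{equation*}
Since $\Psi$ is continuous on $[a,b]\times[0,T]$, vanishes on the parabolic boundary, and is non-negative, it attains its maximum at some interior point $(x_0,t_0)$ (otherwise the estimate is trivial). At $(x_0,t_0)$ the relations $\Psi_x = 0$, $\Psi_t \geq 0$, $\Psi_{xx} \leq 0$ combined with the evolution equation for $P$ yield, after substitution of $P_x(x_0,t_0) = -4\zeta'(x_0) P(x_0,t_0)/\zeta(x_0)$, a pointwise algebraic inequality. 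The advection contribution $(m+1) w P_x$ produces a term of order $P^{3/2} |\zeta'|/\zeta$; this together with the reaction contribution $2 g'(v) P$ is absorbed using the favorable dissipation $-2(m-1) v w_x^2$ (via Cauchy--Schwarz) and the $-\chi'(t_0)\zeta^4 P$ term coming from $\Psi_t \geq 0$. Collecting terms produces a bound $\Psi(x_0,t_0) \leq M_1^2$ with $M_1$ depending only on $m, M_0, G_0, G_1, \delta, \tau$, which gives the claimed estimate on $[a+\delta,b-\delta]\times[\tau,T]$ where $\zeta\equiv\chi\equiv 1$.

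The main obstacle is the degeneracy of the principal coefficient $(m-1)v$, which vanishes on the free boundary and precludes the use of standard uniform parabolic regularity. The Bernstein estimate circumvents this because the favorable term $-2(m-1) v w_x^2$ carries the same factor $v$ and absorbs the unfavorable contributions without any positive lower bound on $v$; this is precisely the reason the final constant $M_1$ has no dependence on $\inf_R v$. A secondary technical point is the power of $\zeta$: the cross term $(m+1) w P_x$ evaluates to a $P^{3/2}$-type contribution that requires $\zeta^4$ (rather than $\zeta^2$) in order to be absorbed up to the free boundary, and a careful choice such as $\chi(t)=\min\{t/\tau,1\}$ produces the $-\chi'/\chi$ term needed to balance the reaction.

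For the \emph{moreover} statement, if $M_1^0 := \max_{[a,b]}|v_0'|$ is finite, one takes $\chi \equiv 1$; the same auxiliary function $\Psi = \zeta^4 P$ then satisfies $\Psi(\cdot,0) \leq (M_1^0)^2$ on $[a,b]$ and vanishes on $\{x=a\}\cup\{x=b\}$, so the maximum-principle argument yields the bound on $(a+\delta,b-\delta)\times(0,T]$ with $M_1$ depending additionally on $M_1^0$.
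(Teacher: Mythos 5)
Your outline correctly identifies the Bernstein strategy: you derive the right evolution equation for $P := v_x^2$, correctly isolate the dissipative term $-2(m-1)v\,w_x^2$ with $w=v_x$, and correctly observe that this term carries the same factor $v$ as the principal coefficient, which is why no lower bound on $v$ is needed. The "moreover" reduction (dropping $\chi$ and using the initial bound $M_1^0$) is also the right idea. However, the auxiliary function $\Psi=\zeta^4\chi\,P$ is not adequate and the absorption step as written fails.

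The problem arises exactly where you most need the estimate. Suppose the maximum of $\Psi$ over $\overline{R}$ is attained at an interior point $(x_0,t_0)$ with $x_0\in[a+\delta,b-\delta]$ and $t_0\in[\tau,T)$, so that $\zeta'(x_0)=\zeta''(x_0)=0$ and $\chi'(t_0)=0$. Then $\Psi_x=0$ forces $P_x(x_0,t_0)=0$, hence $w_x(x_0,t_0)=P_x/(2w)=0$, and the favorable dissipation $-2(m-1)v\,w_x^2$ is identically zero at the maximum. The time-cutoff contribution $\chi'\zeta^4P/\chi$ is also zero. The maximum conditions $\Psi_t\geq 0$, $\Psi_{xx}\leq 0$ together with the PDE then reduce to $0\leq 2g'(v)P$, which is $0\leq 0$ for the pure PME and yields no bound at all when $g'\geq 0$. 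So both mechanisms you invoke for absorption---Cauchy--Schwarz against the dissipation, and the $\chi'$ term---vanish precisely on $[a+\delta,b-\delta]\times[\tau,T]$, and there is nothing left to control $P(x_0,t_0)$.

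The ingredient you are missing from Aronson's argument is an additive term \emph{linear in $v$} in the auxiliary function. Aronson takes (up to normalization) $Z := \phi(x)\,v_x^2 + \lambda v$, with $\phi$ a compactly supported cutoff and $\lambda>0$ a large constant depending only on $m,M_0,G_0,G_1$ and $\phi$. At an interior maximum with $\phi'(x_0)=0$, the condition $Z_x=0$ reads $2\phi\,v_x v_{xx}+\lambda v_x=0$, which forces $v_{xx}(x_0,t_0)=-\lambda/(2\phi(x_0))\neq 0$ even though $\phi'=0$; thus the dissipative term $-2(m-1)v\,v_{xx}^2$ is strictly negative and of size $\sim (m-1)v\lambda^2/\phi$, and this---not the vanishing $(\zeta')^2/\zeta^2$-type terms---is what absorbs the unfavorable contributions. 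One also checks that the cross terms $\pm\lambda(m-1)v\,v_{xxx}$-type contributions from $Z_t$ and $Z_{xx}$ cancel exactly, and the resulting algebraic inequality at the maximum bounds $v_x^2$ by a constant depending only on $m,M_0,G_0,G_1,\delta$ (and, if $\chi\not\equiv1$, on $\tau$), uniformly in $T$. You should rebuild the localized argument around an auxiliary function of this form; the cutoff power (Aronson uses $\phi=(\,\cdot\,)^2$-type) is then chosen so that $(\phi')^2/\phi$ and $\phi''$ are bounded, and the reaction term $2g'(v)P$ is dominated by the $\lambda$-dependent coefficient.
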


Note that the a priori bound $M_1$ of $v_x$ obtained in this lemma is uniform in $T$ due to the fact that the bound of $v$ is so. The result holds for classical solutions.
For the very weak solution in $C(Q_T)\cap L^\infty(Q_T)$, however, one can show the following Lipschitz continuity for $v$ and H\"{o}lder continuity for $u$ (which can be regarded as the limit of a sequence of classical solutions).

\begin{prop}[\cite{A1969}, Lipschitz continuity for $v$ and H\"{o}lder continuity for $u$]\label{prop:regular weak sol}
Assume {\rm (I)} and $f\in C^1([0,\infty))$ with $f(0)=0$. If we further assume that $u^m_0$ is Lipschitz continuous, then for $T>\tau>0$,
\begin{equation}\label{Lip-p}
|v(x,t)-v(y,t)| \leq C_1(m,\tau,\|u_0\|_{L^\infty}) \cdot |x-y|,\quad x,y\in \R,\ t\in [\tau, T],
\end{equation}
\begin{equation}\label{Holder-u}
|u(x,t) -u(y,t)| \leq C_2 (m,\tau,\|u_0\|_{L^\infty}) \cdot |x-y|^\nu ,\quad x,y\in \R,\ t\in [\tau, T],\ \nu := \min\Big\{1, \frac{1}{m-1}\Big\}
\end{equation}
and $\frac{\partial u^m}{\partial x}(x,t)$ exists and is continuous in $x\in \R$, with $\frac{\partial u^m}{\partial x}(x,t)=0$ if $u(x,t)=0$;
\end{prop}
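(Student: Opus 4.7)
The plan is to establish the Lipschitz estimate \eqref{Lip-p} for the pressure $v$ first by approximation, and then deduce the H\"older bound \eqref{Holder-u} for $u$ and the properties of $(u^m)_x$ as direct consequences. The result is essentially a gradient estimate combined with careful handling of the free boundary, following the approach of Aronson \cite{A1969}.

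First I would construct positive classical approximations. Let $v^{\varepsilon,R}$ denote the smooth positive classical solution of the Cauchy--Dirichlet problem for pRPME on $[-R,R]\times (0,T]$ with initial data $v_0^\varepsilon := \max\{v_0,\varepsilon\}$ (smoothly truncated on $\{|x|>R\}$) and boundary value $\varepsilon$; standard parabolic theory gives existence, smoothness, and strict positivity. By the comparison principle, the uniform bound $0\leq v^{\varepsilon,R}\leq M_0$ of \eqref{bound-v} holds independently of $\varepsilon$ and $R$, and the pointwise bounds \eqref{bound-g}--\eqref{bound-g'} on $g, g', g''$ are inherited. Applying Lemma \ref{lem:C1-est} on the strip $[x_0-\delta,x_0+\delta]\times[\tau/2,T]$ centered at any $x_0\in\R$ (with fixed $\delta>0$) yields $|v^{\varepsilon,R}_x(x,t)|\leq M_1(m,M_0,G_0,G_1,\delta,\tau)$ for $(x,t)\in[x_0-\delta/2,x_0+\delta/2]\times[\tau,T]$, with $M_1$ independent of $x_0$, $\varepsilon$ and $R$ by translation invariance of the equation. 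Covering $\R$ by such strips and letting $\varepsilon\to 0$, $R\to\infty$ (using comparison and the standard stability of very weak solutions as in \cite[Chapter 6]{Vaz-book}), I obtain the Lipschitz estimate \eqref{Lip-p} on the classical positivity set of $v$. To extend the Lipschitz bound across the support boundary, for any $x$ with $v(x,t)>0$ and $y$ with $v(y,t)=0$, there is a free boundary point $z$ between them and $|v(x,t)-v(y,t)|=v(x,t)\leq M_1|x-z|\leq M_1|x-y|$, which yields \eqref{Lip-p} for all $x,y\in\R$.

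For \eqref{Holder-u}, I would use the relation $u=\bigl(\tfrac{m-1}{m}v\bigr)^{1/(m-1)}$ together with the elementary inequality
\[
\bigl|a^{1/(m-1)}-b^{1/(m-1)}\bigr|\leq C(m,M_0)\,|a-b|^{\nu},\qquad a,b\in[0,M_0],\ \nu:=\min\Bigl\{1,\tfrac{1}{m-1}\Bigr\},
\]
which follows from concavity (if $m>2$) or mean value (if $1<m\leq 2$). Applied to $a=\tfrac{m-1}{m}v(x,t)$, $b=\tfrac{m-1}{m}v(y,t)$ and combined with \eqref{Lip-p}, this gives \eqref{Holder-u}.

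For the final claim about $(u^m)_x$, I would exploit the identity $(u^m)_x=u\,v_x$, which is classical wherever $u>0$ since $v$ is smooth there by Proposition \ref{prop:positive} and the chain rule gives $v_x=m u^{m-2}u_x$. At any point $(x_0,t)$ with $u(x_0,t)=0$, estimate \eqref{Holder-u} yields $u(x,t)\leq C|x-x_0|^\nu$, hence $u^m(x,t)\leq C^m|x-x_0|^{m\nu}$. Since $m\nu=\min\{m,m/(m-1)\}>1$ for every $m>1$, the difference quotient $(u^m(x,t)-u^m(x_0,t))/(x-x_0)$ tends to $0$ and $(u^m)_x(x_0,t)=0$ in the classical sense. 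Continuity at $x_0$ then follows because approaching $x_0$ through the positivity set gives $|(u^m)_x(x,t)|=u(x,t)\,|v_x(x,t)|\leq M_1\cdot C|x-x_0|^\nu\to 0$.

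The hard part will be Step 1--2, specifically showing that the gradient estimate of Lemma \ref{lem:C1-est} for positive classical approximations survives the degenerate limit \emph{uniformly across the free boundary}. The key enabling fact is that the constant $M_1$ depends only on the sup bound $M_0$ (which is uniform in $(\varepsilon,R)$) and on $\delta,\tau$, but neither on the positivity margin of the solution nor on the location $x_0$; this is what allows the Bernstein-type bound to transfer from the non-degenerate approximations to the degenerate Cauchy problem and then to be patched into a global Lipschitz bound by the simple triangle estimate at the boundary of $\mathrm{spt}\,v(\cdot,t)$.
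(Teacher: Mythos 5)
Your proposal is correct and takes essentially the same route as the paper: the paper's proof is a one-line sketch (apply Lemma \ref{lem:C1-est} to the positive classical approximations from \cite{OKC} and pass to the limit), and you have filled in that sketch with the standard details. One small simplification worth noting: since the approximating solutions $v^{\varepsilon,R}$ are strictly positive, the uniform bound $|v^{\varepsilon,R}_x|\leq M_1$ from Lemma \ref{lem:C1-est} holds on the \emph{entire} strip, not just on the positivity set of the limit; passing to the pointwise (decreasing) limit then gives \eqref{Lip-p} for all $x,y\in\R$ directly, so your separate triangle-inequality patch across the free boundary, while correct, is not actually needed.
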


\begin{proof}
The proof follows from the previous lemma and the fact:  the very weak solution $u$ is the pointwise limit of a decreasing sequence of positive classical functions. (For the PME, this was shown in \cite{OKC}. For the current problem, the proof is similar).
\end{proof}

Next we consider the lower bound of $v_{xx}$. For the PME, Aronson and B\'{e}nilan \cite{AB} gave a lower bound for $v_{xx}$ in 1979:
\begin{equation}\label{AB-est}
v_{xx} \geq - \frac{1}{(m+1)t},\quad t>0.
\end{equation}
This inequality, usually known as the Aronson-B\'{e}nilan estimate, is understood in the sense of distributions. It is optimal in the sense that equality is actually attained by the source-type or ZKB solutions. It is a significant novelty of the Cauchy problem, and is used so often in the theory of nonnegative solutions in the whole space. For our problem (pCP), we will also give a lower bound for $v_{xx}$, but with quite different order from \eqref{AB-est}.

\begin{prop}\label{prop:2-est}
Assume {\rm (F)}, {\rm (I)}, and $v_0\in C^2$ a.e. in $\R$, with
$$
0\leq v_0(x) \leq M_0^0,\quad M_1^0 := \sup\limits_{\R} |v'_0(x)| <\infty,\quad v''_0(x)\geq -M_2^0 >-\infty,\quad a.e.\ x\in \R.
$$
Let $v\in C(Q_T)\cap L^\infty (Q_T)$ for any $T>0$ be the solution of  {\rm (pCP)}. Then
$$
v_{xx} (x,t)\geq - C(t+\tau),\quad x\in \R,
\ t\in (0,T],
$$
for some $\tau =\tau(m, M_0^0, M_1^0, M_2^0, G_0, G_1, G_2)$ and $C=C(m, M_0^0, M_1^0, G_0, G_1, G_2)$, here the inequality holds in the sense of distributions in $\R\times (0,T]$.
\end{prop}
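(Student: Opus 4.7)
The plan is to differentiate the pressure equation twice in $x$, obtain a quasilinear parabolic equation for $z := v_{xx}$ that admits a spatially constant subsolution of the form $-C(t+\tau)$, and apply a comparison argument. Because $v$ need not be $C^2$ everywhere (in particular at the free boundaries), I would first reduce to smooth strictly positive classical approximations $v^\epsilon$ obtained by the standard regularization scheme: on a large ball $B_R$, solve the equation with leading coefficient $(m-1)(v+\epsilon)$ and with smooth, strictly positive initial data $v_0^\epsilon$ satisfying $v_0^\epsilon \to v_0$ uniformly, $(v_0^\epsilon)' \to v_0'$, and $(v_0^\epsilon)'' \geq -M_2^0 - \epsilon$. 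The Moreover-part of Lemma \ref{lem:C1-est}, applied with the finite initial slope $M_1^0$, furnishes a uniform Lipschitz estimate $|v^\epsilon_x| \leq M_1$ on $\R \times [0,T]$ with $M_1 = M_1(m, M_0^0, M_1^0, G_0, G_1)$ independent of $\epsilon$ and $R$.

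For these smooth $v^\epsilon$, differentiating the pressure equation twice in $x$ gives the classical PDE
\begin{equation*}
z^\epsilon_t = (m-1)v^\epsilon z^\epsilon_{xx} + 2m v^\epsilon_x z^\epsilon_x + (m+1)(z^\epsilon)^2 + g'(v^\epsilon)z^\epsilon + g''(v^\epsilon)(v^\epsilon_x)^2.
\end{equation*}
I would seek a spatially constant subsolution of the form $\underline{z}(t) = -C(t+\tau)$. Substitution reduces the subsolution requirement to
\begin{equation*}
(m+1)C^2(t+\tau)^2 - g'(v^\epsilon)C(t+\tau) + g''(v^\epsilon)(v^\epsilon_x)^2 + C \geq 0,
\end{equation*}
which, using $|g'|\leq G_1$, $|g''|\leq G_2$, and $|v^\epsilon_x|\leq M_1$, is implied by the pointwise inequality $(m+1)C^2 s^2 - G_1 C s - G_2 M_1^2 + C \geq 0$ for every $s\geq 0$. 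Choosing any $C > G_2 M_1^2 + G_1^2/(4(m+1))$ makes the quadratic strictly positive for all $s$, and then setting $\tau := (M_2^0+1)/C$ ensures both $\underline{z}(0) = -C\tau \leq -M_2^0 \leq v_0''(x)$ a.e.\ (up to the regularization error) and the claimed dependence of $C$ and $\tau$ on the parameters in the statement. The quadratic gain $(m+1)z^2$ is in fact favorable for a lower bound; the only antagonist is the $g''(v)v_x^2$ term, which the uniform Lipschitz bound on $v$ suppresses.

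With $w^\epsilon := z^\epsilon - \underline{z}$ subtracting the two (in)equalities yields
\begin{equation*}
w^\epsilon_t \geq (m-1)v^\epsilon w^\epsilon_{xx} + 2m v^\epsilon_x w^\epsilon_x + \bigl[(m+1)(z^\epsilon + \underline{z}) + g'(v^\epsilon)\bigr] w^\epsilon, \qquad w^\epsilon(\cdot,0)\geq 0.
\end{equation*}
Setting $W := e^{-\lambda t} w^\epsilon$ with $\lambda$ strictly larger than the essential sup (on the approximate problem, where $z^\epsilon$ is bounded) of the zero-order coefficient converts the inequality into $W_t \geq (m-1)v^\epsilon W_{xx} + 2m v^\epsilon_x W_x + (c-\lambda)W$ with $c-\lambda<0$. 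The parabolic maximum principle on $B_R\times [0,T]$ then rules out an interior negative minimum; controlling the artificial boundary $\partial B_R$ (by a barrier, or by noting that $v$ has compact support so only finitely many $R$ intersect the support) and letting $R\to\infty$ gives $w^\epsilon \geq 0$, i.e., $v^\epsilon_{xx} \geq -C(t+\tau)$. Finally, letting $\epsilon\to 0$ yields $v^\epsilon \to v$ locally uniformly, so $v_{xx}\geq -C(t+\tau)$ in $\mathcal{D}'(\R\times(0,T])$.

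The main obstacle will be the comparison step, not the algebra: the equation for $z$ is quasilinear with a quadratic reaction $(m+1)z^2$ and a leading coefficient $(m-1)v$ that degenerates at the free boundary, while $z^\epsilon$ is only locally bounded a priori. The rigorous justification requires the strictly positive classical approximations (so $z^\epsilon$ is $C^2$ and bounded on compacta), localization in $x$ with barrier control of the artificial Dirichlet data, and the exponential shift to make the linearized operator dissipative on the putative violation set. Once these ingredients are in place, the linear-in-$t$ form of $\underline{z}$ is forced by the impossibility of a $1/t$-type Aronson--B\'enilan bound in the presence of a reaction (the reaction term $g''(v)v_x^2$ contributes an $O(1)$ forcing that rules out any sharper initial behaviour).
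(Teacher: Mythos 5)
Your proposal is correct and follows essentially the same route as the paper: differentiate the pressure equation twice to obtain the quasilinear PDE for $z=v_{xx}$ with nonlinearity $(m+1)z^2$ and forcing $g''(v)v_x^2$, use the uniform Lipschitz bound $|v_x|\leq M_1$ from Lemma~\ref{lem:C1-est} to control the forcing, build a spatially constant subsolution $-C(t+\tau)$ by an algebraic positivity argument, compare, and pass to the limit through strictly positive classical approximations. The paper's constants are $C=G_2(M_1+1)^2$ and $\tau_1=\frac{G_1+(m+1)M_2^0}{(m+1)G_2(M_1+1)^2}$ rather than your $C>G_2M_1^2+G_1^2/(4(m+1))$ and $\tau=(M_2^0+1)/C$, but both choices verify the same quadratic inequality, and both regularization schemes (yours: uniformly parabolic coefficient $(m-1)(v+\epsilon)$ on $B_R$; the paper's: lift the density $u_0\mapsto u_0+\varepsilon$ on all of $\R$ and invoke standard global theory followed by an identification argument via uniqueness of the very weak solution) are standard and interchangeable here.
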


\begin{proof}
We follow the idea in Aronson and B\'{e}nilan \cite{AB} (see also \cite[\S 9.3]{Vaz-book}).

1). First we assume $v>0$ in $Q_T :=\R\times (0,T]$. In this case, $v$ is a classical solution of (pCP). We write the equation satisfied by $\eta := v_{xx}$ by differentiating the equation of $v$ twice:
$$
\eta_t = \mathcal{L} (\eta) + g'' v_x^2,
$$
with
$$
\mathcal{L}(\eta) := (m-1)v\eta_{xx} + 2 m v_x \eta_x + (m+1)\eta^2 + g'(v)\eta.
$$
Note that here we use the $C^2$ smoothness assumption for $f$.
By Lemma \ref{lem:C1-est} we have
$$
|v_x(x,t)|\leq M_1 (m,M_0^0, M_1^0, G_0, G_1),\quad x\in \R,\ t>0.
$$
Hence,
$$
\eta_t \geq \mathcal{L}(\eta) - G_2 (M_1+1)^2.
$$

Now we construct a subsolution  of  this operator. Define
\begin{equation}\label{def-tau}
\tau_1 := \frac{G_1 + (m+1) M_2^0}{(m+1)G_2 (M_1+1)^2}  .
\end{equation}
Then $\zeta(x,t) := - G_2 (M_1 +1)^2 (t+\tau_1)$ satisfies
$$
\zeta_t - \mathcal{L}(\zeta) + G_2 (M_1 +1)^2 = G_2 (M_1 +1)^2 (t+ \tau_1 ) [G_1 - (m+1) G_2 (M_1 +1)^2 (t+\tau_1)]\leq 0.
$$
In addition,
$$
\zeta(x,0) = - G_2 (M_1 +1)^2 \tau_1 < -M_2^0 \leq v''_0(x)= \eta(x,0+),\quad x\in \R.
$$
By comparison we have
$$
v_{xx}(x,t) = \eta(x,t) \geq \zeta(x,t)= - G_2 (M_1 +1)^2 (t+\tau_1), \quad x\in \R,\ t>0.
$$

2). We now construct a family of approximate positive solutions to approach the nonnegative general solution. Set
$$
u_{0\varepsilon}(x) := u_0(x) + \varepsilon,\quad x\in \R, \ 0<\varepsilon \ll 1.
$$
Then the corresponding pressure is $v_{0\varepsilon} := \frac{m}{m-1} u_{0\varepsilon}^{m-1}$, which satisfies
$$
0<\varepsilon \leq v_{0\varepsilon} (x) \leq M_0^0 +1,
\quad
|v'_{0\varepsilon}(x)|\leq M_1^0,\quad v''_{0\varepsilon} (x)\geq -M_2^0,\quad x\in \R.
$$
According to the standard theory, the problem (CP) with initial data $u_{0\varepsilon}$ has a unique solution $u_\varepsilon (x,t)$. In a similar way as in the previous step, we see that the corresponding pressure $v_\varepsilon$ satisfies
\begin{equation}\label{est-v-ep-0}
0< v_\varepsilon(x,t) \leq \max\Big\{\frac{m}{m-1}, M_0^0 +1\Big\},\quad x\in \R,\ t>0,\ 0<\varepsilon\ll 1,
\end{equation}
$v_{\varepsilon x}$ satisfies
$$
|v_{\varepsilon x}(x,t)|\leq M'_1 (m,M_0^0, M_1^0, G_0, G_1),\quad x\in \R,\ t>0,\ 0<\varepsilon \ll 1,
$$
and $v_{\varepsilon xx}$ satisfies
\begin{equation}\label{est-v-ep-2}
v_{\varepsilon x x} (x,t)\geq -C (t+ \tau),\quad x\in \R,\ t\in (0,T],\ 0<\varepsilon\ll 1,
\end{equation}
for
$$
C= C(m,M_0^0, M_1^0, G_0, G_1, G_2) \ \ \mbox{ and } \ \
\tau = \tau(m,M_0^0, M_1^0, M_2^0, G_0, G_1, G_2).
$$
Since $v_{\varepsilon}$ is strictly decreasing in $\varepsilon$ and $v_\varepsilon \geq v$. There exists $\hat{v}\geq v\geq 0$ such that
$$
\lim\limits_{\varepsilon\to 0} v_\varepsilon (x,t) = \hat{v}(x,t) \mbox{ in the topology of } C_{loc}(Q_T).
$$
(In particular, $\hat{v}\in C(Q_T)\cap L^\infty(Q_T)$). Furthermore, in the area $D:=\{(x,t)\in Q_T \mid \hat{v}(x,t)>0\}$, by standard interior parabolic estimates we see that the convergence holds also in the topology $C^{2,1}_{loc}(D)$.
Using the Lebesgue theorem to take limit as $\varepsilon\to 0$ in
$$
\iint [v_\varepsilon \varphi_{xx} +C(t+\tau)\varphi] dxdt = \iint [v_{\varepsilon xx} +C(t+\tau)]\varphi dxdt \geq 0
$$
for every $\varphi \in C^\infty_c (Q_T),\ \varphi \geq 0$, we obtain
$$
\iint [\hat{v}\varphi_{xx} +C(t+\tau)\varphi] dxdt\geq 0.
$$
This means that \eqref{est-v-ep-2} holds for $\hat{v}$ in the distribution sense.

To complete our proof we only need to show that $\hat{v}\equiv v$.
Denote by
$$
\hat{u}=\Big( \frac{m-1}{m} \hat{v} \Big)^{\frac{1}{m-1}} \in C(Q_T)\cap L^\infty(Q_T)
$$
the original density variable corresponding to the pressure $\hat{v}$.
Recalling that $u_\varepsilon$ is a classical solution of  (CP) with initial data $u_{0\varepsilon}$ we have
$$
\int_{\R} u_\varepsilon (x,T)\varphi(x,T) dx  =  \int_{\R} u_{0\varepsilon} (x) \varphi(x,0)dx +
\iint_{Q_T} f(u_\varepsilon) \varphi  dx dt  + \iint_{Q_T} [u_\varepsilon\varphi_t + u^m_\varepsilon \varphi_{xx}] dx dt.
$$
Taking limit as $\varepsilon\to 0$ we have
$$
\int_{\R} \hat{u} (x,T)\varphi(x,T) dx  =  \int_{\R} u_{0} (x) \varphi(x,0)dx +
\iint_{Q_T} f(\hat{u}) \varphi  dx dt  + \iint_{Q_T} [\hat{u}\varphi_t + u^m  \varphi_{xx}] dx dt.
$$
This means that $\hat{u}$ is a very weak solution of  (CP) with $\hat{u}(x,0)=u_0$.
By the uniqueness, $\hat{u}$ must be $u$, and so $\hat{v} \equiv v$.
\end{proof}

Finally, we show that $v$ is also Lipschitz in time.

\begin{lem}\label{lem:bound-vt}
Assume the hypotheses in Proposition \ref{prop:2-est} hold.
Then for any $\tau>0$ there holds
\begin{equation}\label{upper bound vt}
v_t(x,t) \leq C_1 t +C_2,\quad x\in \R,\ t\geq \tau,
\end{equation}
where $C_1$ depends on $m, M^0_1, G_0$ and $G_1$, $C_2$ depends on $m, G_0, G_1, v_t(x,\tau)$, and
\begin{equation}\label{lower bound vt}
v_t \geq - C_3 t -C_4,\quad x\in \R,\ t>0,
\end{equation}
where $C_3$ and $C_4$ depend on $m, M_0^0, M_1^0, G_0, G_1, G_2$.
\end{lem}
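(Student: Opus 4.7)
The plan is to analyze $w:=v_t$ through the linear parabolic equation obtained by differentiating the pRPME in $t$:
\[
w_t \;=\; (m-1)\,v\,w_{xx} + 2\,v_x\,w_x + \bigl[(m-1)v_{xx} + g'(v)\bigr]\,w.
\]
As in the proof of Proposition~\ref{prop:2-est}, I would first establish the estimates uniformly in $\varepsilon$ for the positive classical approximants $v_\varepsilon$ coming from initial data $u_0+\varepsilon$, and then pass to the limit $\varepsilon\to 0+$ in the same way.

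For the lower bound the $w$-equation is not even needed; the estimate is read straight off the pRPME. Proposition~\ref{prop:2-est} yields $v_{xx}\geq -C(t+\tau_0)$ in the distributional sense, with $C,\tau_0$ depending on the declared constants. Combining this with $0\leq v\leq M_0$, $v_x^2\geq 0$, and $|g(v)|\leq G_0$ gives
\[
v_t \;=\; (m-1)v\,v_{xx} + v_x^2 + g(v) \;\geq\; -(m-1)M_0\,C\,(t+\tau_0)-G_0,
\]
which is exactly of the advertised form $v_t\geq -C_3 t - C_4$, with $C_3=(m-1)M_0 C$ and $C_4=(m-1)M_0 C\tau_0+G_0$ depending on $m,M_0^0,M_1^0,G_0,G_1,G_2$ through $C$ and $\tau_0$. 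The approximants satisfy the pointwise analogue, which survives in the limit.

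For the upper bound, my plan is to compare $w_\varepsilon$ against the time-affine barrier $\bar{w}(t):=C_1(t-\tau)+C_2$, choosing $C_2\geq \sup_x w_\varepsilon(x,\tau)$ (which is the only place where $v_t(\cdot,\tau)$ enters, and explains why $C_2$ must depend on it) and $C_1$ to be determined. The genuine difficulty is that the zeroth-order coefficient $c:=(m-1)v_{xx}+g'(v)$ of the $w$-equation is bounded only from \emph{below}, never from above, since Proposition~\ref{prop:2-est} supplies only a one-sided estimate on $v_{xx}$; a naive maximum-principle comparison against $\bar{w}$ therefore collapses. The key algebraic move is to substitute the pRPME identity $(m-1)v\,v_{xx}=w-v_x^2-g(v)$ back into $c\,w$, thereby eliminating $v_{xx}$ from the coefficient at the expense of introducing $w^2/v$ together with terms controllable by the uniform bounds $|v_x|\leq M_1$ (Lemma~\ref{lem:C1-est}) and $|g(v)|\leq G_0$, $|g'(v)|\leq G_1$. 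Applying the parabolic maximum principle to $w_\varepsilon - \bar w$ on $\R\times[\tau,T]$ (the positivity $v_\varepsilon\geq\varepsilon$ keeps the principal part uniformly parabolic and the suprema attained) then produces a first-order differential inequality for $W_\varepsilon(t):=\sup_x w_\varepsilon(x,t)$ of the form $W_\varepsilon'(t)\leq A$ with $A$ depending only on $m,M_1^0,G_0,G_1$; integrating from $\tau$ yields $W_\varepsilon(t)\leq W_\varepsilon(\tau)+A(t-\tau)$, which is the claimed linear bound. Sending $\varepsilon\to 0+$ closes the argument.

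The hard part is genuinely the upper bound: the Aronson–Bénilan-type estimate of Proposition~\ref{prop:2-est} controls $v_{xx}$ in only one direction, so the compensation has to come from a careful algebraic substitution that uses the PDE itself to rewrite the coefficient of the linearized equation. The lower bound, by contrast, is immediate.
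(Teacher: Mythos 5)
Your lower-bound argument is correct and matches the paper's: read $v_t$ directly off the pRPME and plug in the Aronson--B\'enilan-type bound on $v_{xx}$ from Proposition~\ref{prop:2-est}, together with $0\le v\le M_0$ and $|g|\le G_0$.

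The upper bound, however, contains a sign error that breaks the whole argument. Differentiating pRPME in $t$ does give
$w_t=(m-1)vw_{xx}+2v_xw_x+[(m-1)v_{xx}+g'(v)]w$ for $w:=v_t$. But when you use the PDE to eliminate $v_{xx}$ from the coefficient, you get $(m-1)v_{xx}=\frac{w-v_x^2-g(v)}{v}$, and hence the zeroth-order contribution becomes
\[
\bigl[(m-1)v_{xx}+g'(v)\bigr]w
=\frac{w^2}{v}-\frac{(v_x^2+g(v))\,w}{v}+g'(v)\,w.
\]
The quadratic term has the sign $+\frac{w^2}{v}$, which is the \emph{wrong} sign for an upper bound: a time-affine barrier $\bar w=C_1(t-\tau)+C_2$ fails to be a supersolution because one would need $C_1\ge \bar w^2/v+\cdots$, and the right-hand side grows like $\bar w^2/v$ with no hope of control near the free boundary (and the $\varepsilon$-approximation, which only gives $v_\varepsilon\gtrsim \varepsilon^{m-1}$, would produce a bound blowing up as $\varepsilon\to 0$). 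So your claimed differential inequality $W_\varepsilon'(t)\le A$ does not follow; what you actually get is a Riccati-type inequality whose constant degenerates with $\varepsilon$.

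The missing ingredient is the choice of auxiliary function. The paper does not work with $w=v_t$ but with the B\'enilan-type quantity $P:=v_t+(m-1)v_x^2=(m-1)vv_{xx}+mv_x^2+g(v)$. The extra $(m-1)v_x^2$ term flips the sign of the dominant nonlinearity: $P$ satisfies
\[
P_t-(m-1)vP_{xx}-2v_xP_x \;=\; \frac{1}{v}\Bigl[-(P-g)^2+A_1(P-g)+A_2\Bigr]+A_3,
\]
with $A_1,A_2,A_3$ bounded by the available estimates. Now the quadratic in $P-g$ opens \emph{downward}, so for $P$ large (which is exactly the regime where one worries) the $\frac{1}{v}[\,\cdots\,]$ term is $\le 0$, and the time-affine barrier $C_1 t+C_2$ with $C_1:=\bar A_3$ and $C_2$ chosen so that $-(C_2-g)^2+A_1(C_2-g)+A_2\le 0$ is a genuine supersolution. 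This yields $P\le C_1 t+C_2$ and hence $v_t\le P\le C_1 t+C_2$. Without this composite auxiliary function your comparison cannot be made uniform in $\varepsilon$, so the upper bound does not close.
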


\begin{proof}
By approximation, we may assume that $v$ is positive and smooth. In this case we have
$$
0<v\leq 1+\|v_0\|_{L^\infty},\quad |v_x(x,t)|\leq M_1(m,M_1^0, G_0, G_1),\quad x\in \R,\ t>0,
$$
as above. We consider the function
$$
P(x,t) := v_t + (m-1) v_x^2 = (m-1)vv_{xx} + mv_x^2 + g(v).
$$
Then
$$
\begin{array}{l}
 P_x = (m-1) vv_{xxx} + (3m-1)v_x v_{xx} + g'(v) v_x,\\
 P_{xx} = v_{txx} + 2(m-1) (v_{xx}^2 + v_x v_{xxx}),\\
 P_t = (m-1)v v_{txx} + (m-1)v_t v_{xx} + 2m(m+1)v_x^2 v_{xx} + 2m (m-1)vv_x v_{xx}\\
 \ \ \ \ \ \ \ + 2m v_x g'(v) v_x + g'(v) v_t.
\end{array}
$$
Hence we have
\begin{eqnarray*}
\mathcal{L}(P) & := & P_t - (m-1)v P_{xx} - 2v_x P_x \\
& = & \widetilde{g}(x,t,P) := \frac{1}{v}[-(P-g)^2 + A_1 (P-g) +A_2] + A_3,
\end{eqnarray*}
with
$$
\begin{array}{l}
A_1 := (4m-1)v_x^2 + vg',\qquad A_2 := -m(3m-1) v_x^4 ,\\
A_3 := 2m v_x g'(v) v_x + g'(v) (g-(m-1)v_x^2).
\end{array}
$$
By $f\in C^2$ and the bounds of $v,\ v_x$ we see that
$$
|A_i(x,t)| \leq \bar{A}_i , \quad x\in \R,\ t\geq \tau,\ i=1,2,3 .
$$
Choose
$$
C_1 := \bar{A}_3,\quad C_2 := G_0 + \frac{\bar{A}_1 +\sqrt{\bar{A}_1^2 +4 \bar{A}_2}}{2} + (m-1)M_1^2 + \sup\limits_{x\in \R} v_t (x,\tau),
$$
then
$$
-(C_1 t +C_2-g)^2 + A_1 (C_1 t + C_2 -g) +A_2 \leq 0
$$
and so
$$
\mathcal{L}(C_1 t + C_2) - \widetilde{g}(x,t,C_1 t + C_2) \geq 0
$$
and
$$
(C_1 t + C_2)|_{t=\tau} > C_2 > P(x,\tau).
$$
Hence $C_1 t + C_2$ is a supersolution of $\mathcal{L}(P)=\widetilde{g}(x,t,P)$ in $t\geq \tau$, and so the \eqref{upper bound vt} follows from the maximum principle.
Note that the auxiliary function $P$ was used for PME in \cite{Ben1} and \cite[Chapter 15]{Vaz-book}, but the supersolution they used is $-C/t$, different from ours.

The lower bound for $v_t$ is an immediate consequence of Proposition \ref{prop:2-est}:
$$
v_t = (m-1)vv_{xx} +v_x^2 +g(v) \geq -C (m-1)(t+\tau) (1+\|v_0\|_{L^\infty}) - G_0.
$$
This proves the Lipschitz continuity for $v$ in time.
\end{proof}

\end{document}